\definecolor{mygreen}{rgb}{0,0.7,0.3}
\definecolor{myblue}{rgb}{0,0.50,1.20}
\definecolor{myorange}{rgb}{1,0.5,0.1}
\definecolor{fillred}{rgb}{1,0.9,0.9}
\definecolor{fillgreen}{rgb}{0.9,1,0.9}
\definecolor{refkey}{rgb}{0,0.7,0.3}
\definecolor{labelkey}{rgb}{1,0,0}
\numberwithin{equation}{section}
\crefname{thm}{Theorem}{Theorems}
\crefname{cor}{Corollary}{Corollaries}
\crefname{lem}{Lemma}{Lemmas}
\crefname{sublem}{Sublemma}{Sublemmas}
\crefname{prop}{Proposition}{Propositions}
\crefname{dfn}{Definition}{Definitions}
\crefname{defi}{Definition}{Definitions}
\crefname{ex}{Example}{Examples}
\crefname{claim}{Claim}{Claims}
\crefname{conj}{Conjecture}{Conjectures}
\crefname{conv}{Notation}{Notations}
\crefname{rem}{Remark}{Remarks}
\crefname{rmk}{Remark}{Remarks}
\crefname{prob}{Problem}{Problems}
\crefname{figure}{Figure}{Figures}
\crefname{section}{Section}{Sections}
\crefname{subsection}{Section}{Sections}
\crefname{appendix}{Appendix}{Appendices}
\crefname{introthm}{Theorem}{Theorems}
\crefname{introcor}{Corollary}{Corollaries}
\crefname{introconj}{Conjecture}{Conjectures}
\newtheorem{thm}{Theorem}[section]
\newtheorem{prop}[thm]{Proposition}
\newtheorem{cor}[thm]{Corollary}
\newtheorem{lem}[thm]{Lemma}
\newtheorem{introthm}{Theorem}
\theoremstyle{definition}
\newtheorem{dfn}[thm]{Definition}
\newtheorem{ex}[thm]{Example}
\theoremstyle{remark}
\newtheorem{rem}[thm]{Remark}
\newcommand{\corrected}
{\textcolor{red}{(corrected)}}
\newcommand*{\chom}{\mathcal{H}\kern -.5pt om}
\renewcommand{\mathbf}{\boldsymbol}
\newcommand{\bZ}{\mathbb{Z}}
\newcommand{\bQ}{\mathbb{Q}}
\newcommand{\bR}{\mathbb{R}}
\newcommand{\bC}{\mathbb{C}}
\newcommand{\bS}{\mathbb{S}}
\newcommand{\bM}{\mathbb{M}}
\newcommand{\bP}{\mathbb{M}_\circ}
\newcommand{\bG}{\mathbb{G}}
\newcommand{\bH}{\mathbb{H}}
\newcommand{\bB}{\mathbb{B}}
\newcommand{\aT}{\mathcal{T}^a}
\newcommand{\xT}{\mathcal{T}^x}
\newcommand{\pT}{\mathcal{T}^p}
\newcommand{\A}{\mathcal{A}}
\newcommand{\cC}{\mathcal{C}}
\newcommand{\cH}{\mathcal{H}}
\newcommand{\cL}{\mathcal{L}}
\newcommand{\cO}{\mathcal{O}}
\def\P{{\mathcal{P}}}
\newcommand{\cT}{\mathcal{T}}
\newcommand{\X}{\mathcal{X}}
\newcommand{\sfx}{\mathsf{x}}
\newcommand{\sfa}{\mathsf{a}}
\newcommand{\ve}{\varepsilon}
\newcommand{\inn}{\mathrm{in}}
\newcommand{\out}{\mathrm{out}}
\newcommand{\Hom}{\mathrm{Hom}}
\newcommand{\tri}{\triangle}
\newcommand{\sgn}{\mathrm{sgn}}
\newcommand{\pos}{\mathbb{R}_{>0}}
\newcommand{\uf}{\mathrm{uf}}
\newcommand{\Teich}{Teichm\"uller}
\newcommand{\spiral}{\hL}
\newcommand{\hL}{\widehat{L}}
\newcommand{\Skein}[1]{\mathscr{S}_{#1}}
\newcommand{\Skeinr}[1]{\overline{\mathscr{S}}_{#1}}
\DeclareMathOperator{\interior}{\mathrm{int}}
\newcommand{\oset}[3][0ex]{%
  \mathrel{\mathop{#3}\limits^{
    \vbox to#1{\kern-2\ex@
    \hbox{$\scriptstyle#2$}\vss}}}}
\newcommand{\osetnear}[3][0ex]{%
  \mathrel{\mathop{#3}\limits^{
    \vbox to#1{\kern-.3\ex@
    \hbox{$\scriptstyle#2$}\vss}}}}
\tikzset{pics/.cd,
handle/.style={code={
\draw (-0.72,0) to[bend left] (0.72,0);
\draw (-0.9,0.1) to[bend right] (0.9,0.1);
}}}
\newcommand\angleAL[1]{([xshift=-4pt] #1) -- ([xshift=-4pt,yshift=4pt] #1) -- ([yshift=4pt] #1)}
\newcommand\angleBL[1]{([xshift=-4pt] #1) -- ([xshift=-4pt,yshift=-4pt] #1) -- ([yshift=-4pt] #1)}
\tikzset{
  mid arrow/.style={postaction={decorate,decoration={
        markings,
        mark=at position .5 with {\arrow[#1]{stealth}}
      }}},
}
\newcommand{\hgline}[3]{
\pgfmathsetmacro{\thetaone}{mod(#1,360)}
\pgfmathsetmacro{\thetatwo}{mod(#2,360)}
\pgfmathsetmacro{\rho}{#3}
\pgfmathsetmacro{\theta}{(\thetaone+\thetatwo)/2}
\pgfmathsetmacro{\phi}{abs(\thetaone-\thetatwo)/2}
\pgfmathsetmacro{\close}{less(abs(\phi-90),0.0001)}
\ifdim \close pt = 1pt
    \draw (\thetaone:\rho) -- (\thetatwo:\rho);
\else
    \pgfmathsetmacro{\R}{\rho*tan(\phi)}
    \ifdim \R pt < 0pt
        \pgfmathsetmacro{\distance}{sqrt(\rho^2+\R*\R)}
        \draw (\theta:-\distance) circle (\R);
    \else \ifdim \R pt > 0pt
        \pgfmathsetmacro{\distance}{sqrt(\rho^2+\R^2)}
        \draw (\theta:\distance) circle (\R);
        \fi
    \fi
\fi
}
\newcommand{\snake}[2]{\draw[thick,-{Classical TikZ Rightarrow[length=4pt]},decorate,decoration={snake,amplitude=2pt,pre length=1pt,post length=2pt}](#1) -- (#2)}
\tikzset{
    partial ellipse/.style args={#1:#2:#3}{
        insert path={+ (#1:#3) arc (#1:#2:#3)}
    }
}
\newcommand{\bline}[3]{
    \path (#1)++(0,-#3) coordinate(m1);
    \path (#2)++(0,-#3) coordinate(m2);
    \filldraw[gray!30] (m1) -- (#1) -- (#2) -- (m2) --cycle;
    \draw[thick] (#1) -- (#2);
}
\newcommand{\tline}[3]{
    \path (#1)++(0,#3) coordinate(m1);
    \path (#2)++(0,#3) coordinate(m2);
    \filldraw[gray!30] (m1) -- (#1) -- (#2) -- (m2) --cycle;
    \draw[thick] (#1) -- (#2);
}
\tikzset{->-/.style 2 args={
	postaction={decorate},
	decoration={markings, mark=at position #1 with {\arrow[thick, #2]{>}}} 
    },
    ->-/.default={0.5}{}
}
\tikzset{-<-/.style 2 args={
	postaction={decorate},
	decoration={markings, mark=at position #1 with {\arrow[thick, #2]{<}}} 
    },
    -<-/.default={0.5}{}
}
\newcommand{\pinn}[4]{
\draw(#1)++(#2:#3) --++(#2+180:2*#3) node[fill,circle,inner sep=#4]{}; 
}
\title{Teichm\"uller and lamination spaces with pinnings}
\author[Tsukasa Ishibashi]{Tsukasa Ishibashi}
\address{Tsukasa Ishibashi, Mathematical Institute, Tohoku University, 
6-3 Aoba, Aramaki, Aoba-ku, Sendai, Miyagi 980-8578, Japan.}
\email{tsukasa.ishibashi.a6@tohoku.ac.jp}
\date{\today}
\begin{document}
\maketitle

\begin{abstract}
    We describe the spaces of the positive and tropical points of the moduli space $\P_{PGL_2,\Sigma}$ introduced by Goncharov--Shen \cite{GS19} as certain \Teich\ and lamination spaces, respectively, with additional data of \emph{pinnings}. In the case where the surface $\Sigma$ has no punctures, we obtain the formulae relating various functions on the \Teich\ space with pinnings: $\lambda$-lengths, cross ratio coordinates, and Wilson lines. A topological description of the tropicalized amalgamation map is given in terms of $\P$-laminations. 
    Based on our topological study of these ``$\P$-type" spaces, we investigate the compatibility of the Fock--Goncharov duality maps $\mathbb{I}_\A$, $\mathbb{I}_\X$ constructed by \cite{FG06,FG07,MSW,GS15} under the extended ensemble map. We also discuss the amalgamation of bracelets bases.
\end{abstract}

\tableofcontents

\section{Introduction}
The \Teich\ theory and the theory of cluster varieties \cite{FG09} are deeply connected, producing fruitful applications in the both sides. 
Given a marked surface $\Sigma$, we have two kinds of dual cluster varieties, called the \emph{cluster $K_2$-variety} $\A_\Sigma$ and the \emph{cluster Poisson variety} $\X_\Sigma$ \cite{FG09} defined by certain quivers associated with ideal triangulations of $\Sigma$. 
The positive structure on these spaces allows us to consider their sets of semifield-valued points, for example the positive parts $\A_\Sigma(\pos)$ and $\X_\Sigma(\pos)$, which are real-analytic manifolds. On the other hand, 
there are two extensions of the usual \Teich\ space: the \emph{decorated \Teich\ space} $\cT^a(\Sigma)$ introduced by Penner \cite{Penner87} and the \emph{enhanced \Teich\ space} $\cT^x(\Sigma)$ systematically studied by Fock and Goncharov \cite{Fock,FG07}. See Penner's book \cite{Penner} for details. It is known \cite{FG07,FST} that we have canonical isomorphisms
\begin{align*}
    \cT^a(\Sigma) \cong \A_\Sigma(\pos), \quad \cT^x(\Sigma) \cong \X^\uf_\Sigma(\pos),
\end{align*}
which are equivariant under the natural actions of the mapping class group $MC(\Sigma)$. 
These isomorphisms are provided by special coordinate functions on these \Teich\ spaces, called the $\lambda$-lengths and the cross ratios, respectively. Here, $\X^\uf_\Sigma$ stands for the cluster Poisson variety \underline{without frozen coordinates} -- there is no natural way to define the cross ratio coordinates on $\cT^x(\Sigma)$ associated to the boundary edges. The supplement of frozen coordinates on boundary intervals is the main theme in this paper. 

The varieties $\A_\Sigma$ and $\X^\uf_\Sigma$ are birationally isomorphic to certain moduli spaces $\A_{SL_2,\Sigma}$ and $\X_{PGL_2,\Sigma}$ of local systems on $\Sigma$ \cite{FG06}. Here, the moduli space $\X_{PGL_2,\Sigma}$ also misses frozen coordinates. 
After a decade, in their seminal paper \cite{GS19}, Goncharov--Shen introduced a new moduli space $\P_{PGL_2,\Sigma}$ closely related to $\X_{PGL_2,\Sigma}$, but with additional data called the \emph{pinnings}. The data of pinnings allows one to define frozen coordinates as well, and thus provides a birational isomorphism $\X_\Sigma \cong \P_{PGL_2,\Sigma}$.

\subsection{\Teich\ space with pinnings} 
In this paper, we introduce a variant of the \Teich\ space corresponding to $\P_{PGL_2,\Sigma}$, which we call the \emph{\Teich\ space with pinnings} $\cT^p(\Sigma)$. Although it should be nothing but a certain ``real locus" of the moduli space $\P_{PGL_2,\Sigma}$, what we elaborate in this paper is its description purely in terms of the hyperbolic geometry. Mimicking \cite[Lemma-Definition 3.7]{GS19} in our setting, we introduce the notion of pinnings in four equivalent ways, and define $\cT^p(\Sigma)$ to be the \Teich\ space of marked hyperbolic structures equipped with such data on each boundary interval (\cref{dfn:T^p}). Then we define cross ratio coordinates on $\cT^p(\Sigma)$, and show that they combine to give an $MC(\Sigma)$-equivariant isomorphism (\cref{cor:Teich_X-variety})
\begin{align*}
    \cT^p(\Sigma) \xrightarrow{\sim} \X_\Sigma(\pos).
\end{align*}
We also describe the \emph{gluing map} \cite{GS19} in terms of the hyperbolic structures (\cref{prop:amalgamation}). It also clarifies the appearance of spiralling geodesics in the enhanced \Teich\ space in relation with the Thurston's completeness criterion. 

The decorations induce pinnings. Hence we get an \emph{ (extended) ensemble map}
\begin{align}\label{eq:ensemble_map}
    p_\Sigma: \cT^a(\Sigma) \to \cT^p(\Sigma).
\end{align}
The coordinate expression of the map $p_\Sigma$ (\cref{prop:ensemble}) is exactly the one known in the cluster theory, enhanced by Goncharov--Shen \cite[Section 18]{GS19}. It expresses the cross ratios as Laurent monomials of $\lambda$-lengths. 
If $\Sigma$ has no interior marked points (\emph{i.e.}, punctures), it turns out that $p_\Sigma$ is invertible. Then we obtain the inverse formula which expresses the $\lambda$-lengths in terms of the cross ratios, which seems to be well-known to specialists but new in the literature:

\begin{introthm}[\cref{thm:A to X}]
    Assume that $\Sigma$ has no punctures. 
Then for each edge $\alpha \in e(\tri)$ of an ideal triangulation, we have the inverse formula
\begin{align*}
    A_\alpha = \prod_{\beta \in e(\tri)} (X^\tri_{\beta})^{q_{\alpha\beta}}.
\end{align*}
Here $q_{\alpha\beta}:=-\sfa_{\beta}(\alpha_\bB)$, and $\sfa_{\beta}(\alpha_\bB) \in \frac{1}{2}\bZ_{\geq 0}$ denotes half the geometric intersection number between the curves $\beta$ and the positive $\bB$-shift $\alpha_\bB$ (\cref{def:shift_ideal}) of the ideal arc $\alpha$.
\end{introthm}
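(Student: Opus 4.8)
The plan is to use the already-established forward formula for the ensemble map to reduce the inverse formula to an identity of $e(\tri)\times e(\tri)$ matrices, and then to prove that identity by a local count inside the triangles of $\tri$. Write $\ve=(\ve_{\beta\gamma})_{\beta,\gamma\in e(\tri)}$ for the Goncharov--Shen exchange matrix of $\tri$, i.e. the Fock--Goncharov matrix of $\X^\uf_\Sigma$ enlarged by the frozen rows and columns attached to the boundary intervals as in \cite[Section 18]{GS19}, and put $Q:=(q_{\alpha\beta})_{\alpha,\beta\in e(\tri)}$. By \cref{prop:ensemble} the forward ensemble map is the monomial map $p_\Sigma^\ast(X^\tri_\beta)=\prod_{\gamma\in e(\tri)}A_\gamma^{\ve_{\beta\gamma}}$. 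Substituting this into the right-hand side of the asserted formula gives, as an identity of functions on $\cT^a(\Sigma)$, $\prod_{\beta\in e(\tri)}(X^\tri_\beta)^{q_{\alpha\beta}}=\prod_{\gamma\in e(\tri)}A_\gamma^{\sum_\beta q_{\alpha\beta}\ve_{\beta\gamma}}$, which is a Laurent monomial in the independent coordinates $(A_\gamma)_{\gamma\in e(\tri)}$ of $\cT^a(\Sigma)\cong\A_\Sigma(\pos)$. Hence $A_\alpha=\prod_\beta (X^\tri_\beta)^{q_{\alpha\beta}}$ for all $\alpha$ if and only if $\sum_{\beta\in e(\tri)}q_{\alpha\beta}\ve_{\beta\gamma}=\delta_{\alpha\gamma}$ for all $\alpha,\gamma$, that is, $Q\ve=\mathrm{Id}$ (with the index convention of \cref{prop:ensemble}; a transpose in the other convention). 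In particular this simultaneously re-proves that $p_\Sigma$ is invertible when $\Sigma$ has no punctures, and the $\tfrac12\bZ$-integrality of $q_{\alpha\beta}$ is consistent with $\det\ve$ being, in the punctureless case, a power of $2$ up to sign.

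It then remains to prove $Q\ve=\mathrm{Id}$. Fix $\alpha,\gamma\in e(\tri)$. The entry $\ve_{\beta\gamma}$ is a sum of local terms $\ve^T_{\beta\gamma}\in\{0,\pm1\}$ over the at most two triangles $T$ incident to $\gamma$, each recording the cyclic position of the side $\beta$ next to $\gamma$ in $T$, together with the boundary contributions of \cite{GS19} when $\gamma$ or $\beta$ is a boundary interval; so only the at most four edges $\beta$ sharing a triangle with $\gamma$ contribute to $\sum_\beta q_{\alpha\beta}\ve_{\beta\gamma}$, and that sum decomposes into one contribution per triangle at $\gamma$. Using $\mathsf a_\beta(\alpha_\bB)=\tfrac12\,i(\beta,\alpha_\bB)$ — finite because, $\Sigma$ having no punctures, the positive $\bB$-shift $\alpha_\bB$ is an honest compactly supported multicurve — I would, inside each triangle $T$ incident to $\gamma$, decompose $\alpha_\bB\cap T$ into strands and tabulate them by which pair of sides of $T$ each strand connects. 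The term of $T$ in $\sum_\beta\mathsf a_\beta(\alpha_\bB)\ve_{\beta\gamma}$ then telescopes to $\pm\tfrac12$ times the difference of the numbers of strands of $\alpha_\bB$ in $T$ that ``turn toward'' $\gamma$ versus ``turn away'' from $\gamma$; summing over the one or two triangles at $\gamma$, the strands of $\alpha_\bB$ that actually cross $\gamma$ are counted twice with opposite signs and cancel, leaving only the contribution that detects whether $\alpha_\bB$ runs alongside the edge $\gamma$ itself. By the definition of the positive $\bB$-shift (\cref{def:shift_ideal}) this occurs precisely when $\alpha=\gamma$, and it contributes exactly $-\delta_{\alpha\gamma}$ to $\sum_\beta\mathsf a_\beta(\alpha_\bB)\ve_{\beta\gamma}$; together with the sign in $q_{\alpha\beta}=-\mathsf a_\beta(\alpha_\bB)$ this yields $\sum_\beta q_{\alpha\beta}\ve_{\beta\gamma}=\delta_{\alpha\gamma}$, as needed.

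I expect the second paragraph to be the main obstacle: one must organize the strand-counting inside each triangle carefully enough to see all cancellations, track precisely the signs coming from the orientation data in the $\ve^T_{\beta\gamma}$ and from the boundary corrections of \cite{GS19}, and verify that the normalization of the positive $\bB$-shift in \cref{def:shift_ideal} is chosen exactly so that the one surviving term equals $\delta_{\alpha\gamma}$ and not, say, $\pm2$ or $\pm\tfrac12$. Conceptually, the reason behind $Q=\ve^{-1}$ is that $-q_{\alpha\bullet}$ should be read as the vector of shear coordinates of the elementary $\P$-lamination dual to the arc $\alpha$, so the identity is an instance of the general correspondence between $\boldsymbol{g}$-vectors of cluster $\A$-variables and shear coordinates of laminations on surfaces; one could instead deduce the formula from that dictionary, but the local verification above keeps the argument self-contained.
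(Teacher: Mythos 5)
Your proposal is correct and takes essentially the same route as the paper: both reduce, via the forward formula of \cref{prop:ensemble} (in the combined form \eqref{eq:ensemble_combined}) and the independence of the $\lambda$-length coordinates, to the matrix identity $\sum_{\beta}q_{\alpha\beta}(\ve^\tri_{\beta\gamma}+m_{\beta\gamma})=\delta_{\alpha\gamma}$, and both verify it by a local count of the strands of $\alpha_\bB$ in the triangles of $\tri$. The only (cosmetic) difference is organizational --- the paper sums along the sequence $\alpha_0,\dots,\alpha_m$ of edges traversed by $\alpha_\bB$ and lets consecutive terms $p^\tri_{\alpha_{i\pm1},\alpha_i}$ cancel, while you sum around each fixed $\gamma$ --- but the cancellation mechanism you describe (crossing strands contribute with opposite signs, leaving only the term detecting $\alpha=\gamma$) is exactly the one the paper exploits.
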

As a consequence, we can compute the Poisson brackets of $\lambda$-lengths. We see that the Poisson algebra $C^\infty(\cT^p(\Sigma))$ is a classical analogue of the Muller's skein algebra \cite{Muller}. 

We also investigate the \emph{Wilson lines} introduced in \cite{IO20} in terms of hyperbolic geometry. We obtain the transition formulae between the $\lambda$-length/cross ratio coordinates and the matrix coefficients of Wilson lines (\cref{thm:LR-formula,prop:Wilson_lambda}). 

\subsection{Lamination space with pinnings}
For $\mathbb{A}=\bZ,\bQ$ or $\bR$, let $\mathbb{A}^{\!\mathsf{T}}=(\mathbb{A},\max,+)$ denote the (max-plus) tropical semifield. Then we can consider the sets $\A_\Sigma(\mathbb{A}^{\!\mathsf{T}})$ and $\X^\uf_\Sigma(\mathbb{A}^{\!\mathsf{T}})$ of tropical points, which are known to be canonically isomorphic to certain spaces of measured laminations \cite{FG07}. Here $\X^\uf_\Sigma(\mathbb{A}^{\!\mathsf{T}})$ also misses the frozen coordinates. We introduce the space $\cL^p(\Sigma,\bQ)$ of \emph{rational $\P$-laminations}, and show that a natural extension of the shear coordinates gives an$MC(\Sigma)$-equivariant piecewise-linear isomorphism
\begin{align*}  
    \cL^p(\Sigma,\bQ) \xrightarrow{\sim} \X_\Sigma(\bQ^{\mathsf{T}}).
\end{align*}
Its $\mathfrak{sl}_3$-version has already appeared in the work \cite{IK22}. We introduce a gluing map (\cref{dfn:gluing_lamination}) purely in terms of laminations, and prove that it is a tropical analogue of the Goncharov--Shen's gluing map (\cref{prop:amalgamation}). 

Combining the results in the \Teich\ and lamination sides, we can form a ``$\P$-version" of the Thurston compactification $\overline{\cT^p(\Sigma)}:=\cT^p(\Sigma) \cup \bS \cL^p(\Sigma,\bR)$ (\cref{dfn:Thurston_P}). 
Here $\cL^p(\Sigma,\bR)$ is the completion of the space $\cL^p(\Sigma,\bQ)$ with respect to the shear coordinates. 
Then we obtain the following:

\begin{introthm}[\cref{thm:gluing_Thurston}]
The gluing maps on the \Teich\ and lamination spaces combine to give a continuous map
\begin{align*}
    \overline{q}_{\Sigma,\Sigma'}: \overline{\cT^p(\Sigma)} \to \overline{\cT^p(\Sigma')}
\end{align*}
between the Thurston compactifications. 
\end{introthm}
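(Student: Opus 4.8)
The plan is to reduce the assertion to two coordinate descriptions that are already available: on the \Teich\ side the gluing map is a Laurent monomial map in cross ratio coordinates (\cref{prop:amalgamation}, cf.\ the amalgamation formula of \cite{GS19}), whereas on the lamination side the gluing map is precisely its tropicalization (\cref{dfn:gluing_lamination}). Continuity of the combined map across the boundary then becomes the elementary compatibility of a positive monomial map with its tropical limit under the rescaling that defines convergence to $\bS\cL^p$.

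Concretely, I would first fix ideal triangulations $\tri$ of $\Sigma$ and $\tri'$ of $\Sigma'$ adapted to the gluing, so that each edge of $\tri'$ is the image of an edge of $\tri$ and the two glued boundary intervals correspond to a pair of boundary edges $a,b\in e(\tri)$ that are identified to a single edge of $\tri'$. In these coordinates \cref{prop:amalgamation} presents $q_{\Sigma,\Sigma'}$ as
\begin{align*}
    X^{\tri'}_{\gamma}\circ q_{\Sigma,\Sigma'}=\prod_{\beta\in e(\tri)}\bigl(X^\tri_\beta\bigr)^{c_{\gamma\beta}},\qquad\gamma\in e(\tri'),
\end{align*}
for a fixed nonnegative integer matrix $C=(c_{\gamma\beta})$ (the identity on the unglued edges, with the glued edge receiving the product of the two cross ratios $X^\tri_a,X^\tri_b$). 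Dually, by \cref{dfn:gluing_lamination} and its identification with the tropicalization of $q_{\Sigma,\Sigma'}$, the lamination gluing map is, in shear coordinates, the linear map with the same matrix $C$, namely $x^{\tri'}_\gamma\circ q^\cL_{\Sigma,\Sigma'}=\sum_{\beta\in e(\tri)}c_{\gamma\beta}\,x^\tri_\beta$. One then sets $\overline q_{\Sigma,\Sigma'}:=q_{\Sigma,\Sigma'}$ on $\cT^p(\Sigma)$ and $\overline q_{\Sigma,\Sigma'}([\ell]):=[q^\cL_{\Sigma,\Sigma'}(\ell)]$ on $\bS\cL^p(\Sigma,\bR)$; that the latter is well defined (sends nonzero $\P$-laminations to nonzero ones) is part of the point discussed below.

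Continuity at interior points is immediate, since a Laurent monomial map is smooth on the positive locus; continuity along $\bS\cL^p(\Sigma,\bR)$ is immediate because $C$ is linear, hence continuous and positively homogeneous, and therefore descends to the projectivizations. The only remaining issue is continuity at a boundary point $[\ell]$, and by passing to subsequences it suffices to treat a sequence contained in $\cT^p(\Sigma)$ and one contained in $\bS\cL^p(\Sigma,\bR)$; the second case is covered by the previous sentence. For the first, by the definition of $\overline{\cT^p}$ in \cref{dfn:Thurston_P} (and the isomorphisms $\cT^p(\Sigma)\cong\X_\Sigma(\pos)$ of \cref{cor:Teich_X-variety} and $\cL^p(\Sigma,\bR)\cong\X_\Sigma(\bR^{\mathsf{T}})$), convergence $x_n\to[\ell]$ provides scalars $t_n\to+\infty$ with $t_n^{-1}\log X^\tri_\beta(x_n)\to x^\tri_\beta(\ell)$ for all $\beta\in e(\tri)$, not all limits zero; applying $t_n^{-1}\log(-)$ to the monomial formula yields
\begin{align*}
    t_n^{-1}\log\bigl(X^{\tri'}_\gamma\circ q_{\Sigma,\Sigma'}(x_n)\bigr)=\sum_{\beta\in e(\tri)}c_{\gamma\beta}\,t_n^{-1}\log X^\tri_\beta(x_n)\;\longrightarrow\;\sum_{\beta\in e(\tri)}c_{\gamma\beta}\,x^\tri_\beta(\ell)=x^{\tri'}_\gamma\bigl(q^\cL_{\Sigma,\Sigma'}(\ell)\bigr)
\end{align*}
for every $\gamma\in e(\tri')$, so that $q_{\Sigma,\Sigma'}(x_n)\to[q^\cL_{\Sigma,\Sigma'}(\ell)]=\overline q_{\Sigma,\Sigma'}([\ell])$ in $\overline{\cT^p(\Sigma')}$ — provided the limiting vector $(x^{\tri'}_\gamma(q^\cL_{\Sigma,\Sigma'}(\ell)))_\gamma$ is nonzero.

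That last proviso is where the real content lies, and I expect it to be the main obstacle: one must show $q^\cL_{\Sigma,\Sigma'}(\ell)\neq 0$ for every nonzero $\P$-lamination $\ell$, i.e.\ that the nonnegative matrix $C$ has no kernel ray meeting the cone of $\P$-laminations. Since $C$ is the identity off the glued edge, this reduces to excluding a nonzero $\P$-lamination whose $\tri$-shear coordinates vanish on all edges but $a,b$ and satisfy $x^\tri_a(\ell)=-x^\tri_b(\ell)$, and I would rule this out directly from the geometric description of the gluing in \cref{dfn:gluing_lamination}: a $\P$-lamination supported, in shear coordinates, only on two edges that are glued to each other is supported as an actual lamination near those two boundary intervals, so that after gluing it contributes a genuinely non-peripheral curve, or a nonzero pinning contribution, to $\Sigma'$; hence its image is nonzero. (A softer alternative would be to invoke surjectivity of the positive gluing map together with density of tropical points, but the lamination picture is cleaner.) Finally, $\overline q_{\Sigma,\Sigma'}$ is independent of the auxiliary triangulations because $q_{\Sigma,\Sigma'}$ and $q^\cL_{\Sigma,\Sigma'}$ are (\cref{prop:amalgamation}, \cref{dfn:gluing_lamination}), and since continuity is a local property it suffices to have checked it in the single chart above; this completes the plan.
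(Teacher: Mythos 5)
Your overall route is the same as the paper's: the proof given there consists precisely of the two coordinate expressions $q_{\Sigma,\Sigma'}^\ast(\log X_{\overline{\alpha}}^{\tri'})=\log X_{\alpha_L}^\tri+\log X_{\alpha_R}^\tri$ and $(q^{\mathsf{T}}_{\Sigma,\Sigma'})^\ast\sfx_{\overline{\alpha}}^{\tri'}=\sfx_{\alpha_L}^\tri+\sfx_{\alpha_R}^\tri$ together with the definition of convergence in \cref{dfn:Thurston_P}, and your rescaling computation $t_n^{-1}\log X^{\tri'}_\gamma(q(x_n))\to\sum_\beta c_{\gamma\beta}\sfx^\tri_\beta(\ell)$ is exactly the intended argument (triangulation-independence of the convergence condition being supplied by the cited results). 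You have, moreover, correctly isolated the one place where actual content is needed: the linear map $C$ must have no kernel ray in $\cL^p(\Sigma,\bR)\setminus\{0\}$ for the induced map on $\bS\cL^p(\Sigma,\bR)$ to be defined and for the boundary-limit argument to land in $\bS\cL^p(\Sigma',\bR)$.

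However, your proposed resolution of that point is incorrect, and this is a genuine gap. The kernel of $C$ is spanned by the vector with $\sfx^\tri_{\alpha_L}=1$, $\sfx^\tri_{\alpha_R}=-1$ and all other coordinates $0$. By the reconstruction in \cref{prop:p-lamination}, this vector is a bona fide nonzero real $\P$-lamination: its underlying lamination is \emph{empty} (all interior shear coordinates vanish) and the entire content sits in the pinnings $\nu_{\alpha_L}=1=-\nu_{\alpha_R}$. This is precisely a point on the orbit of the shift action \eqref{eq:gluing_action} through the zero lamination, so by \cref{lem:shift-invariance} its image under $q^{\mathsf{T}}_{\Sigma,\Sigma'}$ is the zero lamination. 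Your geometric claim that such an element "contributes a genuinely non-peripheral curve, or a nonzero pinning contribution" fails: there is no curve at all, and the two pinning contributions cancel because the glued edge $\overline{\alpha}$ becomes interior and carries no pinning. The failure is not merely one of well-definedness: taking $g_n\in\cT^p(\Sigma)$ with $X^\tri_{\alpha_L}(g_n)=e^{n}$, $X^\tri_{\alpha_R}(g_n)=e^{-n}$ and all other coordinates constant gives $g_n\to[\ell]$ in $\overline{\cT^p(\Sigma)}$ while $q_{\Sigma,\Sigma'}(g_n)$ is a constant interior point of $\cT^p(\Sigma')$, so no choice of value at $[\ell]$ makes the extension continuous there. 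Your "softer alternative" (surjectivity plus density) cannot repair this either. The statement therefore requires excision of, or special treatment at, the two antipodal kernel points of $\bS\cL^p(\Sigma,\bR)$ — a subtlety that the paper's one-line proof also does not engage with, but which your write-up cannot paper over with the argument as given.
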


\subsection{Ensemble compatibility of duality maps}
Fock--Goncharov's duality conjecture is one of the most fascinating conjectures in the theory of cluster varieties. See \cite{FG09,GHKK}; also \cite{Qin21} for a recent review on this subject. It asks a construction of \emph{duality maps}
\begin{align}
    &\mathbb{I}_\X: \X_{\Sigma}(\bZ^{\mathsf{T}}) \to \cO(\A_{\Sigma}), \label{eq:duality_X}\\
    &\mathbb{I}_\A: \A_{\Sigma}(\bZ^{\mathsf{T}}) \to \cO(\X_{\Sigma}) \label{eq:duality_A}
\end{align}
that parametrize linear bases of the function algebras of cluster varieties, satisfying certain axioms formulated in \cite[Section 4]{FG09}. 

A topological construction of the duality map \eqref{eq:duality_X}, nowadays called the \emph{bracelets basis}, is first given by Fock--Goncharov \cite{FG06,FG07} for a general marked surface, and further studied by Musiker--Schiffler--Williams \cite{MSW} in the absence of punctures. A duality map in the direction \eqref{eq:duality_A} is also constructed by Fock--Goncharov \cite{FG06,FG07}, and further enhanced by Goncharov--Shen \cite{GS15} in the ``$\P$-type" setting. Here the work of Goncharov--Shen gives a basis of the function ring of the moduli space $\P_{SL_2,\Sigma}$ (written as $\mathrm{Loc}_{SL_2,S}$ \emph{loc.~sit.}) parametrized by the space $\cL^a(\Sigma,\bZ) \supset \A_\Sigma(\bZ^{\mathsf{T}})$ (whose elements are called \emph{$PGL_2$-laminations} \emph{loc.~sit.}). Essentially as a restriction of their construction, we obtain:


\begin{introthm}[\cref{thm:X_basis}]
Assume that $\Sigma$ is unpunctured, having at least two marked points. Then the functions $\mathbb{I}_\A(L)$, where $L$ runs over all the integral $\A$-laminations, form a linear basis of the function algebra $\cO(\X_{\Sigma})$. 
\end{introthm}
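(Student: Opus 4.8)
The plan is to derive the statement from the Goncharov--Shen construction \cite{GS15}, which (for unpunctured $\Sigma$) produces a linear basis $\{\mathbb{I}_\A(\ell)\}_{\ell\in\cL^a(\Sigma,\bZ)}$ of $\cO(\P_{SL_2,\Sigma})$; what remains is to compare the two function algebras $\cO(\X_\Sigma)$ and $\cO(\P_{SL_2,\Sigma})$ and to match the index sets $\A_\Sigma(\bZ^\tr)\subset\cL^a(\Sigma,\bZ)$. Since the assertion is precisely the part of the Fock--Goncharov duality conjecture claiming that $\mathbb{I}_\A$ parametrizes a linear basis of $\cO(\X_\Sigma)$, this is the expected shape of the argument, and the phrase ``essentially as a restriction of their construction'' suggests exactly this route.

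First I would make the link between the two moduli spaces precise. Using the birational identification $\X_\Sigma\cong\P_{PGL_2,\Sigma}$ of the introduction together with the natural morphism $\pi\colon\P_{SL_2,\Sigma}\to\X_\Sigma$ coming from the change of structure group and the normalization of pinnings, pullback along $\pi$ should realize $\cO(\X_\Sigma)$ as the subalgebra of $\cO(\P_{SL_2,\Sigma})$ of invariants $\cO(\P_{SL_2,\Sigma})^{H}$ under the natural action of the torus $H$ that rescales the $SL_2$-pinnings (the torus measuring the difference between the $SL_2$- and $PGL_2$-type data). Here I would invoke the hypotheses that $\Sigma$ is unpunctured with at least two marked points to guarantee that $\X_\Sigma$, $\P_{SL_2,\Sigma}$ and $\P_{PGL_2,\Sigma}$ are honest affine varieties whose coordinate rings coincide with the corresponding (upper) cluster algebras, in the same spirit as the isomorphisms $\cT^p(\Sigma)\cong\X_\Sigma(\pos)$ and $\cL^p(\Sigma,\bQ)\xrightarrow{\sim}\X_\Sigma(\bQ^\tr)$ obtained earlier in the paper.

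Second, I would exploit the fact that the Goncharov--Shen basis diagonalizes this $H$-action: each $\mathbb{I}_\A(\ell)$ is an $H$-eigenvector whose weight is a linear function of the boundary (peripheral) coordinates of $\ell$, because $\mathbb{I}_\A(\ell)$ is assembled from trace- and matrix-coefficient-type functions along the curves of $\ell$ on which the pinning-rescaling acts transparently. It then remains to check that the weight-zero laminations in $\cL^a(\Sigma,\bZ)$ are precisely the integral $\A$-laminations $\A_\Sigma(\bZ^\tr)$ -- those with no spiralling/winding around the boundary intervals -- and that on this sublocus the normalization of $\mathbb{I}_\A$ adopted in this paper coincides with that of \cite{GS15}; the topological description of the amalgamation map (\cref{prop:amalgamation}) and the $\bB$-shift conventions (\cref{def:shift_ideal}) are exactly the tools needed for this bookkeeping. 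Granting it, $\{\mathbb{I}_\A(L)\}_{L\in\A_\Sigma(\bZ^\tr)}$ is identified with the set of weight-zero members of a basis of $\cO(\P_{SL_2,\Sigma})$, hence with a basis of $\cO(\P_{SL_2,\Sigma})^{H}=\cO(\X_\Sigma)$: linear independence is then inherited verbatim from \cite{GS15}, while spanning reduces to the statement that any $H$-invariant function has only weight-zero components in the Goncharov--Shen expansion.

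I expect the main obstacle to be this last step: pinning down the precise combinatorial dictionary between the weight-zero sublocus of $\cL^a(\Sigma,\bZ)$ and $\A_\Sigma(\bZ^\tr)$, and matching the two normalizations of $\mathbb{I}_\A$, which requires carefully tracking the spiralling and $\bB$-shift conventions through the Goncharov--Shen amalgamation procedure. A secondary point of care is the first step: verifying that $\cO(\X_\Sigma)$ is literally $\cO(\P_{SL_2,\Sigma})^{H}$ rather than merely birationally so, which is where the assumptions ``unpunctured'' and ``at least two marked points'' are used essentially.
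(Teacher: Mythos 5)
Your route is genuinely different from the one the paper takes, though it is exactly the alternative the paper itself flags (``a proof similar to that of [Theorem 10.14]{GS15} will also be possible, with the restriction to the representations of $PGL_2$''). The paper instead argues skein-theoretically: it identifies $\cO(\X_\Sigma)=\cO(\P_{PGL_2,\Sigma})$ with the \emph{congruent subalgebra} of the classical reduced stated skein algebra (\cref{thm:O=S_even}), and then observes that the $\mathbb{I}_\A(L)$ are the classical specializations of the basis $\mathsf{B}_{\mathrm{cong}}(\Sigma)$ of that subalgebra constructed in \cite{IKar}. What the paper's route buys is that the index-$2$ descent from $\cO(\A^\times_{SL_2,\Sigma})$ to $\cO(\P_{PGL_2,\Sigma})$ and the matching of index sets are absorbed into a single, already-packaged skein statement; what your route buys is independence from the forthcoming reference \cite{IKar}, at the cost of redoing the GS15 bookkeeping for $PGL_2$.

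Two concrete points in your sketch would need repair before it goes through. First, the group governing the descent is not a torus rescaling pinnings: the embedding $p_\Sigma^\ast:\cO(\P_{PGL_2,\Sigma})\hookrightarrow\cO(\A^\times_{SL_2,\Sigma})$ has \emph{index $2$}, and the relevant symmetry is the finite group of $Z(SL_2)=\{\pm 1\}$-twists; the GS15 basis elements are eigenvectors whose character is the mod-$2$ intersection pattern of the lamination with the edges of a triangulation (this is exactly the ``congruence'' condition, cf.\ \cref{lem:function_even}). Second, the sublocus of GS15's $PGL_2$-laminations corresponding to $\A_\Sigma(\bZ^{\mathsf{T}})$ is \emph{not} characterized by the absence of spiralling or winding around boundary intervals; it is the parity condition $\sfa_\alpha(L)\in\bZ$ for all edges $\alpha$ (evenness of geometric intersection numbers), as in the paper's definition of integral $\A$-laminations. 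With the ``torus'' replaced by this finite twisting group and ``weight zero'' replaced by ``trivial mod-$2$ character, i.e.\ $\sfa_\tri(L)\in\bZ^{e(\tri)}$'', your invariants argument does give a correct alternative proof.
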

In this paper, we describe the functions $\mathbb{I}_\A(L)$  by assembling the trace functions along loops and certain matrix coefficients of Wilson lines along arcs. 
We give a proof of this theorem based on the description of $\cO(\X_{\Sigma})$ as the classical limit of the \emph{congruent subalgebra} of the reduced stated skein algebra \cite{IKar}. 
A proof similar to that of \cite[Theorem 10.14]{GS15} will be also possible, with the restriction to the representations of $PGL_2$. 

We then turn our attention to the compatibility of the duality maps \eqref{eq:duality_X} and \eqref{eq:duality_A} under the ensemble map \eqref{eq:ensemble_map}. While such a compatibility has been already formulated in \cite[Conjecture 4.1.3]{FG09}, the importance to extend the ensemble map on the frozen variables seems to be only recognized after then. Indeed, our ensemble map \eqref{eq:ensemble_map} is an extended version according to the choice made in \cite{GS19}. 
Our compatibility statement is the following, which is the main theorem of this paper:

\begin{introthm}[Ensemble compatibility of duality maps: \cref{prop:duality_compatible}]
For any unpunctured marked surface $\Sigma$, the following diagram commutes:
\begin{equation}\label{introeq:duality_compatible}
    \begin{tikzcd}
    \A_{\Sigma}(\bZ^{\mathsf{T}}) \ar[d,"\check{p}_\Sigma^{\mathsf{T}}"'] \ar[rr,"\mathbb{I}_\A"] && \cO(\X_{\Sigma}) \ar[d,"p_\Sigma^\ast"] \\
    \X_{\Sigma}(\bZ^{\mathsf{T}}) \ar[rr,"\mathbb{I}_\X"'] && \cO(\A_{\Sigma}),
    \end{tikzcd}
\end{equation}
where we use the Langlands dual ensemble map $ \check{p}_\Sigma^{\mathsf{T}}: \cL^a(\Sigma,\bZ) \to \cL^p(\Sigma,\bZ)$ \eqref{eq:dual_ensemble} on the tropical side, and $\mathbb{I}_\X$ denotes the bracelets basis (\cref{def:skein_lift_X}). 
\end{introthm}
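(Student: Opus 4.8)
The plan is to check the commutativity of \eqref{introeq:duality_compatible} by evaluating the two composites on each integral $\A$-lamination $L\in\cL^a(\Sigma,\bZ)\supset\A_\Sigma(\bZ^{\mathsf{T}})$ and comparing the resulting elements of $\cO(\A_\Sigma)$. Since $p_\Sigma^\ast$ is an algebra homomorphism, since the Langlands dual ensemble map $\check{p}_\Sigma^{\mathsf{T}}$ acts on $L$ one connected component at a time, and since the skein-lift descriptions of $\mathbb{I}_\A$ and $\mathbb{I}_\X$ turn a disjoint union of mutually non-intersecting components into a product -- with the bracelet (Chebyshev) normalization recording parallel copies in both cases -- it suffices to treat two types of connected components: (i) an essential non-peripheral closed loop $\gamma$ with multiplicity $k$, and (ii) an ideal arc $\alpha$ with multiplicity $k$, where the boundary-parallel arcs carrying the frozen data are included. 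Throughout I fix one auxiliary ideal triangulation $\tri$ and use the coordinate descriptions of $p_\Sigma$ (\cref{prop:ensemble}) and of its tropicalization in terms of the $\bB$-shift (\cref{def:shift_ideal}, \cref{thm:A to X}); the triangulation-independence of $\mathbb{I}_\A$ and $\mathbb{I}_\X$ makes this choice harmless.

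For the loop case, $\mathbb{I}_\A(\gamma)$ is by our description assembled from the trace function $\mathrm{tr}_\gamma$ of the $PGL_2$-local system underlying a point of $\X_\Sigma$, while $\mathbb{I}_\X(\gamma)$ is the bracelets element, which for a primitive loop is the trace function of the underlying $PGL_2$-structure of a point of $\A_\Sigma$. The ensemble map $p_\Sigma$ only remembers this underlying structure (equivalently, the marked hyperbolic structure, by \cref{cor:Teich_X-variety} and its decorated analogue), so $p_\Sigma^\ast(\mathrm{tr}_\gamma)=\mathrm{tr}_\gamma$; as $\check{p}_\Sigma^{\mathsf{T}}$ fixes closed-loop components, this settles $k=1$, and arbitrary $k$ follows since the Chebyshev recursion defining $\mathbb{I}_\A(k\gamma)$ and $\mathbb{I}_\X(k\gamma)$ is intertwined by the algebra map $p_\Sigma^\ast$.

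The arc case is the heart of the matter. On the one hand, $\mathbb{I}_\A(\alpha)\in\cO(\X_\Sigma)$ is a prescribed matrix coefficient of the Wilson line $W_\alpha$ with respect to the pinnings at the endpoints of $\alpha$; using the transition formulae between Wilson lines and cross-ratio coordinates (\cref{thm:LR-formula}, \cref{prop:Wilson_lambda}) and then substituting the coordinate expression of $p_\Sigma$, one obtains an explicit Laurent polynomial in the $\lambda$-lengths $A_\beta$. On the other hand, $\check{p}_\Sigma^{\mathsf{T}}$ sends the $\A$-lamination $\alpha$ to the $\P$-lamination represented by the $\bB$-shifted arc $\alpha_\bB$ -- this is exactly what the identity $q_{\alpha\beta}=-\sfa_\beta(\alpha_\bB)$ of \cref{thm:A to X} encodes, together with the topological description of $\check{p}_\Sigma^{\mathsf{T}}$ -- and $\mathbb{I}_\X(\alpha_\bB)$ is the corresponding Musiker--Schiffler--Williams arc element \cite{MSW}, again a Laurent polynomial in the $A_\beta$. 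I would prove that these two Laurent polynomials coincide by lifting the identity to the skein level: inside the congruent subalgebra of $\Skeinr{\Sigma}$ whose classical limit is $\cO(\X_\Sigma)$ (following \cite{IKar}), the matrix coefficient of $W_\alpha$ is represented by a stated arc along $\alpha$, the homomorphism realizing $p_\Sigma^\ast$ carries it to the arc class in the skein model computing $\cO(\A_\Sigma)$, and the sole discrepancy between a pinning-framed boundary arc and the arc producing the \cite{MSW} element is a boundary-framing shift, which is precisely the $\bB$-shift; passing to classical limits then yields the equality. The boundary-parallel components, which produce the frozen variables, are handled identically and are in fact forced by the Goncharov--Shen extension \cite{GS19} of the ensemble map to the frozen coordinates.

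I expect the last matching in the arc case to be the main obstacle: identifying, with correct normalizations and signs, the pulled-back Wilson-line matrix coefficient with the \cite{MSW} element of the $\bB$-shifted arc. There are two routes -- a direct comparison of the explicit Laurent polynomials coming from \cref{thm:LR-formula} and from the \cite{MSW} expansion (paying attention to the half-integer boundary weights and to the $PGL_2$- versus $SL_2$-distinction), or the skein-theoretic route above, which I believe is cleaner because it replaces a combinatorial identity by the geometric assertion that the $\bB$-shift is the framing correction relating the two skein presentations. In either route one must also verify compatibility with the multiplicative and bracelet structures invoked in the reduction step, so that the component-wise check genuinely assembles into the commutativity of the full square \eqref{introeq:duality_compatible}.
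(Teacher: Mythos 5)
Your overall strategy — reduce to a single weighted component by multiplicativity, dispose of loops via invariance of trace functions under $p_\Sigma^\ast$, and locate the crux in matching the pulled-back Wilson-line entry with the cluster variable of a shifted arc — is exactly the paper's argument, so the proposal is essentially correct. The main difference is that what you flag as "the main obstacle" is, in the paper's logical order, already a proved proposition: the identity $|\Delta_{22}(g_{[\alpha_\bB]})|=A_\alpha$ is \cref{prop:Wilson_lambda}~(2), established earlier from the matrix factorization of \cref{thm:LR-formula} together with \cref{thm:A to X}. Consequently the paper's proof of the theorem is just a three-case check (non-peripheral loop; non-peripheral arc, needing $\Delta_{22}(g_{[\gamma]})^k=(A_{\gamma^\bM})^k$ via $\gamma=(\gamma^\bM)_\bB$; peripheral arc, needing $\Delta_{22}(g_{[\gamma]})^k=A_\alpha^k$ via $\gamma=\alpha_\bB$), each discharged by citing that proposition. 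Your alternative skein-theoretic route to the crux is plausible and close in spirit to how the paper proves \cref{thm:X_basis} via \cref{thm:O=S_even}, but it is not needed; the "direct comparison" route you also mention is the one the paper has pre-packaged. One small bookkeeping correction: the shift does not live in $\check{p}_\Sigma^{\mathsf{T}}$, which sends a non-peripheral arc to the \emph{same} arc (only converting peripheral components into pinnings); the negative $\bM$-shift is built into the definition of $\mathbb{I}_\X$ (\cref{def:skein_lift_X}). The net composite is the one you compute, so this does not affect the conclusion, but stating it your way would clash with \cref{eq:dual_ensemble}.
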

Here it is remarkable that the non-trivial Langlands duality comes into play in order to get the commutative diagram \eqref{introeq:duality_compatible}, even if the exchange matrix is skew-symmetric. Actually, it concerns with the extension of the ensemble map on the frozen coordinates, and the Langlands dual comes from the algebraic consistency of coordinate expressions of $\mathbb{I}_\A$ and $\mathbb{I}_\X$. See \cref{rem:duality_constraint}.
In the end of the paper ,we also investigate the amalgamation of bracelets bases $\mathbb{I}_\X$. See \cref{thm:amal_bracelet} and \cref{rem:amal_weak}. 

\bigskip

\subsection*{Organization of the paper}
In this section below, we summarize our notation on marked surfaces. 

In \cref{sec:Teich}, we investigate the \Teich\ space with pinnings $\cT^p(\Sigma)$. This section is partially intended to be an introduction to the cluster variety for hyperbolic geometers. Basic definitions on cluster varieties in the surface case are summarized in \cref{app:cluster}. Conversely, those who are familiar with cluster variety may safely skip this section by quickly picking up the algebraic results, such as \cref{thm:A to X,prop:Wilson_lambda}. 

We investigate the lamination space with pinnings $\cL^p(\Sigma,\bQ)$ in \cref{sec:lamination} as a tropical counterpart of the previous section, while most constructions are logically independent.

The contents in \cref{sec:duality} are of cluster algebraic nature. Here we choose to discuss inside the algebra $C^\infty(\cT^p(\Sigma))$ containing $\cO(\X_\Sigma)$ to avoid the problem on the square roots of cluster coordinates. 

\subsection*{Acknowledgements}
The author is grateful to Shunsuke Kano for the insightful discussion on the definition of the lamination space $\cL^p(\Sigma,\bQ)$ and the gluing of $\P$-laminations in several stages of this work. The author also thanks Wataru Yuasa and Hiroaki Karuo for the valuable discussions on the stated skein algebras. 
The author is supported by JSPS KAKENHI Grant Number~JP20K22304.


\subsection*{Marked surfaces}
A marked surface $(\Sigma,\bM)$ is a compact oriented surface $\Sigma$ together with a fixed non-empty finite set $\bM \subset \Sigma$ of \emph{marked points}. 
When the choice of $\bM$ is clear from the context, we simply denote a marked surface by $\Sigma$. 
A marked point is called a \emph{puncture} if it lies in the interior of $\Sigma$, and a \emph{special point} otherwise. 
Let $\bP=\bP(\Sigma)$ (resp. $\bM_\partial=\bM_\partial(\Sigma)$) denote the set of punctures (resp. special points), so that $\bM=\bP \sqcup \bM_\partial$. We say that $\Sigma$ is \emph{unpunctured} if $\bM_\circ=\emptyset$.
Let $\Sigma^*:=\Sigma \setminus \bM$. 
We always assume the following conditions:
\begin{enumerate}
    \item[(S1)] Each boundary component (if exists) has at least one marked point.
    \item[(S2)] $-2\chi(\Sigma^*)+|\bM_\partial| >0$.
    \item[(S3)] $(\Sigma,\bM)$ is not a once-punctured disk with a single special point on the boundary.
\end{enumerate}
We call a connected component of the punctured boundary $\partial^\ast \Sigma:=\partial\Sigma\setminus \bM_\partial$ a \emph{boundary interval}. The set of boundary intervals is denote by $\bB=\bB(\Sigma)$. Note that $|\bB|=|\bM_\partial|$. 
By convention, we endow each boundary interval $\alpha \in \bB$ with the orientation induced from $\partial\Sigma$. Let $m^+_\alpha$ (resp. $m^-_\alpha$) denote its initial (resp. terminal) marked point. 



An \emph{ideal arc} in $(\Sigma,\bM)$ is the isotopy class of an immersed arc in $\Sigma$ with endpoints in $\bM$ having no self-intersections except for its endpoints, and not contractible in $\Sigma^\ast$.
An \emph{ideal triangulation} is a triangulation $\tri$ of $\Sigma$ whose set of $0$-cells (vertices) coincides with $\bM$, $1$-cells (edges) being ideal arcs. 
In this paper, we always consider an ideal triangulation without \emph{self-folded triangles} where two of its sides are identified. 
The conditions (S1)--(S3) ensure the existence of such an ideal triangulation. See, for instance, \cite[Lemma 2.13]{FST}. 

For an ideal triangulation $\tri$, denote the set of edges (resp. interior edges, triangles) by $e(\tri)$ (resp. $e_{\interior}(\tri)$, $t(\tri)$). Since the boundary intervals belong to any ideal triangulation, we always have $e(\tri)=e_{\interior}(\tri) \sqcup \bB$. By a computation on the Euler characteristics, we get
\begin{align*}
    &|e(\tri)|=-3\chi(\Sigma^*)+2|\bM_\partial|, \quad |e_{\interior}(\tri)|=-3\chi(\Sigma^*)+|\bM_\partial|, \\
    &|t(\tri)|=-2\chi(\Sigma^*)+|\bM_\partial|.
\end{align*}
Since the main contribution of this paper is on the structures associated with special points/boundary intervals, we do not discuss much details on those around punctures, such as tagged arcs and tagged triangulations. The interested readers are referred to \cite{FST} and \cite[Section 12]{FG06}. 




\section{Teichm\"uller spaces with pinnings}\label{sec:Teich}
In this section, we introduce the \emph{\Teich\ space with pinnings $\pT(\Sigma)$}, which will be identified with the set of positive real points of the moduli space $\P_{PGL_2,\Sigma}$. For the basic terminologies in hyperbolic geometry, we refer the reader to \cite{Penner} and the references therein. See also \cite{FST}.

\subsection{The \Teich\ space $\pT(\Sigma)$ and the cross ratio coordinates}
Let $\bH^2=\{z \in \bC \mid \Im z >0\}$ denote the upper-half plane model of the hyperbolic plane, equipped with the metric $dzd\bar{z}/(\Im z)^2$. The isometry group of $\bH^2$ is isomorphic to the Lie group $PSL_2(\bR)$, which acts on $\bH^2$ by the M\"obius transformations. Another model of the hyperbolic plane is the Poincar\'e disk model $\mathbb{D}^2=\{w \in \bC \mid |w|<1\}$ equipped with the metric $4dwd\bar{w}/(1-|w|^2)^2$. 
We tacitly identify these two models via the Cayley transformation
\begin{align*}
    \bH^2 \xrightarrow{\sim} \mathbb{D}^2, \quad z \mapsto \frac{z-\sqrt{-1}}{z+\sqrt{-1}}.
\end{align*}
Here are basic notions in hyperbolic geometry:
\begin{itemize}
    \item Geodesics in $\mathbb{D}^2$ are euclidean circles/lines perpendicular to the boundary of $\mathbb{D}^2$. The stabilizer of a geodesic is conjugate to $\left\{\begin{bmatrix}\lambda & 0 \\ 0 & \lambda^{-1}\end{bmatrix}\ \middle|\ \lambda\in \bR^\ast \right\}$.
    \item Horocycles in $\mathbb{D}^2$ are euclidean circles tangent to the boundary of $\mathbb{D}^2$. The touching point is called its center. The stabilizer of a horocycle is conjugate to $\left\{\begin{bmatrix}1 & t \\ 0 & 1\end{bmatrix}\ \middle|\ t \in \bR \right\}$.
\end{itemize}

A \emph{decoration} of a geodesic $g$ in $\bH^2$ is a pair $(h_1,h_2)$ of horocycles centered at the two endpoints of $g$. Given such horocycles $(h_1,h_2)$, the geodesic $g$ is uniquely determines as it connects their centers. 

\begin{dfn}[lambda-length]
The \emph{lambda-length} \cite{Penner} of a decorated geodesic $(g;h_1,h_2)$ (or the pair $(h_1,h_2)$) is defined to be $\lambda(h_1,h_2):=\exp (\delta/2) \in \bR_{>0}$, where $\pm\delta$ is the signed hyperbolic length of the segment of $g$ between the horocycles $h_1,h_2$, and the sign is $+$ if and only if the horocycles are disjoint. 
\end{dfn}

\begin{lem}
Given an oriented geodesic $g$ in $\bH^2$, there are  bijections between the following four notions:
\begin{enumerate}
    \item A decoration $(h,h')$ of $g$ with lambda-length $1$.
    \item A horocycle $h$ centered at the initial endpoint of $g$.
    \item A point $x$ on $g$.
    \item An ideal triangle having $g$ as one of its sides, lying on the right of $g$.
\end{enumerate}
\end{lem}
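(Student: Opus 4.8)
The plan is to fix the oriented geodesic $g$ and, after applying an isometry of $\bH^2$ if necessary, normalize it to a convenient standard position; then construct explicit bijections in a cycle, say $(2)\leftrightarrow(3)\leftrightarrow(1)$ and $(2)\leftrightarrow(4)$, each of which is elementary once the normalization is in place. Concretely, I would send the initial endpoint of $g$ to $0\in\partial\bH^2$ and the terminal endpoint to $\infty$, so that $g$ becomes the imaginary axis $\{iy \mid y>0\}$ oriented upward, and the stabilizer of the oriented geodesic inside $PSL_2(\bR)$ is exactly the diagonal subgroup $\left\{\begin{bmatrix}\lambda & 0 \\ 0 & \lambda^{-1}\end{bmatrix}\right\}$ acting by $z\mapsto \lambda^2 z$. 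Horocycles centered at $0$ are euclidean circles internally tangent to the real line at $0$; each is determined by its euclidean diameter $d>0$, giving a bijection of set (2) with $\bR_{>0}$. A point $x$ on $g$ is a point $iy$, $y>0$, giving a bijection of set (3) with $\bR_{>0}$. The bijection $(2)\leftrightarrow(3)$ is then pinned down by the single geometric requirement that the horocycle $h$ centered at $0$ pass through the point $x\in g$; one computes that the horocycle through $iy$ has euclidean diameter $y$, so this is the identity $\bR_{>0}\to\bR_{>0}$ under the above parametrizations. (I should double-check the normalization convention so that moving $x$ toward the initial endpoint shrinks $h$, consistent with lambda-length growing; the sign convention for $\delta$ in the definition of lambda-length dictates which direction is which.)

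Next, for $(1)\leftrightarrow(2)$: a decoration $(h,h')$ of $g$ is a pair of horocycles at the two endpoints, and among all horocycles $h'$ centered at the terminal endpoint $\infty$ there is a unique one making $\lambda(h,h')=1$ once $h$ is fixed, because the lambda-length $\lambda(h,h')=\exp(\delta/2)$ is a strictly monotone (in fact exponential) function of the euclidean "height" of the horizontal horocycle $h'=\{\Im z = t\}$, ranging over all of $\bR_{>0}$ as $t$ ranges over $\bR_{>0}$. Hence projecting $(h,h')\mapsto h$ is a bijection from set (1) onto set (2). Composing, sets (1), (2), (3) are in canonical bijection.

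Finally, for $(2)\leftrightarrow(4)$: an ideal triangle having $g$ as a side and lying to the right of $g$ (with respect to the chosen orientation of $g$) is determined by its third ideal vertex, which must be a boundary point lying on the appropriate side of $g$; in the normalized picture with $g$ the upward imaginary axis, "right of $g$" is the half $\{\Re z>0\}$ (or $\{\Re z<0\}$, depending on the orientation convention — I will fix this carefully), so the third vertex ranges over $\bR_{>0}$, matching set (3) hence set (2). To make this a clean bijection with \emph{horocycles} rather than with a choice of point, I would instead use the standard fact (Penner) that an ideal triangle with a distinguished side $g$ canonically determines a horocycle at each of its vertices — the one for which all three lambda-lengths of the triangle equal $1$ — and restrict attention to the vertex at the initial endpoint of $g$; conversely a horocycle at the initial endpoint of $g$ together with the right-side constraint reconstructs the triangle. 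Checking that these two assignments are mutually inverse is a short computation with the $\lambda$-length-$1$ normalization.

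The main obstacle, I expect, is purely bookkeeping: getting all the orientation and sign conventions mutually consistent — which endpoint is "initial," which side is "right," and the sign in $\delta$ — so that the four constructions genuinely compose to the identity rather than to an involution or a conjugate normalization. Once a single explicit normalization of $g$ is chosen and all four objects are written in coordinates on it, every verification reduces to a one-line computation, and equivariance under the geodesic's stabilizer is automatic since that subgroup acts by $z\mapsto\lambda^2 z$, scaling horocycle diameters, heights along $g$, and third-vertex positions all by the same factor $\lambda^2$.
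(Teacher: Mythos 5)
Your proposal is correct, and for the equivalence of (1), (2), (3) it is exactly the paper's argument (which the paper simply declares obvious: the horocycle at the initial endpoint meets $g$ in a point, and for fixed $h$ there is a unique $h'$ at the terminal endpoint with $\lambda(h,h')=1$). Where you differ is the link to (4). The paper's construction is synthetic: from a point $x\in g$ it takes the geodesic $g^\perp$ through $x$ perpendicular to $g$, oriented so that $(T_xg^\perp,T_xg)$ is a positive frame, and uses the terminal endpoint $g_R$ of $g^\perp$ as the third vertex of the triangle; conversely $x$ is recovered as the foot of the perpendicular from $g_R$. You instead offer two routes: matching the $\bR_{>0}$-parametrizations after normalizing $g$ to the imaginary axis (which is legitimate precisely because of the equivariance under the stabilizer $z\mapsto\lambda^2 z$ that you point out at the end --- without that remark the parametrization-matching would not be canonical), or Penner's pairwise-tangent horocycle triple of an ideal triangle. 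All three constructions give the \emph{same} bijection: with $g$ the upward imaginary axis, the triangle $(0,\infty,t)$ corresponds under each to the point $it$ and to the horocycle of euclidean diameter $t$ at $0$ (for the tangent-triple route, the relations $d_0d_1=|p_0-p_1|^2$ force $d_0=t$ for the triangle $(0,\infty,t)$). The only thing to be aware of is that the paper's perpendicular-foot description is the one it reuses later (e.g.\ in the proof of the gluing formula, where the boundary cross ratio is read off as a distance to the foot of a perpendicular), so if you adopt the tangent-horocycle definition you should record, as above, that it agrees with the perpendicular one. Your hedges about orientation conventions are resolved exactly as you suspect: facing the terminal endpoint, ``right'' is the positive real side, and the signed-length convention makes $h$ shrink as $x$ approaches the initial endpoint.
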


\begin{proof}
The equivalence of the former three notions is obvious: the intersection of the horocycle centered at the initial endpoint and the geodesic $g$ determines a point. Given a point $x \in g$, let $g^\perp$ denote the unique geodesic through $x$ and perpendicular to $g$. Orient $g^\perp$ so that the frame $(T_xg^\perp, T_x g)$ is positive and take the ideal triangle spanned by the terminal endpoint $g_R$ of $g^\perp$ and the two endpoints of $g$. See \cref{fig:pinnings}. Conversely, the point $x$ is uniquely determined as the foot of $g_R$. 
\end{proof}

\begin{figure}
\begin{tikzpicture}[scale=1.2]
\draw(0,0) circle(2cm) coordinate(O);
\draw[->-={0.7}{}](-2,0) node[left]{$g_+$} -- (2,0) node[right]{$g_-$};
\draw(1,0) node[above]{$g$};
\draw[red](-1,0) circle(1cm);
\draw[red](-1,1) node[above]{$h$};
\draw(1,0) circle(1cm);
\draw(1,1) node[above]{$h'$};
\draw(0,0) node[above left=0.3em]{$x$};
\pinn{0,0}{-45}{0.13}{0.035cm};
\draw[myblue,dashed](0,0) --node[midway,above right]{$g^\perp$} (0,-2);
\draw[myblue] ([xshift=-4pt] O) -- ([xshift=-4pt,yshift=-4pt] O) -- ([yshift=-4pt] O);
\draw[myblue](0,-2) node[below]{$g_R$};
\draw[myblue] (-2,0)arc[radius=2cm,start angle=90, end angle=0]; 
\draw[myblue] (2,0)arc[radius=2cm,start angle=90, end angle=180];
\end{tikzpicture}
    \caption{The correspondence between the four notions of pinnings.}
    \label{fig:pinnings}
\end{figure}
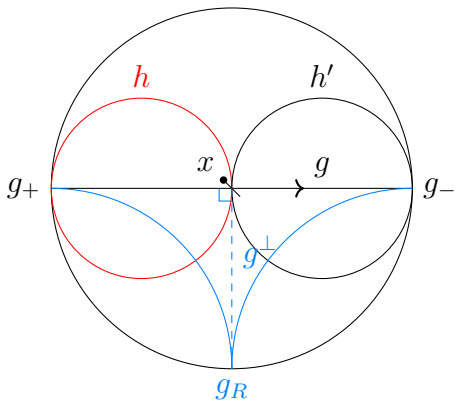

\begin{dfn}
We call one of these equivalent notions a \emph{pinning} over the oriented geodesic $g$. When we speak about a particular one, the notion (1) or (2) is a called a \emph{horocycle pinning}; (3) is called a \emph{point pinning}; (4) is called a \emph{triangle pinning}.
\end{dfn}

\begin{rem}
The equivalence (1) $\Longleftrightarrow$ (2) is an analogue of \cite[Lemma-Definition 3.7]{GS19}. The equivalence (1) $\Longleftrightarrow$ (4) resembles the discussion in \cite[Section 7.1]{GS19}.
\end{rem}

\begin{figure}[ht]
    \centering
\begin{tikzpicture}[scale=0.8]    \draw  (-4.5,0) ellipse (3 and 2);
    \draw (-5.5,-0.5) .. controls (-5.5,-1.35) and (-3.5,-1.35) .. (-3.5,-0.5);
    \draw (-5.4,-0.8) .. controls (-5.4,-0.2) and (-3.6,-0.2) .. (-3.6,-0.8);
    \draw  (-5.5,0.65) ellipse (0.5 and 0.5);
    \node [fill, circle, inner sep=1.3pt] at (-3.5,0.65) {};
    \node [fill, circle, inner sep=1.3pt] at (-5.05,0.85) {};
    \node [fill, circle, inner sep=1.3pt] at (-5.95,0.85) {};
    \node [fill, circle, inner sep=1.3pt] at (-5.5,0.15) {};
    \draw[red] (-5.5,0.15) .. controls (-5.2,-0.35) and (-3.5,0) .. (-3.5,0.65);
    \node [red] at (-4.4,0.25) {$\alpha$};
    \node at (-4.5,-2.5) {$\Sigma$};  
		\draw[->] (-2,2) --node[midway,above]{$f_1$}++ (2,1); 
		\draw[->] (-2,-2) --node[midway,below]{$f_2$}++ (2,-1);
    
\begin{scope}[xshift=-7cm,yshift=3.5cm]    
    \draw  (10.5,0) ellipse (3 and 2);
    \draw [white, ultra thick](8.8,1.65) .. controls (9.1,1.8) and (9.5,1.9) .. (9.8,1.95);
    \draw [white, ultra thick](11.45,1.95) .. controls (11.5,1.9) and (11.8,1.8) .. (11.9,1.7);
    \draw (9.5,-0.5) .. controls (9.5,-1.35) and (11.5,-1.35) .. (11.5,-0.5);
    \draw (9.6,-0.8) .. controls (9.6,-0.2) and (11.4,-0.2) .. (11.4,-0.8);
    \draw (8.35,0.7) .. controls (8.85,0.95) and (8.85,2.3) ..  (8.85,3.15) .. controls (8.85,2.3) and (9.3,1.8) ..node[pos=0.4,inner sep=0](A){} (9.3,2.65) .. controls (9.3,1.8) and (9.75,2.3) ..node[pos=0.6,inner sep=0](B){} (9.75,3.15) .. controls (9.75,2.3) and (8.85,2.3) ..node[pos=0.7,inner sep=0](C){} (8.85,3.15);
		\pinn{A}{45}{0.1}{0.03cm}; 
		\pinn{B}{135}{0.1}{0.03cm}; 
		\pinn{C}{-135}{0.1}{0.03cm};     \draw (10.25,0.7) .. controls (9.75,0.95) and (9.75,2.3) .. (9.75,3.15);
    \draw (10.7,0.7) .. controls (11.2,0.95) and (11.65,1.75) .. (11.65,2.5);
    \draw (12.55,0.7) .. controls (12.05,0.95) and (11.65,1.75) .. (11.65,2.5);
    \draw[dashed] (11.65,1.34) ellipse (0.3 and 0.1);

    \node at (10.5,-2.5) {$X_1$};
    \node [red] at (12,.3) {$f_1(\alpha)$};
    \draw [red](9.3,2.65) .. controls (9.3,0.5) and (10,0.25) .. (10.5,0.25) .. controls (11,0.25) and (11.65,0.5) .. (11.65,1.35) .. controls (11.65,1.7) and (11.65,1.9) .. (11.65,2.5);
\end{scope}
    
\begin{scope}[xshift=-7cm,yshift=-3.5cm]    
    \draw  (10.5,0) ellipse (3 and 2);
    \draw [white, ultra thick](8.8,1.65) .. controls (9.1,1.8) and (9.5,1.9) .. (9.8,1.95);
    \draw [white, ultra thick](11.25,1.95) .. controls (11.5,1.9) and (11.8,1.8) .. (12.05,1.7);
    \draw (9.5,-0.5) .. controls (9.5,-1.35) and (11.5,-1.35) .. (11.5,-0.5);
    \draw (9.6,-0.8) .. controls (9.6,-0.2) and (11.4,-0.2) .. (11.4,-0.8);
    \draw (8.35,0.7) .. controls (8.85,0.95) and (8.85,2.3) ..  (8.85,3.15) .. controls (8.85,2.3) and (9.3,1.8) ..node[pos=0.4,inner sep=0](A){} (9.3,2.65) .. controls (9.3,1.8) and (9.75,2.3) ..node[pos=0.6,inner sep=0](B){} (9.75,3.15) .. controls (9.75,2.3) and (8.85,2.3) ..node[pos=0.7,inner sep=0](C){} (8.85,3.15);
		\pinn{A}{45}{0.1}{0.03cm}; 
		\pinn{B}{135}{0.1}{0.03cm}; 
		\pinn{C}{-135}{0.1}{0.03cm}; 
    \draw (10.25,0.7) .. controls (9.75,0.95) and (9.75,2.3) .. (9.75,3.15);
    \draw (10.7,0.7) .. controls (11.2,0.95) and (11.25,1.95) .. (11.25,2.5);
    \draw (12.55,0.7) .. controls (12.05,0.95) and (12.05,1.95) .. (12.05,2.5);
    \draw(11.65,2.5) ellipse (0.4 and 0.2);

    \node at (10.5,-2.5) {$X_2$};
    \node [red] at (12,.3) {$f_2(\alpha)$};
    \draw [red](9.3,2.65) .. controls (9.3,0.5) and (10,0.25) .. (10.5,0.25) .. controls (11,0.25) and (11.65,0.5) .. (11.65,1.35) .. controls (11.65,1.7) and (11.45,1.9) .. (11.23,2);
    \draw [red](12.05,2.05) .. controls (11.85,1.95) and (11.45,1.95) .. (11.25,2.15);\draw [red](12.05,2.25) .. controls (11.85,2.1) and (11.45,2.1) .. (11.25,2.3);
    \draw [red](12.05,2.4) .. controls (11.85,2.2) and (11.45,2.2) .. (11.25,2.4);
\end{scope}
\end{tikzpicture}
    \caption{Two marked hyperbolic structures with pinnings having different natures at a puncture.}
    \label{fig:cusp_flare}
\end{figure}
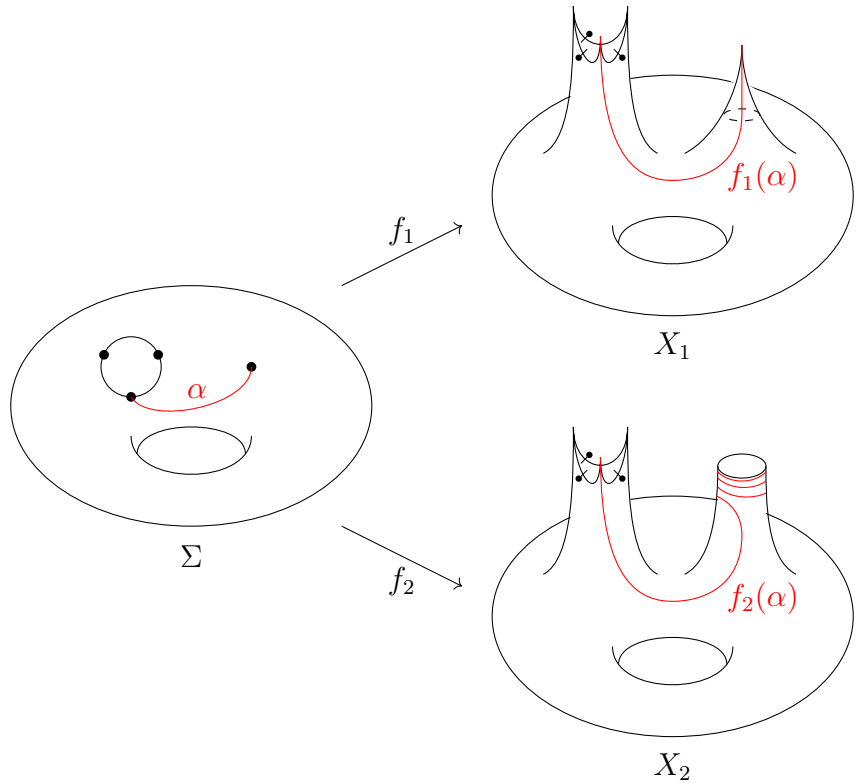

In this paper, a \emph{marked hyperbolic structure} on $\Sigma$ means a pair $(X,f)$, where 
\begin{itemize}
    \item $X$ is a complete hyperbolic surface with finite area and totally geodesic boundary. Let $X^\circ \subset X$ be the complement of the closed geodesic boundary. 
    \item $f: \Sigma^\ast \to X^\circ$ is an orientation-preserving homeomorphism which maps a representative of each ideal arc to a complete geodesic, where each end either enters into a cusp or spirals around a closed geodesic boundary.
\end{itemize}
The hyperbolic surface $X$ can have either cusps or closed geodesic boundary components arising from $m \in \bM_\circ$, and spikes arising from $m \in \bM_\partial$. Boundary intervals give rise to complete geodesics. 
See \cref{fig:cusp_flare}. 
A \emph{hyperbolic structure with pinnings} on $\Sigma$ consists of the following data:
\begin{itemize}
    \item A marked hyperbolic structure $(X,f)$ on $\Sigma$, 
    \item A tuple $p=(p_\alpha)_{\alpha \in \bB}$ of pinnings over the complete geodesics arising from the boundary intervals, oriented positively with respect to $\partial\Sigma$. 
\end{itemize}
Two such data $(X_1,f_1;p_1)$ and $(X_2,f_2;p_2)$ are said to be equivalent if there exists an isometry $h: X_1 \to X_2$ homotopic to $f_2 \circ f_1^{-1}$ relative to the boundary, which sends the pinnings $p_1$ to $p_2$. 

\begin{dfn}\label{dfn:T^p}
The \emph{\Teich\ space with pinnings} $\pT(\Sigma)$ (or the \emph{\Teich\ $\P$-space}) is the set of equivalence classes of hyperbolic structures with pinnings on $\Sigma$. 
\end{dfn}
Forgetting the data of pinnings, we get the \emph{enhanced \Teich\ space} $\mathcal{T}^x(\Sigma)$ (or the \emph{\Teich\ $\X$-space} \cite{FG07}). Let $\pi_\Sigma: \pT(\Sigma) \to \mathcal{T}^x(\Sigma)$ be the natural projection. 

Now we are going to define a coordinate system on $\pT(\Sigma)$ using the cross ratio parameters. 

\begin{dfn}[cross ratio]
For an ideal quadrilateral $Q \subset \bH^2$ with a fixed diagonal $\alpha$, define $r(Q;\alpha) >0$ to be the cross ratio of the four vertices $x_1,x_2,x_3,x_4 \in \partial \bH^2 = \bR \cup \{\infty\}$ of $Q$ in this counter-clockwise order, $x_1$ being one of the endpoints of $\alpha$. Explicitly, we have
\begin{align*}
    r(Q;\alpha) = -\frac{x_1-x_4}{x_3-x_4}\frac{x_3-x_2}{x_1-x_2}.
\end{align*}
Thanks to the cyclic symmetry and the $PSL_2(\bR)$-invariance, it depends only on the isometry class of the quadrilateral $Q$ and its diagonal $\alpha$. 
\end{dfn}

\paragraph{\textbf{Straightening the arcs.}}
Suppose a hyperbolic structure with pinnings $(h,p) \in \pT(\Sigma)$ and an ideal triangulation $\tri$ are given. Here $h=(X,f)$ is a marked hyperbolic structure. For each edge $\alpha \in e(\tri)$, let $\alpha^h:=f(\alpha) \subset X^\circ$ be the corresponding complete geodesic. Similarly, any polygon $P$ in $\tri$ is straightened to a geodesic polygon $P^h \subset X^\circ$. 

Given an ideal triangulation $\tri$ of $\Sigma$, we define a coordinate system $X_\tri=(X_\alpha^\tri)_{\alpha \in e(\tri)}: \pT(\Sigma) \to \bR^\tri_{>0}$ as follows. 
Let $(h,p)$ be a hyperbolic structure with pinnings. 

\begin{itemize}
    \item For an interior edge $\alpha \in e_{\interior}(\tri)$, let $Q$ be the unique quadrilateral of $\tri$ having $\alpha$ as its diagonal. Define 
    \begin{align*}
        X_\alpha^\tri(h,p):=r(\widetilde{Q}^h;\widetilde{\alpha}^h),
    \end{align*}
    where $(\widetilde{Q}^h,\widetilde{\alpha}^h)$ is a lift of $(Q^h,\alpha^h)$.
    \item For a boundary interval $\alpha \in \mathbb{B}$, let $T$ be the unique triangle having $\alpha$ as one of its sides, which necessarily lies on the left of $\alpha$. Choose their lifts $\widetilde{\alpha}^h \subset \widetilde{T}^h$. The datum $p_\alpha$, seen as a triangle pinning over the oriented geodesic $\widetilde{\alpha}^h$, determines a triangle $T(p_\alpha)$ on the right of $\widetilde{\alpha}^h$. Then define 
    \begin{align*}
        X_\alpha^\tri(h,p):=r(\widetilde{T}^h \cup T(p_\alpha);\widetilde{\alpha}^h).
    \end{align*}
\end{itemize}
We call the coordinate system $X_\tri$ the \emph{cross ratio coordinates} associated with $\tri$. The following is essentially due to a combination of Fock--Goncharov \cite{FG07} and Goncharov--Shen \cite{GS19}:

\begin{thm}
For any ideal triangulation $\tri$, the cross ratio coordinate $X_\tri: \pT(\Sigma) \xrightarrow{\sim} \bR^\tri_{>0}$ gives a bijection. For the flip $f_{\kappa}: \tri \to \tri'$ along an interior edge $\kappa \in e_{\interior}(\tri)$, the coordinate transformation $X_{\tri'}\circ X_\tri^{-1}$ is given as shown in \cref{fig:flip}. 
\end{thm}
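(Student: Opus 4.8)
The plan is to reduce the statement to the corresponding fact for the enhanced \Teich\ space $\cT^x(\Sigma)$ — where both the bijectivity of the shear (cross-ratio) coordinates and the flip formula are classical, due to Fock--Goncharov — and to handle the extra coordinates attached to the boundary intervals separately: fibrewise over the projection $\pi_\Sigma$ for the bijectivity statement, and by a purely local cross-ratio computation for the flip.

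For bijectivity, I would first observe that for an interior edge $\alpha\in e_{\interior}(\tri)$ the value $X^\tri_\alpha(h,p)$ does not involve the pinning datum $p$, so it factors through $\pi_\Sigma\colon\pT(\Sigma)\to\cT^x(\Sigma)$, and the resulting tuple $(X^\tri_\alpha)_{\alpha\in e_{\interior}(\tri)}$ is exactly the system of cross-ratio coordinates on $\cT^x(\Sigma)$, which by \cite{FG07,FST} (the isomorphism $\cT^x(\Sigma)\cong\X^\uf_\Sigma(\pos)$ recalled in the introduction) is a bijection onto $\bR_{>0}^{e_{\interior}(\tri)}$. It then remains to understand $X_\tri$ on the fibres of $\pi_\Sigma$. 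Fixing $h$, each geodesic $\alpha^h$ ($\alpha\in\bB$) is determined, and by the correspondence of the lemma above (see \cref{fig:pinnings}) a pinning over the oriented geodesic $\alpha^h$ is the same as a point of $\alpha^h$, equivalently as an ideal triangle $T(p_\alpha)$ on its right. Putting $\widetilde{\alpha}^h$ in a standard position in $\bH^2$ (endpoints $0$ and $\infty$, with the auxiliary triangle $\widetilde{T}^h$ to the left and a fixed third vertex), a direct computation shows that $X^\tri_\alpha(h,p)=r(\widetilde{T}^h\cup T(p_\alpha);\widetilde{\alpha}^h)$ is a strictly monotone function of the third vertex of $T(p_\alpha)$ and takes every value in $\pos$ as that vertex runs over the right-hand boundary arc. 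Hence, for fixed $h$, the tuple $(X^\tri_\alpha)_{\alpha\in\bB}$ identifies the fibre $\pi_\Sigma^{-1}(h)\cong\prod_{\alpha\in\bB}\{\text{pinnings over }\alpha^h\}$ with $\pos^{\bB}$ (here one uses that $\cT^x(\Sigma)$ is a genuine moduli space, so that pinnings on a fixed $h$ are distinguished by their cross ratios). Combining the two steps yields the bijection $X_\tri\colon\pT(\Sigma)\xrightarrow{\sim}\bR_{>0}^{e(\tri)}$.

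For the flip formula, the key point is that a flip $f_{\kappa}\colon\tri\to\tri'$ at an interior edge $\kappa$ changes $\tri$ only inside the quadrilateral $Q_\kappa$ with diagonal $\kappa$, so that $X^{\tri'}_\beta=X^{\tri}_\beta$ for every $\beta$ other than $\kappa$ and the (at most four) sides of $Q_\kappa$; the whole verification is therefore local. Conceptually, attaching the pinning triangle $T(p_\alpha)$ along each boundary interval $\alpha\in\bB$ turns every coordinate $X^\tri_\alpha$ — interior or boundary — into the cross ratio of the ideal quadrilateral formed by the two triangles adjacent to $\alpha$, and $f_\kappa$ then becomes an ordinary flip within this enlarged triangulated configuration which leaves the attached triangles untouched. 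Consequently the transformation of each affected coordinate is given by the single Fock--Goncharov quadrilateral computation (carried out with one lift of $Q_\kappa$ and the adjacent triangles placed in $\bH^2$), giving $X^{\tri'}_\kappa=(X^{\tri}_\kappa)^{-1}$ together with the usual mutation factors $(1+(X^{\tri}_\kappa)^{\pm1})^{\pm1}$ for the four neighbours — exactly the cluster $\X$-mutation transformation, frozen variables included, displayed in \cref{fig:flip}.

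The routine parts are the explicit cross-ratio computations; I expect the real obstacle to be the matching of conventions rather than any conceptual difficulty: one must fix consistently the induced orientation of $\partial\Sigma$, the orientation of the geodesics, the cyclic order entering $r(Q;\alpha)$, and the signs in the flip formula, so that the boundary-interval case comes out precisely as in \cref{fig:flip}. In the punctured case one should also check that the interior coordinates literally coincide with the Fock--Goncharov shear coordinates on $\cT^x(\Sigma)$ — which is where the standing assumption that $\tri$ has no self-folded triangles is used — so that the bijectivity result of \cite{FG07,FST} can be invoked verbatim.
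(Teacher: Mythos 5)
Your proposal is correct and follows essentially the same route as the paper: reduce the interior coordinates to the Fock--Goncharov bijection for $\cT^x(\Sigma)$, then observe that for fixed $h$ the boundary cross ratio determines the pinning (the paper phrases your monotonicity/surjectivity step as the cross ratio being a complete invariant of a $PSL_2(\bR)$-orbit of four points), and deduce the flip formula from the $\cT^x(\Sigma)$ case. Your justification of the boundary-edge case of the flip formula via the attached pinning triangles is slightly more explicit than the paper's, but not a different argument.
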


\begin{proof}
It is known that $X_\tri^{\interior}:=(X_\alpha^\tri)_{\alpha \in e_{\interior}(\tri)}: \xT(\Sigma) \xrightarrow{\sim} \bR^{e_{\interior}(\tri)}_{>0}$ gives a bijection \cite{FG07}. Hence for a given $(h,p) \in \pT(\Sigma)$ with pinnings, the underlying enhanced hyperbolic structure $h \in \xT(\Sigma)$ is determined by the coordinates assigned to the interior edges. 

In order to see that the coordinates assigned to the boundary intervals determine the pinnings, just note that the cross ratio is a complete invariant of a $PSL_2(\bR)$-orbit of four distinct points. In particular, the triangle pinnings are uniquely determined by the underlying hyperbolic structure and the boundary coordinates. The formula for coordinate transformation follows from that for the space $\xT(\Sigma)$ (\cite[Figure 11]{FG07}).
\end{proof}

\begin{figure}[ht]
\[\hspace{1.4cm}
\begin{tikzpicture}[scale=0.8]
\path(0,0) node [fill, circle, inner sep=1.6pt] (x1){};
\path(135:4) node [fill, circle, inner sep=1.6pt] (x2){};
\path(0,4*1.4142) node [fill, circle, inner sep=1.6pt] (x3){};
\path(45:4) node [fill, circle, inner sep=1.6pt] (x4){};
\draw[blue](x1) to node[midway,left,black]{$X_\beta$} (x2) 
to node[midway,left,black]{$X_\alpha$} (x3) 
to node[midway,right,black]{$X_\delta$} (x4) 
to node[midway,right,black]{$X_\gamma$} (x1) 
to node[midway,left,black]{$X_\kappa$} (x3);

\draw[-implies, double distance=2pt](4,2*1.4142) to  (6,2*1.4142);

\begin{scope}[xshift=10cm]
\path(0,0) node [fill, circle, inner sep=1.6pt] (x1){};
\path(135:4) node [fill, circle, inner sep=1.6pt] (x2){};
\path(0,4*1.4142) node [fill, circle, inner sep=1.6pt] (x3){};
\path(45:4) node [fill, circle, inner sep=1.6pt] (x4){};
\draw[blue](x1) to node[midway,left,black]{\scalebox{0.8}{$X_\beta(1+X_\kappa^{-1})^{-1}$}} (x2) 
to node[midway,left,black]{\scalebox{0.8}{$X_\alpha(1+X_\kappa)$}} (x3) 
to node[midway,right,black]{\scalebox{0.8}{$X_\delta(1+X_\kappa^{-1})^{-1}$}} (x4) 
to node[midway,right,black]{\scalebox{0.8}{$X_\gamma(1+X_\kappa)$}} (x1);
\draw[blue] (x2) to node[midway,above,black]{$X_\kappa^{-1}$} (x4);
\end{scope}
\end{tikzpicture}
\]
\caption{The coordinate transformation for the flip along an edge $\kappa$. The formula is the same when some of the surrounding edges are boundary intervals, and still valid when some of the edges are identified as $\alpha=\gamma$ and/or $\beta=\delta$.}
\label{fig:flip}
\end{figure}
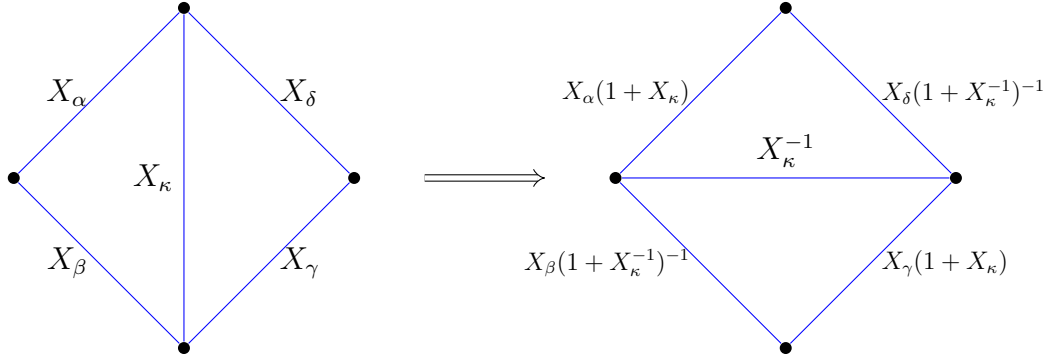
Since the coordinate transformations are real-analytic, we can endow $\pT(\Sigma)$ with a real-analytic structure so that each $X_\tri$ is a real-analytic diffeomorphism. 
Moreover, the formula coincides with the \emph{cluster Poisson transformation} \eqref{eq:X-transf}.
As a consequence, we get:

\begin{cor}\label{cor:Teich_X-variety}
The cross ratio coordinates $X_\tri: \pT(\Sigma) \xrightarrow{\sim} \bR^\tri_{>0}$ associated with ideal triangulations $\tri$ of $\Sigma$ combine to give a canonical $MC(\Sigma)$-equivariant diffeomorphism
\begin{align*}
    \pT(\Sigma) \xrightarrow{\sim} \X_\Sigma(\bR_{>0}).
\end{align*}
\end{cor}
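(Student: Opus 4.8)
The plan is to deduce the corollary from the preceding theorem by the standard cluster-variety gluing argument. Recall that the cluster Poisson variety $\X_\Sigma$ is, by definition (see \cref{app:cluster}), the scheme obtained by gluing the tori $\Spec \bZ[X_\alpha^{\pm 1} : \alpha \in e(\tri)]$ over all ideal triangulations $\tri$ of $\Sigma$ via the cluster Poisson transformations associated to flips; its positive real points $\X_\Sigma(\bR_{>0})$ are correspondingly obtained by gluing the positive tori $\bR^\tri_{>0}$ along the \emph{same} transition maps. So the content to verify is precisely that $\pT(\Sigma)$, equipped with the charts $X_\tri$, realizes exactly this gluing.

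First I would record that, by the theorem just proved, each $X_\tri \colon \pT(\Sigma) \xrightarrow{\sim} \bR^\tri_{>0}$ is a bijection and, for a flip $f_\kappa \colon \tri \to \tri'$ along an interior edge, the composite $X_{\tri'} \circ X_\tri^{-1}$ is given by the formula in \cref{fig:flip}, which (as noted in the text immediately after the theorem) coincides with the cluster Poisson transformation \eqref{eq:X-transf}. Next I would invoke the connectivity of the flip graph: any two ideal triangulations of $\Sigma$ (without self-folded triangles, which is our standing assumption) are related by a finite sequence of flips — this is classical, e.g.\ \cite{FST, Penner}. Consequently the charts $\{X_\tri\}$ form an atlas whose transition maps are exactly the cluster Poisson transformations, which is the defining data of $\X_\Sigma(\bR_{>0})$. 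The universal property of the gluing then yields a canonical bijection $\pT(\Sigma) \xrightarrow{\sim} \X_\Sigma(\bR_{>0})$ intertwining the chart maps; since the transitions are real-analytic and we have endowed $\pT(\Sigma)$ with the real-analytic structure making every $X_\tri$ a diffeomorphism (as remarked after \cref{fig:flip}), this bijection is a diffeomorphism.

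It remains to check $MC(\Sigma)$-equivariance. The mapping class group acts on $\pT(\Sigma)$ by pulling back marked hyperbolic structures and pinnings, and on $\X_\Sigma(\bR_{>0})$ through its action on the set of ideal triangulations (relabelling charts). For $\phi \in MC(\Sigma)$ and any ideal triangulation $\tri$, one has $X_{\phi(\tri)} \circ \phi = X_\tri$ as maps $\pT(\Sigma) \to \bR^\tri_{>0}$, because a cross ratio (resp.\ a triangle pinning) is defined intrinsically from the hyperbolic structure and is therefore carried by $\phi$ to the corresponding datum for $\phi(\tri)$; this is just naturality of the constructions defining $X_\tri$. Since the isomorphism $\pT(\Sigma) \xrightarrow{\sim} \X_\Sigma(\bR_{>0})$ is characterized by compatibility with all the charts $X_\tri$, these identities show it is $MC(\Sigma)$-equivariant.

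The only genuine subtlety — the ``main obstacle'' — is bookkeeping at the boundary intervals: one must confirm that the flip formula in \cref{fig:flip} is the correct cluster Poisson transformation \emph{including} the frozen coordinates $X_\alpha^\tri$ for $\alpha \in \bB$, i.e.\ that the extended exchange matrix (with rows for frozen directions) used to define $\X_\Sigma$ is the one coming from the Goncharov--Shen pinning normalization, and that our geometric definition of $X_\alpha^\tri$ via the triangle pinning $T(p_\alpha)$ matches it. Granting the theorem above (which already asserts that the flip formula is unchanged when surrounding edges are boundary intervals) together with the comparison to \eqref{eq:X-transf}, this is exactly the statement verified there, so no further work is needed; the rest is the formal gluing and naturality argument sketched above.
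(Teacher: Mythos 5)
Your argument is correct and matches the paper's (implicit) reasoning: the corollary is stated there as an immediate consequence of the preceding theorem together with the observation that the flip formula coincides with the cluster Poisson transformation \eqref{eq:X-transf}, and your write-up simply makes explicit the standard gluing, flip-connectivity, and naturality steps that the paper leaves to the reader. No gaps.
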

In particular, we have a $MC(\Sigma)$-invariant Poisson bracket $\{-,-\}$ on $C^\infty(\pT(\Sigma))$ such that
\begin{align*}
    \{X_\alpha^\tri,X_\beta^\tri\} = \ve_{\alpha\beta}^\tri X_\alpha^\tri X_\beta^\tri
\end{align*}
for any ideal triangulation $\tri$. Here $(\ve_{\alpha\beta}^\tri)_{\alpha,\beta \in e(\tri)}$ denotes the \emph{exchange matrix} (see \cref{app:cluster}). 

\begin{rem}
It is straightforward to extend the construction of coordinates for a \emph{tagged triangulation}. See, for instance, \cite[Section 9]{AB}. 
\end{rem}

\subsection{Gluing map}\label{subsec:Teich_amalgamation}
We are going to discuss a map between the \Teich\ spaces with pinnings, called the \emph{gluing map}. Let $\Sigma$ be a marked surface (possibly disconnected), and choose distinct boundary intervals $\alpha_L,\alpha_R \in \mathbb{B}$. Let $\Sigma'$ be the marked surface obtained from $\Sigma$ by gluing the edges $\alpha_L$ and $\alpha_R$ together. We define a map $q_{\Sigma,\Sigma'}:\pT(\Sigma) \to \pT(\Sigma')$ as follows. 

Let $(h,p)$ be a hyperbolic structure with pinnings on $\Sigma$. Then the data $p_{\alpha_L}$ and $p_{\alpha_R}$, seen as point pinnings, determine a point on each of the boundary geodesics $\alpha_L^h$ and $\alpha_R^h$. Gluing the hyperbolic surface $(\Sigma,h)$ along these edges so that these points match, we get a new hyperbolic surface $(\Sigma',h')$. 
Since the remaining pinnings naturally induces pinnings over $h'$, we get a pair $(h',p')=q_{\Sigma,\Sigma'}(h,p)$ on $\Sigma'$. 
The resulting map
\begin{align}\label{eq:gluing_Teich}
    q_{\Sigma,\Sigma'}:\pT(\Sigma) \to \pT(\Sigma')
\end{align}
is called the \emph{gluing map}.

\begin{ex}
Let us illustrate the construction in a simple example. Let $\Sigma_L$ (resp. $\Sigma_R$) be an $n_L$-gon (resp. $n_R$-gon), \emph{i.e.}, a disk with $n_L$  (resp. $n_R$) special points, and consider the marked surface $\Sigma:=\Sigma_L \sqcup \Sigma_R$. For $Z \in \{L,R\}$, choose a side $\alpha_Z$ of the polygon $\Sigma_Z$ and glue them together. The resulting surface $\Sigma'$ is an $(n_L+n_R-2)$-gon. 

Given a hyperbolic structure with pinnings $(h,p) \in \pT(\Sigma)$, each polygon $\Sigma_Z$ is realized as an ideal polygon $\widetilde{\Pi}^h_Z \subset \bH^2$. The geodesic lift $\widetilde{\alpha}_Z^h \subset \widetilde{\Pi}^h_Z$ of $\alpha_Z$ is equipped with a point pinning given by the data $p_{\alpha_Z}$. Then there exists a unique hyperbolic isometry $g \in PSL_2(\bR)$ which maps the geodesic $\widetilde{\alpha}_R^h$ to $\widetilde{\alpha}_L^h$ and matches the point pinnings. The resulting polygon $\widetilde{\Pi}^h_L \cup g(\widetilde{\Pi}^h_R)$ gives the hyperbolic structure $h'$ on $\Sigma'$, together with a pinning determined by $p \setminus \{p_{\alpha_L},p_{\alpha_R}\}$.
\end{ex}

\begin{figure}[ht]
\begin{tikzpicture}[scale=1]
\begin{scope}
\draw(0,0) circle(2cm);
\clip(0,0) circle(2cm);
\draw(-2,0) -- (2,0);
\draw[red](-1.5,0) circle(0.5cm);
\pinn{-1,0}{-135}{0.13}{0.03cm}
\draw[red,thick,->,>=latex](-1,0) -- (0,0);
\draw[red,dashed] (-0.5,0) -- (0,-1) node[fill=white,inner sep=2pt,scale=0.9]{$\log X_{\alpha_L}^\tri(h,p)$};
{\color{myblue}
\draw \angleBL{-1,0};
\draw[dashed] (-1,0)arc[radius=1cm,start angle=0, end angle=-75]; 
\hgline{180}{210}{2} 
\hgline{0}{210}{2}
}
\hgline{90}{0}{2}
\hgline{90}{180}{2}
\draw[dashed] (0,2) -- (0,0);
\draw \angleAL{0,0};
\draw(-0.8,1) node[scale=0.9]{$\widetilde{T}_1^h$};
\end{scope}


\begin{scope}[yshift=-4.5cm]
\draw(0,0) circle(2cm);
\clip(0,0) circle(2cm);
\draw(-2,0) -- (2,0);
\draw[red](1,0) circle(1cm);
\pinn{0,0}{-135}{0.13}{0.03cm}
\draw[red,thick,->,>=latex](-1,0) -- (0,0);
\draw[red,dashed] (-0.5,0) -- (0,-1.5) node[fill=white,inner sep=2pt,scale=0.9]{$\log X_{\alpha_R}^\tri(h,p)$};
\draw\angleBL{-1,0};
\draw[dashed] (-1,0)arc[radius=1cm,start angle=0, end angle=-75]; 
\hgline{180}{210}{2} 
\hgline{0}{210}{2}
{\color{myblue}
\hgline{90}{0}{2}
\hgline{90}{180}{2}
\draw[dashed] (0,2) -- (0,0);
\draw \angleAL{0,0};
}
\draw(0.8,-0.5) node[scale=0.9]{$\widetilde{T}_2^h$};
\end{scope}

\snake{2.5,-2.25}{4.5,-2.25} node[midway,above=0.2em]{Glue};

\begin{scope}[xshift=7.5cm,yshift=-2.25cm]
\draw(0,0) circle(2cm);
\clip(0,0) circle(2cm);
\draw(-2,0) -- (2,0);
\pinn{-1,0}{-135}{0.13}{0.03cm}
\draw[red,thick,->,>=latex](-1.5,0) -- (0,0);
\draw[dashed] (-1.5,0)arc[radius=0.5cm,start angle=0, end angle=-75]; 
\hgline{180}{195}{2} 
\hgline{0}{195}{2}

\hgline{90}{0}{2}
\hgline{90}{180}{2}
\draw[dashed] (0,2) -- (0,0);
\draw \angleAL{0,0};
\draw(-0.8,1) node[scale=0.9]{$\widetilde{Q}_{h'}$};
\end{scope}

\begin{scope}[xshift=7.5cm,yshift=-2.25cm]
\draw[red,dashed] (-0.8,0) -- (0,-2.5) node[fill=white,inner sep=2pt,scale=0.9]{$\log X_{\alpha_L}^\tri(h,p)+ \log X_{\alpha_R}^\tri(h,p)$};
\draw \angleBL{-1.5,0};
\end{scope}
\end{tikzpicture}
    \caption{Gluing of two hyperbolic triangles. Here triangle pinnings are shown in blue. The second triangle $\widetilde{T}_2^h$ is mapped by a hyperbolic isometry so that the pointed geodesic $\widetilde{\alpha}_2^h$ is matched with $\widetilde{\alpha}_1^h$ to form an ideal quadrilateral $\widetilde{Q}^{h'}$.}
    \label{fig:gluing_triangle}
\end{figure}
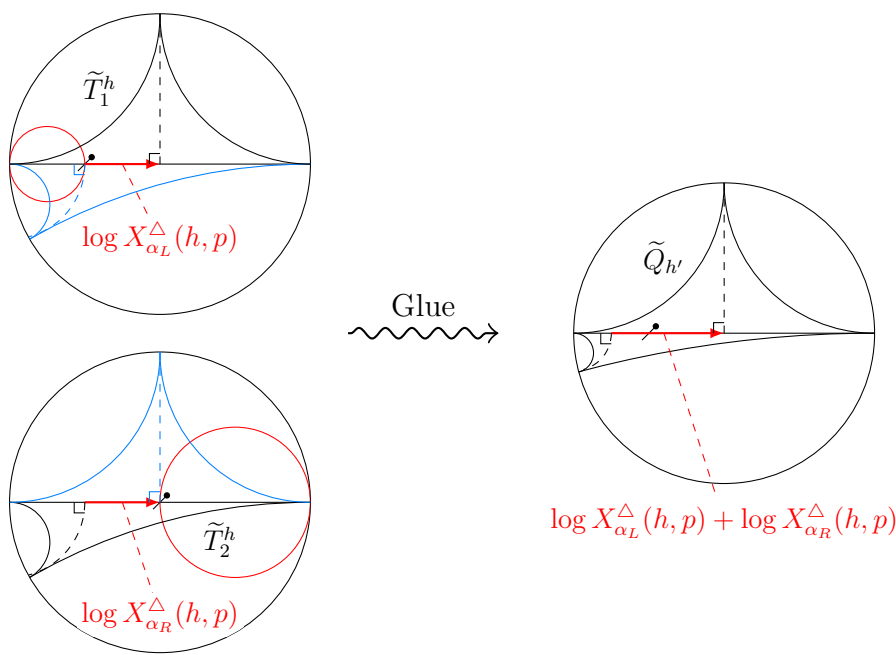

Note that an ideal triangulation $\tri$ on $\Sigma$ naturally induces an ideal triangulation $\tri'$ on $\Sigma'$. Let $\overline{\alpha} \in e_{\interior}(\tri')$ be the interior edge arising from $\alpha_L$ and $\alpha_R$. 

\begin{prop}\label{prop:amalgamation}
We have
\begin{align*}
    q_{\Sigma,\Sigma'}^*X_{\overline{\alpha}}^{\tri'} = X_{\alpha_L}^\tri \cdot X_{\alpha_R}^\tri.
\end{align*}
\end{prop}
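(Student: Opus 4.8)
The statement is a purely local computation about cross ratios of four points on $\partial\bH^2$, so the plan is to reduce it to such a computation by choosing convenient lifts. First I would recall from the construction of the gluing map (\cref{subsec:Teich_amalgamation}) and \cref{fig:gluing_triangle} the precise picture: let $T_L$ (resp. $T_R$) be the unique triangle of $\tri$ having $\alpha_L$ (resp. $\alpha_R$) as a side, lying on its left. After straightening, choose a lift $\widetilde{\alpha}_L^h \subset \widetilde{T}_L^h \subset \bH^2$, and the pinning $p_{\alpha_L}$ viewed as a triangle pinning produces a triangle $T(p_{\alpha_L})$ on the right of $\widetilde{\alpha}_L^h$; by definition $X_{\alpha_L}^\tri(h,p) = r(\widetilde{T}_L^h \cup T(p_{\alpha_L}); \widetilde{\alpha}_L^h)$, and symmetrically for $R$. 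On the glued surface, $\widetilde{\alpha}^{h'}_{\overline\alpha}$ is the common geodesic obtained by applying the hyperbolic isometry $g$ that carries $\widetilde{\alpha}_R^h$ to $\widetilde{\alpha}_L^h$ matching the point pinnings; the quadrilateral $\widetilde{Q}^{h'}$ having $\overline\alpha$ as diagonal is $\widetilde{T}_L^h \cup g(\widetilde{T}_R^h)$, so $X_{\overline\alpha}^{\tri'} = r(\widetilde{T}_L^h \cup g(\widetilde{T}_R^h); \widetilde{\alpha}_L^h)$.

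Next I would normalize coordinates using $PSL_2(\bR)$-invariance of the cross ratio: put the two endpoints of $\widetilde{\alpha}_L^h$ at $0$ and $\infty$, oriented so the initial endpoint (with respect to the $\partial\Sigma$-orientation) is, say, $\infty$, and arrange the point pinning on $\widetilde{\alpha}_L^h$ (equivalently, the matched point pinning that $g$ transports from $\widetilde{\alpha}_R^h$) to be the point $\sqrt{-1}$. With this normalization the third vertex of $\widetilde{T}_L^h$ is some point $a \in \bR_{<0}$ and the third vertex of the pinning triangle $T(p_{\alpha_L})$ is the point on $\partial\bH^2$ determined by the point-pinning-to-triangle correspondence of \cref{fig:pinnings} (for the geodesic from $0$ to $\infty$ with foot at $\sqrt{-1}$, the opposite vertex $g_R$ of $g^\perp$ is a definite point, with the perpendicular through $\sqrt{-1}$ being the unit semicircle, so $g_R = 1$ say). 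Evaluating the cross ratio then expresses $X_{\alpha_L}^\tri$ as an explicit function of $a$; similarly, after applying $g$, the third vertex of $g(\widetilde{T}_R^h)$ is determined by $X_{\alpha_R}^\tri$ via the same formula read backwards, because $g$ matches the point pinning, and the point-pinning normalization is exactly what makes the two "pinning triangles" coincide. A short cross-ratio identity then gives $r(\widetilde{T}_L^h \cup g(\widetilde{T}_R^h); \widetilde{\alpha}_L^h) = X_{\alpha_L}^\tri \cdot X_{\alpha_R}^\tri$.

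Alternatively — and this is probably the cleanest route — I would avoid explicit vertices and argue via multiplicativity of the cross ratio along a chain. Write $x_L$ for the third vertex of $\widetilde T_L^h$, $x_R$ for that of $g(\widetilde T_R^h)$, and $x_0$ for the common third vertex of the two pinning triangles $T(p_{\alpha_L})$ and $g(T(p_{\alpha_R}))$ (these agree because $g$ matches the point pinnings, and the triangle pinning is determined by the point pinning). Then $X_{\alpha_L}^\tri = r$ of the quadrilateral with vertices $(p_+, x_L, p_-, x_0)$ across $\widetilde\alpha_L^h$ (endpoints $p_\pm$), $X_{\alpha_R}^\tri = r$ of $(p_+, x_0, p_-, x_R)$, and $X_{\overline\alpha}^{\tri'} = r$ of $(p_+, x_L, p_-, x_R)$; the identity $r(p_+,x_L,p_-,x_0)\cdot r(p_+,x_0,p_-,x_R) = r(p_+,x_L,p_-,x_R)$ is the standard cocycle/telescoping property of the cross ratio in the variable that is being ``inserted'' between the fixed diagonal endpoints. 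I expect the main obstacle to be purely bookkeeping: getting the orientations and the left/right conventions consistent so that the pinning triangle of $\alpha_L$ really does sit on the same side and with the same vertex $x_0$ as the $g$-image of the pinning triangle of $\alpha_R$ — i.e. checking that the point-pinning matching in the definition of $q_{\Sigma,\Sigma'}$ translates precisely into ``$x_0$ is common'' — after which the cross-ratio algebra is a one-line identity. Once that is pinned down, $q_{\Sigma,\Sigma'}^* X_{\overline\alpha}^{\tri'} = X_{\alpha_L}^\tri \cdot X_{\alpha_R}^\tri$ follows.
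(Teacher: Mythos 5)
Your first route (normalize $\widetilde\alpha_L^h$ to $[0,\infty]$ with the matched point pinning at $\sqrt{-1}$ and compute the cross ratios of the three quadrilaterals explicitly) is sound and does prove the proposition. The paper argues differently: it invokes the additive characterization of the cross ratio as the exponential of the signed distance along the diagonal between the \emph{feet} of the perpendiculars dropped from the two opposite vertices (\cite[Chapter 1, Corollary 4.14 (c)]{Penner}). With that, $\log X_{\alpha_Z}^\tri$ is the signed distance from the point pinning $p_{\alpha_Z}$ to the foot of the perpendicular from the third vertex of $\widetilde T_Z^h$, and the gluing matches the two pinning points, so the three distances telescope. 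This buys exactly what your computation has to do by hand: since only the feet enter, all left/right and orientation bookkeeping disappears.

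Your ``cleanest route'', however, contains a genuine error in the step you yourself flag as the one to check: the two pinning triangles $T(p_{\alpha_L})$ and $g(T(p_{\alpha_R}))$ do \emph{not} have a common third vertex. The isometry $g$ reverses the orientation of the glued geodesic (it must send the left side of $\widetilde\alpha_R^h$, where $\widetilde T_R^h$ sits, to the right side of $\widetilde\alpha_L^h$), so it carries the right side of $\widetilde\alpha_R^h$ to the left side of $\widetilde\alpha_L^h$; consequently the third vertices of $T(p_{\alpha_L})$ and of $g(T(p_{\alpha_R}))$ are the two \emph{distinct} endpoints of the common perpendicular through the matched foot (in the normalization above, $+1$ and $-1$). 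If you insist on a single $x_0$, the middle quadrilateral $(p_+,x_0,p_-,x_R)$ is not in convex position and the cocycle identity $r(p_+,x_L,p_-,x_0)\cdot r(p_+,x_0,p_-,x_R)=r(p_+,x_L,p_-,x_R)$ acquires a sign: with the paper's formula one gets $r_1r_2=-r_3$. The identity you want is the one with the two antipodal endpoints of the perpendicular inserted (e.g.\ $r(\infty,-a,0,1)\cdot r(\infty,-1,0,b)=a/b=r(\infty,-a,0,b)$), which is what your explicit normalization produces; so the route is repairable, but not by the reason you give.
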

In other words, the map $q_{\Sigma,\Sigma'}$ agrees with the cluster amalgamation map $\P_{PGL_2,\Sigma} \to \P_{PGL_2,\Sigma'}$ ( \cite[Definition 2.1]{FG06}).

In the proof, we use another characterization of the cross ratio. Let $Q$ be an ideal quadrilateral with a diagonal $\alpha$ in $\bH^2$, and $x_1,x_2,x_3,x_4$ its vertices in this clockwise order, $x_1$ being one of the endpoints of $\alpha$. Let $g_2$ (resp. $g_4$) be the oriented geodesic perpendicular to $\alpha$ emanating from the vertex $x_2$ (resp. $x_4$). Then the cross ratio $r_{Q;\alpha}$ coincides with the exponential of the signed hyperbolic length of the segment of $\alpha$ bounded by the geodesics $g_2$ and $g_4$, where the sign is positive when one geodesic is seen on the right on the other (\cite[Chapter 1, Corollary 4.14 (c)]{Penner}). 

\begin{proof}
Consider $(h,p) \in \pT(\Sigma)$ and $(h',p'):=q_{\Sigma,\Sigma'}(h,p)$. 
For $Z \in \{L,R\}$, consider a lift $\widetilde{T}_Z^h \subset \bH^2$ of the triangle in $\tri$ having $\alpha_Z$ as one of its sides. In the universal cover of $(\Sigma',h')$, these triangles are glued together and forms a quadrilateral $\widetilde{Q}^{h'}:=\widetilde{T}_L^h \cup g(\widetilde{T}_R^h)$ by using some isometry $g\in PSL_2(\bR)$. See \cref{fig:gluing_triangle}. 
From the definition of the coordinate assigned to the boundary interval $\alpha_Z$, it is given by the exponential of the signed hyperbolic between the point pinning $p_{\alpha_Z} \in \widetilde{\alpha}_Z^h$ and the perpendicular geodesic from the vertex of $\widetilde{T}_Z^h$ other than the endpoints of $\widetilde{\alpha}_Z^h$. 
Then we see that the coordinate $\log X_{\overline{\alpha}}^{\tri'}(h',p')$ coincides with the sum $\log X_{\alpha_L}^\tri(h,p) + \log X_{\alpha_R}^\tri(h,p)$, from which we get the desired assertion.
\end{proof}

\paragraph{\textbf{Relation to the Thurston's completeness criterion.}}
If $\alpha_L,\alpha_R$ are consecutive boundary intervals (say, $\alpha_R$ follows $\alpha_L$ along the boundary orientation), then we get a new puncture $m:=m^-_{\alpha_L}=m^+_{\alpha_R}$ in $\Sigma'$ arising from their common marked point. 
Let us investigate what happens here. 

Let $(h,p) \in \cT^p(\Sigma)$ be a hyperbolic structure with pinnings. Recall that the datum $p_{\alpha_R}$ gives a horocycle pinning, which is a horocycle $C_R$ centered at $m$. Let $\widetilde{C}_0^h \subset \widetilde{\Sigma}^h$ be its lift, a horocyclic arc in the universal cover. 
Similarly, the datum $p_{\alpha_L}$ gives a horocycle pinning $C_L$ centered at $m^+_\alpha$, and a point pinning $x \in \widetilde{\alpha}^h_L$. 

Suppose first that the lambda length $\lambda(C_L,C_R) >1$. In particular, the horocyclic arc $\widetilde{C}_0^h$ does not pass through the point $x$ again. Extending $\widetilde{C}_0^h$ as a horocyclic arc over the geodesic $\widetilde{\alpha}^h_L$, we get a new horocyclic arc $\widetilde{C}_1^h$ in the universal cover of $\Sigma'$, whose projection to $\Sigma'$ is ``closer'' than that of $\widetilde{C}_0^h$ to the puncture $m$. See \cref{fig:horocycle_spiral}. Continuing in this manner, we get an infinite horocyclic arc $\widetilde{C}_\infty^h:=\bigcup_{i=0}^\infty \widetilde{C}_i^h$ ``spiralling'' around $m$, where the hyperbolic distance between the consecutive segments $\widetilde{C}_i^h$ and $\widetilde{C}_{i+1}^h$ is given by the constant $2\log \lambda(C_L,C_R)$. This is exactly the situation discussed in the Thurston's completeness criterion \cite[Proposition 3.4.8]{Thu}, the constant $2\log \lambda(C_L,C_R)$ being the \emph{invariant} $d(v)$. 

In particular, the resulting hyperbolic surface is not complete, and its metric completion gets a new closed geodesic boundary. Indeed, the intersection points between any ray $\ell$ from the ideal vertex $m$ and $\widetilde{C}_\infty^h$ constitute a non-convergent Cauchy sequence. Such sequences give rise to a new $S^1$ after the completion.
The geodesics entering the spike $m$ become spiralling geodesics around the new closed geodesic. 

The case $\lambda(C_L,C_R) <1$ is similar, where the spiralling direction is reversed. In the case $\lambda(C_L,C_R) =1$, the arc $C_R$ is glued into a horocycle $\overline{C}$ around $m$, and the resulting hyperbolic structure is complete around the cusp given by $m$ equipped with the decoration $\overline{C}$. 

\begin{figure}[ht]
    \centering
\begin{tikzpicture}
\fill[gray!20] (3,0) -- (-3,0) -- (-3,-0.2) -- (3,-0.2) --cycle;
\draw[myorange,thick] (-1.3,0) arc (0:180:0.7) node[midway,above,scale=0.9]{$C_L$};
\pinn{-1.3,0}{-135}{0.13}{0.03cm}
\node[scale=0.9] at (-1.2,0.3){$x$};
\draw[red,thick] (-0.9,0) arc (180:0:0.9) node[midway,above,scale=0.9]{$C_R$};
\pinn{0.9,0}{-135}{0.13}{0.03cm}
\draw[red,thick,dashed,->-] (-0.9,0) arc (180:360:0.7);
\draw[thick] (3,0) -- (-3,0);
\draw[dashed,<-,>=latex] (-1.3,-0.1) --++(0,-1) coordinate(A);
\draw[dashed,<-,>=latex] (0.9,-0.1) --++(0,-1) coordinate(B);
\draw[dashed] (A) --node[midway,below,scale=0.9]{glued} (B);
\foreach \i in {0,2,-2} \fill(\i,0) circle(1.5pt);
\node[scale=0.9] at (0,0.3) {$m$};
\snake{3.5,0}{5,0} node[midway,above=0.2em]{Glue};

\begin{scope}[xshift=8.5cm,yshift=-1cm]
\fill[gray!20] (1.5,0) -- (-1.5,0) -- (-1.5,-0.2) -- (1.5,-0.2) --cycle;
\draw[thick] (1.5,0) -- (-1.5,0);
\draw[myorange,thick] (-0.7,0) arc(180:90:0.7) coordinate(X);
\pinn{X}{-135}{0.13}{0.03cm}
\node[scale=0.9] at (0.2,0.5) {$x$};
\draw[red,thick,name path=C1] (X) arc(-90:90:1.1) arc(90:270:0.9) coordinate(Y);
\draw[red,thick,name path=C2] (Y) arc(-90:30:0.7);
\draw (0,0) -- (0,2);
\draw[blue,<->] ($(X)+(-0.1,0)$) coordinate(X') -- ($(Y)+(-0.1,0)$) coordinate(Y');
\draw[blue,dashed] ($(X')!0.5!(Y')$) --++(-2,-0)--++(0,-0.3) node[below,scale=0.9]{$2\log\lambda(C_L,C_R)$};
\draw[red,dashed,name path=ray] (0,2) --++(-45:2) node[above]{$\ell$};
\draw[name intersections={of=C1 and ray,by=F1}];
\draw[name intersections={of=C2 and ray,by=F2}];
\fill[red] (F1) circle(1.5pt);
\fill[red] (F2) circle(1.5pt);
\draw(0,2) ++(45:1.4) node[red]{$\widetilde{C}_\infty^h$};
\fill(0,0) circle(1.5pt);
\fill(0,2) circle(1.5pt);
\end{scope}
\end{tikzpicture}
    \caption{Topological picture of the gluing that produces a new puncture in the case $\lambda(C_L,C_R) >1$. }
    \label{fig:horocycle_spiral}
\end{figure}
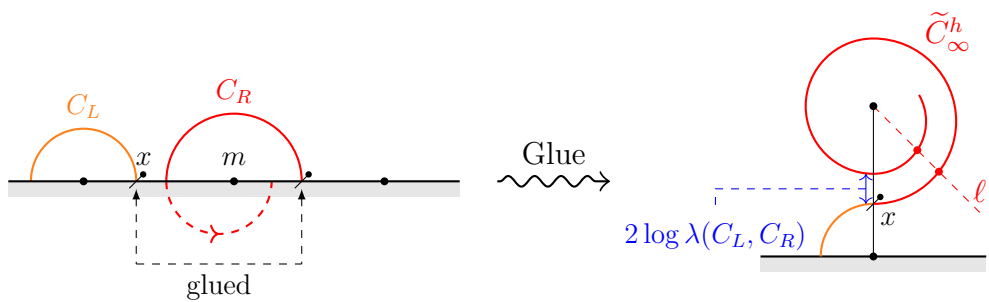

\subsection{Ensemble map}

Recall the \emph{decorated \Teich\ space} introduced by Penner \cite{Penner}. Let $h$ be a marked hyperbolic structure having no closed geodesic boundary. In other words, the monodromy around each $m \in \bM_\circ$ is assumed to be parabolic (unipotent). 
In the universal cover, each marked point gives rise to a $\pi_1(\Sigma)$-invariant collection of spikes. A \emph{decoration} of $h$ is a $\pi_1(\Sigma)$-equivariant collection $d$ of horocycles centered at these points. We call the pair $(h,d)$ a \emph{decorated hyperbolic structure}. An equivalence of decorated hyperbolic structures is similarly defined as in the case of hyperbolic structures with pinnings.  

\begin{dfn}[\cite{Penner}]
The \emph{decorated \Teich\ space} (or the \emph{\Teich\ $\A$-space}) $\aT(\Sigma)$ of $\Sigma$ is the set of equivalence classes of the decorated hyperbolic structures on $\Sigma$.
\end{dfn}
Since each geodesic lift on an ideal arc $\alpha$ in $\Sigma$ is equipped with a decoration, we have a \emph{lambda-length function} $A_\alpha:\aT(\Sigma) \to \bR_{>0}$ associated to $\alpha$. Given an ideal triangulation $\tri$, the collection of lambda-length functions gives a real-analytic coordinate system
\begin{align*}
    A_\tri:=(A_\alpha)_{\alpha \in e(\tri)}: \aT(\Sigma) \xrightarrow{\sim} \bR_{>0}^\tri,
\end{align*}
and they combine to give an $MC(\Sigma)$-equivariant diffeomorphism $\aT(\Sigma) \xrightarrow{\sim} \A_{SL_2,\Sigma}(\pos)$. See \cite[Chapter 2]{Penner} for a detail.  

Now we are going to study a relation between the \Teich\ spaces $\aT(\Sigma)$ and $\pT(\Sigma)$. 
We define the \emph{ensemble map} $p_\Sigma: \aT(\Sigma) \to \pT(\Sigma)$ as follows. Let $(h,d) \in \aT(\Sigma)$ be a decorated hyperbolic structure. 
For each boundary interval $\alpha \in \mathbb{B}$, the decoration $d$ gives a horocycle on each endpoint of $\alpha$. We adopt the one assigned to the initial marked point $m^+_\alpha$ as the horocycle pinning over $\alpha$ (cf.~\cite[Section 12.2]{GS19}). Thus we get a hyperbolic structure with pinnings $(h,p)=p_\Sigma(h,d) \in \pT(\Sigma)$.

When $\Sigma$ has a puncture, the ensemble map is neither injective nor surjective, since it forgets the decorations on punctures and only produces marked hyperbolic structures without closed geodesic boundary.

\begin{prop}\label{prop:ensemble}
If $\Sigma$ is unpunctured, then the ensemble map $p_\Sigma: \aT(\Sigma) \xrightarrow{\sim} \pT(\Sigma)$ is a $C^\omega$-diffeomorphism. For any ideal triangulation $\tri$ of $\Sigma$ and an edge $\kappa \in e(\tri)$, the pull-back $p_\Sigma^*X_\kappa^\tri$ is given as follows.
\begin{enumerate}
    \item If $\kappa \in e_{\interior}(\tri)$, then 
    \begin{align*}
        p_\Sigma^*X_\kappa^\tri = \frac{A_{\alpha} A_{\gamma}}{A_{\beta} A_{\delta}},
    \end{align*}
    where the edges around $\kappa$ is labeled in the same way as in \cref{fig:flip}. 
    \item If $\kappa \in \mathbb{B}$, then 
    \begin{align*}
        p_\Sigma^*X_\kappa^\tri = \frac{A_{\beta}}{A_{\kappa} A_{\alpha}},
    \end{align*}
    where we relabel the edges sharing a triangle with $\kappa$ by $\alpha,\beta$ as in \cref{fig:ensemble_boundary}.
\end{enumerate}
\end{prop}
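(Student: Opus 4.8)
The plan is to establish the diffeomorphism assertion and the two coordinate formulae separately, leveraging the isomorphisms with positive cluster varieties already recorded in the excerpt.

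First, for the diffeomorphism claim, I would argue that when $\Sigma$ is unpunctured there are no closed geodesic boundary components arising from interior marked points, so every marked hyperbolic structure in $\xT(\Sigma)$ automatically has the form required for a decorated structure once we add horocycles. A decoration $d$ on an unpunctured $\Sigma$ consists precisely of one horocycle on each boundary interval's initial marked point, and by the four-fold equivalence of \cref{fig:pinnings} (specifically the horocycle-pinning description (2)) this is exactly the data of a tuple $p=(p_\alpha)_{\alpha \in \bB}$. Thus $p_\Sigma$ is a bijection on the level of data; since it intertwines the real-analytic coordinate systems $A_\tri$ and $X_\tri$ (the content of the formulae below, which are Laurent monomial and hence $C^\omega$ with $C^\omega$ inverse), it is a $C^\omega$-diffeomorphism. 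Alternatively, one can simply invoke \cref{cor:Teich_X-variety}, the analogous statement $\aT(\Sigma) \cong \A_{SL_2,\Sigma}(\pos)$, and the fact that for unpunctured surfaces the ensemble map on the positive points of the cluster varieties $\A_\Sigma \to \P_{PGL_2,\Sigma}$ is an isomorphism (the frozen coordinates being supplied exactly by the horocycle data).

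For the interior-edge formula (1), I would pass to the universal cover and fix a geodesic lift $\widetilde{\kappa}^h$ with its adjacent quadrilateral $\widetilde{Q}^h$, whose four vertices carry the horocycles coming from the decoration $d$. By $PSL_2(\bR)$-invariance normalize the vertices to convenient positions in $\partial\bH^2$. The cross ratio $X_\kappa^\tri$ of the four cusps is then computed from the four vertex coordinates; on the other hand, each $\lambda$-length $A_\epsilon$ ($\epsilon \in \{\alpha,\beta,\gamma,\delta,\kappa\}$) is, in these coordinates, an explicit expression in the two endpoint coordinates and the Euclidean radii of the two horocycles. Substituting and simplifying gives the classical Ptolemy-type identity $X_\kappa^\tri = A_\alpha A_\gamma / (A_\beta A_\delta)$; this is exactly the standard comparison between Penner's $\lambda$-lengths and shear/cross-ratio coordinates, and I would cite \cite{Penner} for the normalization lemmas rather than redo the elementary trigonometry.

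For the boundary-interval formula (2), the argument is parallel but one of the four vertices of the relevant quadrilateral is replaced by the apex $g_R$ of the triangle pinning $T(p_\kappa)$ determined by the horocycle pinning $p_\kappa$, using the equivalence (1)$\Leftrightarrow$(4) of \cref{fig:pinnings}. Concretely: the horocycle pinning on $\kappa$ is the decoration horocycle $h$ at $m^+_\kappa$ with $\lambda$-length $1$ to some reference, and the triangle pinning determines a fourth ideal point; computing the cross ratio $r(\widetilde{T}^h \cup T(p_\kappa); \widetilde{\kappa}^h)$ against the three $\lambda$-lengths $A_\kappa, A_\alpha, A_\beta$ of the triangle $T$ gives $A_\beta/(A_\kappa A_\alpha)$. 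The key point making this work is that, by the normalization in the lemma proved in the excerpt, the triangle pinning associated to a horocycle pinning of $\lambda$-length $1$ is precisely the one ``with unit $\lambda$-lengths to the decoration horocycle'', so the fourth point is fixed by the decoration $d$ and no extra parameter enters. I expect the main obstacle to be bookkeeping the orientation conventions — which marked point is $m^+_\kappa$, which side of $\kappa$ carries the triangle $T$ versus the pinning triangle, and the clockwise-versus-counterclockwise ordering in the definition of $r(Q;\alpha)$ — so that the monomial comes out with exactly the exponents and edge labels matching \cref{fig:flip} and \cref{fig:ensemble_boundary}, rather than an inverse or a relabeling. Once the conventions are pinned down, the computation itself is the classical one and, as the introduction notes, coincides with the cluster-theoretic formula enhanced in \cite[Section 18]{GS19}.
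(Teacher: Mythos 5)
Your proposal is correct in substance, and for case (1) it coincides with the paper (a citation to \cite[Chapter 1, Corollary 4.14 (b)]{Penner}). For case (2) you take a genuinely different computational route. You propose to compute the four-point cross ratio $r(\widetilde{T}^h \cup T(p_\kappa);\widetilde{\kappa}^h)$ directly in the upper half-plane, after normalizing the three vertices of $T$ and locating the fourth ideal point $g_R$ from the decoration horocycle at $m^+_\kappa$ via the point-pinning/triangle-pinning correspondence. The paper instead works in the hyperboloid model: it rescales a light-cone basis $u,v,w$ so that the three horocycles realize $A_\kappa,A_\alpha,A_\beta$, and then computes the \emph{signed distance} along $\kappa$ between the point pinning $x$ and the foot $y$ of the perpendicular from the opposite vertex, using $\sinh d(x,n^\perp)=\langle x,n\rangle$ and the characterization of the cross ratio as $e^{\pm d(x,y)}$ (Penner, Corollary 4.14 (c), recalled before \cref{prop:amalgamation}). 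The hyperboloid computation buys a two-line algebraic derivation of $\tfrac{1}{2}\bigl(t-t^{-1}\bigr)$ with $t=A_\beta/(A_\kappa A_\alpha)$; your route is more elementary but requires explicitly producing $g_R$ from the perpendicular geodesic and keeping track of Euclidean horocycle radii, which is exactly the convention bookkeeping you flag and which you do not carry out. You also supply an argument for the $C^\omega$-diffeomorphism claim (bijection of decorations with pinning tuples when $\bM_\circ=\emptyset$, plus invertibility of the monomial coordinate expression), which the paper's proof leaves implicit; that part is sound. The only caveat is that your case-(2) computation remains a sketch with the final identity asserted rather than derived, but the setup is correct and the missing step is routine.
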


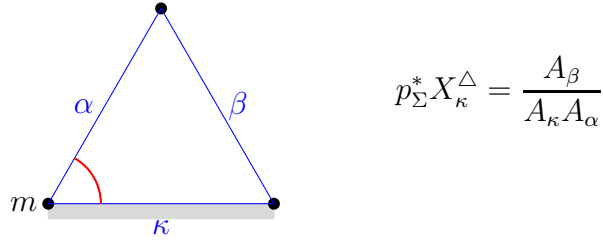
\begin{figure}
\begin{tikzpicture}
\draw[fill, gray!30]  (0,-0.2) rectangle (3,0);
\path(0,0) node [fill, circle, inner sep=1.6pt] {} node[left]{$m$};
\path(3,0) node [fill, circle, inner sep=1.6pt] {};
\path(60:3) node [fill, circle, inner sep=1.6pt] {};
\draw[blue](0,0) --node[midway,below=0.2em]{$\kappa$} (3,0) --node[midway,right]{$\beta$} (60:3) --node[midway,left]{$\alpha$} cycle;
\draw[red,thick] (0.7,0) arc[radius=0.7cm,start angle=0, end angle=60];
\node at (6,1.5) {$\displaystyle p_\Sigma^*X_{\kappa}^\tri = \frac{A_{\beta}}{A_{\kappa} A_{\alpha}}$};
\end{tikzpicture}
    \caption{The pull-back action of the ensemble map on a boundary coordinate. Here $\kappa$ is a boundary interval. 
    }
    \label{fig:ensemble_boundary}
\end{figure}

For the proof, the \emph{hyperboloid model} of the hyperbolic plane is useful. Let us briefly recall it. For a detail, see \cite[Chapter 1]{Penner}.

Let $\bR^{2,1}:=\bR^3$ be the Minkowski space of signature $(2,1)$, having the inner product $\langle x,x'\rangle:=x_1x'_1+x_2x'_2-x_3x'_3$. 
We endow the hyperboloid $\cH:=\{x\in \bR^{2,1} \mid \langle x,x\rangle=-1,~x_3>0\}$ with the induced metric, which turns out to be isometric to $\bH^2$. In this model, 
\begin{description}
    \item[Geodesic] given by the intersection of $\cH$ and a timelike plane, which is the orthogonal complement $n^\perp$ of a spacelike vector $n$ ($\langle n,n\rangle >0$). We will denote such a geodesic simply by $n^\perp$. Two geodesics $n^\perp,(n')^\perp$ are perpendicular to each other if and only if $\langle n,n'\rangle=0$. The distance between a point $x$ and a geodesic $n^\perp$ satisfies $\sinh d(x,n^\perp)=\langle x,n \rangle$. 
    \item[Horocycle] has the form
    \begin{align*}
        h(u):=\left\{ v \in \cH ~\middle|~ \langle u,v\rangle = -\frac{1}{\sqrt{2}} \right\}
    \end{align*}
    for a lightlike vector $u=(u_1,u_2,u_3)$ ($\langle u,u\rangle=0$) with $u_3 >0$. 
    We will identify the horocycle $h(u)$ and the vector $u$. The lambda-length is given by $\lambda(u,u')=\sqrt{-\langle u,u'\rangle}$. 
\end{description}

\begin{proof}
The formula for the first case is well-known. See \cite[Chapter 1, Corollary 4.14 (b)]{Penner}. For the second case, it suffices to think about a hyperbolic triangle equipped with a horocycle at each vertex. 
It is convenient to use the light-cone basis
\begin{align*}
    u=\frac{1}{\sqrt{2}}(-1,0,1), \quad 
    v=\frac{1}{\sqrt{2}}(1,0,1), \quad 
    w=\sqrt{2}(0,-1,1),
\end{align*}
which satisfies $\langle u,v\rangle=\langle v,w\rangle=\langle w,u\rangle=-1$. The lambda-lengths between the rescaled horocycles $u':=(A_{\beta}A_{\kappa}/A_{\alpha})u$, $v':=(A_{\alpha}A_{\kappa}/A_{\beta})v$, $w':=(A_{\beta}A_{\alpha}/A_{\kappa})w$ are given as follows:
\begin{align*}
    \lambda(u',v') = A_{\kappa},\quad \lambda(v',w') = A_{\alpha},\quad \lambda(w',u') = A_{\beta}.
\end{align*}
Now let us consider the hyperbolic triangle $(\bar{u},\bar{v},\bar{w})$ spanned by the centers of the horocycles $u',v',w'$. Let $x$ be the intersection point of the geodesic $\gamma:=[\bar{u},\bar{v}]$ and the horocycle $h(v')$, and $y$ the foot of the geodesic $\delta$ from $\bar{w}$ perpendicular to $\gamma$. See \cref{fig:ensemble_computation}. 
Our aim is to compute the distance between $x$ and $y$ in terms of the lambda-lengths $A_{\kappa},A_{\alpha},A_{\beta}$. 

Since $\gamma$ is clearly the intersection of $\cH$ and the $uv$-plane, its orthonormal vector is $u+v-w$. Then the point $x$ can be computed as a solution of two linear equations (defining $\gamma$ and $h(u')$) and a quadratic equation (defining $\cH$), which is given by
\begin{align*}
    x = \frac{1}{\sqrt{2}}\frac{A_{\beta}}{A_{\kappa}A_{\alpha}}u + \frac{1}{\sqrt{2}}\frac{A_{\kappa}A_{\alpha}}{A_{\beta}}v.
\end{align*}
A short computation also shows that the unit normal vector of $\delta$ is $n=1/\sqrt{2}(u-v)$. Therefore the distance $d(x,y)$ is computed as
\begin{align*}
    \sinh d(x,y) = \sinh d(x,n^\perp) = \langle x,n\rangle = \frac{1}{2}\left( \frac{A_{\beta}}{A_{\kappa}A_{\alpha}} - \frac{A_{\kappa}A_{\alpha}}{A_{\beta}} \right).
\end{align*}
Thus we get $d(x,y) = A_{\beta}/(A_{\kappa}A_{\alpha})$ as desired. 
\end{proof}

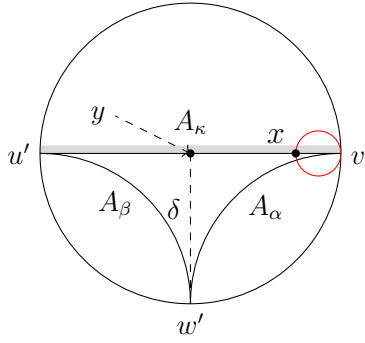
\begin{figure}
\begin{tikzpicture}[scale=1]
\begin{scope}
\draw[fill, gray!30]  (-2,0.1) rectangle (2,0);
\draw(0,0) circle(2cm);
\clip(0,0) circle(2cm);
\hgline{0}{180}{2}
\hgline{0}{270}{2}
\hgline{270}{180}{2}
\draw(0,0) node[above=0.3em,scale=0.9]{$A_{\kappa}$};
\draw(1,-1) node[above,scale=0.9]{$A_{\alpha}$};
\draw(-1,-1) node[above,scale=0.9]{$A_{\beta}$};
\draw[red] (1.7,0) circle(0.3cm);
\path(1.4,0) node [fill, circle, inner sep=1.1pt] {} node[above left]{$x$};
\draw[dashed] (0,-2) --node[midway,above left,scale=0.9]{$\delta$} (0,0);
\path(0,0) node [fill, circle, inner sep=1.1pt] {};
\draw[dashed,<-] (0,0) -- (-1,0.5) node[left,scale=0.9]{$y$};
\end{scope}
\draw(-2,0) node[left,scale=0.9]{$u'$};
\draw(2,0) node[right,scale=0.9]{$v'$};
\draw(0,-2) node[below,scale=0.9]{$w'$};
\end{tikzpicture}
    \caption{Computation of the ensemble map.}
    \label{fig:ensemble_computation}
\end{figure}

We set $m_{\alpha\beta}:=-\delta_{\alpha\beta}$ if $\alpha,\beta \in \bB$, and otherwise $m_{\alpha\beta}:=0$. Then the two formulae in \cref{prop:ensemble} are combined into
\begin{align}\label{eq:ensemble_combined}
    p_\Sigma^\ast X_\kappa^\tri = \prod_{\alpha \in e(\tri)} A_\alpha^{\ve_{\kappa\alpha}^\tri+m_{\kappa\alpha}},
\end{align}
where recall the exchange matrix $\ve^\tri=(\ve_{\alpha\beta}^\tri)$ from \cref{app:cluster}. 
This agrees with the formula given in \cite[Proposition 12.4]{GS19} for the $A_1$ case. 

\begin{rem}The right-hand side of the formula in \cref{prop:ensemble} (2) coincides with the \emph{$h$-length} of the decoration assigned to $m^+_\kappa$ \cite[Chapter 1, Lemma 4.7]{Penner}.     
\end{rem}

\subsection{The $\lambda$-lengths in terms of the cross ratios}
Assuming that $\Sigma$ is unpunctured, we are going to give the inverse formula to \eqref{eq:ensemble_combined}. In this case, we can identify the two \Teich\ spaces $\cT^a(\Sigma)$ and $\cT^p(\Sigma)$ via the ensemble map $p_\Sigma$. Thus we omit the symbol $p_\Sigma^\ast$ in the following. 

\begin{dfn}[positive $\bB$-shift of ideal arcs]\label{def:shift_ideal}
Given an ideal arc $\alpha$ on $\Sigma$, we define its \emph{(positive) $\bB$-shift} to be the simple curve $\alpha_
{\bB}$ having its endpoints on $\partial^\ast \Sigma$ obtained from $\alpha$ by shifting its endpoints to the next boundary interval in the positive direction along $\partial\Sigma$. See \cref{fig:shifting}. 
\end{dfn}

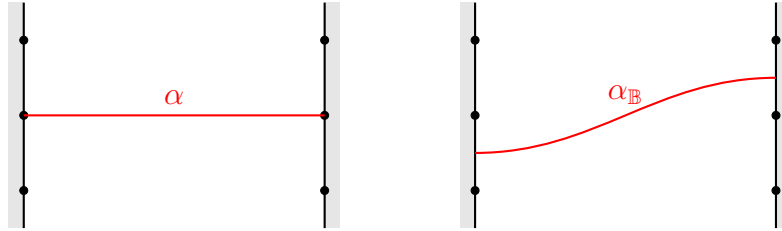
\begin{figure}[ht]
    \centering
\begin{tikzpicture}
\begin{scope}[xshift=0cm]
\fill[gray!20] (0,1.5) -- (-0.2,1.5) -- (-0.2,-1.5) -- (0,-1.5) --cycle;
\fill[gray!20] (4,1.5) -- (4+0.2,1.5) -- (4+0.2,-1.5) -- (4,-1.5) --cycle;
\draw[thick] (0,1.5) -- (0,-1.5);
\draw[thick] (4,-1.5) -- (4,1.5);
\filldraw(0,1) circle(1.5pt); 
\filldraw(0,0) circle(1.5pt);
\filldraw(0,-1) circle(1.5pt);
\filldraw(4,1) circle(1.5pt);
\filldraw(4,0) circle(1.5pt);
\filldraw(4,-1) circle(1.5pt);
\draw[red,thick] (0,0) to[out=0,in=180] node[midway,above]{$\alpha$} (4,0);
\end{scope}
\begin{scope}[xshift=6cm]
\fill[gray!20] (0,1.5) -- (-0.2,1.5) -- (-0.2,-1.5) -- (0,-1.5) --cycle;
\fill[gray!20] (4,1.5) -- (4+0.2,1.5) -- (4+0.2,-1.5) -- (4,-1.5) --cycle;
\draw[thick] (0,1.5) -- (0,-1.5);
\draw[thick] (4,-1.5) -- (4,1.5);
\filldraw(0,1) circle(1.5pt); 
\filldraw(0,0) circle(1.5pt);
\filldraw(0,-1) circle(1.5pt);
\filldraw(4,1) circle(1.5pt);
\filldraw(4,0) circle(1.5pt);
\filldraw(4,-1) circle(1.5pt);
\draw[red,thick] (0,-0.5) to[out=0,in=180] node[midway,above]{$\alpha_\bB$} (4,0.5);
\end{scope}
\end{tikzpicture}
    \caption{The positive $\bB$-shift of an ideal arc.}
    \label{fig:shifting}
\end{figure}

\begin{thm}\label{thm:A to X}
Assume that $\Sigma$ is unpunctured. 
Then for each edge $\alpha \in e(\tri)$ of an ideal triangulation, we have the inverse formula to \eqref{eq:ensemble_combined}:
\begin{align*}
    A_\alpha = \prod_{\beta \in e(\tri)} (X^\tri_{\beta})^{q_{\alpha\beta}}.
\end{align*} 
Here $q_{\alpha\beta}:=-\sfa_{\beta}(\alpha_\bB)$, and $\sfa_{\beta}(\alpha_\bB) \in \frac{1}{2}\bZ_{\geq 0}$ denotes half the geometric intersection number between the two curves $\alpha_\bB$ and $\beta$.
\end{thm}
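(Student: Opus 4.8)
The strategy is to invert the system \eqref{eq:ensemble_combined} by exhibiting an explicit left inverse on the level of exponents, and then to recognize that this left inverse has a purely topological meaning in terms of intersection numbers with the shifted arcs $\alpha_\bB$. First I would fix an ideal triangulation $\tri$ and record the exponent matrix $M=\ve^\tri+(m_{\kappa\alpha})$ appearing in \eqref{eq:ensemble_combined}, so that $\log A = M^{-1}\log X$ whenever $M$ is invertible; since $p_\Sigma$ is a diffeomorphism by \cref{prop:ensemble}, the map on coordinates is a bijection $\bR^\tri_{>0}\to\bR^\tri_{>0}$ and hence $M$ is indeed invertible (the unpunctured hypothesis guarantees that the total exchange matrix augmented by the frozen block is nondegenerate). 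Thus it suffices to compute the matrix $Q=(q_{\alpha\beta})$ with $A_\alpha=\prod_\beta (X^\tri_\beta)^{q_{\alpha\beta}}$ and to identify its entries with $-\sfa_\beta(\alpha_\bB)$.

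The computation of $Q$ I would do geometrically rather than by literally inverting $M$. For a fixed edge $\alpha\in e(\tri)$, realize $\alpha$ and its positive $\bB$-shift $\alpha_\bB$ on the surface, put $\alpha_\bB$ in minimal position with respect to $\tri$, and read off how $\alpha_\bB$ crosses the edges. The key point is the standard fact (going back to Fock--Goncharov and reproved in Penner's setup) that the lambda-length $A_\alpha$, which is the boundary value of a horocyclic ``h-length'' computation, can be reconstructed from shear/cross-ratio coordinates along the path traced by a curve \emph{parallel to $\alpha$ but with endpoints pushed onto the adjacent boundary intervals} — this is exactly $\alpha_\bB$. Concretely, cutting $\Sigma$ along the edges met by $\alpha_\bB$ decomposes a neighborhood of $\alpha_\bB$ into a chain of triangles, and in each triangle the contribution to $\log A_\alpha$ from the corresponding cross-ratio coordinate is $\pm\frac12$ times the number of times $\alpha_\bB$ separates the two relevant vertices; summing gives $\sum_\beta (-\tfrac12 i(\alpha_\bB,\beta)) \log X^\tri_\beta$, i.e. $q_{\alpha\beta}=-\sfa_\beta(\alpha_\bB)$ with $\sfa_\beta(\alpha_\bB)=\tfrac12 i(\alpha_\bB,\beta)\in\tfrac12\bZ_{\ge0}$. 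I would organize this as: (i) the boundary-edge case, where $\alpha_\bB$ crosses exactly the two non-$\alpha$ sides of the unique triangle on $\alpha$ together with the boundary intervals following them, directly matching \cref{prop:ensemble}(2) after taking logarithms of $p_\Sigma^*X_\kappa^\tri=A_\beta/(A_\kappa A_\alpha)$; (ii) the interior-edge case, handled by a flip argument — verify the formula for one triangulation containing $\alpha$, then check that both sides transform consistently under a flip using \cref{fig:flip} and the fact that the geometric intersection numbers $i(\alpha_\bB,\beta)$ change under a flip exactly as dictated by the cluster Poisson (tropical) transformation.

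The main obstacle is the sign bookkeeping and the half-integrality in step (ii): a flip $f_\kappa$ changes $X^\tri_\beta$ by factors $(1+X_\kappa^{\pm1})^{\mp1}$, whose logarithm is not linear, so the identification of exponents cannot be checked ``numerically'' but must be matched against the tropical/PL transformation of the intersection pattern of $\alpha_\bB$ with the new edges. The clean way around this is to work tropically from the start: the map $\log X\mapsto \log A$ is the restriction of a subtraction-free (cluster Poisson) inverse whose tropicalization is the PL map sending the shear coordinates of a lamination to its ``$A$-coordinates''; under the known identification of $\X$-tropical points with $\P$-laminations and of $\A$-tropical points with integral $\A$-laminations, the exponent vector $(q_{\alpha\beta})_\beta$ is forced to be the coordinate vector of the elementary lamination dual to $\alpha$, which is precisely $\alpha_\bB$ with the intersection pairing. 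So after establishing the base case (i) by the direct hyperbolic computation already done in \cref{prop:ensemble}, the interior case follows by equivariance of everything in sight under flips, the one genuinely new input being the combinatorial lemma that the positive $\bB$-shift $\alpha_\bB$ is the lamination-theoretic object whose pairing with $\tri$ computes $q_{\alpha\cdot}$ — which I would isolate and prove by the cutting-into-triangles picture described above.
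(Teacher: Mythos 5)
Your overall target is the right one: the theorem reduces to the matrix identity $\sum_{\gamma\in e(\tri)} q_{\alpha\gamma}\,(\ve^\tri_{\gamma\beta}+m_{\gamma\beta})=\delta_{\alpha\beta}$, which is exactly what the paper checks. But your proposal never actually establishes this identity, and each of the routes you sketch for it has a problem. First, the flip argument does not close: $A_\alpha$ is triangulation-independent while $\prod_\beta(X^{\tri}_\beta)^{q_{\alpha\beta}}$ is a monomial in $\tri$-coordinates, and under a flip a monomial picks up binomial factors $(1+X_\kappa^{\pm1})^{\ast}$; showing that these cancel against the change in the intersection pattern of $\alpha_\bB$ is itself a nontrivial computation which you acknowledge but do not perform. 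Second, the tropical detour does not rescue it: tropicalizing a subtraction-free expression only determines its extremal exponent vector, so it identifies $(q_{\alpha\beta})_\beta$ only after one already knows the expression is a single Laurent monomial — at which point one is back to the linear-algebra verification. Third, invoking \cref{prop:ensemble} to get invertibility of the exponent matrix is close to circular, since the bijectivity of $p_\Sigma$ is most naturally established by exhibiting the inverse formula, i.e.\ by the theorem itself; the paper sidesteps this by directly verifying that $Q=(q_{\alpha\beta})$ is a one-sided (hence two-sided) inverse, which yields invertibility for free.

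The missing computation is short and is the actual content of the proof: orient $\alpha_\bB$, list the edges $\alpha_0,\dots,\alpha_m$ of $\tri$ it traverses, write $n_{\alpha\beta}=-\tfrac12\sum_{i} p^\tri_{\alpha_i,\beta}$ with $p^\tri_{\gamma\beta}:=\ve^\tri_{\gamma\beta}+m_{\gamma\beta}$, and observe that the consecutive contributions $p^\tri_{\alpha_{i-1},\alpha_i}+p^\tri_{\alpha_{i+1},\alpha_i}$ cancel except at the unique quadrilateral where $\alpha_{i_0}=\alpha$, the frozen correction $m_{\gamma\beta}=-\delta_{\gamma\beta}$ handling the two boundary ends. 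Note also that your description of the boundary case is inaccurate: for $\alpha\in\bB$ the shift $\alpha_\bB$ is the corner arc around the terminal marked point $m^-_\alpha$ and crosses \emph{all} edges of $\tri$ incident to that point, not just the two other sides of the triangle containing $\alpha$; the same telescoping argument nevertheless gives $n_{\alpha\alpha}=1$ there.
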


\begin{proof}
Let us write 
\begin{align*}
    n_{\alpha\beta}:=\sum_{\gamma \in e(\tri)} q_{\alpha\gamma}p^\tri_{\gamma\beta}
\end{align*}
for $\alpha,\beta \in e(\tri)$. Then it suffices to prove the equation $n_{\alpha\beta}=\delta_{\alpha\beta}$ for all $\alpha,\beta \in e(\tri)$. Fix an edge $\alpha \in e(\tri)$, and give $\alpha_\bB$ an arbitrary orientation. Let $\alpha_0,\dots,\alpha_m$ be the edges of $\tri$ traversed by $\alpha_\bB$ in this order, where $\alpha_0,\alpha_m \in \bB$ are end-intervals of $\alpha$, and the other $\alpha_i$ are interior edges. 
Then we get $n_{\alpha\beta}=-1/2\sum_{i=0}^m p^\tri_{\alpha_i,\beta}$, with a notice that we allow $\alpha_i=\alpha_j$ for some $i\neq j$. 

First consider the case where $\alpha$ is an interior edge. Then $\alpha$ is the diagonal of a unique quadrilateral $Q_\alpha$ in $\tri$. 
There is a unique $0 \leq i_0 \leq m$ such that $\alpha_{i_0}=\alpha$ and $\alpha_{i_0\pm 1}$ are the opposite sides of $Q_\alpha$. See the left picture in \cref{fig:shift_interior}. 
Then one can verify the equations
\begin{align*}
    n_{\alpha,\alpha_i} = \begin{cases}
        -\frac{1}{2}(p^\tri_{\alpha_0,\alpha_0}+p^\tri_{\alpha_1,\alpha_0}) =-\frac{1}{2}(-1+1) = 0 & \mbox{for $i=0$}, \\
        -\frac{1}{2}(p^\tri_{\alpha_{i-1},\alpha_{i}}+p^\tri_{\alpha_{i+1},\alpha_{i}}) & \mbox{for $0 < i < m$}, \\
        -\frac{1}{2}(p^\tri_{\alpha_{m-1},\alpha_m}+p^\tri_{\alpha_m,\alpha_m}) =-\frac{1}{2}(1-1) = 0 & \mbox{for $i=m$}.
    \end{cases}
\end{align*}
The middle case produces $n_{\alpha,\alpha_{i_0}}=1$ if $i=i_0$, and otherwise $n_{\alpha,\alpha_i}=0$. 
It is easier to verify $n_{\alpha\beta}=0$ for $\beta \in e(\tri) \setminus \{\alpha_i\}_{i=0}^m$.
Thus $n_{\alpha\beta}=\delta_{\alpha\beta}$ holds in this case. 

\begin{figure}[ht]
    \centering
\begin{tikzpicture}
\bline{-2,-2}{2,-2}{0.2};
\tline{-2,2}{2,2}{0.2};
\draw[blue] (-2,0) -- (0,-2) -- (2,0) -- (0,2) -- cycle;
\draw[blue] (0,2) -- (0,-2);
\foreach \i in {1,2} \draw(-2,0)++(0,\i*0.5) coordinate(A\i);
\draw[blue] (-2,0) -- (A1) -- (0,2);
\draw[blue] (A1) --(A2) -- (0,2);
\foreach \i in {1,2} \draw(2,0)++(0,-\i*0.5) coordinate(B\i);
\draw[blue] (2,0) -- (B1) -- (0,-2);
\draw[blue] (B1) --(B2) -- (0,-2);
\draw[blue,thick,dotted] (0,2)++(-160:1.5) arc(-160:-178:1.5);
\draw[blue,thick,dotted] (0,-2)++(20:1.5) arc(20:2:1.5);
\filldraw (0,2) circle(1.5pt);
\filldraw (0,-2) circle(1.5pt);
\draw[red,thick] (0.5,-2) to[out=90,in=-45] (0,0) to[out=135,in=-90] (-0.5,2);
\node[blue] at (-1,2.4) {$\alpha_0$};
\node[blue] at (1,-2.4) {$\alpha_m$};
\node[blue] at (0.3,0.4) {$\alpha_{i_0}$};
\node[red] at (-0.3,-0.1) {$\alpha_\bB$};

\begin{scope}[xshift=7cm]
\tline{-2,2}{2,2}{0.2};
\foreach \i in {-30,-45,-60,-90,-120,-135,-150} \draw[blue] (0,2) --++(\i:2);
\draw[blue,thick,dotted] (0,2)++(-160:1.5) arc(-160:-178:1.5);
\draw[blue,thick,dotted] (0,2)++(-20:1.5) arc(-20:-2:1.5);
\filldraw (0,2) circle(1.5pt);
\draw[red,thick] (-1,2) arc(-180:0:1);
\node[blue] at (-1,2.4) {$\alpha_0$};
\node[blue] at (1,2.4) {$\alpha_m$};
\node[red] at (-0.3,0.7) {$\alpha_\bB$};
\end{scope}
\end{tikzpicture}
    \caption{Computation of the matrix $n_{\alpha\beta}$. Left: the case $\alpha \in e_{\interior}(\tri)$, Right: the case $\alpha \in \bB$.}
    \label{fig:shift_interior}
\end{figure}

In the case where $\alpha$ is a boundary interval, the curve $\alpha_\bB$ is the corner arc surrounding its terminal marked point $m \in \bM$. Let us give $\alpha_\bB$ an orientation so that it runs around $m$ in the counter-clockwise direction. We have $\alpha=\alpha_m$. 
See the right picture in \cref{fig:shift_interior}. 
Then one can verify that $n_{\alpha,\alpha_i}=0$ for $0 \leq i <m$, and $n_{\alpha,\alpha_m}=-\frac{1}{2}(p^\tri_{\alpha_{m-1},\alpha_m}+p^\tri_{\alpha_m,\alpha_m})=-\frac{1}{2}(-1-1)=1$ in this case. Thus $n_{\alpha\beta}=\delta_{\alpha\beta}$ holds in this case. The assertion is proved.
\end{proof}
In particular, the Poisson bracket of $\lambda$-length functions along compatible arcs are computed as
\begin{align}\label{eq:Poisson_lambda}
    \{A_\alpha,A_\beta\} = \left\{ \prod_{\gamma} (X^\tri_{\gamma})^{q_{\alpha\gamma}}, \prod_{\delta} (X^\tri_{\delta})^{q_{\beta\delta}}\right\} = \left(\sum_{\gamma,\delta} q_{\alpha\gamma}\ve^\tri_{\gamma\delta}q_{\beta\delta}\right) A_\alpha A_\beta,
\end{align}
where we take any ideal triangulation $\tri$ containing $\alpha,\beta$. 
In order to describe it more precisely, recall the Muller's compatibility matrix $\pi_{\alpha\beta}$, defined as follows. For an ideal arc $\alpha$, let $\alpha_+,\alpha_-$ denote its two ends (with an arbitrary labeling). For two ideal arcs $\alpha,\beta$, define 
\begin{align*}
    \pi_{\alpha_\mu,\beta_{\nu}}:=\begin{cases}
    1 & \mbox{if $\alpha_\mu$ is clockwise to $\beta_{\nu}$ at a common marked point}, \\
    -1 & \mbox{if $\alpha_\mu$ is counter-clockwise to $\beta_{\nu}$  at a common marked point}, \\
    0 & \mbox{otherwise},
    \end{cases}
\end{align*}
and set $\pi_{\alpha\beta}:=\sum_{\mu,\nu=+,-} \pi_{\alpha_\mu,\beta_{\nu}}$. 

\begin{lem}
For any unpunctured marked surface, we have 
\begin{align*}
    \sum_{\gamma,\delta \in e(\tri)} q_{\alpha\gamma}\ve^\tri_{\gamma\delta}q_{\beta\delta}=-\frac{1}{4}\pi_{\alpha\beta}.
\end{align*}
In particular, the Poisson bracket \eqref{eq:Poisson_lambda} becomes
\begin{align*}
    \{A_\alpha,A_\beta\} = -\frac{1}{4}\pi_{\alpha\beta} A_\alpha A_\beta.
\end{align*}
\end{lem}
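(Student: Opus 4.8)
The strategy is to reduce the claimed identity, via the skew-symmetry of the exchange matrix, to a statement about $q=(q_{\alpha\beta})$ alone, and then to prove that by a local intersection count near common endpoints. By \cref{thm:A to X} the matrix $q$ is inverse to the exponent matrix $p^\tri=(p^\tri_{\alpha\beta})$ of \eqref{eq:ensemble_combined}, where $p^\tri_{\alpha\beta}=\ve^\tri_{\alpha\beta}+m_{\alpha\beta}$; indeed the relation $n_{\alpha\beta}=\delta_{\alpha\beta}$ proved there reads $q\,p^\tri=I$, whence also $(p^\tri)^{\mathsf{T}}q^{\mathsf{T}}=I$. The matrix $\ve^\tri$ is skew-symmetric (the triangulation has no self-folded triangle; see \cref{app:cluster}) and $m=(m_{\alpha\beta})$ is symmetric, so $\ve^\tri=\frac{1}{2}\bigl(p^\tri-(p^\tri)^{\mathsf{T}}\bigr)$, and hence
\begin{align*}
    \sum_{\gamma,\delta\in e(\tri)}q_{\alpha\gamma}\ve^\tri_{\gamma\delta}q_{\beta\delta}
    &=\bigl(q\,\ve^\tri\,q^{\mathsf{T}}\bigr)_{\alpha\beta}
    =\tfrac{1}{2}\bigl(q\,p^\tri\,q^{\mathsf{T}}-q\,(p^\tri)^{\mathsf{T}}\,q^{\mathsf{T}}\bigr)_{\alpha\beta} \\
    &=\tfrac{1}{2}\bigl(q^{\mathsf{T}}-q\bigr)_{\alpha\beta}
    =\tfrac{1}{2}\bigl(q_{\beta\alpha}-q_{\alpha\beta}\bigr).
\end{align*}
Therefore the asserted identity is equivalent to $q_{\alpha\beta}-q_{\beta\alpha}=\frac{1}{2}\pi_{\alpha\beta}$, and once this is known the concluding formula for $\{A_\alpha,A_\beta\}$ follows immediately from \eqref{eq:Poisson_lambda}.

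Next I would unwind this geometrically. Since $q_{\alpha\beta}=-\sfa_\beta(\alpha_\bB)$ and $\sfa_\beta(\alpha_\bB)$ is one half of the geometric intersection number of the curves $\alpha_\bB$ and $\beta$ (a quantity symmetric in the two curves), the identity to be proved is
\[
    \sfa_\alpha(\beta_\bB)-\sfa_\beta(\alpha_\bB)=\tfrac{1}{2}\pi_{\alpha\beta}.
\]
Because $\alpha$ and $\beta$ are edges of the common triangulation $\tri$ they are disjoint away from their endpoints, and the $\bB$-shift alters an ideal arc only inside small half-disk neighbourhoods of its two endpoints (\cref{fig:shifting}); thus $\sfa_\alpha(\beta_\bB)$ and $\sfa_\beta(\alpha_\bB)$ are each sums of contributions localized at marked points that are common endpoints of $\alpha$ and $\beta$. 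Fix such a common endpoint $m$, at which $\alpha$ enters by an end $\alpha_\mu$ and $\beta$ by an end $\beta_\nu$, and pass to a half-disk neighbourhood of $m$ in which $\alpha$ and $\beta$ appear as two embedded rays from $m$. Sliding the $\beta$-end forward along $\partial\Sigma$ in the direction prescribed by \cref{def:shift_ideal} creates exactly one transverse crossing with the $\alpha$-ray when $\alpha_\mu$ lies clockwise of $\beta_\nu$ at $m$, and none when it lies counterclockwise; by symmetry the same holds with the roles of $\alpha$ and $\beta$ interchanged. Hence the net contribution of $m$ to $\sfa_\alpha(\beta_\bB)-\sfa_\beta(\alpha_\bB)$ equals $\frac{1}{2}\pi_{\alpha_\mu,\beta_\nu}$, and summing over all incidences of ends of $\alpha$ and $\beta$ at common marked points gives $\frac{1}{2}\pi_{\alpha\beta}$, as wanted.

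The main obstacle will be this local count. One must match the ``positive direction along $\partial\Sigma$'' of \cref{def:shift_ideal} with the clockwise/counterclockwise convention in the definition of $\pi_{\alpha_\mu,\beta_\nu}$ (this pins down the overall sign and is the most error-prone point), verify that the shifted representatives are genuinely in minimal position --- i.e.\ that the crossing produced near $m$ cannot be removed by a bigon --- and treat the degenerate configurations: when $\alpha$ or $\beta$ is a loop with both ends at $m$, when $\alpha$ and $\beta$ share both endpoints, and when $\alpha$ or $\beta$ is a boundary interval (so that $\alpha_\bB$ or $\beta_\bB$ meets $\partial^\ast\Sigma$ and the relevant half-intersection numbers become genuinely half-integral). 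In each of these several terms $\pi_{\alpha_\mu,\beta_\nu}$ must be bookkept simultaneously, but the same neighbourhood-by-neighbourhood analysis applies verbatim.
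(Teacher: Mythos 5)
Your proposal is correct, and it reaches the identity by a genuinely different algebraic reduction than the paper. The paper substitutes $\ve^\tri=q^{-1}-m$ directly into $(q\,\ve^\tri q^{\mathsf{T}})_{\alpha\beta}$, obtaining $q_{\beta\alpha}+\sum_{\gamma\in\bB}q_{\alpha\gamma}q_{\beta\gamma}=-\sfa_\alpha(\beta_\bB)+\sum_{\gamma\in\bB}\sfa_\gamma(\alpha_\bB)\sfa_\gamma(\beta_\bB)$, and then performs the local count at each shared endpoint on \emph{both} terms, the quadratic boundary term included (e.g.\ a clockwise incidence contributes $1/4$ to the quadratic term and $-1/2$ to the linear one, totalling $-1/4$). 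You instead exploit the skew-symmetry of $\ve^\tri$ and the symmetry of $m$ to write $\ve^\tri=\tfrac12(p^\tri-(p^\tri)^{\mathsf{T}})$ and collapse everything to $\tfrac12(q^{\mathsf{T}}-q)$, so that only the antisymmetrization $\sfa_\alpha(\beta_\bB)-\sfa_\beta(\alpha_\bB)$ needs to be computed locally; this is arguably cleaner, as it removes the quadratic term entirely and makes the $\tfrac14$ emerge from $\tfrac12\cdot\tfrac12$ rather than from a cancellation. Your local count agrees with the paper's convention (the shifted end $\beta_{\nu,\bB}$ crosses $\alpha_\mu$ exactly when $\alpha_\mu$ is clockwise of $\beta_\nu$, and is disjoint otherwise), and the sign bookkeeping $\sfa_\alpha(\beta_\bB)-\sfa_\beta(\alpha_\bB)=\tfrac12\pi_{\alpha\beta}$ checks out against the paper's case analysis. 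The caveats you flag at the end (minimal position, shared double endpoints, boundary intervals) are real but are handled no more explicitly in the paper's own proof, which also reduces to the same neighbourhood-by-neighbourhood inspection.
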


\begin{proof}
Since $\ve^\tri_{\gamma\delta} = p^\tri_{\gamma\delta} -m_{\gamma\delta} = (q^{-1})_{\gamma\delta} - m_{\gamma\delta}$ by the lemma above, we have 
\begin{align*}
    &\sum_{\gamma,\delta \in e(\tri)} q_{\alpha\gamma}\ve^\tri_{\gamma\delta}q_{\beta\delta} \\
    &=q_{\beta\alpha} - \sum_{\gamma \in \bB, \delta \in e(\tri)} q_{\alpha\gamma}m_{\gamma\delta}q_{\beta\delta}\\
    &= q_{\beta\alpha} + \sum_{\gamma \in \bB} q_{\alpha\gamma}q_{\beta\gamma} \\
    &= -\sfa_\alpha(\beta_\bB) + \sum_{\gamma \in \bB} \sfa_{\gamma}(\alpha_\bB)\sfa_{\gamma}(\beta_\bB).
\end{align*}
The last expression is clearly $0$ if $\alpha$ and $\beta$ do not share endpoints. If an end $\alpha_\mu$ of $\alpha$ is clockwise to an end $\beta_\nu$ of $\beta$ at a common marked point, then such a pair $(\alpha_\mu,\beta_\nu)$ contributes to $\sum_{\gamma \in \bB} \sfa_{\gamma}(\alpha_\bB)\sfa_{\gamma}(\beta_\bB)$ by $1/4$, and to $-\sfa_\alpha(\beta_\bB)$ by $-1/2$. In total, its contribution is $-1/4$. If $\alpha_\mu$ is counter-clockwise to $\beta_\nu$, then the contribution of the pair $(\alpha_\mu,\beta_\nu)$ is $0+1/4$, since the shifted end $\beta_{\nu,\bB}$ is disjoint from $\alpha_\mu$ in this case. Thus the assertion is proved. 
\end{proof}

\begin{rem}
\begin{enumerate}
    \item By the lemma above, the Poisson algebra $(C^\infty(\cT^a(\Sigma)),-4\cdot\{\ ,\ \})$ is the classical limit of the Muller's skein algebra $\mathscr{S}_\Sigma^{q}$. 
    \item A similar computation works for higher rank cases as well. In the $\mathfrak{sl}_3$-case, the $A$-variables are expressed as Laurent monomials of $X$-variables with exponents given by $-1/3$ times the Douglas--Sun coordinates \cite{DS20I} of the bounded $\mathfrak{sl}_3$-laminations obtained by $\bB$-shifting the corresponding elementary webs \cite{IYsl3}. The Poisson bracket multiplied by $-6$ gives the classical limit of the skein algebra studied in \cite{IYsl3}.
\end{enumerate}
\end{rem}

\begin{rem}
Via the correspondence between the decorations and pinnings, one can also consider the gluing map $q_{\Sigma,\Sigma'}:\cT^a(\Sigma) \to \cT^a(\Sigma')$ for any marked surface $\Sigma$ in the way illustrated as
\begin{align*}
\begin{tikzpicture}[scale=0.9]
\begin{scope}
\draw (-2,0) -- (0,0) -- (0,3) -- (-2,3);
\draw[red] (0,0) arc(-45:-45+360:0.9);
\draw[red] (1,3) arc(135:135-360:0.6);
\draw[blue] (0,3) arc(45:45+360:0.4);
\draw[mygreen] (1,0) arc(-135:-135+360:0.6);
\foreach \i in {0,1} \foreach \j in {0,3} \fill(\i,\j) circle(1.5pt);
\pinn{0,0.9*1.414}{180}{0.1}{0.03cm}
\draw[dashed] (0.1,0.9*1.414) -- (0.9,3-0.6*1.414);
\end{scope}
\begin{scope}[xshift=1cm]
\draw (2,0) -- (0,0) -- (0,3) -- (2,3);
\pinn{0,3-0.6*1.414}{0}{0.1}{0.03cm}
\end{scope}
\draw[thick,|->] (3.5,1.5) --node[midway,above]{$q_{\Sigma,\Sigma'}$}++(1,0);
\begin{scope}[xshift=7cm]
\draw (-2,0) -- (2,0);
\draw (-2,3) -- (2,3);
\draw[dashed] (0,0) -- (0,3);
\draw[blue] (0,3) arc(45:45+360:0.4);
\draw[mygreen] (0,0) arc(-135:-135+360:0.6);
\foreach \j in {0,3} \fill(0,\j) circle(1.5pt);
\end{scope}
\end{tikzpicture}\ ,
\end{align*}
which is invariant under the $\pos$-action rescaling the $h$-lengths of the red horocycles by $(\lambda,\lambda^{-1})$ for $\lambda \in \pos$. It clearly satisfies $q_{\Sigma,\Sigma'}^\ast A_{\overline{\alpha}}=A_{\alpha_L}\cdot A_{\alpha_R}$.
\end{rem}

\subsection{Wilson lines and $\lambda$-length}
In addition to the usual trace functions of monodromy (\emph{a.k.a.} Wilson loops), the data of pinnings allow us to consider a wider class of functions associated to arcs connecting boundary intervals, which we call the \emph{Wilson lines}. Let us recall the setting from \cite[Section 3.3]{IO20} with a specialization to the $A_1$ case.

An \emph{arc class} is the homotopy class $[c]$ of a path $c$ in $\Sigma$ which runs between two boundary interval $\alpha_\inn$ and $\alpha_\out$, where the homotopies are relative to $\partial^\ast \Sigma$. 
Given an arc class $[c]$ from $\alpha_\inn$ to $\alpha_\out$ and  $(h,p) \in \pT(\Sigma)$, we define an isometry $g_{[c]}(h,p) \in PSL_2(\bR)$ as follows. 

Choose a fundamental polygon $\widetilde{\Pi}^h \subset \bH^2$ of $\Sigma$ so that the unique lift $\widetilde{\alpha}^h_\inn$ of $\alpha_\inn$ contained in $\widetilde{\Pi}^h$ sits in the \lq\lq normalized" position: $\widetilde{\alpha}^h_\inn=\sqrt{-1}\bR_{>0}$ and the point pinning $p_{\alpha_\inn}$ gives $\sqrt{-1} \in \widetilde{\alpha}^h_\inn$. Let $\widetilde{c}^h$ be the lift of $c$ which starts from $\widetilde{\alpha}^h_\inn$, which ends on a certain side $\widetilde{\alpha}^h_\out$ of $\widetilde{\Pi}^h$. The terminal side $\widetilde{\alpha}^h_\out$ must be a lift of $\alpha_\out$, so it is equipped with a point pinning determined by $p_{\alpha_\out}$. 
Define $g=g_{[c]}(h,p) \in PSL_2(\bR)$ to be the unique isometry such that $g(\widetilde{\alpha}^h_\inn)=\widetilde{\alpha}^h_\out$, matching the point pinnings on them. 
In this way, we get a map
\begin{align*}
    g_{[c]}: \pT(\Sigma) \to PSL_2(\bR),
\end{align*}
which we call the \emph{Wilson line} along  $[c]$.

Let $\tri$ be an ideal triangulation of $\Sigma$. Represent an arc class $[c]$ by a curve $c$ so that the intersection with $\tri$ is minimal. Label the edges (resp. triangles) of $\tri$ that $c$ traverses as $\alpha_\inn=\alpha_0,\dots,\alpha_M=\alpha_\out$ (resp. $T_1,\dots,T_M$) in this order. Note that each intersection $c \cap T_\nu$ is either one of the two patterns shown in \cref{f:intersection}. The \emph{turning pattern} of $[c]$ with respect to $\tri$ is the sequence $\tau_\tri([c]) = (\tau_\nu)_{\nu=1}^M \in \{L,R\}^{M}$, where $\tau_\nu = L$ (resp. $\tau_\nu=R$) if $c \cap T_\nu$ is the left (resp. right) pattern in \cref{f:intersection}.

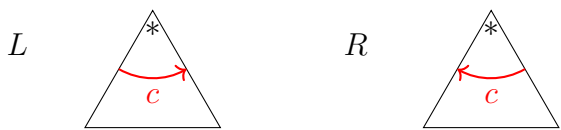
\begin{figure}[hb]
\centering
\begin{tikzpicture}[scale=0.9]
\draw (0,0) coordinate (B1) node[below]{$\ast$};
\draw (240: 2) coordinate (B2);
\draw (300: 2) coordinate (B3);
\draw (B1) -- (B2) -- (B3) --cycle;
\draw[->,thick,color=red] (240:1) arc[start angle=240, end angle=300, radius=1cm] node[midway,below]{$c$};
\draw(-2,-0.5) node{$L$};
\begin{scope}[xshift=5cm]
\draw (0,0) coordinate (B1) node[below]{$\ast$};
\draw (240: 2) coordinate (B2);
\draw (300: 2) coordinate (B3);
\draw (B1) -- (B2) -- (B3) --cycle;
\draw[->,thick,color=red] (300:1) arc[start angle=300, end angle=240, radius=1cm] node[midway,below]{$c$};
\draw(-2,-0.5) node{$R$};
\end{scope}
\end{tikzpicture}
\caption{Two intersection patterns of $c \cap T_\nu$}
\label{f:intersection}
\end{figure}

\begin{thm}[\cite{FG07,Penner,IO20}]\label{thm:LR-formula}
Let $\tri$ be an ideal triangulation of $\Sigma$, and $[c]$ an arc class. Then in terms of the cross ratios $X_\nu:=X_{\alpha_\nu}^\tri(h,p)$ for $\nu=0,\dots,M$, the Wilson line $g_{[c]}$ is expressed as
\begin{align}\label{eq:LR-formula}
    g_{[c]}(h,p) = H(X_0)\mathbb{E}^{\tau_1}H(X_2)\mathbb{E}^{\tau_2}\dots H(X_{M-1})\mathbb{E}^{\tau_{M}}H(X_M),
\end{align}
where 
\begin{align*}
    H(X):=\begin{bmatrix}X^{1/2} & 0 \\ 0 & X^{-1/2} \end{bmatrix}, \quad \mathbb{E}^L:=\begin{bmatrix}1 & 1 \\ 0 & 1 \end{bmatrix}, \quad \mathbb{E}^R:=\begin{bmatrix}1 & 0 \\ 1 & 1 \end{bmatrix} \in PSL_2(\bR).
\end{align*}
\end{thm}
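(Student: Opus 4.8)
The statement is a product formula for the Wilson line along an arc class, expressed in terms of cross-ratio coordinates along the traversed edges, with the turning pattern dictating whether we insert an upper- or lower-triangular unipotent. I would prove it by induction on the number $M$ of triangles the curve $c$ traverses, reducing everything to the single-triangle case by the composition law for Wilson lines. First I would establish the multiplicativity of $g_{[c]}$ under concatenation of arcs: if $[c]=[c_1]\cdot[c_2]$ where $c_1$ runs from $\alpha_\inn$ to an intermediate edge $\alpha$ and $c_2$ from $\alpha$ to $\alpha_\out$, then $g_{[c]}=g_{[c_1]}\cdot g_{[c_2]}$ — this is immediate from the normalization procedure, since the point pinning on the intermediate edge $\widetilde\alpha^h$ provides exactly the reference frame needed to glue the two developing-map computations. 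This lets me cut $c$ at each edge crossing and assemble the answer from elementary pieces.

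\textbf{The base case.} The core computation is for a single triangle $T_\nu$ with an arc entering through edge $\alpha_{\nu-1}$ and leaving through $\alpha_\nu$. Here I would develop the triangle into $\bH^2$ with the incoming edge in the normalized position $\widetilde\alpha^h_{\nu-1}=\sqrt{-1}\,\bR_{>0}$ and its point pinning at $\sqrt{-1}$ — so the triangle pinning over $\widetilde\alpha^h_{\nu-1}$ determines the third vertex $g_R$ — then read off the position of the outgoing edge $\widetilde\alpha^h_\nu$ together with its point pinning (which is forced by the cross-ratio coordinate $X_{\alpha_\nu}^\tri$, because by the Theorem in Section 2.1 the boundary coordinate is precisely the datum that pins down the triangle pinning, hence the point pinning, on that edge). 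The isometry taking $\widetilde\alpha^h_{\nu-1}$ with its pinning to $\widetilde\alpha^h_\nu$ with its pinning will come out as $H(X_{\nu-1})\,\mathbb{E}^{\tau_\nu}\,H(X_\nu)^{?}$ up to bookkeeping; the point is that the $\mathbb{E}^L$ versus $\mathbb{E}^R$ distinction is exactly the left/right turning dichotomy of Figure~\ref{f:intersection}, and the diagonal factors $H(X_\nu)$ come from the cross-ratio normalization of each edge's pinning relative to the ideal triangle on each of its two sides. Composing the single-triangle isometries via the concatenation law, the diagonal factors at intermediate edges get shared between consecutive pieces and collapse to a single $H(X_\nu)$ per interior edge, producing the stated alternating product $H(X_0)\mathbb{E}^{\tau_1}H(X_1)\cdots$ — I should double-check the index conventions here, since the excerpt writes $H(X_2)$ after $\mathbb{E}^{\tau_1}$, which suggests the intermediate $H(X_1)$ has been absorbed and the displayed formula is really the already-collapsed form (this indexing subtlety is something to verify carefully rather than a genuine difficulty).

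\textbf{Main obstacle.} The delicate point is getting the normalization of the point pinning on each \emph{interior} edge to be consistent from both sides — i.e., that the factor of $H(X_\nu)$ extracted when leaving triangle $T_\nu$ is compatible with the normalization assumed when entering $T_{\nu+1}$, so that exactly one diagonal factor survives per crossed edge and the signs/turning labels propagate correctly. This amounts to pinning down the precise relationship between the triangle-pinning description of $X_\alpha^\tri$ (used to define the coordinates) and the developing-map frame used to compute $g_{[c]}$, and matching orientation conventions for ``left'' versus ``right'' turns with the upper/lower-triangular choice. Since this is attributed to \cite{FG07,Penner,IO20}, I would organize the write-up as: (1) state and prove concatenation multiplicativity; (2) do the explicit $PSL_2(\bR)$ computation in a single triangle with a clean choice of developing coordinates, identifying the $H$ and $\mathbb{E}^{L/R}$ factors; (3) induct, citing \cite[Figure 11]{FG07} for the coordinate normalization and referring to the analogous higher-rank computation in \cite{IO20} for the structure of the argument. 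The routine matrix algebra in step (2) I would not grind through in detail, but the orientation/normalization compatibility in step (3) is where the real care is needed.
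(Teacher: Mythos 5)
The paper gives no proof of this theorem — it is quoted from \cite{FG07,Penner,IO20} — and your sketch (concatenation multiplicativity plus the single-triangle $PSL_2(\bR)$ computation, with the diagonal factors at each interior edge merging into one $H(X_\nu)$ via the amalgamation relation $X_{\overline{\alpha}}=X_{\alpha_L}X_{\alpha_R}$ of \cref{prop:amalgamation}) is exactly the argument of those references, so there is nothing in the text to compare against. You are also right that the factor $H(X_2)$ following $\mathbb{E}^{\tau_1}$ in \eqref{eq:LR-formula} is a typo for $H(X_1)$: the product alternates as $H(X_0)\mathbb{E}^{\tau_1}H(X_1)\mathbb{E}^{\tau_2}\cdots H(X_{M-1})\mathbb{E}^{\tau_M}H(X_M)$.
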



The Wilson line reproduces the lambda-length function, as follows. For a $2\times 2$ matrix $M$, let $\Delta_{ij}(M)$ denote its $(i,j)$-entry for $i,j=1,2$. For $M \in PSL_2(\bR)$, $\Delta_{ij}(M)$ is defined up to sign. 
Observe that an arc class $[c]$ without self-intersections is represented by the $\bB$-shift $c=\alpha_\bB$ (\cref{def:shift_ideal}) of some ideal arc $\alpha$, together with an arbitrary orientation. 

\begin{prop}\label{prop:Wilson_lambda}
Let $[\alpha_\bB]$ be the arc class represented by the $\bB$-shift of an ideal arc $\alpha$.
\begin{enumerate}
    \item For any ideal triangulation $\tri$ of $\Sigma$, we have
    \begin{align*}
    |\Delta_{22}(g_{[\alpha_\bB]})| = \prod_{\beta \in e(\tri)}(X_\beta^\tri)^{-\sfa_\beta(\alpha_\bB)}\cdot F_\alpha^\tri(X_\tri),
    \end{align*}
    where $F_\alpha^\tri(X_\tri)$ is a polynomial of the cross ratio coordinates with respect to $\tri$ having the constant term $1$. 
    \item We have 
\begin{align*}
   |\Delta_{22}(g_{[\alpha_\bB]})| = A_\alpha.
\end{align*}
\end{enumerate}

\end{prop}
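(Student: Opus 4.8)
\smallskip

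The plan is to prove (1) by directly analyzing the matrix product \eqref{eq:LR-formula} for the arc class $[\alpha_\bB]$, and then deduce (2) from (1) together with \cref{thm:A to X}. First I would fix an ideal triangulation $\tri$ and represent $[\alpha_\bB]$ by a taut curve $c=\alpha_\bB$, recording the edges $\alpha_\inn=\alpha_0,\dots,\alpha_M=\alpha_\out$ it traverses and its turning pattern $\tau=(\tau_\nu)$. By \cref{thm:LR-formula}, $g_{[\alpha_\bB]} = H(X_0)\mathbb{E}^{\tau_1}H(X_1)\cdots\mathbb{E}^{\tau_M}H(X_M)$ up to sign. The key structural observation is that for a simple (non-self-intersecting) arc, the turning pattern is ``monotone'' in a suitable sense: since $\alpha_\bB$ is obtained by pushing both endpoints of $\alpha$ one boundary interval forward, the curve $c$ winds around at most the two marked points $m^+_\alpha$ and $m^-_\alpha$, and between these windings it crosses the quadrilaterals/triangles of $\tri$ without reversing turning direction. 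I would make this precise by comparing the edge sequence $(\alpha_i)$ of $\alpha_\bB$ with that of $\alpha$ itself (which is a single edge, hence traverses no triangles) and noting that the $\alpha_i$ for $0<i<M$ are exactly the ``fan'' edges around the two endpoints, read off in \cref{fig:shift_interior}.

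\smallskip

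The main computation is then to track the $(2,2)$-entry through the product. I would argue by induction on $M$, maintaining the claim that, writing $g_{[\alpha_\bB]}$ as a word in the $H$'s and $\mathbb{E}$'s, the $(2,2)$-entry has the form $\prod_\beta (X_\beta^\tri)^{-\sfa_\beta(\alpha_\bB)} F_\alpha^\tri$ with $F_\alpha^\tri$ a polynomial with constant term $1$. The point is that each $H(X)$ contributes a factor $X^{\pm 1/2}$ to each matrix entry, and summed over the word these half-integer exponents of $X_\beta$ combine to $-\sfa_\beta(\alpha_\bB)$ — precisely because the geometric intersection number $2\sfa_\beta(\alpha_\bB)$ counts how many times $\alpha_\bB$ crosses $\beta$, i.e.\ the number of occurrences of $\beta$ among the $\alpha_i$, with the two boundary occurrences at the ends contributing the ``half'' corrections. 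The polynomial $F_\alpha^\tri$ then records the contributions of the unipotent factors $\mathbb{E}^{\tau_\nu}$, whose entries are nonnegative monomials in the $X$'s; the constant term $1$ survives because the all-$H$ contribution (taking the $(2,2)$ route straight through) is never cancelled. This reduces to a careful but routine bookkeeping of exponents, most cleanly organized by separating the traversal into: the initial ``fan'' around $m^+_\alpha$, a middle monotone stretch, and the final ``fan'' around $m^-_\alpha$ — mirroring the two cases ($\alpha$ interior vs.\ $\alpha$ boundary) in the proof of \cref{thm:A to X}.

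\smallskip

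For part (2), I would observe that $\sfa_\beta(\alpha_\bB) = -q_{\alpha\beta}$ by \cref{def:shift_ideal}, so part (1) gives $|\Delta_{22}(g_{[\alpha_\bB]})| = \prod_\beta (X_\beta^\tri)^{q_{\alpha\beta}} \cdot F_\alpha^\tri(X_\tri)$, and by \cref{thm:A to X} the leading monomial $\prod_\beta (X_\beta^\tri)^{q_{\alpha\beta}}$ equals $A_\alpha$. Hence it remains to show $F_\alpha^\tri \equiv 1$. I expect two ways to close this. The clean conceptual route: $|\Delta_{22}(g_{[\alpha_\bB]})|$ is independent of $\tri$ (the Wilson line itself is), while the leading monomial $A_\alpha$ is also intrinsic; a polynomial-with-constant-term-$1$ that equals a single Laurent monomial in every cluster chart must be identically $1$, since passing to a triangulation where $\alpha_\bB$ is taut with a trivial turning pattern (e.g.\ flip so that $\alpha$ becomes a diagonal of a quadrilateral with $\alpha_\bB$ a corner arc — the right picture of \cref{fig:shift_interior}) makes $g_{[\alpha_\bB]}$ a product of at most three elementary matrices, forcing $F=1$ there, hence everywhere. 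The alternative: compute $\Delta_{22}$ directly in hyperbolic-geometry terms — the point pinnings on $\widetilde\alpha_\inn^h$ and $\widetilde\alpha_\out^h$ together with the isometry $g_{[\alpha_\bB]}$ realize exactly the lambda-length configuration of $\alpha$ decorated by the induced horocycles, and the $(2,2)$-entry of a lower-triangular-normalized such isometry is the lambda-length on the nose. The likely main obstacle is the exponent bookkeeping in part (1) — specifically, proving that the half-integer exponents sum correctly to $-\sfa_\beta(\alpha_\bB)$ including the boundary-edge endpoints, and that no monomial in $F_\alpha^\tri$ has negative total $X_\beta$-degree pushing it outside the claimed leading monomial; this is exactly the phenomenon already handled in the proof of \cref{thm:A to X} and I would structure the argument to reuse that combinatorics verbatim.
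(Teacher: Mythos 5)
Your proposal is correct and takes essentially the same route as the paper: part (1) is the paper's rescaling of $H(X)$ to $H'(X)=X^{1/2}H(X)$ (extracting the monomial $\prod_\nu X_\nu^{-1/2}=\prod_\beta(X_\beta^\tri)^{-\sfa_\beta(\alpha_\bB)}$ and using positivity of the remaining entries to get constant term $1$), and part (2) is proved by passing to a triangulation containing $\alpha$, where the turning pattern is $L\cdots LR\cdots R$ and hence $F_\alpha^\tri=1$, then invoking \cref{thm:A to X} and triangulation-independence of both sides. One small correction: your parenthetical ``forcing $F=1$ there, hence everywhere'' is false as a statement about all charts --- for $\tri$ not containing $\alpha$ the function $A_\alpha$ is a genuine Laurent polynomial in $X_\tri$, so $F_\alpha^\tri\neq 1$ in general (and \cref{thm:A to X} only applies when $\alpha\in e(\tri)$) --- but this does not affect the proof of (2), which needs only the one good chart.
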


\begin{proof}
Let $\tri$ be any ideal triangulation, and $\alpha_0,\dots,\alpha_M$ its edges that the curve $\alpha_\bB$ traverses in this order. 
Let us rescale the diagonal matrices as 
\begin{align*}
    H'(X):=X^{1/2}H(X) = \begin{bmatrix}X & 0 \\ 0 & 1 \end{bmatrix}.
\end{align*}

Then the formula \eqref{eq:LR-formula} becomes
\begin{align*}
    g_{[\alpha_\bB]} = \prod_{\nu=0}^M X_\nu^{-1/2}\cdot H'(X_0)\mathbb{E}^{\tau_1}H'(X_2)\mathbb{E}^{\tau_2}\dots H'(X_{M-1})\mathbb{E}^{\tau_{M}}H'(X_M).
\end{align*}
With a notice that the monomial term $\prod_{\nu=0}^M X_\nu^{-1/2}$ coincides with $\prod_{\beta \in e(\tri)}(X_\beta^\tri)^{-\sfa_\beta(\alpha_\bB)}$, we see that the first assertion holds. 

If $\tri$ contains the ideal arc $\alpha$, then we have $A_\alpha=\prod_{\beta \in e(\tri)}(X_\beta^\tri)^{-\sfa_\beta(\alpha_\bB)}$ by \cref{thm:A to X}. 
In this case, the turning pattern is given by $\tau_1=\cdots=\tau_{i_0}=L$ and $\tau_{i_0+1}=\cdots=\tau_{M}=R$ in the notation of \cref{fig:shift_interior}. In particular we get $F_\alpha^\tri=1$, and hence $|\Delta_{22}(g_{[\alpha_\bB]})| = A_\alpha$ holds.
\end{proof}
\begin{rem}
The expressions of the other entries of $g_{[c]}$ are also given in \cite[(5.1)]{IOS22}.
\end{rem}

\section{Lamination spaces with pinnings}\label{sec:lamination}
In this section, we introduce the space of \emph{$\P$-laminations} which will be identified with the set of real tropical points of the moduli space $\P_{PGL_2,\Sigma}$. 

\subsection{The space $\cL^p(\Sigma,\bQ)$ of rational $\P$-laminations and shear coordinates}
Let $\Sigma$ be a marked surface. During this section, by a \emph{curve} we mean an unoriented curve $\gamma$ in $\Sigma$ which is either closed or having endpoints in $\bM_\circ \cup \partial^\ast \Sigma$, and the other part is embedded into $\Sigma^\ast$. Isotopies of curves are considered within this class. 
Such a curve $\gamma$ is said to be
\begin{itemize}
    \item \emph{peripheral} \footnote{It is called \lq\lq special" in \cite{FG07}.} if it is either isotopic to a puncture $m \in \bM_\circ$ or an interval in $\partial \Sigma$ which contains exactly one special point $m \in M_\partial$. 
    \begin{align}\label{eq:peripheral}
    \begin{tikzpicture}[scale=.8]
    \draw[dashed, fill=white] (0,0) circle [radius=1];
    \draw[thick, red, fill=pink!60] (0,0) circle [radius=0.5];
    \filldraw[draw=black,fill=white] (0,0) circle(2.5pt);
    \begin{scope}[xshift=5cm]
    \coordinate (P) at (-0.5,0) {};
    \coordinate (P') at (0.5,0) {};
    \coordinate (C) at (0,0.5) {};
    \draw[thick, red, fill=pink!60] (P) to[out=north, in=west] (C) to[out=east, in=north] (P');
    \draw[dashed] (1,0) arc (0:180:1cm);
    \bline{-1,0}{1,0}{0.2}
    \draw[fill=black] (0,0) circle(2pt);
    \end{scope}
    \end{tikzpicture}
    \end{align}
    In each case, it is called a peripheral curve \emph{around $m$}.
    \item \emph{contractible} if it is isotopic to a point.
\end{itemize}

\begin{dfn}\label{d:P-lamination}
A \emph{rational $\P$-lamination} on $\Sigma$ consists of the following data:
\begin{itemize}
    \item a collection $L=\{(\gamma_j,w_j)\}_j$ of mutually disjoint non-peripheral curves $\gamma_j$ in $\Sigma$ equipped with non-negative rational weights $w_j\geq 0$;
    \item a tuple $\sigma_L=(\sigma_m)_{m \in \bM_\circ} \in \{+,0,-\}^{\bM_\circ}$ of signs assigned to punctures such that $\sigma_m=0$ if and only if there are no curves incident to $m$;
    \item a tuple $\nu=(\nu_\alpha)_{\alpha \in \mathbb{B}} \in \bQ^\mathbb{B}$ of rational numbers assigned to boundary intervals.
\end{itemize}
Such a data is considered modulo the equivalence relation generated by isotopies and the following operations:
\begin{enumerate}
    \item Remove a contractible curve or a curve with weight $0$.
    \item Combine a pair of isotopic curves with weights $u$ and $v$ into a single curve with the weight $u+v$.
\end{enumerate}
\end{dfn}
Let $\mathcal{L}^p(\Sigma,\bQ)$ denote the set of rational $\P$-laminations. 
We call the tuple $\sigma_L=(\sigma_m)_{m \in \bM_\circ}$ the \emph{lamination signature}, and $\nu$ the \emph{pinning}. 
Forgetting the pinnings, we get the projection
\begin{align*}
    \pi_\Sigma^\mathsf{T}: \cL^p(\Sigma,\bQ) \to \cL^x(\Sigma,\bQ), \quad (L,\sigma_L,\nu) \mapsto (L,\sigma_L),
\end{align*}
where $\cL^x(\Sigma,\bQ)$ denotes the space of rational $\X$-laminations of Fock--Goncharov \cite{FG07}. 

A rational $\P$-lamination is said to be \emph{integral} if all the weights of the curves and the pinnings $\nu_\alpha$ are integers. Let $\cL^p(\Sigma,\bZ)\subset \cL^p(\Sigma,\bQ)$ denote the subset of integral $\P$-laminations. 

\smallskip
\paragraph{\textbf{Spiralling diagram.}}
Given a rational $\X$-lamination $(L,\sigma_L) \in \cL^x(\Sigma,\bQ)$, we deform each curve $\gamma_j$ in $L$ incident to a puncture $m$, as follows: 
if the sign $\sigma_m$ is positive (resp. negative), then replace the corresponding end of $\gamma_j$ with an infinite curve $\widehat{\gamma}_j$ that spirals around $m$ in the clockwise (resp. counter-clockwise) direction. See \cref{fig:spiral}. The resulting diagram $\spiral$ is called the \emph{spiralling diagram} of $(L,\sigma_L)$.

\begin{figure}[ht]
    \centering
\begin{tikzpicture}[scale=0.9]
\draw[dashed] (-2.5,-1.5) circle(2cm);
\draw [red](-3,0.45) .. controls (-2.5,0) and (-2.9,-0.8) .. (-2.5,-1.5);
\filldraw[fill=white] (-2.5,-1.5) circle(2pt);
\node[red] at (-2.4,-0.1) {$\gamma_j$};
\node[red] at (-2.3,-1.7) {$+$};
\node at (-2.8,-1.7) {$m$};
\draw (5,-1.5) circle(2pt);
\draw[dashed] (5,-1.5) circle(2cm);
\node[red] at (5.3,-0.1) {$\widehat{\gamma}_j$};

\draw [red](4.5,0.45) .. controls (5,0) and (5.55,-1.05) .. (5.55,-1.5) .. controls (5.55,-1.85) and (5.25,-2) .. (5,-2) .. controls (4.75,-2) and (4.55,-1.8) .. (4.55,-1.5) .. controls (4.55,-1.25) and (4.75,-1.1) .. (5,-1.1) .. controls (5.25,-1.1) and (5.4,-1.25) .. (5.4,-1.5) .. controls (5.4,-1.75) and (5.2,-1.85) .. (5,-1.85) .. controls (4.85,-1.85) and (4.7,-1.7) .. (4.7,-1.5) .. controls (4.7,-1.35) and (4.85,-1.25) .. (5,-1.25) .. controls (5.15,-1.25) and (5.25,-1.35) .. (5.25,-1.5) .. controls (5.25,-1.6) and (5.15,-1.7) .. (5,-1.7) .. controls (4.9,-1.7) and (4.85,-1.6) .. (4.85,-1.5);
\draw [red, thick, dotted](4.85,-1.5) .. controls (4.85,-1.3) and (5.15,-1.3) .. (5.15,-1.5);

\draw [thick,-{Classical TikZ Rightarrow[length=4pt]},decorate,decoration={snake,amplitude=2pt,pre length=2pt,post length=3pt}](0.65,-1.5) -- (2,-1.5);
\end{tikzpicture}
    \caption{Construction of a spiralling diagram. The negative sign similarly produce an end spiralling counter-clockwisely.}
    \label{fig:spiral}
\end{figure}
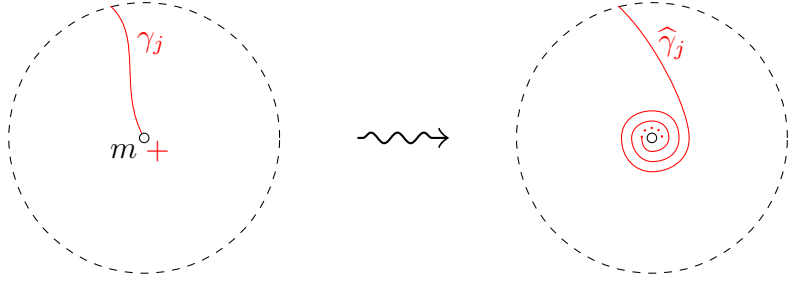
Given an ideal triangulation $\tri$ of $\Sigma$, it is easy to verify that we can move such a spiralling diagram by an isotopy fixing a small neighborhood of $\bM_\circ$ into a position such that its restriction to each triangle of $\tri$ consists only of corner arcs (\emph{i.e.} curves connecting distinct edges). We call such a position a \emph{good position} with respect to $\tri$.

Then we define a coordinate system 
\begin{align*}
    \sfx_\tri=(\sfx_\alpha^\tri)_{\alpha \in e(\tri)}:\mathcal{L}^p(\Sigma,\bQ) \to \bQ^\tri
\end{align*}
associated with an ideal triangulation $\tri$,
as follows. Given $(L,\sigma_L,\nu) \in \cL^p(\Sigma,\bQ)$, let $\hL$ be the spiralling diagram of $(L,\sigma_L)$ in a good position with respect to $\tri$. 
For each edge $\alpha \in e(\tri)$ and a curve $\widehat{\gamma}_j$ in the spiralling diagram, let $(\alpha:\widehat{\gamma}_j) \in \bZ$ be the integer defined as follows: 

\begin{itemize}
    \item if $\alpha$ is an interior edge, then it is the diagonal of a unique quadrilateral $Q_\alpha$ in $\tri$. An intersection between a portion of $\widehat{\gamma}_j$ and $Q_\alpha$ as in the left (resp. right) of \cref{f:intersection sign} contributes as $+1$ (resp. $-1$), and the others $0$. Then $(\alpha:\widehat{\gamma}_j)$ is the sum of these local contributions. 
    \item if $\alpha$ is a boundary interval, then $(\alpha:\widehat{\gamma}_j):=+1$ if $\widehat{\gamma}_j$ contains a corner arc around the initial marked point $m^+_\alpha$ as its portion, and otherwise $0$.  
\end{itemize}

\begin{figure}[ht]
\centering
\begin{tikzpicture}[scale=1.15]
\path(0,0) node [fill, circle, inner sep=1.5pt] (x1){};
\path(135:2) node [fill, circle, inner sep=1.5pt] (x2){};
\path(0,2*1.4142) node [fill, circle, inner sep=1.5pt] (x3){};
\path(45:2) node [fill, circle, inner sep=1.5pt] (x4){};
\draw[blue](x1) to (x2) to (x3) to (x4) to (x1) to node[midway,left]{$\alpha$} node[midway,above right,black]{$\oplus$} (x3);
\draw [red] (2,0.7) to[out=135,in=-45] (0,1.5) to[out=135,in=-45] (-1,3);
\draw [red] (2,0.7) node[below]{$\gamma_j$}; 

\begin{scope}[xshift=5cm]
\path(0,0) node [fill, circle, inner sep=1.5pt] (x1){};
\path(135:2) node [fill, circle, inner sep=1.5pt] (x2){};
\path(0,2*1.4142) node [fill, circle, inner sep=1.5pt] (x3){};
\path(45:2) node [fill, circle, inner sep=1.5pt] (x4){};
\draw[blue] (x1) to (x2) to (x3) to (x4) to (x1) to node[midway,left]{$\alpha$} node[midway,below right,black]{$\ominus$} (x3);
\draw [red] (-1,0.5)  to[out=45,in=215] (0,1.2) to[out=45,in=215] (1,3);
\draw [red] (-1,0.5) node[below]{$\gamma_j$};
\end{scope}
\end{tikzpicture}
\caption{Contributions to $(\alpha:\widehat{\gamma}_j)$.}
\label{f:intersection sign}
\end{figure}
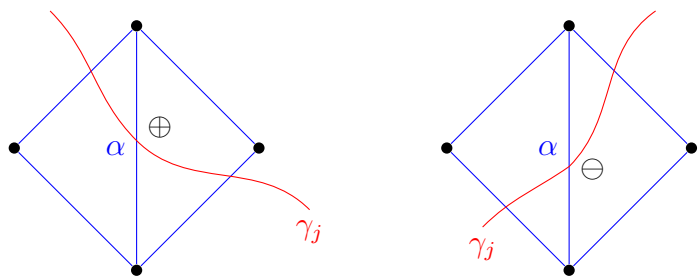

Although $\widehat{\gamma}_j$ may intersect with $\alpha$ infinitely many times in the first case, the number $(\alpha:\widehat{\gamma}_j)$ is always finite. 
Then we define $\sfx_\alpha^\tri(L,\sigma_L,\nu) \in \bQ$ by the following rule:

\begin{itemize}
    \item For an interior edge $\alpha \in e_{\interior}(\tri)$
    define
    \begin{align*}
        \sfx^\tri_\alpha(L,\sigma_L,\nu):=\sum_j w_j (\alpha:\widehat{\gamma}_j).
    \end{align*}
    \item For a boundary interval $\alpha\in \mathbb{B}$, define
    \begin{align}\label{eq:boundary_coord}
        \sfx^\tri_\alpha(L,\sigma_L,\nu):=\nu_\alpha- \sum_j w_j (\alpha:\widehat{\gamma}_j).
    \end{align}
\end{itemize}

We call the coordinate system $\sfx_\tri$ the \emph{(lamination) shear coordinates} associated with $\tri$. 
The following is a slight extension of the result in \cite[Section 3.1]{FG07}.

\begin{thm}\label{prop:p-lamination}
For any ideal triangulation $\tri$ of $\Sigma$, the map
\begin{align*}
    \sfx_\tri: \mathcal{L}^p(\Sigma,\bQ) \xrightarrow{\sim} \bQ^{\tri}
\end{align*}
gives a bijection. For the flip $f_{\alpha}:\tri \to \tri'$ along an interior edge $\alpha \in e_{\interior}(\tri)$, the coordinate transformation $\sfx_{\tri'} \circ \sfx_{\tri}^{-1}$ is given as in \cref{f:tropical x-flip}. Here we assume that both $\tri$ and $\tri'$ do not have self-folded triangles. 
\end{thm}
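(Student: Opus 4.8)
The plan is to bootstrap from the Fock--Goncharov theorem \cite[Section 3.1]{FG07} for $\X$-laminations, isolating the one genuinely new ingredient: the coordinates attached to boundary intervals, together with the pinning $\nu$.

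First I would check, exactly as for the Fock--Goncharov coordinates, that $\sfx_\tri$ is well defined, i.e. that each count $(\alpha:\widehat\gamma_j)$ is independent of the chosen good position and of the curve representatives; for an interior edge this is \cite{FG07}, and for a boundary interval it is the (easy) statement that whether a strand of the spiralling diagram turns around the corner at $m^+_\alpha$ does not depend on the good position. For bijectivity, observe that for an interior edge $\beta\in e_{\interior}(\tri)$ the value $\sfx_\beta^\tri(L,\sigma_L,\nu)=\sum_j w_j(\beta:\widehat\gamma_j)$ does not involve $\nu$ and coincides with the shear coordinate of the underlying $\X$-lamination $\pi_\Sigma^\mathsf{T}(L,\sigma_L,\nu)=(L,\sigma_L)$, so that the interior coordinates of $\sfx_\tri$ equal the composition of $\pi_\Sigma^\mathsf{T}$ with the Fock--Goncharov shear-coordinate bijection $\cL^x(\Sigma,\bQ)\xrightarrow{\sim}\bQ^{e_{\interior}(\tri)}$ (here the no-self-folded-triangles hypothesis on $\tri$ is used). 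Given a target $(x_\beta)_{\beta\in e(\tri)}\in\bQ^\tri$, its interior part determines a unique $(L,\sigma_L)$; for each $\alpha\in\bB$ the correction $c_\alpha:=\sum_j w_j(\alpha:\widehat\gamma_j)$ then depends only on $(L,\sigma_L)$ and $\tri$, so $\nu_\alpha:=x_\alpha+c_\alpha$ is forced, and this triple is a legitimate $\P$-lamination and the unique preimage. Hence $\sfx_\tri$ is a bijection.

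For the flip $f_\kappa\colon\tri\to\tri'$, the edge $\kappa$ and all interior edges transform by the tropicalisation in $\bQ^{\mathsf{T}}=(\bQ,\max,+)$ of the cluster Poisson mutation, which is \cite[Section 3.1]{FG07}, i.e. \cref{fig:flip} read in the max-plus semifield. An edge of $e(\tri)\setminus\{\kappa\}$ not incident to the flipped quadrilateral $Q_\kappa$ is unchanged, since the spiralling diagram can be kept in good position away from $Q_\kappa$ simultaneously for $\tri$ and $\tri'$: this fixes the interior counts, and also the corner-arc indicator at $m^+_\alpha$ for a boundary interval whose adjacent triangle is disjoint from $Q_\kappa$, while $\nu_\alpha$ is intrinsic. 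The remaining case is a boundary interval $\alpha$ occurring among the surrounding edges $\alpha,\beta,\gamma,\delta$ of \cref{fig:flip}. Since $\nu_\alpha$ is unchanged, $\sfx_\alpha^{\tri'}-\sfx_\alpha^\tri=c_\alpha^\tri-c_\alpha^{\tri'}$, so it remains to compute how the (weighted) presence of a strand turning around $m^+_\alpha$ inside $Q_\kappa$ changes under the flip. This is a finite local computation in $Q_\kappa$, parallel to Fock--Goncharov's reconstruction of laminations from shear coordinates but restricted to the single side $\alpha$: one records the corner-arc multiplicities in the two triangles of $Q_\kappa$ (governed by $\sfx_\kappa^\tri$), isotopes into good position for $\tri'$, and reads off the new multiplicities; carrying this out yields $\sfx_\alpha\mapsto\sfx_\alpha+\max(\sfx_\kappa^\tri,0)$ or $\sfx_\alpha\mapsto\sfx_\alpha+\min(\sfx_\kappa^\tri,0)$ according to which pair of sides of $Q_\kappa$ contains $\alpha$, in agreement with \cref{f:tropical x-flip}.

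The step I expect to be the main obstacle is this last local corner-arc count on a boundary side of $Q_\kappa$, especially when a vertex of $Q_\kappa$ is a puncture carrying a nonzero sign, so that infinitely many strands of the spiralling diagram sweep through $Q_\kappa$: one must verify that only finitely many of them affect $c_\alpha$ and that the net change is the asserted piecewise-linear function of $\sfx_\kappa^\tri$. The degenerate cases where two sides of $Q_\kappa$ are identified ($\alpha=\gamma$ and/or $\beta=\delta$) should likewise be checked for consistency, as flagged in the caption of \cref{fig:flip}.
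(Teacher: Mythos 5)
Your proposal is correct and follows essentially the same route as the paper: bijectivity is reduced to the Fock--Goncharov bijection on the interior shear coordinates, with the pinning $\nu_\alpha=x_\alpha+c_\alpha$ then forced by \eqref{eq:boundary_coord}, and the flip formula on a boundary interval is obtained by the same local count of corner-arc (leaf) multiplicities in the flipped quadrilateral, split into the cases $\sfx_\kappa^\tri\geq 0$ and $\sfx_\kappa^\tri\leq 0$. The paper's own verification of that last count is no more detailed than your sketch (it just records the corner weights $\mathsf{w}_{ij}^\tri$, normalizes $\nu\equiv 0$ by linearity, and compares $\mathsf{w}_{01}^\tri$ with $\mathsf{w}_{12}^{\tri'}$), so there is no gap relative to the paper's standard of rigor.
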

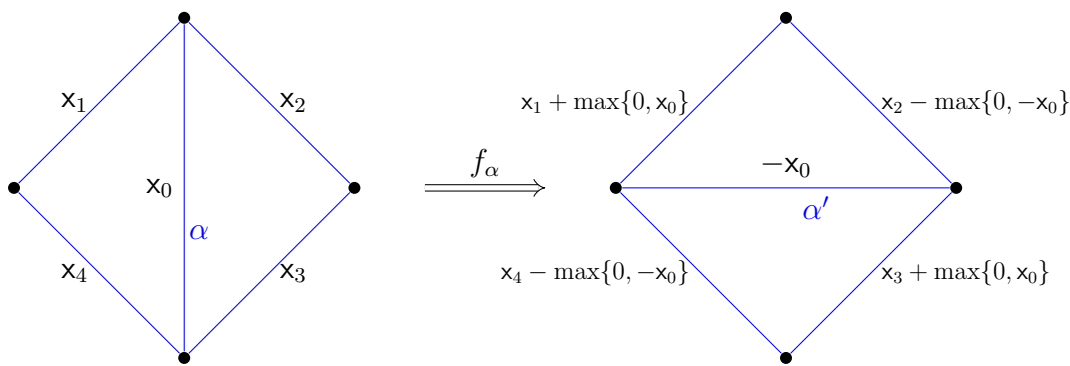
\begin{figure}[ht]
\[\hspace{1.4cm}
\begin{tikzpicture}[scale=0.8]
\path(0,0) node [fill, circle, inner sep=1.6pt] (x1){};
\path(135:4) node [fill, circle, inner sep=1.6pt] (x2){};
\path(0,4*1.4142) node [fill, circle, inner sep=1.6pt] (x3){};
\path(45:4) node [fill, circle, inner sep=1.6pt] (x4){};
\draw[blue](x1) to node[midway,left,black]{$\sfx_4$} (x2) 
to node[midway,left,black]{$\sfx_1$} (x3) 
to node[midway,right,black]{$\sfx_2$} (x4) 
to node[midway,right,black]{$\sfx_3$} (x1) 
to node[midway,left,black]{$\sfx_0$} (x3);

\draw[-implies, double distance=2pt](4,2*1.4142) to node[midway,above]{$f_{\alpha}$} (6,2*1.4142);

\begin{scope}[xshift=10cm]
\path(0,0) node [fill, circle, inner sep=1.6pt] (x1){};
\path(135:4) node [fill, circle, inner sep=1.6pt] (x2){};
\path(0,4*1.4142) node [fill, circle, inner sep=1.6pt] (x3){};
\path(45:4) node [fill, circle, inner sep=1.6pt] (x4){};
\draw[blue](x1) to node[midway,left,black]{\scalebox{0.8}{$\sfx_4-\max\{0,-\sfx_0\}$}} (x2) 
to node[midway,left,black]{\scalebox{0.8}{$\sfx_1+\max\{0,\sfx_0\}$}} (x3) 
to node[midway,right,black]{\scalebox{0.8}{$\sfx_2-\max\{0,-\sfx_0\}$}} (x4) 
to node[midway,right,black]{\scalebox{0.8}{$\sfx_3+\max\{0,\sfx_0\}$}} (x1);
\draw[blue] (x2) to node[midway,above,black]{$-\sfx_0$} (x4);
\end{scope}
\node [blue] at (0.25,2.1) {$\alpha$};
\node [blue] at (10.5,2.5) {$\alpha'$};
\end{tikzpicture}
\]
\caption{The coordinate transformation for a flip. 
}
\label{f:tropical x-flip}
\end{figure}

\begin{proof}
It is known that $\sfx_\tri^{\uf}:=(\sfx_\alpha^\tri)_{\alpha \in e_{\interior}(\tri)}: \cL^x(\Sigma,\bQ) \xrightarrow{\sim} \bQ^{e_{\interior}(\tri)}$ gives a bijection \cite[Section 3.1]{FG07}. In other words, given a vector $\sfx=(\sfx_\alpha) \in \bQ^{e(\tri)}$, one can uniquely reconstruct a rational $\X$-lamination $(L,\sigma_L)$ such that $\sfx_\alpha^\tri(L,\sigma_L)=\sfx_\alpha$ for $\alpha \in e_{\interior}(\tri)$. Then the pinning $\nu$ can be reconstructed from $(L,\sigma_L)$ and the boundary coordinates via the relation \eqref{eq:boundary_coord}. Thus the first statement holds. 

When all the edges in \cref{f:tropical x-flip} are interior edges, the formula is the one given in \cite{FG07}. Consider the case where one of the edges, say $\alpha_1$, is a boundary interval. 
Fix a rational $\P$-lamination $(L,\sigma_L,\nu) \in \cL^p(\Sigma,\bQ)$. For $i \neq j \in \{0,1,2,3,4\}$, let $\mathsf{w}_{ij}^\tri=\mathsf{w}_{ij}^\tri(L,\sigma_L,\nu)$ denote the weighted sum of the leaves which surround the corner bounded by the edges $\alpha_i$ and $\alpha_j$ in $\tri$. Let $\mathsf{w}_{ij}^{\tri'}$ be the similar quantity for the triangulation $\tri'$. Since the pinnings contributes to the frozen coordinates linearly, we may assume that $\nu_\alpha=0$ for all $\alpha \in \bB$ without loss of generality. 

Then from the definitions, $\sfx_{\alpha_1}^\tri= -\mathsf{w}_{01}^\tri$ and $\sfx_{\alpha_1}^{\tri'} = -\mathsf{w}_{12}^{\tri'}$. 
If $\sfx_{\alpha_0}^\tri \geq 0$, then 
$\mathsf{w}_{12}^{\tri'} = \mathsf{w}_{01}^\tri - \sfx_{\alpha_0}^\tri=-(\sfx_{\alpha_1}^\tri+\sfx_{\alpha_0}^\tri)$, 
and hence $\sfx_{\alpha_1}^{\tri'}=\sfx_{\alpha_1}^\tri+\sfx_{\alpha_0}^\tri$. 
If $\sfx_{\alpha_0}^\tri \leq 0$, then $\mathsf{w}_{12}^{\tri'} = \mathsf{w}_{01}^\tri$ and hence $\sfx_{\alpha_1}^{\tri'}=\sfx_{\alpha_1}^\tri$. By a similar argument for the edge $\alpha_2$ and the symmetry, we get the desired formula. 
\end{proof}

The formula in \cref{f:tropical x-flip} is the tropical analogue of the cluster Poisson transformation \eqref{eq:X-transf}. Then we get:

\begin{cor}
The shear coordinates $\sfx_\tri: \mathcal{L}^p(\Sigma,\bQ) \xrightarrow{\sim} \bQ^{e(\tri)}$ associated with ideal triangulations $\tri$ of $\Sigma$ combine to give a canonical $MC(\Sigma)$-equivariant isomorphism $\cL^p(\Sigma,\bQ) \xrightarrow{\sim} \X_{\Sigma}(\bQ^\mathsf{T})$. 
\end{cor}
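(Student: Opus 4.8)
The plan is to compare the atlas $\{\sfx_\tri\}$ with the defining coordinate atlas of the cluster Poisson variety $\X_\Sigma$. Recall from \cref{app:cluster} that $\X_\Sigma$ (with its frozen directions) is glued from split tori $\X_\tri \cong \bG_m^{e(\tri)}$, one for each ideal triangulation $\tri$ of $\Sigma$, and that for a flip $f_\kappa \colon \tri \to \tri'$ the transition map between $\X_\tri$ and $\X_{\tri'}$ is the cluster Poisson transformation \eqref{eq:X-transf}. Consequently the set of rational tropical points is, by definition,
\[ \X_\Sigma(\bQ^{\mathsf{T}}) \;=\; \varinjlim_{\tri}\; \bQ^{e(\tri)}, \]
the colimit being over the groupoid of ideal triangulations and flips, with the gluing maps obtained by tropicalizing \eqref{eq:X-transf} (replace $\times$ by $+$ and $+$ by $\max$). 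So the task is to exhibit $\cL^p(\Sigma,\bQ)$ together with the family $(\sfx_\tri)$ as (a cocone computing) this colimit.

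\textbf{The isomorphism.} By \cref{prop:p-lamination}, each $\sfx_\tri \colon \cL^p(\Sigma,\bQ) \to \bQ^{e(\tri)}$ is a bijection, and for a flip $f_\kappa \colon \tri \to \tri'$ the composite $\sfx_{\tri'} \circ \sfx_\tri^{-1}$ is exactly the piecewise-linear map of \cref{f:tropical x-flip}. The first step is the routine (but necessary) observation that this formula is the termwise tropicalization of \eqref{eq:X-transf}: the monomial factors $X_i(1+X_\kappa^{\pm 1})^{\pm 1}$ tropicalize to $\sfx_i \pm \max\{0,\pm\sfx_\kappa\}$ and the new coordinate $X_\kappa^{-1}$ to $-\sfx_\kappa$. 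Hence the family $(\sfx_\tri)$ is a cocone over precisely the diagram whose colimit is $\X_\Sigma(\bQ^{\mathsf{T}})$, and since every leg $\sfx_\tri$ is a bijection, the universal map $\cL^p(\Sigma,\bQ) \to \X_\Sigma(\bQ^{\mathsf{T}})$ is a piecewise-linear isomorphism. Canonicity is then automatic: the identification is described through any single $\tri$, and two such descriptions differ by a composite of flips under which all the data are compatible by construction.

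\textbf{Equivariance.} A mapping class $\phi$ permutes ideal triangulations, inducing an automorphism of the cluster complex and hence of $\X_\Sigma$ and of $\X_\Sigma(\bQ^{\mathsf{T}})$; on the other side it acts on $\cL^p(\Sigma,\bQ)$ by pushing forward the underlying curves, the lamination signature, and the pinning. Since the shear coordinates are defined purely topologically (through the counting rule for $(\alpha:\widehat{\gamma}_j)$ in a good position and the relation \eqref{eq:boundary_coord}), they are natural under homeomorphisms: $\sfx^{\phi(\tri)}_{\phi(\alpha)}(\phi_\ast L) = \sfx^{\tri}_\alpha(L)$ for all $\alpha \in e(\tri)$. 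This naturality is precisely the statement that the isomorphism above intertwines the two $MC(\Sigma)$-actions.

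\textbf{Anticipated obstacle.} The formal colimit argument and the topological naturality are straightforward; the delicate point is the bookkeeping around punctures. A flip of a triangulation without self-folded triangles may produce one, so the flip groupoid restricted to triangulations without self-folded triangles does not by itself index the full cluster Poisson atlas — one must pass to tagged triangulations, where the lamination signature plays the role of the tagging. I expect the main work is to check that \cref{prop:p-lamination} extends across these tagged moves (equivalently, that $\cL^p(\Sigma,\bQ)$ is glued from the charts $\bQ^{e(\tri)}$ over the tagged flip groupoid via the tropicalized mutations), after which the argument of the two preceding paragraphs applies verbatim. For $\Sigma$ unpunctured this issue does not arise and the proof is complete as stated.
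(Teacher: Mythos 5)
Your argument is correct and is essentially the paper's own: the corollary is deduced immediately from \cref{prop:p-lamination} together with the observation that the flip formula in \cref{f:tropical x-flip} is the tropicalization of the cluster Poisson transformation \eqref{eq:X-transf}, plus the topological naturality of the shear coordinates for equivariance. Your "anticipated obstacle" about self-folded and tagged triangulations is a genuine subtlety that the paper also leaves implicit (it restricts throughout to triangulations without self-folded triangles, whose flip graph is connected, and defers tagged structures to the references), so flagging it is reasonable but does not change the argument.
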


\paragraph{\textbf{Fock--Goncharov's reconstruction, revisited.}}
For later use in \cref{subsec:lamination_amalgamation}, let us recall the reconstruction procedure of a rational $\X$-laminations from the shear coordinates given in \cite{FG07}. 
Suppose $(\sfx_\alpha)_\alpha \in \bZ^{e_{\interior}(\tri)}$ is given.
On each triangle $T \in t(\tri)$, draw an infinite collection of disjoint corner arcs around each corner (\cref{fig:gluing block}). We are going to glue these local blocks together to form an integral $\X$-lamination. 

\begin{figure}[ht]
    \centering
\begin{tikzpicture}
\draw[blue] (-30:2) coordinate(A) -- (90:2) coordinate(B) -- (210:2) coordinate(C) --cycle;
\begin{scope}
\clip (-30:2) -- (90:2) -- (210:2) --cycle;

\foreach \i in {0.8,1,1.2,1.4}
{
\foreach \x in {0,120,240}
    {
    \draw(-30+\x:2)++(120+\x:\i) coordinate(a);
    \draw[red] (a) arc(120+\x:180+\x:\i);
    \draw[red,thick,dotted] (-30+\x:1.7) -- (-30+\x:1.34);
    }
}
\end{scope}
\end{tikzpicture}
    \caption{The building block for reconstruction from the shear coordinates.}
    \label{fig:gluing block}
\end{figure}
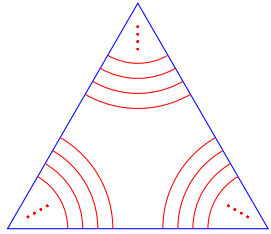

Consider two triangles $T_L$ and $T_R$ that share an interior edge $\alpha$. 
Fatten $\alpha$ into a biangle $B_\alpha$, which is bounded by the boundary intervals $\alpha_L$ and $\alpha_R$ of $T_L$ and $T_R$, respectively. 
For $Z \in \{L,R\}$, let $S_{Z}$ denote the set of endpoints of the infinite corner arcs on $\alpha_Z$. 
We connect the points in $S_L$ and $S_R$ inside the biangle $B_\alpha$ by the following rule. See \cref{fig:FG_gluing}.
\begin{itemize}
    \item For $Z \in \{L,R\}$, choose an orientation-preserving homeomorphism $\phi_{Z}:\bR \to \alpha_Z$ so that $\phi_{Z}(\frac{1}{2}+\bZ)=S_{Z}^\pm$, and $\phi_Z(\bR_{<0}) \cap S_Z$ consists of all the strands coming from the corner arcs around $m^+_{\alpha_Z}$. 
    \item Put the points 
    \begin{align}\label{eq:pins_reconstruction}
        p_L:=\phi_{L}(\sfx_{\alpha})\quad \mbox{and} \quad p_R:=\phi_{R}(0),
    \end{align}
    which we call the \emph{pins}. 
    \item There exists an orientation-reversing homeomorphism $f:\alpha_L \to \alpha_R$ such that $f(\frac{1}{2}+\bZ)=\frac{1}{2}+\bZ$ and $f(p_L)=p_R$. 
    Connect the points $s \in S_L$ to the points $f(s) \in S_R$ by a disjoint collection of curves. 
\end{itemize}

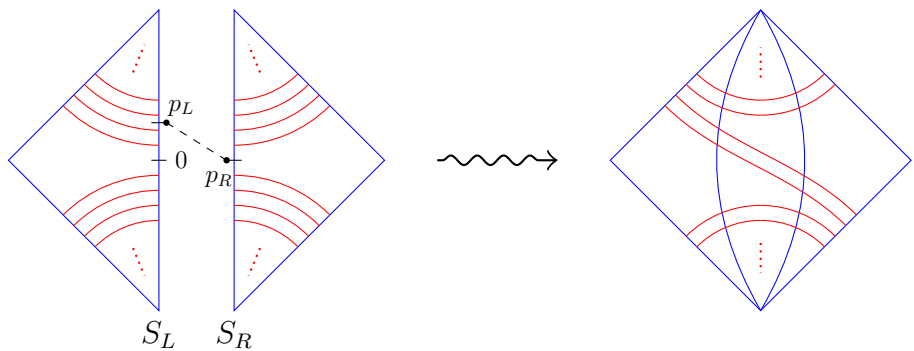
\begin{figure}[ht]
    \centering
\begin{tikzpicture}
\draw[blue] (-2,0) -- (0,2) -- (0,-2) --cycle;
\node[below] at (0,-2) {$S_L$};

\foreach \x in {1.8,1.6,1.4,1.2}
{
\draw[red] (0,-2+\x) arc(90:135:\x); 
\draw[red] (0,2-\x) arc(-90:-135:\x);
}
\draw[red,thick,dotted] ($(0,-2)+(112.5:0.9)$)--++(-67.5:0.4);
\draw[red,thick,dotted] ($(0,2)+(-112.5:0.9)$)--++(67.5:0.4);

\draw(-0.1,-0) -- (0.1,-0) node[right,scale=0.8]{$0$};
\pinn{0,0.5}{180}{0.1}{0.03cm}
\draw(0.1,0.5) coordinate(p1);
\node[scale=0.8] at (0.3,0.7) {$p_L$};

\begin{scope}[xshift=1cm]
\draw[blue] (2,0) -- (0,2) -- (0,-2) --cycle;
\node[below] at (0,-2) {$S_R$};

\foreach \x in {1.8,1.6,1.4,1.2}
{
\draw[red] (0,-2+\x) arc(90:45:\x); 
\draw[red] (0,2-\x) arc(-90:-45:\x);
}
\draw[red,thick,dotted] ($(0,-2)+(67.5:0.9)$)--++(-112.5:0.4);
\draw[red,thick,dotted] ($(0,2)+(-67.5:0.9)$)--++(112.5:0.4);

\pinn{0,0}{0}{0.1}{0.03cm}
\draw(-0.1,0) coordinate(p2);
\node[scale=0.8] at (-0.2,-0.25) {$p_R$};
\end{scope}
\draw[dashed] (p1) -- (p2);
\draw [thick,-{Classical TikZ Rightarrow[length=4pt]},decorate,decoration={snake,amplitude=1.8pt,pre length=2pt,post length=3pt}](3.7,0) --(5.3,0);

\begin{scope}[xshift=8cm]
\draw[blue] (2,0) -- (0,2) -- (-2,0) -- (0,-2) --cycle;
\draw[blue] (0,-2) to[bend left=30pt] (0,2);
\draw[blue] (0,-2) to[bend right=30pt] (0,2);
\foreach \x in {1.4,1.2}
{
\draw[red] (0,-2+\x) arc(90:135:\x); 
\draw[red] (0,-2+\x) arc(90:45:\x); 
\draw[red] (0,2-\x) arc(-90:-135:\x);
\draw[red] (0,2-\x) arc(-90:-45:\x);
}
\draw[red,thick,dotted] ($(0,-2)+(90:0.9)$)--++(-90:0.4);
\draw[red,thick,dotted] ($(0,2)+(-90:0.9)$)--++(90:0.4);

\draw[red] (0,2)++(-135:1.6) ..controls ++(-45:1) and ($(0,-2)+(45:1.8)+(135:1)$).. ($(0,-2)+(45:1.8)$);
\draw[red] (0,2)++(-135:1.8) ..controls ++(-45:1) and ($(0,-2)+(45:1.6)+(135:1)$).. ($(0,-2)+(45:1.6)$);
\end{scope}
\end{tikzpicture}
    \caption{Fock--Goncharov's gluing procedure of $\X$-laminations.}
    \label{fig:FG_gluing}
\end{figure}

Then we get an infinite collection of curves on the quadrilateral $T_L \cup B_\alpha \cup T_R$. Applying this construction to each pair of consecutive triangles, we get an infinite collection of curves on $\Sigma$. Then we do the followings: 
\begin{itemize}
    \item Remove the peripheral curves around each special point of $\Sigma$.
    \item For each puncture $m \in \bM_\circ$ of $\Sigma$, replace each spiralling end around $m$ with a signed end at $m$, while encoding the spiralling directions in signs by reversing the rule in \cref{fig:spiral}. 
\end{itemize}
Then we get an integral $\X$-lamination $(L,\sigma_L)$, which satisfies $\sfx^\tri_\alpha(L,\sigma_L) = \sfx_\alpha$ for $\alpha \in e_{\interior}(\tri)$. 

\begin{rem}\label{rem:pin_shift}
The asymmetry of the pins $p_L$ and $p_R$ is explained as follows. In fact, if we change the pins to $p'_L:=\phi_L(\sfx_{\alpha}-\nu)$ and $p'_R:=\phi_R(\nu)$ for $\nu \in \bZ$, the resulting pairing of points does not change. In particular, we could instead use the pins $p'_L=\phi_L(0)$ and $p'_R=\phi_R(\sfx_\alpha)$, which produces the same result. 
\end{rem}

\subsection{Gluing map}\label{subsec:lamination_amalgamation}
Now let us turn our attention to the tropical analogue of the gluing map. In the setting at the beginning of \cref{subsec:Teich_amalgamation}, we are going to construct a map 
\begin{align*}
    q_{\Sigma,\Sigma'}^\mathsf{T}: \cL^p(\Sigma,\bQ) \to \cL^p(\Sigma',\bQ)
\end{align*}
satisfying the equation $(q_{\Sigma,\Sigma'}^\mathsf{T})^*\sfx_{\overline{\alpha}}^{\tri'} = \sfx_{\alpha_L}^\tri + \sfx_{\alpha_R}^\tri$, which is the tropical analogue of the formula given in \cref{prop:amalgamation}. It is also defined so that equivariant under the $\bQ_{>0}$-action rescaling the weights on the curves, and the action $\sigma_{\alpha_L,\alpha_R}:\bQ \curvearrowright \cL^p(\Sigma,\bQ)$ given by the shift
\begin{align}\label{eq:gluing_action}
    \mu.(\nu_{\alpha_L},\nu_{\alpha_R}):= (\nu_{\alpha_L}+\mu,\nu_{\alpha_R}-\mu)
\end{align}
for $\mu \in \bQ$, keeping the other $\nu_\alpha$, $\alpha\neq \alpha_L,\alpha_R$ intact. 

Let $(L,\sigma_L,\nu) \in \cL^p(\Sigma,\bZ)$ be an integral $\P$-lamination. Represent $L$ by a collection of curves with weight $1$. 
Around each endpoint of $\alpha_L$ and $\alpha_R$, draw an infinite collection of disjoint peripheral curves so that they are disjoint from the curves in $L$. For $Z \in \{L,R\}$, let $S_Z$ denote the set of the endpoints of the curves in $L$ and these additional peripheral curves on the edge $\alpha_Z$. 
Insert a biangle $B$ between $\alpha_L$ and $\alpha_R$, and identify $\Sigma'= \Sigma \cup B$. We connect the points in $S_L$ and $S_R$ inside the biangle $B$ by the following rule:
\begin{itemize}
    \item Choose an orientation-preserving homeomorphism $\psi_{Z}:\bR \to \alpha_Z$ so that $\psi_{Z}(\frac{1}{2}+\bZ)=S_{Z}$, and $\psi_Z(\bR_{<0}) \cap S_Z$ consists of all the endpoints of the additional peripheral curves around the marked point $m^+_{\alpha_Z}$. 
    \item Put the point $p_Z:=\psi_{Z}(\nu_{\alpha_Z}) \in \alpha_Z$, which we call the \emph{pin}.
    \item There exists an orientation-reversing homeomorphism $f:\alpha_L \to \alpha_R$ such that $f(\frac{1}{2}+\bZ)=\frac{1}{2}+\bZ$ and $f(p_L)=p_R$. 
    Connect the points $s \in S_L$ to the points $f(s) \in S_R$ by a disjoint collection of curves. 
\end{itemize}
Then we get an infinite collection of curves on $\Sigma'=\Sigma\cup B$. Here the reader should notice the similarity to the reconstruction procedure given in the previous subsection. 
The marked points of $\alpha_L$ and $\alpha_R$ are identified, and regarded as new marked points in $\Sigma'$. For each of these new marked points, do the similar procedure as before: remove the peripherals around new special points, and replace spiralling ends to signed ends around new punctures. 
Thus we get an integral $\P$-lamination $\hL'=q^\mathsf{T}_{\Sigma,\Sigma'}(\hL) \in \cL^p(\Sigma,\bZ)$. 
The construction is clearly equivariant under the rescaling $\bZ_{>0}$-action. 

\begin{dfn}\label{dfn:gluing_lamination}
By extending the above construction $\bQ_{>0}$-equivariantly, we obtain a map $q^\mathsf{T}_{\Sigma,\Sigma'}: \cL^p(\Sigma,\bQ) \to \cL^p(\Sigma',\bQ)$, which we call the \emph{gluing map} along $\alpha_L$ and $\alpha_R$.
\end{dfn}

The following is easily verified with \cref{rem:pin_shift} in mind:
\begin{lem}\label{lem:shift-invariance}
The gluing map $q^\mathsf{T}_{\Sigma,\Sigma'}$ is invariant under the shift action \eqref{eq:gluing_action}.
\end{lem}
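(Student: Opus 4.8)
The plan is to unwind the construction in \cref{dfn:gluing_lamination} and isolate exactly where the boundary parameters $\nu_{\alpha_L}$ and $\nu_{\alpha_R}$ enter. They affect the output only through the two pins $p_L=\psi_L(\nu_{\alpha_L})$ and $p_R=\psi_R(\nu_{\alpha_R})$, and these in turn serve only to pin down the orientation-reversing matching homeomorphism $f\colon\alpha_L\to\alpha_R$ (subject to $f(\tfrac12+\bZ)=\tfrac12+\bZ$ and $f(p_L)=p_R$) up to isotopy relative to $\tfrac12+\bZ$. So the whole assertion reduces to showing that this matching --- hence the isotopy class of the family of curves glued inside the biangle $B$ --- depends on $(\nu_{\alpha_L},\nu_{\alpha_R})$ only through the sum $\nu_{\alpha_L}+\nu_{\alpha_R}$, which is precisely the quantity left fixed by \eqref{eq:gluing_action}.

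First I would reduce to the integral case with an integer shift. Since $q^\mathsf{T}_{\Sigma,\Sigma'}$ is $\bQ_{>0}$-equivariant and the rescaling action scales the pinnings as well as the weights, rescaling by $N\in\bZ_{>0}$ intertwines $\mu.(-)$ with $(N\mu).(-)$; so, given a rational $\P$-lamination $(L,\sigma_L,\nu)$ and $\mu\in\bQ$, choosing $N$ that clears all denominators of the weights, of $\nu$, and of $\mu$ makes both $N\cdot(L,\sigma_L,\nu)$ and $N\mu$ integral, and the desired equality follows by pulling it back through the rescaling. Thus it suffices to prove $q^\mathsf{T}_{\Sigma,\Sigma'}(\mu.\widehat L)=q^\mathsf{T}_{\Sigma,\Sigma'}(\widehat L)$ for $\widehat L=(L,\sigma_L,\nu)\in\cL^p(\Sigma,\bZ)$ and $\mu\in\bZ$.

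In the integral setting I would fix the identifications $\psi_L,\psi_R$ once and for all, so that $S_Z=\psi_Z(\tfrac12+\bZ)$ and the additional peripheral strands around the common endpoint $m^+_{\alpha_Z}$ occupy $\psi_Z(\bR_{<0})\cap S_Z$, and then read off the pairing directly. Because $\nu_{\alpha_Z}\in\bZ$, the pin $p_Z$ lies in the gap between the strands $\psi_Z(\tfrac12+(\nu_{\alpha_Z}-1))$ and $\psi_Z(\tfrac12+\nu_{\alpha_Z})$; being orientation-reversing and required to match the pins, $f$ is forced, up to isotopy relative to $\tfrac12+\bZ$, to realize the pairing $\psi_L(\tfrac12+k)\leftrightarrow\psi_R(\tfrac12+(n-1-k))$ with $n:=\nu_{\alpha_L}+\nu_{\alpha_R}$, i.e.\ an affine bijection of $\tfrac12+\bZ$ of slope $-1$ whose offset depends on the two parameters only through $n$. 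Since the inserted biangle $B$ is a disk, a disjoint family of arcs in $B$ realizing a prescribed pairing of its boundary points is unique up to isotopy --- this is exactly the mechanism already recorded in \cref{rem:pin_shift}, transplanted from the reconstruction procedure to the present gluing --- so the infinite curve collection produced on $\Sigma'=\Sigma\cup B$ depends on $(\nu_{\alpha_L},\nu_{\alpha_R})$ only through $n$. The remaining steps (discarding the peripherals around the new special points, trading spiralling ends around the new punctures for signed ends, and carrying over the untouched $\nu_\beta$) are canonically determined by that curve collection, and the shift \eqref{eq:gluing_action} preserves both $n$ and every other $\nu_\beta$; hence $q^\mathsf{T}_{\Sigma,\Sigma'}$ takes the same value.

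The only genuinely substantive point --- modest, as the statement advertises --- is the isotopy-uniqueness step: checking that moving $p_L$ by $+1$ unit and $p_R$ by $-1$ unit leaves the glued family isotopic. This is essentially \cref{rem:pin_shift}, but it does rely on the additional peripheral strands being drawn in the same fashion around $m^+_{\alpha_L}$ and $m^+_{\alpha_R}$, so that the matching is truly governed by the single integer $n$ and no information about the asymptotics near the identified endpoint can intrude; everything else is bookkeeping.
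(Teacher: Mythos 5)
Your argument is correct and is exactly the paper's intended proof: the paper dispatches this lemma in one line by invoking \cref{rem:pin_shift}, i.e.\ the observation that the matching homeomorphism $f$ (hence the glued curve family up to isotopy in the biangle) depends on the two pins only through $\nu_{\alpha_L}+\nu_{\alpha_R}$, which is precisely what you verify. Your explicit pairing formula $\psi_L(\tfrac12+k)\leftrightarrow\psi_R(\tfrac12+(n-1-k))$ and the reduction to the integral case via $\bQ_{>0}$-equivariance are just the details the paper leaves to the reader.
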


Any ideal triangulation $\tri$ of $\Sigma$ naturally induces a triangulation $\tri'$ of $\Sigma'$, where the edges $\alpha_L$ and $\alpha_R$ are identified and give an interior edge $\overline{\alpha}$ of $\tri$. The other edges are naturally inherited to $\tri'$. 

\begin{thm}\label{thm:amalgamation}
The gluing map $q^\mathsf{T}_{\Sigma,\Sigma'}$ is the tropical analogue of \cref{prop:amalgamation}. Namely, for any ideal triangulation $\tri$ of $\Sigma$ and the induced triangulation $\tri'$ of $\Sigma'$, it satisfies
\begin{align*}
    (q^\mathsf{T}_{\Sigma,\Sigma'})^\ast\sfx^{\tri'}_{\overline{\alpha}} = \sfx^\tri_{\alpha_L} + \sfx^\tri_{\alpha_R},
\end{align*}
and the other coordinates are kept intact: $(q^\mathsf{T}_{\Sigma,\Sigma'})^\ast\sfx^{\tri'}_{\alpha} = \sfx^\tri_{\alpha}$ for $\alpha \neq \overline{\alpha}$.
\end{thm}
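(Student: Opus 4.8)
The plan is to pass to integral $\P$-laminations, localize everything near the glued edge $\overline{\alpha}$, and there recognize the gluing procedure as an instance of the Fock--Goncharov reconstruction recalled above. Both sides of each claimed identity are $\bQ_{>0}$-homogeneous of degree $1$ for the simultaneous rescaling of curve weights and pinnings, and $q^\mathsf{T}_{\Sigma,\Sigma'}$ is $\bQ_{>0}$-equivariant by \cref{dfn:gluing_lamination}; so it suffices to treat an integral $\P$-lamination $(L,\sigma_L,\nu)$, with $L$ drawn by curves of weight $1$ and spiralling diagram $\hL$. (We may also invoke \cref{lem:shift-invariance} together with the invariance of $\nu_{\alpha_L}+\nu_{\alpha_R}$ under \eqref{eq:gluing_action} to normalize the pinnings if convenient.)

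\emph{Locality.} The construction of \cref{dfn:gluing_lamination} alters the diagram only inside the inserted biangle $B$ and an arbitrarily small neighbourhood of its two new marked points (or the new puncture, when $\alpha_L,\alpha_R$ are consecutive): outside there the curves of $\hL$ are merely reconnected, and the auxiliary peripherals enter $B$ in matched pairs that close up into peripherals and are discarded. Hence for every edge $\beta\neq\overline{\alpha}$ of $\tri'$ whose governing quadrilateral (or triangle) is disjoint from that neighbourhood one has $\sfx^{\tri'}_\beta(\hL')=\sfx^\tri_\beta(L,\sigma_L,\nu)$, and it remains to analyze $\overline{\alpha}$ itself together with the finitely many boundary intervals of $\tri'$ incident to the new marked points.

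\emph{The edge $\overline{\alpha}$.} The quadrilateral with diagonal $\overline{\alpha}$ is $Q_{\overline{\alpha}}=T_L\cup B\cup T_R$, where $T_Z$ is the triangle of $\tri$ containing $\alpha_Z$. The curves of $\hL'$ meeting $Q_{\overline{\alpha}}$ are produced inside $B$ by connecting $S_L$ to $S_R$ through the pins $p_L=\psi_L(\nu_{\alpha_L})$, $p_R=\psi_R(\nu_{\alpha_R})$, which is exactly the pairing rule \eqref{eq:pins_reconstruction}. Since that reconstruction procedure is inverse to $\sfx_{\tri'}$ on the interior coordinates (\cref{prop:p-lamination}), the shear coordinate along the glued diagonal of a lamination assembled from such a biangle equals the sum of the two pin parameters, each read in the parametrization used there — this sum being the quantity preserved by the pin-shift of \cref{rem:pin_shift}. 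Comparing the gluing parametrizations $\psi_Z$ with the reconstruction parametrizations $\phi_Z$ for $\tri'$: both space the strand-endpoints on $\alpha_Z$ by unit steps, but $\phi_Z$ puts at negative parameter \emph{all} strands of $\hL$ entering the corner $m^+_{\alpha_Z}$, whereas $\psi_Z$ puts there only the auxiliary peripherals around $m^+_{\alpha_Z}$; the origins therefore differ by $\sum_j w_j(\alpha_Z:\widehat{\gamma}_j)$ strands. Combining,
\[
\sfx^{\tri'}_{\overline{\alpha}}(\hL')
=\bigl(\nu_{\alpha_L}+\nu_{\alpha_R}\bigr)-\sum_j w_j(\alpha_L:\widehat{\gamma}_j)-\sum_j w_j(\alpha_R:\widehat{\gamma}_j)
=\sfx^\tri_{\alpha_L}+\sfx^\tri_{\alpha_R},
\]
the last equality being \eqref{eq:boundary_coord}. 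The same local picture shows that the extra curves of $\hL'$ introduced near $\overline{\alpha}$ by the pin-shift touch precisely the boundary intervals incident to the new marked points, where they are cancelled by the pinnings the construction assigns to those intervals, so $\sfx^{\tri'}_\beta(\hL')=\sfx^\tri_\beta$ there as well.

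\emph{Main obstacle.} The delicate part is the parametrization comparison in the previous paragraph: pinning down the exact offset between $\psi_Z$ and $\phi_Z$ with the correct sign, checking via \cref{f:intersection sign} that the surviving glued curves contribute to $\sfx^{\tri'}_{\overline{\alpha}}$ with the intended sign, and — most subtly — verifying that the pinnings the construction produces on the boundary intervals incident to the new marked points exactly cancel the local contributions of those curves (this is the point where the Teichm\"uller analogue \cref{prop:amalgamation} keeps the frozen variables intact). One must also handle the case where $\alpha_L,\alpha_R$ are consecutive, so that the relevant corners carry spiralling (hence infinitely many) ends and the unit-spacing / spiralling-to-sign conventions of the reconstruction must be matched, as well as the degenerate cases in which a single curve of $L$ is incident to both $\alpha_L$ and $\alpha_R$, or already incident to a puncture of $\Sigma$.
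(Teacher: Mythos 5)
Your proposal is correct and follows essentially the same route as the paper: reduce to the integral case by $\bQ_{>0}$-equivariance, observe that the gluing pins $\psi_Z(\nu_{\alpha_Z})$ differ from the reconstruction parametrization by the offset $\phi_Z(n)=\psi_Z(n+\sum_j w_j(\alpha_Z:\widehat{\gamma}_j))$, and conclude via the pin-shift invariance of \cref{rem:pin_shift} that the glued coordinate is $(\nu_{\alpha_L}-c_{\alpha_L})+(\nu_{\alpha_R}-c_{\alpha_R})=\sfx^\tri_{\alpha_L}+\sfx^\tri_{\alpha_R}$. The remaining points you flag (locality for the other edges, the consecutive-interval case) are treated by the paper as immediate from the definition.
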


\begin{proof}
The last statement is clear from the definition. To see the relation between the coordinates on the edges $\alpha_L$, $\alpha_R$ and $\alpha$, it suffices to consider an integral lamination $(L,\sigma_L,\nu) \in \cL^p(\Sigma,\bZ)$ by $\bQ_{>0}$-equivariance. Write $\sfx_\alpha:=\sfx_\alpha^\tri(L,\sigma_L,\nu)$ for $\alpha \in e(\tri)$
. Recall from \eqref{eq:boundary_coord} that the pinnings are given by
\begin{align*}
    \begin{aligned}
    \nu_{\alpha_L}= \sfx_{\alpha_L} +c_{\alpha_L}, \quad \nu_{\alpha_R}&= \sfx_{\alpha_R} +c_{\alpha_R},
    \end{aligned}
\end{align*} 
where we write $c_{\alpha}:= \sum_j w_j(\alpha:\widehat{\gamma}_j)$ for $\alpha\in \bB$ with $L=\{(\gamma_j,w_j)\}_j$. By \cref{rem:pin_shift}, the result of gluing is the same if we use the pins $\widetilde{p}_Z=\psi_Z(\widetilde{\nu}_{\alpha_Z})$ with 
\begin{align}\label{eq:pins_gluing}
    \begin{aligned}
    \widetilde{\nu}_{\alpha_L}:= (\sfx_{\alpha_L}+\sfx_{\alpha_R}) +c_{\alpha_L}, \quad \widetilde{\nu}_{\alpha_R}:= c_{\alpha_R}.
    \end{aligned}
\end{align}
Comparing to the reconstruction procedure in the previous subsection, we here have \lq\lq original'' corner arcs of $L$ in $T_L$ and $T_R$ before adding infinite collections of peripheral curves in the gluing procedure. Hence the two parametrizations of edges are related by 
\begin{align*}
    \phi_Z(n) = \psi_Z(n+c_{\alpha_Z})
\end{align*}
for $n \in \bZ$ and $Z \in \{L,R\}$. 
See \cref{fig:difference_gluing}. 
Comparing two choices of pins \eqref{eq:pins_reconstruction} and \eqref{eq:pins_gluing} under this relation, we see that $(L',\sigma_{L'},\nu')=q_{\Sigma,\Sigma'}(L,\sigma_L,\nu)$ if and only if $\sfx_{\overline{\alpha}}(L',\sigma_{L'},\nu')=\sfx_{\alpha_L}(L,\sigma_L,\nu) + \sfx_{\alpha_R}(L,\sigma_L,\nu)$. 
\end{proof}

\begin{figure}[ht]
\begin{tikzpicture}
\draw[blue] (0,-2) -- (0,2) --(-2,0) --cycle;
\draw[thick] (0,-2.5) -- (0,2.5);
\fill(0,2) circle(2pt);
\fill(0,-2) circle(2pt);
\foreach \i in {0.8,1.0,1.2,1.4}
\draw[myblue] (0,-2+\i) arc(90:135:\i);
\foreach \i in {1.6,1.8}
\draw[red] (0,-2+\i) arc(90:135:\i);
\foreach \i in {0.8,1.0,1.2,1.4}
\draw[myblue] (0,2-\i) arc(-90:-135:\i);
\draw[myblue,thick,dotted] ($(0,-2)+(112.5:0.6)$)--++(-67.5:0.3);
\draw[myblue,thick,dotted] ($(0,2)+(-112.5:0.6)$)--++(67.5:0.3);

\draw[dashed] (-0.1,-0.5) --++(0.6,0) node[anchor=west]{$\psi_L^+(0)$};
\draw[dashed] (-0.1,0) --++(0.6,0) node[anchor=west]{$\phi_L^+(0)$};
\node at (-1.5,1) {$T_L$};
\node at (0.5,1.2) {$\alpha_L$};
\end{tikzpicture}
    \caption{Comparison of two edge parametrizations. Arcs in the given lamination are shown in red, while the peripheral curves added upon the gluing procedure are shown in blue.}
    \label{fig:difference_gluing}
\end{figure}
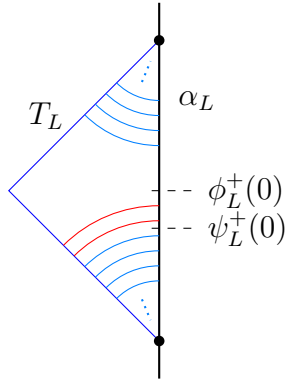

\subsection{Ensemble map}

Recall the following from \cite{FG07}:
\begin{dfn}
A \emph{rational $\A$-lamination} on $\Sigma$ is the isotopy class of a mutually non-isotopic, disjoint collection $\{\gamma_i\}_i$ of curves in $\Sigma$ that do not incident to punctures, together with rational weights $w_i \in \bQ$ such that $w_i \geq 0$ if $\gamma_i$ is non-peripheral. Such a data is considered modulo the equivalence relation generated by isotopies and the following operations:
\begin{enumerate}
    \item Remove a contractible curve or a curve with weight $0$.
    \item Combine a pair of isotopic curves with weights $u$ and $v$ into a single curve with the weight $u+v$.
\end{enumerate}
Let $\cL^a(\Sigma,\bQ)$ denote the set of integral $\A$-laminations, whose element is denoted by $L=\{(\gamma_i,w_i)\}_i$. 
\end{dfn}
For each ideal arc $\alpha$ on $\Sigma$ and a rational $\A$-lamination $L=\{(\gamma_i,w_i)\}_i$, isotope each curve $\gamma_i$ so that the intersection with $\alpha$ is minimal. Then we define
\begin{align*}
    \sfa_\alpha(L):= \sum_i w_i \sfa_\alpha(\gamma_i),
\end{align*}
where $\sfa_\alpha(\gamma_i) \in \frac 1 2 \bZ_{\geq 0}$ denotes half the geometric intersection number of $\alpha$ and $\gamma_i$. Then it is known that, for any ideal triangulation $\tri$ of $\Sigma$, the map
\begin{align*}
    \sfa_\tri:=(\sfa_\alpha)_{\alpha \in e(\tri)}: \cL^a(\Sigma,\bQ) \to \bQ^{e(\tri)}
\end{align*}
gives a bijection. They transform by the tropical analogue of cluster $K_2$-transformation \eqref{eq:A-transf}, and thus combine to give an $MC(\Sigma)$-equivariant isomorphism $\cL^a(\Sigma,\bQ) \xrightarrow{\sim} \A_{\Sigma}(\bQ^\mathsf{T})$. A rational $\A$-lamination $L$ is said to be \emph{integral} if $\sfa_\tri(L) \in \bZ^{e(\tri)}$ for any ideal triangulation $\tri$. Notice that an $\A$-lamination with integral weights may not be integral in this sense. 
Since the coordinate transformations are integral piece-wise linear, it suffices to check this condition for one triangulation. Let $\cL^a(\Sigma,\bZ) \subset \cL^a(\Sigma,\bQ)$ denote the subset of integral $\A$-laminations, which is identified with $\A_\Sigma(\bZ^\mathsf{T})$. 

Let us define the \emph{extended ensemble map}
\begin{align}
    p_\Sigma^\mathsf{T}: \cL^a(\Sigma,\bQ) \to \cL^p(\Sigma,\bQ)
\end{align}
by forgetting the peripheral components, and defining the pinning $\nu_\alpha \in \bZ$ to be minus the weight of the peripheral component around the initial marked point $m^+_\alpha$. 

\begin{prop}\label{prop:ensemble_tropical}
For any ideal triangulation $\tri$ of $\Sigma$, we have
\begin{align}\label{eq:ensemble_tropical}
    (p_\Sigma^\mathsf{T})^\ast \sfx_\kappa^\tri = \sum_{\alpha \in e(\tri)} (\ve_{\kappa\alpha}^\tri+ m_{\kappa\alpha}) \sfa_\alpha
\end{align}
for all $\kappa \in e(\tri)$.
\end{prop}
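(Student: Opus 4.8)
The plan is to prove \eqref{eq:ensemble_tropical} by evaluating both sides on an arbitrary $\A$-lamination and reducing to a local computation near $\kappa$, along the lines of \cite[Section 3]{FG07}. By the $\bQ_{>0}$-equivariance of $p_\Sigma^\mathsf{T}$ and of all the coordinates involved, it suffices to treat an integral $L=\{(\gamma_i,w_i)\}_i$, put in minimal position with respect to $\tri$ so that each $\gamma_i$ is simultaneously a union of corner arcs in the triangles. Both sides of \eqref{eq:ensemble_tropical} are then additive over the components $\gamma_i$: for the right-hand side because $\sfa_\alpha(L)=\sum_i w_i\,\sfa_\alpha(\gamma_i)$; for the left-hand side because $p_\Sigma^\mathsf{T}(L)$ consists of the non-peripheral $\gamma_i$ together with $\nu_\alpha=-w^0_\alpha$, where $w^0_\alpha$ is the weight of the (isotopically unique) peripheral component of $L$ around $m^+_\alpha$, and no spiralling occurs since $\A$-laminations avoid punctures. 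Writing $\widehat\gamma$ for the image of a single component $\gamma$ under $p_\Sigma^\mathsf{T}$ --- equal to $\gamma$ if $\gamma$ is non-peripheral and absent otherwise --- we thus get $\sfx_\kappa^\tri(p_\Sigma^\mathsf{T}(L))=\sum_i w_i\,\phi_\kappa(\gamma_i)$, where $\phi_\kappa(\gamma):=(\kappa:\widehat\gamma)$ for $\kappa\in e_{\interior}(\tri)$, and $\phi_\kappa(\gamma):=-(\kappa:\widehat\gamma)-[\,\gamma\text{ is peripheral around }m^+_\kappa\,]$ for $\kappa\in\bB$ (with $(\kappa:\widehat\gamma):=0$ when $\gamma$ is peripheral). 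It remains to prove $\phi_\kappa(\gamma)=\sum_{\alpha\in e(\tri)}(\ve^\tri_{\kappa\alpha}+m_{\kappa\alpha})\,\sfa_\alpha(\gamma)$ for each single curve $\gamma$.

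Because $\ve^\tri_{\kappa\alpha}+m_{\kappa\alpha}$ vanishes unless $\alpha$ is a side of the quadrilateral $Q_\kappa$ with diagonal $\kappa$ (when $\kappa$ is interior) or of the unique triangle containing $\kappa$ (when $\kappa\in\bB$), both $\phi_\kappa(\gamma)$ and the right-hand side depend only on how $\gamma$ meets a neighborhood of $Q_\kappa$, resp.\ of that triangle, and the identity follows by inspecting the finitely many local patterns, in three cases. For $\kappa\in e_{\interior}(\tri)$ and non-peripheral $\gamma$, a corner arc of $Q_\kappa$ contributes to the $\sfa$'s of two adjacent sides whose coefficients $\ve^\tri_{\kappa\cdot}$ have opposite signs, hence $0$, matching the vanishing contribution to $(\kappa:\widehat\gamma)$ in \cref{f:intersection sign}, while an arc crossing $Q_\kappa$ between opposite sides contributes $\pm1$ to both, with signs fixed by the conventions behind \cref{f:tropical x-flip} and \cref{f:intersection sign}; this is the classical computation of \cite[Section 3.1]{FG07}, now also allowing endpoints of $\gamma$ on boundary sides of $Q_\kappa$. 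If $\gamma$ is peripheral around a puncture, then $(\kappa:\widehat\gamma)=0$ and likewise $\sum_\alpha\ve^\tri_{\kappa\alpha}\sfa_\alpha(\gamma)=0$, the entries $\ve^\tri_{\kappa\cdot}$ for the edges at that puncture cancelling in pairs --- equivalently, peripheral loops around punctures lie in the kernel of the tropical ensemble map. If $\gamma$ is peripheral around a special point, it is a single corner arc, again contributing to two adjacent sides of $Q_\kappa$ with opposite $\ve$-signs, so both sides vanish.

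The case $\kappa\in\bB$ is where the pinning convention enters, and needs the most care. Let $T$ be the triangle containing $\kappa$, with other sides $\alpha$ and $\beta$ labelled as in \cref{fig:ensemble_boundary} ($\alpha$ meeting $\kappa$ at $m^+_\kappa$); then, comparing \eqref{eq:ensemble_tropical} with \eqref{eq:ensemble_combined} and \cref{prop:ensemble}(2), the right-hand side equals $\sfa_\beta(\gamma)-\sfa_\alpha(\gamma)-\sfa_\kappa(\gamma)$. I would run through the position of $\gamma$ inside $T$, recalling that an endpoint of $\gamma$ on an edge contributes $\tfrac12$ to the corresponding $\sfa$. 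For non-peripheral $\gamma$ one checks $\sfa_\beta(\gamma)-\sfa_\alpha(\gamma)-\sfa_\kappa(\gamma)=-(\kappa:\widehat\gamma)$, the only nonzero pattern being a corner arc cutting off $m^+_\kappa$ (joining $\kappa$ and $\alpha$), which gives $-1$ on both sides; for $\gamma$ peripheral around $m^+_\kappa$, the term $-[\,\gamma\text{ peripheral around }m^+_\kappa\,]=-1$ in $\phi_\kappa$ matches $\sfa_\beta-\sfa_\alpha-\sfa_\kappa=0-\tfrac12-\tfrac12$; and for $\gamma$ peripheral around any other marked point the same bookkeeping gives $0=0$. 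Assembling the three cases proves \eqref{eq:ensemble_tropical}. The main obstacle is precisely this boundary bookkeeping: correctly tracking the half-integer contributions of endpoints on boundary edges, verifying that exactly the one peripheral component around $m^+_\kappa$ is what the pinning $\nu_\kappa$ subtracts, and fixing the signs in \cref{f:intersection sign} and in the exchange matrix consistently. As an alternative, once \eqref{eq:ensemble_tropical} is known for a single triangulation one could propagate it through flips, checking that both sides transform identically under \cref{f:tropical x-flip} and the matrix mutation of $\ve^\tri$; but the single-curve analysis seems the most transparent.
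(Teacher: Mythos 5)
Your proof is correct and follows essentially the same route as the paper: the interior case is delegated to \cite[Section 3.1]{FG07}, and the boundary case reduces to the local count $\sfa_\beta-\sfa_\alpha-\sfa_\kappa$ of corner arcs around $m^+_\kappa$ in the triangle containing $\kappa$, with the removed peripheral component reappearing through the pinning $\nu_\kappa$. The only difference is organizational — you verify the identity component-by-component, whereas the paper aggregates the weights into $\mathsf{w}_{T,m}(L)=\sfa_\alpha(L)+\sfa_\kappa(L)-\sfa_\beta(L)$ and $\mathsf{w}_m(L)$ and computes $\sfx_\kappa=\nu_\kappa-(\mathsf{w}_{T,m}-\mathsf{w}_m)=-\mathsf{w}_{T,m}$ in one line.
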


\begin{proof}
For $\kappa \in e_{\interior}(\tri)$, we have $m_{\kappa\alpha}=0$ and hence the formula is proved in \cite{FG07}. 

For $\kappa \in \bB$, label the edges of the unique triangle $T$ containing $\kappa$ as $\alpha,\beta,\kappa$ in this clockwise order (as in \cref{fig:ensemble_boundary}). Let $m \in \bM$ be the initial marked point of $\kappa$. 
Let $L$ be a rational $\A$-lamination, and denote by $\mathsf{w}_{T,m}(L)$ (resp. $\mathsf{w}_m(L)$) the total weight of the corner arcs of $T\cap L$ (resp. the total weight of the peripheral components of $L$) around $m$. Then we have 
\begin{align*}
    \mathsf{w}_{T,m}(L) = \sfa_\alpha(L) +\sfa_\kappa(L) - \sfa_\beta(L).
\end{align*}
Observe that the integral $\P$-lamination $p_\Sigma^\mathsf{T}(L)$ has the corner arcs with the total weight $\mathsf{w}_{T,m}(L) - \mathsf{w}_m(L)$, equipped with the pinning $\nu_\kappa=-\mathsf{w}_m(L)$. Then by definition of the coordinate $\sfx_\kappa$, we get
\begin{align*}
    \sfx^\tri_\kappa(p_\Sigma^\mathsf{T}(L)) = \nu_\kappa - (\mathsf{w}_{T,m}(L) - \mathsf{w}_m(L)) = -\mathsf{w}_{T,m}(L) =-\sfa_\alpha(L) -\sfa_\kappa(L) + \sfa_\beta(L).
\end{align*}
This is exactly the desired formula. The assertion is proved.
\end{proof}

When $\bM_\circ \neq \emptyset$, the map $p_\Sigma^\mathsf{T}$ is neither injective nor surjective, since it forgets peripheral components and its image does not have components incident to punctures. 

\begin{thm}\label{thm:index_2}
If $\bM_\circ=\emptyset$, then $p_\Sigma^\mathsf{T}: \cL^a(\Sigma,\bQ) \xrightarrow{\sim} \cL^p(\Sigma,\bQ)$ is a bijection. Moreover, its restriction to the subset $\cL^a(\Sigma,\bZ)$ is an embedding of index $2$.
\end{thm}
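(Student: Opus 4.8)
The plan is to transport $p_\Sigma^\mathsf{T}$ into the coordinate charts, where it turns out to be linear. Fix an ideal triangulation $\tri$. By \cref{prop:p-lamination} and the recalled analogue for $\A$-laminations, $\sfx_\tri\colon\cL^p(\Sigma,\bQ)\xrightarrow{\sim}\bQ^{e(\tri)}$ and $\sfa_\tri\colon\cL^a(\Sigma,\bQ)\xrightarrow{\sim}\bQ^{e(\tri)}$ are bijections, and — this is the key simplification — \cref{prop:ensemble_tropical} shows that $\sfx_\tri\circ p_\Sigma^\mathsf{T}\circ\sfa_\tri^{-1}$ is the \emph{linear} endomorphism of $\bQ^{e(\tri)}$ given by the integer matrix $P^\tri:=(\ve^\tri_{\kappa\alpha}+m_{\kappa\alpha})_{\kappa,\alpha\in e(\tri)}$; even though $\cL^a(\Sigma,\bQ)$ and $\cL^p(\Sigma,\bQ)$ are only piecewise-linear spaces, in this fixed pair of charts the map is honestly linear. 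So $p_\Sigma^\mathsf{T}$ is a bijection over $\bQ$ iff $P^\tri\in GL(\bQ^{e(\tri)})$. I would also record here that $\sfa_\tri$ and $\sfx_\tri$ carry the integral sublattices bijectively onto $\bZ^{e(\tri)}$: for $\cL^a(\Sigma,\bZ)$ this is the definition of integrality, and for $\cL^p(\Sigma,\bZ)$ it is a short check — the reconstruction procedure recalled earlier sends any vector of $\bZ^{e(\tri)}$ to a $\P$-lamination with integral leaf-weights and pinnings, and conversely integral weights and pinnings force integral shear coordinates. With these identifications, the second assertion of the theorem reads: $P^\tri$ is injective on $\bZ^{e(\tri)}$ with $[\bZ^{e(\tri)}:P^\tri\bZ^{e(\tri)}]=2$.

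For invertibility I would simply quote the proof of \cref{thm:A to X}: the verification there that $n_{\alpha\beta}=\delta_{\alpha\beta}$ says precisely that the square matrix $q^\tri=(-\sfa_\beta(\alpha_\bB))_{\alpha,\beta}$ satisfies $q^\tri P^\tri=\mathrm{Id}$, so $P^\tri$ is invertible over $\bQ$ with inverse $q^\tri$, and hence $p_\Sigma^\mathsf{T}\colon\cL^a(\Sigma,\bQ)\to\cL^p(\Sigma,\bQ)$ is a bijection.

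For the integral statement: $P^\tri$ has integer entries, so it restricts to $\bZ^{e(\tri)}\to\bZ^{e(\tri)}$, and it is injective there because $\det P^\tri\ne0$ — this gives the embedding. Next, since $\sfa_\beta(\alpha_\bB)\in\tfrac12\bZ$, the matrix $2q^\tri$ is integral, so $2\bZ^{e(\tri)}=P^\tri\!\left((2q^\tri)\bZ^{e(\tri)}\right)\subseteq P^\tri\bZ^{e(\tri)}\subseteq\bZ^{e(\tri)}$, and the last inclusion is strict because $q^\tri$ has a non-integral entry (for instance $q^\tri_{\alpha\alpha}=-\tfrac12$ whenever $\alpha\in\bB$, the $\bB$-shift of a boundary interval meeting it exactly once). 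Thus the cokernel $\bZ^{e(\tri)}/P^\tri\bZ^{e(\tri)}$ is a non-trivial $2$-torsion group, and what remains is to identify it — by the theorem, with a single $\bZ/2\bZ$, equivalently to compute $|\det P^\tri|$.

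The hard part will be exactly this last computation; everything before it is bookkeeping on top of \cref{thm:A to X,prop:ensemble_tropical,prop:p-lamination}. I would attack it on two fronts at once: first, evaluate $\det P^\tri$ (or the Smith normal form of $P^\tri$) on a conveniently chosen triangulation — say one for which most $\bB$-shifts degenerate to single corner arcs, so that $q^\tri$ is nearly lower-triangular — keeping very careful track of the contributions of the frozen boundary coordinates, where the correction matrix $m$ is supported and where the subtleties live; and second, exhibit the cokernel generator geometrically, i.e.\ write down one explicit integral $\P$-lamination that is not of the form $p_\Sigma^\mathsf{T}(L)$ but whose double is, and check that no further class survives. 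Reconciling these two computations pins down the precise index.
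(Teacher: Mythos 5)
Your treatment of the first assertion is correct, though it takes a different route from the paper: you invert the linear map in coordinates using $q^\tri P^\tri=\mathrm{Id}$ from the proof of \cref{thm:A to X}, whereas the paper disposes of bijectivity over $\bQ$ in one topological sentence (the peripheral weights around special points are recovered from the pinnings, so the inverse is written down directly). Both are fine; your preliminary check that $\sfa_\tri$ and $\sfx_\tri$ identify the integral subsets with $\bZ^{e(\tri)}$ is also needed and correctly supplied.

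For the second assertion there is a genuine gap, which you yourself flag: you prove $2\bZ^{e(\tri)}\subseteq P^\tri\bZ^{e(\tri)}\subsetneq\bZ^{e(\tri)}$, hence that the index is $2^k$ with $k\ge 1$, but you never show $k=1$; the two strategies you outline for computing $|\det P^\tri|$ are not carried out, and without that the theorem is not proved. The paper's own proof is no help here: it characterizes the image as the locus where the $\sfa_\alpha=\sum_\beta q_{\alpha\beta}\sfx_\beta^\tri$ are integral and then asserts this is ``obviously'' of index $2$, with no computation. Your instinct that this is the crux is right, and in fact the remaining step is more than bookkeeping: since the index equals $|\det(\ve^\tri+m)|$, one can test it on a triangle (all three edges frozen), where the image in pinning coordinates is $\{\nu\in\bZ^3:\nu_1\equiv\nu_2\equiv\nu_3\ (\mathrm{mod}\ 2)\}$, of index $4$ (and indeed $\det(\ve^\tri+m)=\pm 4$ there; a square gives $\pm 8$). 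So your program of ``reconciling'' the two computations to land on index exactly $2$ would not succeed as stated, and the exponent $k$ appears to depend on the surface. At minimum, your proof is incomplete at precisely the point where the statement's content lies; before investing in the determinant computation you should verify the claimed value of the index on small examples.
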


\begin{proof}
The first assertion is clear since the weights of peripheral components around special points can be recovered from the pinnings. For the second assertion, observe that the inverse formula of \eqref{eq:ensemble_tropical} is given by
\begin{align*}
    \sfa_\alpha= \sum_{\beta \in e(\tri)} q_{\alpha\beta} \sfx_\beta^\tri
\end{align*}
as the linear version of the formula given in \cref{thm:A to X}. The image $p_\Sigma^\mathsf{T}(\cL^a(\Sigma,\bZ))$ is characterized as the subset where the coordinates $\sfa_\alpha$ are integral for all $\alpha\in e(\tri)$, which is obviously a sub-lattice of index $2$. 
\end{proof}

\subsection{Thurston compactification with pinnings}
The coordinate transformation $\sfx_{\tri'} \circ \sfx_{\tri}^{-1}$ given in \cref{prop:p-lamination} is a Lipschitz map with respect to the Euclidean metric on $\bQ^{\tri} \cong \bQ^{-3\chi(\Sigma^\ast)+2|\bM_\partial|}$.
Let $\mathcal{L}^p(\Sigma,\bR)$ be the corresponding metric completion of $\mathcal{L}^p(\Sigma,\bQ)$, which does not depend on a specific coordinate system. Each coordinate system $\sfx_\tri$ is extended to a homeomorphism $\sfx_\tri: \mathcal{L}^p(\Sigma,\bR) \xrightarrow{\sim} \bR^\tri$, being still denoted by the same symbol. We call an element of $\mathcal{L}^p(\Sigma,\bR)$ a \emph{real $\P$-lamination}. We have the following structures:
\begin{itemize}
    \item Since the $\bQ_{>0}$-action on $\cL^p(\Sigma,\bQ)$ rescaling the weights is continuous, we get a continuous $\bR_{>0}$-action on $\cL^p(\Sigma,\bR)$. 
    \item The Shift action \eqref{eq:gluing_action} of the pinnings is also extended to a continuous action $\sigma_{\alpha_L,\alpha_R}:\bR \curvearrowright \cL^p(\Sigma,\bR)$.
    \item Since the coordinate expression of the gluing map given in \cref{thm:amalgamation} is continuous, it is extended to a continuous map
    \begin{align}\label{eq:gluing_Th_boundary}
        q_{\Sigma,\Sigma'}^\mathsf{T}: \cL^p(\Sigma,\bR) \to \cL^p(\Sigma'\bR),
    \end{align}
    which is invariant under $\sigma_{\alpha_L,\alpha_R}$ above.
\end{itemize}
Let us consider the sphere $\bS \cL^p(\Sigma,\bR):=\cL^p(\Sigma,\bR)/\bR_{>0} \cong S^{-3\chi(\Sigma^\ast)+2|\bM_\partial|-1}$.

\begin{dfn}\label{dfn:Thurston_P}
The \emph{Thurston compactification} of the \Teich\ space with pinnings is defined to be
\begin{align*}
    \overline{\cT^p(\Sigma)}:=\cT^p(\Sigma) \cup \bS \cL^p(\Sigma,\bR),
\end{align*}
where the topology is endowed so that a sequence $(g_n)$ in $\cT^p(\Sigma)$ converges to a point $[G] \in \bS\cL^p(\Sigma,\bR)$ if 
\begin{align}\label{eq:Thurston_convergence}
    [\log X_{\alpha_1}^\tri(g_n):\cdots: \log X_{\alpha_N}^\tri(g_n)] \to [\sfx^\tri_{\alpha_1}(G):\cdots: \sfx^\tri_{\alpha_N}(G)], \quad n\to\infty
\end{align}
for any ideal triangulation $\tri$. Here $e(\tri)=\{\alpha_1,\dots,\alpha_N\}$.
\end{dfn}
It is known \cite{FG16,Le16,Ish19} that the condition \eqref{eq:Thurston_convergence} does not depend on the triangulation. In particular, the action of the mapping class group $MC(\Sigma)$ continuously extends to $\overline{\cT^p(\Sigma)}$. The topological space $\overline{\cT^p(\Sigma)}$ is homeomorphic to a closed ball of dimension $-3\chi(\Sigma^\ast)+2|\bM_\partial|$. 

\begin{thm}\label{thm:gluing_Thurston}
The gluing maps \eqref{eq:gluing_Teich} and \eqref{eq:gluing_Th_boundary} combine to give a continuous map
\begin{align*}
    \overline{q}_{\Sigma,\Sigma'}: \overline{\cT^p(\Sigma)} \to \overline{\cT^p(\Sigma')}
\end{align*}
between the Thurston compactifications.
\end{thm}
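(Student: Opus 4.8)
The plan is to define $\overline{q}_{\Sigma,\Sigma'}$ so that it restricts to $q_{\Sigma,\Sigma'}$ of \eqref{eq:gluing_Teich} on the open stratum $\cT^p(\Sigma)$ and, on the boundary sphere $\bS\cL^p(\Sigma,\bR)$, to the map induced by the continuous extension $q^\mathsf{T}_{\Sigma,\Sigma'}\colon\cL^p(\Sigma,\bR)\to\cL^p(\Sigma',\bR)$ of \eqref{eq:gluing_Th_boundary}, and then to verify continuity. The induced boundary map makes sense because, by \cref{thm:amalgamation}, $q^\mathsf{T}_{\Sigma,\Sigma'}$ is \emph{linear} in the shear coordinates: $\sfx^{\tri'}_{\overline{\alpha}}\circ q^\mathsf{T}_{\Sigma,\Sigma'}=\sfx^\tri_{\alpha_L}+\sfx^\tri_{\alpha_R}$, while $\sfx^{\tri'}_{\alpha}\circ q^\mathsf{T}_{\Sigma,\Sigma'}=\sfx^\tri_{\alpha}$ for $\alpha\neq\overline{\alpha}$; in particular it is $\bR_{>0}$-equivariant for the weight-rescaling actions, hence descends to a continuous map of the quotient spheres, defined on the complement of the locus where $q^\mathsf{T}_{\Sigma,\Sigma'}$ vanishes (treated below). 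With this definition, continuity on each stratum separately is immediate: on $\cT^p(\Sigma)$ it is the real-analyticity of $q_{\Sigma,\Sigma'}$, which by \cref{prop:amalgamation} is the monomial map $X^{\tri'}_{\overline{\alpha}}=X^\tri_{\alpha_L}X^\tri_{\alpha_R}$ together with the identity on the remaining cross-ratio coordinates; on $\bS\cL^p(\Sigma,\bR)$ it is the continuity of the descended linear map.

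The core of the proof is continuity along the interface: if $g_n\in\cT^p(\Sigma)$ and $g_n\to[G]\in\bS\cL^p(\Sigma,\bR)$, I must show $q_{\Sigma,\Sigma'}(g_n)\to\overline{q}_{\Sigma,\Sigma'}([G])$ in $\overline{\cT^p(\Sigma')}$. First I would recast the Thurston topology in the usual way: since $(g_n)$ leaves every compact subset of $\cT^p(\Sigma)$, fixing an ideal triangulation $\tri$ and putting $\lambda_n:=\lVert(\log X^\tri_\alpha(g_n))_{\alpha\in e(\tri)}\rVert\to+\infty$, the convergence \eqref{eq:Thurston_convergence} is equivalent (after normalizing $G$ so that $\lVert(\sfx^\tri_\alpha(G))_\alpha\rVert=1$) to $\lambda_n^{-1}\log X^\tri_\alpha(g_n)\to\sfx^\tri_\alpha(G)$ for all $\alpha\in e(\tri)$. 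Now take $\tri'$ to be the triangulation of $\Sigma'$ induced by $\tri$. By \cref{prop:amalgamation}, $\log X^{\tri'}_{\overline{\alpha}}(q_{\Sigma,\Sigma'}(g_n))=\log X^\tri_{\alpha_L}(g_n)+\log X^\tri_{\alpha_R}(g_n)$ and $\log X^{\tri'}_{\alpha}(q_{\Sigma,\Sigma'}(g_n))=\log X^\tri_{\alpha}(g_n)$ for $\alpha\neq\overline{\alpha}$; dividing by $\lambda_n$ and letting $n\to\infty$, \cref{thm:amalgamation} yields $\lambda_n^{-1}\log X^{\tri'}_\alpha(q_{\Sigma,\Sigma'}(g_n))\to\sfx^{\tri'}_\alpha(q^\mathsf{T}_{\Sigma,\Sigma'}(G))$ for all $\alpha\in e(\tri')$. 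If $q^\mathsf{T}_{\Sigma,\Sigma'}(G)\neq0$ in $\cL^p(\Sigma',\bR)$, this limit vector is nonzero, so $\lVert(\log X^{\tri'}_\alpha(q_{\Sigma,\Sigma'}(g_n)))_\alpha\rVert\to+\infty$ and the projective classes converge; thus $q_{\Sigma,\Sigma'}(g_n)\to[q^\mathsf{T}_{\Sigma,\Sigma'}(G)]=\overline{q}_{\Sigma,\Sigma'}([G])$, and checking this for the single triangulation $\tri'$ suffices by the triangulation-independence of \eqref{eq:Thurston_convergence} recalled after \cref{dfn:Thurston_P}.

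The one case that genuinely resists this computation --- and which I expect to be the main obstacle --- is the \emph{degenerate locus} $q^\mathsf{T}_{\Sigma,\Sigma'}(G)=0$: in shear coordinates this is the antipodal pair of classes in $\bS\cL^p(\Sigma,\bR)$ supported on the direction $\sfx^\tri_{\alpha_L}=-\sfx^\tri_{\alpha_R}$ with all other coordinates zero, which is exactly the pair of asymptotic directions shared by every orbit of the shift action $\sigma_{\alpha_L,\alpha_R}$ of \eqref{eq:gluing_action}. There the argument above gives only $\lambda_n^{-1}\log X^{\tri'}_\alpha(q_{\Sigma,\Sigma'}(g_n))\to0$, which does not by itself pin down the limit of $q_{\Sigma,\Sigma'}(g_n)$. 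To deal with it I would exploit that both gluing maps are shift-invariant: $q^\mathsf{T}_{\Sigma,\Sigma'}$ by \cref{lem:shift-invariance}, and $q_{\Sigma,\Sigma'}$ because, by \cref{prop:amalgamation}, its composition with every cross-ratio coordinate is invariant under the $\bR$-action on $\cT^p(\Sigma)$ sliding the point pinnings on $\alpha_L$ and $\alpha_R$ oppositely (this action scales $X^\tri_{\alpha_L}$ and $X^\tri_{\alpha_R}$ reciprocally and fixes the other coordinates). Since $q_{\Sigma,\Sigma'}$ therefore factors through $\cT^p(\Sigma)/\bR$, I would identify these degenerate boundary directions with the directions of the $\bR$-orbits --- precisely the ones collapsed by $q^\mathsf{T}_{\Sigma,\Sigma'}$ --- and establish continuity there on the level of the quotient, using $q_{\Sigma,\Sigma'}(g_n)=q_{\Sigma,\Sigma'}(\sigma_{\alpha_L,\alpha_R}^{-\lambda_n}(g_n))$ to replace $(g_n)$ by the renormalized sequence $\sigma_{\alpha_L,\alpha_R}^{-\lambda_n}(g_n)$, whose full cross-ratio vector is $o(\lambda_n)$, and analysing its next-order asymptotics. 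Once this delicate point is settled, the remaining verifications --- that the two partial definitions patch into one map $\overline{q}_{\Sigma,\Sigma'}$ and that it commutes with the part of the $MC(\Sigma)$-action preserving the pair $\{\alpha_L,\alpha_R\}$ --- are routine.
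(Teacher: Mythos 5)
Your main computation coincides with the paper's proof: the paper disposes of the theorem in two lines, citing the coordinate identities $q_{\Sigma,\Sigma'}^\ast(\log X_{\overline\alpha}^{\tri'})=\log X_{\alpha_L}^\tri+\log X_{\alpha_R}^\tri$ and $(q^{\mathsf T}_{\Sigma,\Sigma'})^\ast \sfx_{\overline\alpha}^{\tri'}=\sfx_{\alpha_L}^\tri+\sfx_{\alpha_R}^\tri$ together with the definition of the topology on the compactification --- which is exactly your ``divide by $\lambda_n$ and pass to the limit'' argument. So away from the degenerate locus you have reproduced, and written out more carefully than, the intended proof.

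The degenerate locus you isolate is a genuine issue, and the paper's proof silently ignores it rather than resolving it; but your proposed repair cannot succeed as stated, because the obstruction is already visible inside the boundary sphere, before any interior sequence enters the picture. Take $G_+\in\cL^p(\Sigma,\bR)$ with $\sfx^\tri_{\alpha_L}(G_+)=1=-\sfx^\tri_{\alpha_R}(G_+)$ and all other coordinates $0$, and any $H$ with $q^{\mathsf T}_{\Sigma,\Sigma'}(H)\neq 0$. Then $[G_++tH]\to[G_+]$ in $\bS\cL^p(\Sigma,\bR)$ as $t\to 0^+$, while the images $[q^{\mathsf T}_{\Sigma,\Sigma'}(G_++tH)]=[q^{\mathsf T}_{\Sigma,\Sigma'}(H)]$ are constant in $t$ and depend on the choice of $H$. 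Hence no assignment of a value to $\overline q_{\Sigma,\Sigma'}$ at the two antipodal classes $[G_\pm]$ can make the map continuous there, no matter how delicately you renormalize interior sequences by the shift action; the ``next-order asymptotics'' you defer to cannot produce a limit independent of the approach direction. The statement is correct only on the complement of $[G_\pm]$, or after passing to the quotient by the shift action on both strata (which is exactly what your factorization of $q_{\Sigma,\Sigma'}$ through $\cT^p(\Sigma)/\bR$ is pointing at); one of these modifications has to be made explicit rather than left as a step to be settled later.
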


\begin{proof}
It immediately follows from the coordinate expressions
\begin{align*}
    q_{\Sigma,\Sigma'}^\ast (\log X_{\overline{\alpha}}^{\tri'}) &= \log X_{\alpha_L}^\tri + \log X_{\alpha_R}^\tri, \\
    (q^\mathsf{T}_{\Sigma,\Sigma'})^\ast (\sfx_{\overline{\alpha}}^{\tri'}) &= \sfx_{\alpha_L}^\tri + \sfx_{\alpha_R}^\tri
\end{align*}
and the definition of the topology on the compactification.
\end{proof}
\section{Duality maps}\label{sec:duality}
The \Teich\ spaces $(\cT^a(\Sigma),\cT^p(\Sigma))$ are ``positive real parts'' of the moduli spaces $(\A^\times_{SL_2,\Sigma},\P_{PGL_2,\Sigma})$ introduced by Fock, Goncharov and Shen. These moduli spaces have natural cluster structures, for which we have algebra isomorphisms $\cO(\A_\Sigma) \cong \cO(\A_{SL_2,\Sigma}^\times)$ and $\cO(\X_\Sigma) \cong \cO(\P_{PGL_2,\Sigma})$ over $\bC$ \cite{Shen20,IOS22}. 
Recall the canonical isomorphisms $\X_{\Sigma}(\bZ^\mathsf{T}) \cong \cL^p(\Sigma,\bZ)$ and $\A_{\Sigma}(\bZ^\mathsf{T}) \cong \cL^a(\Sigma,\bZ)$.
In this section, we study duality maps
\begin{align*}
    &\mathbb{I}_\X: \X_{\Sigma}(\bZ^\mathsf{T}) \to \cO(\A_{\Sigma}), \\
    &\mathbb{I}_\A: \A_{\Sigma}(\bZ^\mathsf{T}) \to \cO(\X_{\Sigma}) 
\end{align*}
based on our investigation on the ``$\P$-type" spaces in the previous sections.

\subsection{Relation with the moduli spaces of $SL_2$-/$PGL_2$-local systems}
In order to precisely state algebraic results, we quickly review the relation between the \Teich\ theory developed in the previous sections with the moduli spaces of $SL_2$-/$PGL_2$-local systems introduced in \cite{FG06,GS19}.

Let $\Sigma$ be a marked surface, and consider the algebraic group $SL_2$ over $\bC$. To the pair $(SL_2,\Sigma)$, associated is the moduli space $\A_{SL_2,\Sigma}$ of \emph{decorated twisted $SL_2$-local systems} on $\Sigma$. It is an algebraic stack over $\bC$. The reader is referred to \cite{FG06} for details. Fock--Goncharov showed that the moduli space $\A_{G,\Sigma}$ has a canonical cluster $K_2$-structure, and its positive real part is canonically identified with the decorated \Teich\ space $\cT^a(\Sigma)$ \cite[Theorem 1.7 (b)]{FG06}\footnote{Indeed, the isomorphism $\cT^a(\Sigma)\xrightarrow{\sim} \A_{SL_2,\Sigma}(\pos)$ is obtained as follows. We can lift the monodromy representation $\rho:\pi_1(\Sigma) \to PSL_2(\bR)$ of a marked hyperbolic structure to a twisted representation $\widetilde{\rho}: \pi_1(T'\Sigma) \to SL_2(\bR)$, as discussed in \cite[Section 1.3]{BW}. See \cite[Section 11]{FG06} for an appropriate way to lift a cyclic configuration of horocycles to a twisted cyclic configuration in the decorated flag variety $\A_{SL_2}(\bR)=\bR^2\setminus\{0\}$.}. From this, we get an algebra embedding
\begin{align}
    \iota_\A: \cO(\A_{SL_2,\Sigma}) \hookrightarrow \cC^\infty(\cT^a(\Sigma)),
\end{align}
where $\cO(\A_{SL_2,\Sigma})$ denotes the $\bC$-algebra of global functions on $\A_{SL_2,\Sigma}$. 
The $\lambda$-length coordinate $A_\alpha$ along an ideal arc $\alpha$ lies in the image of $\iota_\A$, and realized by the cluster $K_2$-coordinate on $\A_{SL_2,\Sigma}$ assigned to $\alpha$. See \cite[Section 11.2]{FG06}. When $\Sigma$ is unpunctured, the function algebra $\cO(\A^\times_{SL_2,\Sigma})$ of a certain open subspace $\A^\times_{SL_2,\Sigma} \subset \A_{SL_2,\Sigma}$ is known to coincide with the associated \emph{cluster algebra} $\mathscr{A}_{\mathfrak{sl}_2,\Sigma}$ (see, for instance, \cite{IOS22}). Hence the cluster $\A$-coordinates, together with the inverses of frozen coordinates, generate the algebra $\cO(\A^\times_{SL_2,\Sigma})$. 

There is a similar results related to $\cT^p(\Sigma)$. Let $PGL_2:=GL_2/\mathbb{G}_m$, the adjoint group of $SL_2$ having the same Lie algebra $\mathfrak{sl}_2$. To the pair $(PGL_2,\Sigma)$, associated is the moduli space $\P_{PGL_2,\Sigma}$ of \emph{framed $PGL_2$-local systems with pinnings} on $\Sigma$. It is introduced in \cite{GS19}, extending the moduli space $\X_{PGL_2,\Sigma}$ studied in \cite{FG06}. See Section 3 \emph{loc.~sit.~}for the $PGL_2$-case. The moduli space $\P_{PGL_2,\Sigma}$ has a cluster Poisson structure, and the pair $(\A^\times_{SL_2,\Sigma},\P_{PGL_2,\Sigma})$ forms a \emph{cluster ensemble} in the sense in \cite{FG09}. In particular, there is the \emph{(extended) ensemble map} $p_\Sigma: \A^\times_{SL_2,\Sigma} \to \P_{PGL_2,\Sigma}$. In terms of the coordinates, it is expressed as
\begin{align*}
    p_\Sigma^\ast X_\kappa^\tri = \prod_{\alpha \in e(\tri)} A_\alpha^{\ve^\tri_{\kappa\alpha}+m_{\kappa\alpha}}
\end{align*}
for all $\kappa \in e(\tri)$. 
When $\Sigma$ is unpunctured, the induced homomorphism
\begin{align*}
    p_\Sigma^\ast: \cO(\P_{PGL_2,\Sigma}) \to \cO(\A^\times_{SL_2,\Sigma})
\end{align*}
is an injective, finite homomorphism of index $2$. 
As a slight extension of \cite[Theorem 1.7 (a)]{FG06}, one can verify that the positive real part of $\P_{PGL_2,\Sigma}$ is identified with the \Teich\ space with pinnings $\cT^p(\Sigma)$. From this, we get an algebra embedding
\begin{align}
    \iota_\X: \cO(\P_{PGL_2,\Sigma}) \hookrightarrow \cC^\infty(\cT^p(\Sigma)).
\end{align}
Although the cross ratios $X_\alpha^\tri$ are not extended to global functions on $\P_{PGL_2,\Sigma}$, they can be defined on an open subspace $\P_{PGL_2,\Sigma}^\tri \subset \P_{PGL_2,\Sigma}$ associated with an ideal triangulation $\tri$. Hence the cross ratios lie in the image of a similar embedding  
\begin{align}
    \iota_\X^\tri: \cO(\P_{PGL_2,\Sigma}^\tri) \hookrightarrow \cC^\infty(\cT^p(\Sigma)),
\end{align}
and realized by the cluster Poisson coordinates. From \cite{IO20}, we have Wilson line morphisms $g_{[c]}: \P_{PGL_2,\Sigma} \to PGL_2$ associated with any arc class $[c]$. 

The following result allows us to study the cluster algebras $\cO(\A_\Sigma)$ and $\cO(\X_\Sigma)$ in terms of these moduli spaces:

\begin{thm}[\cite{Shen20} for $\X_\Sigma$, \cite{IOS22} for $\A_\Sigma$]
We have algebra isomorphisms $\cO(\A_\Sigma) \cong \cO(\A_{SL_2,\Sigma}^\times)$ and $\cO(\X_\Sigma) \cong \cO(\P_{PGL_2,\Sigma})$ over $\bC$.
\end{thm}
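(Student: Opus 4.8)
The plan is to establish both isomorphisms by the same mechanism: realize each moduli space and the corresponding cluster variety as glued from one and the same atlas of algebraic tori indexed by ideal triangulations, and then deduce equality of their rings of global functions from a codimension estimate. I would treat the two sides separately but in parallel.

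For the $\A$-side I would start from \cite[Theorem 1.7]{FG06}: for each ideal triangulation $\tri$ of the (unpunctured) surface $\Sigma$, the tuple of $\lambda$-lengths $(A_\alpha)_{\alpha \in e(\tri)}$ defines an open immersion of the cluster $K_2$-torus $\bG_m^{e(\tri)}$ into $\A^\times_{SL_2,\Sigma}$, and the transition maps between the charts of two flip-related triangulations are the cluster $K_2$-mutations \eqref{eq:A-transf} --- this is exactly the content recalled above for the $\lambda$-length coordinates on $\cT^a(\Sigma)$. Gluing these charts produces an open immersion $\A_\Sigma \hookrightarrow \A^\times_{SL_2,\Sigma}$ onto a dense subscheme, whence restriction of functions gives an injection $\cO(\A^\times_{SL_2,\Sigma}) \hookrightarrow \cO(\A_\Sigma)$, the target being by definition the upper cluster algebra $\bigcap_\tri \bC[A_\alpha^{\pm1} : \alpha \in e(\tri)]$. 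For the reverse inclusion I would argue that the complement of $\A_\Sigma$ inside $\A^\times_{SL_2,\Sigma}$ has codimension $\geq 2$ --- this uses connectivity of the flip graph to see that the triangulation charts sweep out enough of the moduli space, together with normality of $\A^\times_{SL_2,\Sigma}$ --- so that every regular function on $\A_\Sigma$ extends. It is also convenient here to invoke Muller's theorem that, for a triangulated surface with at least two marked points, the cluster algebra equals the upper cluster algebra, so that $\cO(\A_\Sigma)$ is generated by the cluster $K_2$-coordinates and the inverse frozen ones, all manifestly in $\cO(\A^\times_{SL_2,\Sigma})$. This is in substance the route of \cite{IOS22}.

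For the $\X$/$\P$-side I would argue identically, now using \cite{GS19} in place of \cite{FG06}: for each ideal triangulation $\tri$, the interior cross ratios $(X_\alpha^\tri)_{\alpha \in e_{\interior}(\tri)}$ together with the frozen cross ratios $(X_\alpha^\tri)_{\alpha \in \bB}$ --- the former coming from the underlying framed $PGL_2$-local system and the latter from the pinnings --- identify an open subscheme $\P^\tri_{PGL_2,\Sigma} \subset \P_{PGL_2,\Sigma}$ with the cluster Poisson torus, the flip transitions being the cluster Poisson mutations \eqref{eq:X-transf}. One then checks that the subschemes $\P^\tri_{PGL_2,\Sigma}$ cover $\P_{PGL_2,\Sigma}$ away from a locus of codimension $\geq 2$; since $\P_{PGL_2,\Sigma}$ is normal, this gives $\cO(\P_{PGL_2,\Sigma}) = \bigcap_\tri \cO(\P^\tri_{PGL_2,\Sigma}) = \cO(\X_\Sigma)$, the last ring being the global functions of the cluster Poisson variety by definition. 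This follows \cite{Shen20}.

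The hard part, in both cases, is exactly the codimension-$\geq 2$ claim for the complement of the triangulation atlas inside the moduli space of local systems: it must be verified at the geometric level, and it is genuinely sensitive to the treatment of the boundary data. It is here that the pinnings of \cite{GS19} are indispensable --- without them the frozen cross ratios are not honest coordinates on any open subscheme, and one cannot even set up the frozen part of the atlas; this is the same phenomenon underlying \cref{rem:duality_constraint}. Once the exhaustion statement is granted, the remaining steps --- the comparison of function rings, and the verification that the isomorphisms carry the cluster coordinates to the $\lambda$-lengths and cross ratios respectively --- are formal.
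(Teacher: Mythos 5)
This theorem is not proved in the paper at all: it is imported verbatim from \cite{Shen20} (for the $\X$-side) and \cite{IOS22} (for the $\A$-side), so there is no in-paper argument to compare yours against. Judged on its own terms, your outline does reconstruct the general shape of the cited proofs --- open immersions of the triangulation tori, flip transitions given by \eqref{eq:X-transf} and \eqref{eq:A-transf}, and an extension argument across the complement of the atlas. But as a proof it has a genuine gap exactly where you flag one: the codimension-$\geq 2$ exhaustion of the moduli space by the triangulation charts is asserted, not established, and it is the entire mathematical content of the theorem. On the $\X$-side this is precisely what \cite{Shen20} proves (via a careful stratification of configurations of flags with pinnings), and it cannot be treated as a formality; moreover, one must also address that the moduli spaces are a priori stacks (the paper says so explicitly for $\A_{SL_2,\Sigma}$), so ``normality'' and ``codimension of the complement'' need to be made sense of before they can be invoked, and for punctured $\Sigma$ one must reconcile the intersection over \emph{all} cluster seeds (which is how $\cO(\X_\Sigma)$ is defined) with the intersection over ideal-triangulation seeds only, which requires tagged triangulations.

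On the $\A$-side your argument also conflates two different closing steps. The route actually used in \cite{IOS22} (and hinted at in the surrounding text of this paper) is the sandwich $\mathscr{A}_{\mathfrak{sl}_2,\Sigma}\subseteq \cO(\A^\times_{SL_2,\Sigma})\subseteq \mathscr{U}=\cO(\A_\Sigma)$: the first inclusion because cluster variables and inverse frozens are visibly regular on the moduli space, the second because the cluster tori are open subschemes, and then Muller's $\mathscr{A}=\mathscr{U}$ (for unpunctured surfaces with at least two marked points) closes the loop with no normality or codimension input whatsoever. You mention Muller's theorem as ``convenient,'' but if you invoke it you should drop the codimension argument on this side entirely --- it is redundant --- and if you do not invoke it, the codimension claim again carries all the weight and is unproven. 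Finally, note that no such sandwich is available on the $\X$/$\P$-side (there is no ``lower'' cluster Poisson algebra with obvious regularity), which is why the two halves of the theorem require genuinely different proofs in the literature; your proposal presents them as formally parallel, which understates where the difficulty sits.
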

Summarizing, we have
\begin{align*}
    \cO(\A_\Sigma)=\cO(\A_{SL_2,\Sigma}^\times) \subset C^\infty(\cT^a(\Sigma)), \quad \cO(\X_\Sigma)=\cO(\P_{PGL_2,\Sigma}) \subset C^\infty(\cT^p(\Sigma)).
\end{align*}
In particular, the relations (for instance those given in \cref{prop:ensemble,thm:A to X,prop:Wilson_lambda}) among the coordinates/Wilson lines mentioned above are valid inside these subalgebras.

\subsection{The basis of $\cO(\X_\Sigma)$ parametrized by the integral $\A$-laminations}
For simplicity, let us restrict our attention to an unpunctured surface $\Sigma$. It is straightforward to extend our construction to the general case, following \cite{FG09}.

\begin{dfn}\label{def:skein_lift_A}
Let $L=\{(\gamma_i,w_i)\} \in \cL^a(\Sigma,\bZ)$ be an integral $\A$-lamination. We define the corresponding function $\mathbb{I}_\A(L) \in C^\infty(\cT^p(\Sigma))$, as follows.
\begin{itemize}
    \item For each weighted non-peripheral loop $(\gamma_i,w_i)$, associate the trace-of-monodromy function
    \begin{align*}
        \mathrm{Tr}_{[\gamma_i]^{w_i}},
    \end{align*}
    where $[\gamma_i]^{w_i} \in \pi_1(\Sigma)$ denotes the $w_i$-th power of a based loop homotopic to $\gamma_i$. 
    \item For each weighted non-peripheral arc $(\gamma_i,w_i)$, associate the function
    \begin{align*}
        \Delta_{22}(g_{[\gamma_i]})^{w_i}.
    \end{align*}
    \item For each weighted peripheral arc $(\gamma_i,w_i)$ around a special point, associate the function
    \begin{align*}
        \Delta_{22}(g_{[\gamma_i]})^{w_i}. 
    \end{align*}
    Here note that $\Delta_{22}(g_{[\gamma_i]})^{-1}=\Delta_{11}(g_{[\gamma_i]})$, $\gamma_i$ being peripheral. 
\end{itemize}
Then the function $\mathbb{I}_\A(L) \in C^\infty(\cT^p(\Sigma))$ is defined to be the product of these elements. 
\end{dfn}
The map $\mathbb{I}_\A:\A_\Sigma(\bZ^{\mathsf{T}}) \to C^\infty(\cT^p(\Sigma))$ is clearly $MC(\Sigma)$-equivariant. 
Notice that the trace functions $\mathrm{Tr}_{[\gamma_i]^{w_i}}$ and the matrix coefficients $\Delta_{kl}(g_{[\gamma_i]})$ themselves do not belong to the subalgebra $\cO(\P_{PGL_2,\Sigma})$, since the Wilson line takes its value in $PGL_2$, rather than $SL_2$. Nevertheless, we have:

\begin{lem}\label{lem:function_even}
For any integral $\A$-lamination, the product $\mathbb{I}_\A(L)$ belongs to $\cO(\P_{PGL_2,\Sigma})$. Namely it is a well-defined global function on the moduli space $\P_{PGL_2,\Sigma}$. The Laurent expression of $\mathbb{I}_\A(L)$ in the cluster coordinates has the unique lowest term $\prod_{\alpha \in e(\tri)} (X_\alpha^\tri)^{-\sfa_\alpha(L)}$ for any ideal triangulation $\tri$.
\end{lem}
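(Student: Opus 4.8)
The plan is to verify the two claims separately: first that $\mathbb{I}_\A(L)$ is a well-defined element of $\cO(\P_{PGL_2,\Sigma})$, and then that its Laurent expansion in any cluster Poisson chart has the asserted unique lowest term. For the first claim, the key observation is a parity argument. The matrix coefficients $\Delta_{kl}(g_{[c]})$ and the traces $\mathrm{Tr}_{[\gamma]}$ are only well-defined up to a global sign because the Wilson lines and monodromies take values in $PGL_2$ rather than $SL_2$; a monomial in these quantities descends to a genuine function on $\P_{PGL_2,\Sigma}$ precisely when it is invariant under the simultaneous sign ambiguity, i.e.~when it is an even monomial. First I would record that the sign ambiguity attached to the lift of the Wilson line along an arc class $[c]$ is governed by a homology class (or a spin-structure-type obstruction), and that for a lamination $L$ the total obstruction is the class of $\sum_i w_i[\gamma_i]$ in the appropriate $\bZ/2$-vector space. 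Then I would check that this class vanishes for any $L$ coming from a lamination: the curves $\gamma_i$ are mutually disjoint and the boundary condition (arcs end on boundary intervals, loops are closed) forces the relevant mod-$2$ count to be zero. Concretely, for each connected component of $L$ the contribution is even because a disjoint curve bounds on one side after cutting, or—more simply—one can invoke that $\mathbb{I}_\A$ is built to be the classical shadow of a skein-theoretic construction in which these monomials are manifestly $PGL_2$-invariant (cf.~the congruent subalgebra of the reduced stated skein algebra, \cite{IKar}). This is the step I expect to be the main obstacle: making the parity/sign bookkeeping precise and triangulation-independent, and in particular treating the peripheral arcs around special points, where $\Delta_{22}(g_{[\gamma_i]})^{-1}=\Delta_{11}(g_{[\gamma_i]})$ so that negative weights cause no trouble.

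For the statement that $\mathbb{I}_\A(L)$ lies in $\cO(\P_{PGL_2,\Sigma})$ as opposed to merely a localization, I would argue that each building block is already a regular function on the cluster Poisson variety: trace functions of monodromy are regular on $\X_\Sigma$ (they are among the classical skein generators, and extend over all clusters with Laurent polynomial expressions by the positivity/Laurent phenomenon for these specific loops, or directly by \cite{FG06,IO20}), and the entries $\Delta_{22}(g_{[c]})$ of Wilson lines are regular by the morphism property of Wilson lines established in \cite{IO20}. A product of regular functions is regular, and combined with the $PGL_2$-descent above this gives $\mathbb{I}_\A(L)\in\cO(\P_{PGL_2,\Sigma})=\cO(\X_\Sigma)$.

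For the lowest-term claim, I would work in a fixed ideal triangulation $\tri$ and compute the valuation of $\mathbb{I}_\A(L)$ with respect to the tropical (leading-exponent) filtration, using that $\mathbb{I}_\A$ is multiplicative and that the tropicalization of a product is the sum of tropicalizations. For a single non-peripheral loop $\gamma$ of weight $w$, the leading term of $\mathrm{Tr}_{[\gamma]^w}$ in the $X_\tri$-coordinates is $\prod_\alpha (X_\alpha^\tri)^{-w\,\sfa_\alpha(\gamma)}$ with coefficient $1$: this is the standard computation of the highest term of a trace function via the shear-coordinate expansion of the monodromy matrix as a product of the $H$ and $\mathbb{E}^{L},\mathbb{E}^{R}$ matrices of \cref{thm:LR-formula}, where the maximal-degree monomial picks up exactly the intersection pattern recorded by $\sfa_\alpha$, and no cancellation occurs at top degree. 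For a non-peripheral or peripheral arc $\gamma=\alpha_\bB$ of weight $w$, \cref{prop:Wilson_lambda}(1) already gives $|\Delta_{22}(g_{[\alpha_\bB]})| = \prod_{\beta}(X_\beta^\tri)^{-\sfa_\beta(\alpha_\bB)}\cdot F_\alpha^\tri(X_\tri)$ with $F_\alpha^\tri$ having constant term $1$, so the lowest term of $\Delta_{22}(g_{[\gamma_i]})^{w_i}$ is $\prod_\beta (X_\beta^\tri)^{-w_i\sfa_\beta(\gamma_i)}$ with coefficient $1$. Multiplying over all components of $L$ and using additivity $\sfa_\alpha(L)=\sum_i w_i\sfa_\alpha(\gamma_i)$ yields the unique lowest term $\prod_{\alpha\in e(\tri)}(X_\alpha^\tri)^{-\sfa_\alpha(L)}$; uniqueness of this term follows because the curves $\gamma_i$ are disjoint, so the leading monomials of the factors multiply without any interference that could produce a competing term of equal degree. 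Finally, triangulation-independence of the statement is automatic since $\sfa_\tri$ transforms by the tropical cluster $K_2$-mutation, which is exactly the dual of the cluster Poisson mutation governing the change of the $X_\tri$-coordinates.
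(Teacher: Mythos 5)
Your third paragraph (the lowest-term computation) is essentially the paper's argument and is fine. The genuine problems are in your first two paragraphs, where you argue membership in $\cO(\P_{PGL_2,\Sigma})$. The parity obstruction is \emph{not} automatic from disjointness and the boundary conditions: for a single arc $\gamma$ of weight $1$ crossing an interior edge $\alpha$ of $\tri$ exactly once, one has $\sfa_\alpha(\gamma)=1/2$, the Laurent expression of $\Delta_{22}(g_{[\gamma]})$ coming from \cref{thm:LR-formula} involves $X_\alpha^{1/2}$, and this function does \emph{not} lie in $\cO(\P_{PGL_2,\Sigma})$ --- the paper says explicitly that the individual traces and matrix coefficients do not belong to this subalgebra. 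What saves the product is precisely the hypothesis that $L$ is an \emph{integral} $\A$-lamination in the sense $\sfa_\tri(L)\in\bZ^{e(\tri)}$ (a condition the paper carefully distinguishes from merely having integer weights), so that the total exponent $-\sfa_\alpha(L)=-\sum_i w_i\sfa_\alpha(\gamma_i)$ of each $X_\alpha^\tri$ is an integer. Your argument never invokes this hypothesis at the step where it is needed, and the claim that ``the mod-$2$ count is zero for any lamination'' is false.

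For the same reason, your second paragraph's premise that ``each building block is already a regular function on the cluster Poisson variety,'' so that regularity of $\mathbb{I}_\A(L)$ follows from ``a product of regular functions is regular,'' is not a valid route: the factors are regular only after adjoining square roots of the $X_\alpha^\tri$ (equivalently, on the $SL_2$-cover), not on $\P_{PGL_2,\Sigma}$ itself. The paper's proof instead expands each factor in a fixed chart as $\prod_\alpha(X_\alpha^\tri)^{-w_i\sfa_\alpha(\gamma_i)}\cdot F(X_\tri)$ with $F$ a polynomial with constant term $1$ (via \cref{prop:Wilson_lambda} (1) and its analogue for traces), observes that the product of these expressions has integral exponents by the integrality of $L$, and concludes that $\mathbb{I}_\A(L)$ is a Laurent polynomial in \emph{every} chart, hence universally Laurent and therefore in $\cO(\X_\Sigma)=\cO(\P_{PGL_2,\Sigma})$; the lowest term then falls out of the same expansion. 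If you replace your first two paragraphs by this argument --- which you essentially already have in hand from your lowest-term computation --- the proof goes through.
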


\begin{proof}
Fix an ideal triangulation $\tri$ of $\Sigma$, and consider the coordinate expression of $\mathbb{I}_\A(L)$. 
From \cref{prop:Wilson_lambda} (1), the $(2,2)$-entry of Wilson lines are expressed as
\begin{align*}
    |\Delta_{22}(g_{[\gamma_i]})|^{w_i}&=\prod_{\alpha \in e(\tri)} (X^\tri_\alpha)^{-w_i\sfa_\alpha(\gamma_i)}F_i^\tri(X_\tri)^{w_i}
\end{align*}
for some polynomial $F_i^\tri$ in the coordinates $X_\alpha^\tri$ with constant term $1$. 
A similar computation is applied to the monodromy, and hence we get
\begin{align*}
    \mathrm{Tr}_{[\gamma_i]^{w_i}} =\prod_{\alpha \in e(\tri)} (X^\tri_\alpha)^{-w_i\sfa_\alpha(\gamma_i)}F_{i,w_i}^\tri(X_\tri)
\end{align*}
for some polynomial $F^\tri_{i,w_i}$ in the coordinates $X_\alpha^\tri$ with constant term $1$. 
Recall that the integral $\A$-lamination satisfies the integrality condition $\sfa_\alpha(L) \in \bZ$. Hence the product $\mathbb{I}_\A(L)$ only has integral exponents in the coordinates $X_\alpha^\tri$. Since the above argument applies for any ideal triangulation $\tri$, it follows that $\mathbb{I}_\A(L)$ is a universally Laurent polynomial, hence it belongs to $\cO(\X_\Sigma)=\cO(\P_{PGL_2,\Sigma})$. Thus the assertion is proved.
\end{proof}

\begin{rem}
Our construction is essentially the restriction of the construction given in \cite[Section 10.3]{GS15} to the integral $\A$-laminations. Indeed, their function $\Delta_\beta$ is exactly our function $\Delta_{22}(g_{[\beta]})$ if we reinterpret it by identifying the $PGL_2$-version of their moduli space $\mathrm{Loc}_{SL_2,\Sigma}$ with $\P_{PGL_2,\Sigma}$ (cf.~\cite[Remark 3.9]{IOS22} and the proof of \cref{prop:duality_compatible} below). 
\end{rem}

\begin{thm}\label{thm:X_basis}
Assume that $\Sigma$ is unpunctured, having at least two marked points. Then the functions $\mathbb{I}_\A(L)$, where $L$ runs over all the integral $\A$-laminations, form a linear basis of the function algebra $\cO(\X_\Sigma)=\cO(\P_{PGL_2,\Sigma})$. 
\end{thm}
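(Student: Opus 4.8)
The plan is to establish that $\{\mathbb{I}_\A(L)\}_{L \in \cL^a(\Sigma,\bZ)}$ is both a spanning set and a linearly independent family in $\cO(\X_\Sigma)$, using the partial-order structure on tropical points together with the lowest-term description from \cref{lem:function_even}.

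\textbf{Step 1: Leading-term triangularity.} By \cref{lem:function_even}, for a fixed ideal triangulation $\tri$, the Laurent expansion of $\mathbb{I}_\A(L)$ in the cluster Poisson coordinates $(X_\alpha^\tri)$ has a unique lowest term $\prod_{\alpha\in e(\tri)} (X_\alpha^\tri)^{-\sfa_\alpha(L)}$. Since $\sfa_\tri=(\sfa_\alpha)_{\alpha\in e(\tri)}: \cL^a(\Sigma,\bZ) \xrightarrow{\sim} \A_\Sigma(\bZ^{\mathsf{T}})$ is a bijection onto the integral tropical points, distinct laminations $L$ produce distinct leading exponent vectors. This immediately gives linear independence: any finite linear combination $\sum_j c_j \mathbb{I}_\A(L_j)=0$ with the $L_j$ distinct forces, by looking at a minimal leading exponent among the $L_j$ (minimal with respect to, say, a generic linear functional separating the finitely many exponent vectors), the corresponding coefficient to vanish, and then we induct.

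\textbf{Step 2: Spanning.} This is where I expect the real work to lie. The cleanest route, and the one hinted at in the text, is to identify $\cO(\X_\Sigma)=\cO(\P_{PGL_2,\Sigma})$ with the classical limit of the \emph{congruent subalgebra} of the reduced stated skein algebra $\overline{\mathscr{S}}_\Sigma$ of \cite{IKar}, for which a basis in terms of (stated) skein diagrams — loops and arcs carrying corner states — is already known. Under the classical limit, the stated skein basis elements map to products of trace functions and $\Delta_{22}$-matrix coefficients of Wilson lines precisely of the form appearing in \cref{def:skein_lift_A}; the bookkeeping is to match the admissible state/corner data of the congruent subalgebra with the data of an integral $\A$-lamination (non-peripheral loops with nonnegative weights, and arcs — peripheral or not — corresponding to the $\Delta_{22}$ factors). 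Once this dictionary is set up, the skein basis of the congruent subalgebra is carried bijectively onto $\{\mathbb{I}_\A(L)\}$, so the latter spans. An alternative, mentioned in the excerpt, is to mimic the proof of \cite[Theorem 10.14]{GS15}: one shows that the functions $\mathbb{I}_\A(L)$ together with the obvious multiplication-compatibility (the product $\mathbb{I}_\A(L)\mathbb{I}_\A(L')$ expands upper-triangularly as $\mathbb{I}_\A(L+L')$ plus lower-order laminations, via skein resolution of crossings) generate $\cO(\P_{PGL_2,\Sigma})$ as an algebra, and then that the leading-term triangularity of Step 1 upgrades an algebra generating set to a linear basis indexed exactly by $\cL^a(\Sigma,\bZ)$; the restriction to $PGL_2$-representations rather than $SL_2$ is what makes the parametrizing set $\cL^a$ rather than a larger lamination space.

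\textbf{Step 3: Assembling.} Combining linear independence (Step 1) with spanning (Step 2) yields that $\{\mathbb{I}_\A(L)\}$ is a linear basis. The hypothesis that $\Sigma$ is unpunctured with at least two marked points is used to guarantee the identifications $\cO(\X_\Sigma)\cong\cO(\P_{PGL_2,\Sigma})$ and the skein-theoretic description are available in the stated form, and to ensure there exists an ideal triangulation without self-folded triangles so that the cross ratio coordinates of \cref{cor:Teich_X-variety} are defined. The main obstacle is Step 2: making the identification of $\cO(\X_\Sigma)$ with the classical limit of the congruent subalgebra precise enough that the known skein basis transports to exactly the family $\mathbb{I}_\A(L)$ — in particular checking that no basis elements are lost or doubled when passing between the stated-skein corner data and the pinning/peripheral-weight data of an $\A$-lamination, and that the classical limit of a stated skein diagram is genuinely the product of $\Delta_{22}(g_{[\gamma_i]})^{w_i}$ and $\mathrm{Tr}_{[\gamma_i]^{w_i}}$ and not some other matrix coefficient.
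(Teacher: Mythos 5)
Your proposal is correct and takes essentially the same route as the paper: the paper's proof consists precisely of your Step 2, namely identifying $\cO(\X_\Sigma)=\cO(\P_{PGL_2,\Sigma})$ with the classical limit of the congruent subalgebra $\Skeinr{\Sigma}^1(\bB)_{\mathrm{cong}}$ via \cref{thm:O=S_even} and transporting the basis $\mathsf{B}_{\mathrm{cong}}(\Sigma)$ of \cite{IKar}, whose classical counterparts are the $\mathbb{I}_\A(L)$. Your Step 1 lowest-term argument for linear independence is a valid (and self-contained) supplement; the paper instead notes in a remark that independence can alternatively be deduced from \cref{prop:duality_compatible} and the independence of the bracelets basis.
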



We prove this theorem based on the results on the skein algebras \cite{IY,IKar}.
Let $\Sigma$ be an unpunctured marked surface, and $\Skein{\Sigma}^q(\bB)$ the \emph{stated skein algebra} on $\Sigma$. It consists of $\bZ_q$-linear combinations of framed tangles in $\Sigma \times [0,1]$, whose ends lie in $\partial\Sigma \times [0,1]$ and are equipped with states $\{1,2\}$, modulo certain relations. See \cite{TTQLe18} for a detail, where the states $+,-$ \emph{loc.~sit.}~corresponds to our states $1,2$, respectively.
Let $\mathcal{I}_{\mathrm{bad}} \subset \Skein{\Sigma}^q(\bB)$ denote the ideal generated by \emph{bad arcs}, which are peripheral tangles around a special point with particular states:
\begin{align*}
\begin{tikzpicture}
    \coordinate (P) at (-0.5,0) {};
    \coordinate (P') at (0.5,0) {};
    \coordinate (C) at (0,0.5) {};
    \draw[very thick, red] (P) to[out=north, in=west] (C) to[out=east, in=north] (P');
    \draw(P)++(0,-0.4) node[red,scale=0.8]{$1$};
    \draw(P')++(0,-0.4) node[red,scale=0.8]{$2$};
    \draw[dashed] (1,0) arc (0:180:1cm);
    \bline{-1,0}{1,0}{0.2}
    \draw[fill=black] (0,0) circle(2pt);
\end{tikzpicture}
\end{align*}
The quotient 
\begin{align*}
    \Skeinr{\Sigma}^q(\bB) := \Skein{\Sigma}^q(\bB)/\mathcal{I}_{\mathrm{bad}}
\end{align*}
is called the \emph{reduced stated skein algebra}. We denote its classical specialization $q^{1/2}=1$ by $\Skeinr{\Sigma}^1(\bB)$.
We have the following:

\begin{thm}[{\cite{CL19,IY}}]
We have an isomorphism of $\bC$-algebras
\begin{align}\label{eq:O=S}
    \cO(\A_{SL_2,\Sigma}^\times) \cong \Skeinr{\Sigma}^1(\bB)\otimes \bC, \quad \Delta_{ij}(g_{[c]}) \mapsto \tau([c])_{ij}
\end{align}
where the matrix coefficient $\Delta_{ij}(g_{[c]})$ of the Wilson line along an arc class $[c]$ corresponds to a framed tangle $\tau([c])$ that projects to $[c]$ together with the state $i$ (resp. $j$) on its initial (resp. terminal) end. 
\end{thm}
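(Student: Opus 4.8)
The plan is to fix an ideal triangulation $\tri$ of $\Sigma$ and realise both sides as explicit subalgebras of the Laurent ring $\Lambda_\tri := \bC[A_\alpha^{\pm 1} : \alpha \in e(\tri)]$, then show the two images coincide via the asserted assignment. On the geometric side, recall from the discussion above that $\cO(\A_{SL_2,\Sigma}^\times)$ is the cluster algebra $\mathscr{A}_{\mathfrak{sl}_2,\Sigma}$ with the frozen $\lambda$-lengths inverted; it is generated inside $\Lambda_\tri$ by the cluster $\A$-coordinates $A_\alpha$ and their flip images, and admits a basis indexed by integral $\A$-laminations. On the skein side, the stated quantum trace of Bonahon--Wong and L\^e, in the form given in \cite{CL19}, provides for each $\tri$ an injective algebra map $\Skein{\Sigma}^q(\bB) \hookrightarrow \mathcal{Z}_\tri^q$ into the balanced Chekhov--Fock quantum torus on the edges, compatible with flips, which descends to an injection of the reduced algebra $\Skeinr{\Sigma}^q(\bB)$ after quotienting out the relations implementing the bad-arc ideal. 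Specialising $q^{1/2}=1$ and identifying the torus generators with $\lambda$-lengths, both constructions live inside the commutative ring $\Lambda_\tri$, so the theorem reduces to matching the two images and checking the explicit correspondence on Wilson lines.

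The key steps are the following. First, one computes the classical trace of a single boundary-stated edge arc $\alpha\in e(\tri)$ and checks that, for the states corresponding to $\Delta_{22}$, its image is exactly $A_\alpha$, while simultaneously identifying the bad-arc relation with the $SL_2$-determinant normalisation, i.e.\ with the passage from the moduli stack of decorated twisted $SL_2$-local systems (the natural target of the full stated skein algebra) to the open subspace $\A^\times_{SL_2,\Sigma}$ on which frozen $\lambda$-lengths are invertible. Second, for a general arc class $[c]$ one resolves the stated tangle $\tau([c])_{ij}$ triangle by triangle using the defining skein relations: each crossing of a triangle produces exactly one of the two elementary turning moves, so the resolution reproduces the product $H(X_0)\mathbb{E}^{\tau_1}\cdots\mathbb{E}^{\tau_M}H(X_M)$ of \cref{thm:LR-formula} once $A$-coordinates are converted to $X$-coordinates via the ensemble map of \cref{prop:ensemble}; this pins down $\tau([c])_{ij}\mapsto \Delta_{ij}(g_{[c]})$ on the nose, and in particular verifies that the classical skein relations hold for these explicit $PSL_2$/$SL_2$-valued holonomies, so the assignment extends to a well-defined algebra homomorphism $\Phi\colon \Skeinr{\Sigma}^1(\bB)\otimes\bC \to \cO(\A^\times_{SL_2,\Sigma})$. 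Third, $\Phi$ is surjective because the $A_\alpha$ together with inverse frozens generate $\cO(\A^\times_{SL_2,\Sigma})$ and lie in its image by the first step. Fourth, $\Phi$ is injective: $\Skeinr{\Sigma}^1(\bB)\otimes\bC$ has a basis of reduced stated multicurves, and sending these through the classical ($q^{1/2}=1$) quantum trace yields Laurent polynomials in $\Lambda_\tri$ whose leading monomials $\prod_{\alpha} A_\alpha^{n_\alpha}$ are governed by the reduced intersection data of the multicurve with $\tri$ and are pairwise distinct, hence these images are linearly independent; therefore $\Phi$ is an isomorphism onto $\cO(\A^\times_{SL_2,\Sigma})$.

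The step I expect to require the most care is the reconciliation of the combinatorial \emph{states} on tangle ends with the \emph{framing/decoration} data entering the Wilson line and the $\lambda$-length: one must fix which boundary states reproduce $\Delta_{22}$ versus $\Delta_{11}$, track the normalisation factors in $H,\mathbb{E}^L,\mathbb{E}^R$ against the crossing resolution and the height-exchange relation in the skein algebra, and control the twisting sign --- the stated skein algebra naturally produces the twisted moduli space, so one needs the canonical lift of a cyclic configuration of horocycles to a twisted cyclic configuration described in \cite[Section 11]{FG06}, and to verify that it is compatible with the sign conventions in the defining skein relations (which degenerate to the $SL_2$-signs at $q^{1/2}=1$). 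This is ultimately a finite verification on one ideal triangle glued to its neighbours; granting it, the global statement follows from the cluster-algebraic description of $\cO(\A^\times_{SL_2,\Sigma})$ recalled above together with the compatibility of the quantum trace, the cross-ratio coordinates, and the Wilson lines with flips.
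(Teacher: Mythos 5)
Your route is genuinely different from the paper's. The paper does not re-derive the classical isomorphism at all: it quotes \cite[Theorem 8.12]{CL19} for the identification of the (classical) stated skein algebra with the function algebra of the relevant moduli space, and then adds a single observation — that by \cref{thm:LR-formula} the Wilson line along a peripheral arc class around a special point is triangular, and the bad arcs correspond precisely to the vanishing entries of these triangular matrices — so that killing the bad-arc ideal matches the passage to $\A^\times_{SL_2,\Sigma}$ (with the remaining details deferred to \cite{IY}). You instead reconstruct the isomorphism from scratch: fix a triangulation, embed both sides in a Laurent ring via the classical limit of the quantum trace, match generators through the LR-decomposition of Wilson lines and the ensemble map, and get injectivity from distinct leading monomials on the basis of stated multicurves. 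What your route buys is an explicit, coordinate-level verification of the correspondence $\Delta_{ij}(g_{[c]})\mapsto\tau([c])_{ij}$ (which the paper only asserts); what it costs is that the heavy lifting — factorization of the trace through the bad-arc quotient, the stated multicurve basis, and the leading-term injectivity — is exactly the content of \cite{CL19}, so in effect you are reproving their theorem rather than using it, and the sign/twisting bookkeeping you defer to a ``finite verification'' is the genuinely delicate part (the lift to twisted local systems of \cite[Section 11]{FG06} and the $q^{1/2}=1$ sign conventions).

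One point in your first step is off as stated and would fail if taken literally: the bad-arc ideal is \emph{not} ``the $SL_2$-determinant normalisation'' and is not what implements inverting the frozen $\lambda$-lengths. The determinant-type relations already hold in the unreduced stated skein algebra, and localization at the frozens is not a quotient. The correct statement — and precisely the observation the paper's proof hinges on — is that the bad arcs are sent to the vanishing off-diagonal entries of the triangular peripheral Wilson lines, so the quotient imposes this triangularity/genericity; the invertibility of the frozen $\lambda$-lengths then comes for free in $\Skeinr{\Sigma}^1(\bB)$, where the corner arcs with the surviving states become invertible as a consequence of the bad-arc relations. With that correction your first step aligns with the paper's observation, and the rest of your outline goes through.
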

This theorem follows from \cite[Theorem 8.12]{CL19} by taking the following observation into account: the Wilson line along a peripheral arc class around a special point is a triangular matrix by \cref{thm:LR-formula}. The bad arcs correspond to the vanishing entries of these triangular matrices. 

Now we want to restrict the isomorphism \eqref{eq:O=S} to the subalgebra $\cO(\P_{PGL_2,\Sigma}) \subset \cO(\A_{SL_2,\Sigma}^\times)$ of index $2$. Let $\Skeinr{\Sigma}^q(\bB)_{\mathrm{cong}} \subset \Skeinr{\Sigma}^q(\bB)$ be the \emph{congruent subalgebra} \cite{IKar} of the reduced stated skein algebra, which is generated by \emph{congruent} (or \emph{even}) tangles. Here a (stated) tangle is said to be \emph{congruent} with respect to a given triangulation $\tri$ if its geometric intersection with each edge $\alpha \in e(\tri)$ is even. This condition turns out to be independent of triangulations, and invariant under the isotopy and skein relations. 


\begin{thm}\label{thm:O=S_even}
The isomorphism \eqref{eq:O=S} restricts to an isomorphism
\begin{align}\label{eq:O=S_even}
    \cO(\P_{PGL_2,\Sigma}) \cong \Skeinr{\Sigma}^1(\bB)_{\mathrm{cong}}\otimes \bC.
\end{align}
\end{thm}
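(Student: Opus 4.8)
The plan is to leverage the already-established isomorphism \eqref{eq:O=S} of the previous theorem, together with the index-$2$ description of $p_\Sigma^\ast: \cO(\P_{PGL_2,\Sigma}) \hookrightarrow \cO(\A_{SL_2,\Sigma}^\times)$, and match the two index-$2$ subalgebras on the nose. Concretely, under the identification $\cO(\A_{SL_2,\Sigma}^\times) \cong \Skeinr{\Sigma}^1(\bB)\otimes\bC$, I want to show that the subalgebra $\cO(\P_{PGL_2,\Sigma})$ corresponds \emph{exactly} to the congruent subalgebra $\Skeinr{\Sigma}^1(\bB)_{\mathrm{cong}}\otimes\bC$. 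Since both are subalgebras of index $2$ inside a common ambient algebra, it suffices to produce one inclusion in one direction plus an argument that they are not equal to the whole thing in a compatible way, or — more cleanly — to exhibit a $\bZ/2$-grading on $\Skeinr{\Sigma}^1(\bB)$ whose degree-$0$ part is $\Skeinr{\Sigma}^1(\bB)_{\mathrm{cong}}$ and show $\cO(\P_{PGL_2,\Sigma})$ is precisely the degree-$0$ part under \eqref{eq:O=S}.

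First I would recall that the ambient algebra carries a natural $(\bZ/2)$-grading: fixing an ideal triangulation $\tri$, send a stated tangle $\tau$ to the parity of its total geometric intersection number $\sum_{\alpha\in e(\tri)}|\tau\cap\alpha| \bmod 2$. The skein relations and boundary relations preserve this parity (a crossing resolution changes each local intersection count by an even number; the boundary/height-exchange relations likewise), and — as recalled in the excerpt — the condition is triangulation-independent; hence this is a well-defined algebra $\bZ/2$-grading, and $\Skeinr{\Sigma}^1(\bB)_{\mathrm{cong}}$ is by definition its degree-$0$ component. Next I would transport this grading to $\cO(\A_{SL_2,\Sigma}^\times)$ via \eqref{eq:O=S}, and identify it intrinsically: under $p_\Sigma^\ast$ the cluster $\A$-coordinate $A_\alpha$ has parity $1$ for each $\alpha$ (a single edge meets $\alpha$ once and no other edges), so the grading on $\cO(\A_{SL_2,\Sigma}^\times)$ is exactly the $\bZ/2$-grading by the exponent-sum of the $A_\alpha$'s modulo $2$, i.e.\ the grading associated to the covering $p_\Sigma: \A_{SL_2,\Sigma}^\times \to \P_{PGL_2,\Sigma}$, whose deck group is $\mu_2$ acting by $A_\alpha \mapsto -A_\alpha$. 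Then $\cO(\P_{PGL_2,\Sigma}) = \cO(\A_{SL_2,\Sigma}^\times)^{\mu_2}$ is exactly the degree-$0$ part, which matches $\Skeinr{\Sigma}^1(\bB)_{\mathrm{cong}}\otimes\bC$. To make ``the $\mu_2$-action is $A_\alpha\mapsto -A_\alpha$'' rigorous I would point to the coordinate description of $p_\Sigma^\ast$ (the formula $p_\Sigma^\ast X_\kappa^\tri = \prod_\alpha A_\alpha^{\ve^\tri_{\kappa\alpha}+m_{\kappa\alpha}}$, from \cref{prop:ensemble}), since the image consists precisely of the Laurent polynomials invariant under all sign changes $A_\alpha\mapsto \epsilon_\alpha A_\alpha$ with $\prod_\alpha \epsilon_\alpha$-type constraints reducing to the single $\bZ/2$; alternatively use the $\lambda$-length/Wilson-line translation of \cref{prop:Wilson_lambda,thm:A to X} to see directly that a monomial in trace functions and $\Delta_{22}(g_{[c]})$'s lands in $\cO(\P_{PGL_2,\Sigma})$ iff the corresponding tangle is congruent — which is essentially \cref{lem:function_even}.

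The main obstacle I anticipate is the bookkeeping needed to confirm that the parity grading on the \emph{reduced} skein algebra is genuinely well-defined and that its degree-$0$ part is literally the congruent subalgebra as defined in \cite{IKar} — in particular checking that quotienting by the bad-arc ideal $\cI_{\mathrm{bad}}$ is compatible with the grading (bad arcs are peripheral, meeting each edge an even number of times — $0$ or $2$ — so they are congruent, hence $\cI_{\mathrm{bad}}$ is a graded ideal and the grading descends). A secondary subtlety is ensuring the two index-$2$ inclusions are \emph{the same} inclusion rather than merely abstractly isomorphic subalgebras: this is why I route everything through the single isomorphism \eqref{eq:O=S} and the explicit correspondence $\Delta_{ij}(g_{[c]})\mapsto \tau([c])_{ij}$, so that the identification $A_\alpha = \Delta_{22}(g_{[\alpha_\bB]})$ (\cref{prop:Wilson_lambda}) pins down exactly which elements generate which subalgebra. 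Once the graded picture is set up, the proof is a one-line comparison: $\cO(\P_{PGL_2,\Sigma})$ and $\Skeinr{\Sigma}^1(\bB)_{\mathrm{cong}}\otimes\bC$ are both the degree-$0$ summand of the same $\bZ/2$-graded algebra, hence equal, and \eqref{eq:O=S} restricts to the claimed isomorphism \eqref{eq:O=S_even}. $\blacksquare$
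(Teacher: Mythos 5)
There is a genuine gap, and it sits at the heart of your argument: the congruent subalgebra is \emph{not} the degree-$0$ part of the total-parity grading. By the paper's definition, a tangle is congruent when its geometric intersection with \emph{each} edge $\alpha\in e(\tri)$ is even; your grading only records the parity of the \emph{sum} $\sum_{\alpha}|\tau\cap\alpha|$. These conditions differ. Concretely, take a boundary interval $\kappa$ and the corner arc $\kappa_\bB$ representing $A_\kappa$ under \eqref{eq:O=S}: it meets exactly two boundary intervals, once each (at its endpoints), and no interior edge, so its total intersection number is $2$ and it lies in your degree-$0$ part — yet it is not congruent, and indeed $A_\kappa=(X_\kappa X_{\kappa'})^{-1/2}$ by \cref{thm:A to X} has half-integral exponents and does \emph{not} belong to $\cO(\P_{PGL_2,\Sigma})$. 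The same example kills the second identification: $A_\kappa A_{\kappa'}$ is invariant under the diagonal sign change $A_\alpha\mapsto -A_\alpha$ (even exponent sum), but its $X$-expansion still has half-integral exponents, so $\cO(\P_{PGL_2,\Sigma})\subsetneq \cO(\A^\times_{SL_2,\Sigma})^{\mu_2}$. Relatedly, your claim that every $A_\alpha$ has total parity $1$ confuses the ideal arc $\alpha$ with the tangle actually representing $A_\alpha$, namely the shifted arc $\alpha_\bB$ with states; its total parity varies with $\alpha$ (it is $2$ for the corner arc above, $3$ for the diagonal of a square). In short, the relevant "parity" is the full vector in $(\bZ/2)^{e(\tri)}$, one coordinate per edge, not a single $\bZ/2$; the subgroup it cuts out is in general of index larger than $2$, so no single $\bZ/2$-grading can separate $\cO(\P_{PGL_2,\Sigma})$ from $\cO(\A^\times_{SL_2,\Sigma})$.

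A repaired version of your strategy would use the per-edge parity vector: the all-zero part of that finer decomposition is the congruent subalgebra by definition, and the inclusion "congruent $\Rightarrow$ lies in $\cO(\P_{PGL_2,\Sigma})$" then follows from the Laurent-integrality argument of \cref{lem:function_even} (integrality of every exponent $-\sfa_\alpha$, for every triangulation). But the reverse inclusion — that every element of $\cO(\P_{PGL_2,\Sigma})$ comes from a congruent tangle — is not a formal consequence of any grading on $\cO(\A^\times_{SL_2,\Sigma})$; it requires knowing a generating set of $\cO(\P_{PGL_2,\Sigma})$. This is exactly the step the paper supplies and your proposal omits: by \cite[Corollary 3.16]{IO20}, $\cO(\P_{PGL_2,\Sigma})$ is generated by matrix coefficients of Wilson lines, and the embedding $\cO(PGL_2)\hookrightarrow\cO(SL_2)$ shows these are the quadratic expressions $\Delta_{ij}(g_{[c]})\Delta_{kl}(g_{[c]})$, whose skein images $\tau([c])_{ij}\tau([c])_{kl}$ meet every edge an even number of times and hence are congruent. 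You need to add this generation argument (or an equivalent one) before the two subalgebras can be matched.
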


\begin{proof}
By \cite[Corollary 3.16]{IO20}, the function algebra $\cO(\P_{PGL_2,\Sigma})$ is generated by the matrix coefficients of Wilson lines. The projection $SL_2 \to PGL_2$ induces an embedding $\cO(PGL_2) \to \cO(SL_2)$, whose image is generated by the elements $\Delta_{ij}\Delta_{kl}$ for $i,j,k,l \in \{1,2\}$. Hence the elements $\Delta_{ij}(g_{[c]})\Delta_{kl}(g_{[c]})$ generate $\cO(\P_{PGL_2,\Sigma})$, which are send to elements $\tau([c])_{ij}\tau([c])_{kl} \in \Skeinr{\Sigma}^1(\bB)_{\mathrm{cong}}$ in the congruent subalgebra. 

Conversely, each element $W \in \Skeinr{\Sigma}^1(\bB)_{\mathrm{cong}}$ corresponds to a certain polynomial $F_W \in \cO(\A_{SL_2,\Sigma}^\times)$ of matrix entries of Wilson lines valued in $SL_2$. Then by the same argument as in the proof of \cref{lem:function_even}, one can verify that $F_W$ actually lies in the subalgebra $\cO(\P_{PGL_2,\Sigma})$, thanks to the congruent condition of $W$. Thus the assertion is proved.
\end{proof}

\begin{proof}[Proof of \cref{thm:X_basis}]
The functions $\mathbb{I}_\A(L) \in \cO(\P_{PGL_2,\Sigma})$ are classical counterparts of the elements in the $\bZ_q$-basis $\mathsf{B}_{\mathrm{cong}}(\Sigma) \subset \Skeinr{\Sigma}^q(\bB)_{\mathrm{cong}}$ constructed in \cite{IKar}. Then the assertion follows from \cref{thm:O=S_even}.
\end{proof}

\begin{rem}
    Without referring to the forthcoming result in \cite{IKar}, the linear independence of the elements $\mathbb{I}_\A(L) \in \cO(\P_{PGL_2,\Sigma})$ also follows from \cref{prop:duality_compatible} below and the linear independence of the bracelets basis.  
\end{rem}

\subsection{The basis of $\cO(\A_\Sigma)$ parametrized by the integral $\P$-laminations}
Let $\Sigma$ be an unpunctured marked surface. In this case, we do not need the data of lamination signature. We basically follow \cite[Definition 12.4]{FG06} with an extra assignment for pinnings. In particular, we lift each loop $\gamma$ to the punctured tangent bundle $T'\Sigma$, and understand the trace function $\mathrm{Tr}_{[\gamma]}$ on $\A_{SL_2,\Sigma}$ as the trace of monodromy of twisted $SL_2$-local systems along this lift. We also use the following shifting operation on the curves. Compare with \cref{def:shift_ideal}.

\begin{dfn}[negative $\bM$-shift of curves]\label{def:shift_curve}
For a curve $\gamma$ in $\Sigma$ having its endpoints on $\partial^\ast\Sigma$, we define its \emph{(negative) $\bM$-shift} to be the ideal arc $\gamma^{\bM}$ obtained from $\gamma$ by shifting its endpoints to the nearest special point in the negative direction along the boundary.
See \cref{fig:shifting_curve}.
\end{dfn}

\begin{figure}[ht]
    \centering
\begin{tikzpicture}
\fill[gray!20] (0,1.5) -- (-0.2,1.5) -- (-0.2,-1.5) -- (0,-1.5) --cycle;
\fill[gray!20] (4,1.5) -- (4+0.2,1.5) -- (4+0.2,-1.5) -- (4,-1.5) --cycle;
\draw[thick] (0,1.5) -- (0,-1.5);
\draw[thick] (4,-1.5) -- (4,1.5);
\filldraw(0,1) circle(1.5pt); 
\filldraw(0,0) circle(1.5pt);
\filldraw(0,-1) circle(1.5pt);
\filldraw(4,1) circle(1.5pt);
\filldraw(4,0) circle(1.5pt);
\filldraw(4,-1) circle(1.5pt);
\draw[red,thick] (0,-0.5) to[out=0,in=180] node[midway,above]{$\gamma$} (4,0.5);
\begin{scope}[xshift=6cm]
\fill[gray!20] (0,1.5) -- (-0.2,1.5) -- (-0.2,-1.5) -- (0,-1.5) --cycle;
\fill[gray!20] (4,1.5) -- (4+0.2,1.5) -- (4+0.2,-1.5) -- (4,-1.5) --cycle;
\draw[thick] (0,1.5) -- (0,-1.5);
\draw[thick] (4,-1.5) -- (4,1.5);
\filldraw(0,1) circle(1.5pt); 
\filldraw(0,0) circle(1.5pt);
\filldraw(0,-1) circle(1.5pt);
\filldraw(4,1) circle(1.5pt);
\filldraw(4,0) circle(1.5pt);
\filldraw(4,-1) circle(1.5pt);
\draw[red,thick] (0,0) to[out=0,in=180] node[midway,above]{$\gamma^{\bM}$} (4,0);
\end{scope}
\end{tikzpicture}
    \caption{The negative $\bM$-shift of a curve.}
    \label{fig:shifting_curve}
\end{figure}
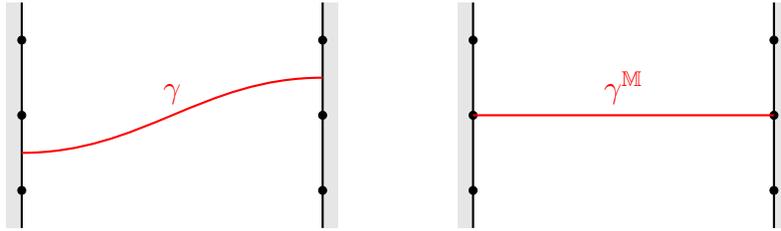
The two shifting operations are related by $(\alpha_\bB)^{\bM}=\alpha$ for an ideal arc $\alpha$, and $(\gamma^\bM)_\bB=\gamma$ for a curve having its endpoints on $\partial^\ast\Sigma$. The following is a slight enhancement of the construction given in \cite[Section 12.3]{FG06} and \cite[Section 7.2]{FG07}:



\begin{dfn}\label{def:skein_lift_X}
Given an integral $\P$-lamination $(L=\{(\gamma_i,w_i)\},\nu) \in \cL^p(\Sigma,\bZ)$, we define the corresponding function $\mathbb{I}_\X(L,\nu)\in \cO(\A^\times_{SL_2,\Sigma})$, as follows.
\begin{itemize}
    \item For each weighted loop $(\gamma_i,w_i)$, associate the trace function
    \begin{align*}
        \mathrm{Tr}_{[\gamma_i]^{w_i}} \in \cO(\A_{SL_2,\Sigma}),
    \end{align*}
    where $[\gamma_i]^{w_i} \in \pi_1(T'\Sigma)$ denotes the $w_i$-th power of a based loop homotopic to (the lift of) $\gamma_i$. 
    \item For each weighted non-peripheral arc $(\gamma_i,w_i)$, associate the function
    \begin{align*}
        (A_{\gamma^\bM})^{w_i} \in \cO(\A_{SL_2,\Sigma}).
    \end{align*}
    \item For each boundary interval $\alpha \in \bB$, associate the function
    \begin{align}\label{eq:duality_pinning}
        A_\alpha^{\nu_\alpha} \in \cO(\A_{SL_2,\Sigma}^\times).
    \end{align}
\end{itemize}
Then the function $\mathbb{I}_\X(L,\nu) \in \cO(\A_{SL_2,\Sigma}^\times)$ is defined to be the product of these elements. 
\end{dfn}
The map $\mathbb{I}_\X:\X_\Sigma(\bZ^{\mathsf{T}}) \to \cO(\A_{SL_2,\Sigma}^\times)$ is clearly $MC(\Sigma)$-equivariant.
Via the isomorphism $\cO(\A_\Sigma) \cong \cO(\A_{SL_2,\Sigma}^\times)$, we have the following:

\begin{thm}[Musiker--Schiffler--Williams {\cite[Theorem 1.1 and Corollary 1.3]{MSW}}]
Suppose that $\Sigma$ has at least two marked points. 
Then the functions $\mathbb{I}_\X(L,\nu)$, where $(L,\nu)$ runs over all the integral $\P$-laminations, form a linear basis of the upper cluster algebra $\cO(\A_{\Sigma})$. 
\end{thm}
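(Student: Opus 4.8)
The statement is quoted from \cite{MSW}, so the cleanest route is to reduce our situation to theirs rather than to reprove the bracelets basis from scratch. The plan is to verify that the family $\{\mathbb{I}_\X(L,\nu)\}$ constructed in \cref{def:skein_lift_X} coincides, term by term, with the Musiker--Schiffler--Williams basis of $\cO(\A_\Sigma)$ once we incorporate the pinning factors \eqref{eq:duality_pinning}. First I would recall that $\cL^p(\Sigma,\bZ)$ splits (non-canonically, along a choice of triangulation, but the count is intrinsic) as the set of integral $\X$-laminations $\cL^x(\Sigma,\bZ)$ together with the free $\bZ^{\bB}$ of pinnings $\nu$; correspondingly the MSW bracelets basis is indexed by their ``$C^{\circ}$-compatible'' collections of arcs and closed curves, and the pinning coordinates $\nu_\alpha$ exactly match the multiplicities $A_\alpha^{\nu_\alpha}$ of the frozen variables that one is allowed to adjoin in the upper cluster algebra $\cO(\A_\Sigma)$. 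So the first step is a bookkeeping lemma: the map $(L,\nu)\mapsto \mathbb{I}_\X(L,\nu)$ is a bijection from $\cL^p(\Sigma,\bZ)$ onto the set $\{\text{MSW bracelet}\}\cdot\{\text{Laurent monomial in frozen }A_\alpha\}$.

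Second, I would check that under this identification $\mathbb{I}_\X(L,\nu)$ is literally the MSW element. For a weighted loop $(\gamma_i,w_i)$ the function $\mathrm{Tr}_{[\gamma_i]^{w_i}}$ is, after lifting to $T'\Sigma$ and using the twisted $SL_2$-local system picture, the MSW bracelet element $\mathrm{Band}^{w_i}(\gamma_i)$ (this is already how MSW define their loop elements via the Chebyshev/bracelet normalization; here since we raise the \emph{based loop} to the $w_i$-th power we get precisely the $w_i$-th bracelet, not the $w_i$-th power of the bracelet --- this is the standard reconciliation and I would cite it). For a non-peripheral arc $(\gamma_i,w_i)$, the key point is the shift relation $(\alpha_\bB)^\bM = \alpha$ and $(\gamma^\bM)_\bB=\gamma$ recalled just before \cref{def:skein_lift_X}: the MSW cluster-algebra element attached to the arc $\gamma_i$ is the generalized cluster variable $x_{\gamma_i}$, and on the Teichm\"uller/$\lambda$-length side this is precisely $A_{\gamma_i^\bM}$ by the definition of $\lambda$-lengths of ideal arcs together with \cref{thm:A to X} (or directly \cite[Chapter~2]{Penner}); no further computation is needed beyond matching conventions. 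Peripheral arcs around special points contribute boundary $\lambda$-lengths $A_\alpha$, which MSW also allow as part of their ``boundary'' data. Finally the explicit pinning factors $A_\alpha^{\nu_\alpha}$ are exactly the frozen-variable monomials that distinguish the upper cluster algebra $\cO(\A_\Sigma)$ from the ordinary cluster algebra; adjoining them to the MSW basis of the cluster algebra gives a basis of the upper cluster algebra because $\cO(\A_\Sigma)$ is a free module over the Laurent monomials in the (invertible) frozen variables with the cluster algebra of non-frozen variables as a complement --- and this is precisely the content of \cite[Corollary~1.3]{MSW}, which I would invoke directly.

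The third and genuinely substantive step is to make sure the \emph{products} behave correctly, i.e.\ that multiplying the individual factors listed in \cref{def:skein_lift_X} really does produce the MSW basis element attached to the disjoint union $L$ and not merely something with the same leading term. Here I would use that $L$ is a collection of \emph{mutually disjoint} curves (that is the definition of an $\A$-lamination, hence of the compatible collections indexing the MSW basis), so the defining feature of the bracelets basis --- that for a pairwise-disjoint multicurve the basis element is the product of the bracelet elements of the components --- applies verbatim; this is \cite[Theorem~1.1]{MSW} together with the multiplicativity of bracelet elements on disjoint unions. Combining: $\mathbb{I}_\X(L,\nu)$ is a product of (i) bracelet elements of the disjoint loops, (ii) cluster variables of the disjoint arcs (written via $A_{\gamma^\bM}$), (iii) boundary $\lambda$-lengths, (iv) the pinning monomial; by \cite[Theorem~1.1, Corollary~1.3]{MSW} this is exactly the MSW bracelets basis of $\cO(\A_\Sigma)$, and the assignment is a bijection by the first step. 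The main obstacle I anticipate is purely one of translating conventions --- orientations of $\bB$-shifts versus the nearest-arc convention in MSW, the twisted lift for loops, and the precise normalization of bracelets (Chebyshev polynomials $T_n$ versus $n$-th power) --- none of which is mathematically hard but all of which must be pinned down carefully so that ``leading term matches'' can be upgraded to ``equal on the nose''. Once that dictionary is fixed, the theorem is a direct corollary of \cite{MSW}.
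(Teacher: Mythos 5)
Your proposal is correct and is essentially the route the paper takes: the paper offers no proof of this statement but simply attributes it to \cite[Theorem 1.1, Corollary 1.3]{MSW}, implicitly relying on exactly the dictionary you spell out (traces of $w$-th powers of twisted monodromies $=$ Chebyshev/bracelet elements for loops, $A_{\gamma^\bM}=$ the MSW cluster variable of the $\bM$-shifted ideal arc, and the pinnings $\nu_\alpha\in\bZ$ accounting for the Laurent monomials in the invertible frozen variables of $\cO(\A_\Sigma)=\cO(\A^\times_{SL_2,\Sigma})$). Your write-up just makes explicit the convention-matching that the paper leaves to the citation.
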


\begin{rem}
The construction can be generalized to any marked surface so that it is equivariant under the $\bZ/2\bZ$-action at each puncture which alternates the lamination signature and the tag. See \cite[Section 12.6]{FG06}. 
\end{rem}

\subsection{Ensemble compatibility of duality maps}
We are going to discuss the compatibility of the two constructions of duality maps with respect to the structure of cluster ensemble. It turns out that a non-trivial Langlands duality comes into play.

\paragraph{\textbf{Langlands dual coordinates on $\cL^p(\Sigma,\bQ)$.}}
For an ideal triangulation $\tri$ of $\Sigma$, we define the \emph{Langlands dual coordinates}
\begin{align}\label{eq:coordinate_dual}
    \check{\sfx}_\tri=(\check{\sfx}^\tri_\alpha)_{\alpha \in e(\tri)}: \cL^p(\Sigma,\bQ) \xrightarrow{\sim} \bQ^{e(\tri)},
\end{align}
as follows. For $\alpha \in e_{\interior}(\tri)$, let $\check{\sfx}^\tri_\alpha:=\sfx^\tri_\alpha$. We modify the frozen coordinates $\check{\sfx}^\tri_\alpha$, $\alpha \in \bB$ into
\begin{align*}
    \check{\sfx}^\tri_\alpha(L,\sigma_L,\nu) := \nu_\alpha +\sum_{j} w_j (\alpha:\widehat{\gamma}_j)^\vee,
\end{align*}
where $(\alpha:\widehat{\gamma}_j)^\vee:= +1$ if $\widehat{\gamma}_j$ contains a corner arc around the \underline{terminal} marked point $m^-_\alpha$ as its portion, and otherwise $0$. Compare with \eqref{eq:boundary_coord}. 
We define the \emph{Langlands dual ensemble map}
\begin{align}\label{eq:dual_ensemble}
    \check{p}_\Sigma^{\mathsf{T}}: \cL^a(\Sigma,\bQ) \to \cL^p(\Sigma,\bQ)
\end{align}
by forgetting the peripheral components, and defining the pinning $\nu_\alpha \in \bZ$ to be the weight of the peripheral component around the \underline{terminal} marked point $m^-_\alpha$. 
Then similarly to \cref{prop:p-lamination,prop:ensemble_tropical}, we get:
\begin{thm}
\begin{enumerate}
    \item For any ideal triangulation $\tri$ of $\Sigma$, the map \eqref{eq:coordinate_dual} gives a bijection. The coordinate transformations are again tropical cluster Poisson transformations.
    \item For any ideal triangulation $\tri$ of $\Sigma$, we have
    \begin{align}\label{eq:ensemble_dual}
    (\check{p}_\Sigma^\mathsf{T})^\ast \check{\sfx}_\kappa^\tri = \sum_{\alpha \in e(\tri)} (\ve_{\kappa\alpha}^\tri- m_{\kappa\alpha}) \sfa_\alpha
\end{align}
for all $\kappa \in e(\tri)$.
\end{enumerate}
\end{thm}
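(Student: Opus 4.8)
The plan is to run the arguments of \cref{prop:p-lamination} and \cref{prop:ensemble_tropical} in parallel, the only new ingredient being that the corner arcs and peripheral components are now counted at the \emph{terminal} marked point $m^-_\alpha$ instead of the initial one, together with the sign change in the defining formula (the $+$ in $\check{\sfx}^\tri_\alpha=\nu_\alpha+\sum_j w_j(\alpha:\widehat{\gamma}_j)^\vee$ versus the $-$ in \eqref{eq:boundary_coord}).

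For part (1), bijectivity of $\check{\sfx}_\tri$ is immediate from \cref{prop:p-lamination}: the interior coordinates are unchanged, $\check{\sfx}^\tri_\alpha=\sfx^\tri_\alpha$ for $\alpha\in e_{\interior}(\tri)$, so they reconstruct the underlying rational $\X$-lamination $(L,\sigma_L)$; since the multiplicities $(\alpha:\widehat{\gamma}_j)^\vee$ depend only on $(L,\sigma_L)$ (computed in a good position, which exists), the pinning $\nu_\alpha$ is then recovered from $\check{\sfx}^\tri_\alpha$ through the defining relation, just as in \cref{prop:p-lamination}. For the coordinate transformation under a flip $f_\kappa$, the interior coordinates transform by the tropical cluster Poisson rule by \cref{prop:p-lamination}, so it remains to treat a frozen edge $\alpha$. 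Since the pinnings enter $\check{\sfx}^\tri_\alpha$ linearly, we may assume $\nu=0$ and argue exactly as in the proof of \cref{prop:p-lamination}, now with the weighted corner-arc count $\mathsf{w}$ measured at $m^-_\alpha$; a direct case analysis in the relevant quadrilateral then shows that $\check{\sfx}^\tri_\alpha$ changes by the expected tropical cluster Poisson rule of \cref{f:tropical x-flip}.

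For part (2), when $\kappa\in e_{\interior}(\tri)$ one has $m_{\kappa\alpha}=0$ and $\check{\sfx}^\tri_\kappa=\sfx^\tri_\kappa$, so \eqref{eq:ensemble_dual} coincides with \eqref{eq:ensemble_tropical}, which holds by \cite{FG07}. For $\kappa\in\bB$, let $T$ be the unique triangle containing $\kappa$, with edges $\alpha,\beta,\kappa$ labelled clockwise as in \cref{fig:ensemble_boundary}, so that $m^-_\kappa$ is the vertex common to $\kappa$ and $\beta$. As in the proof of \cref{prop:ensemble_tropical}, the total weight of the corner arcs of $T\cap L$ around $m^-_\kappa$ equals $\sfa_\beta(L)+\sfa_\kappa(L)-\sfa_\alpha(L)$, while $\check{p}_\Sigma^{\mathsf{T}}$ removes the peripheral components around $m^-_\kappa$ (of total weight $\mathsf{w}_{m^-_\kappa}(L)$) and assigns the pinning $\nu_\kappa=\mathsf{w}_{m^-_\kappa}(L)$. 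Substituting into the definition of $\check{\sfx}^\tri_\kappa$ gives
\[
\check{\sfx}^\tri_\kappa(\check{p}_\Sigma^{\mathsf{T}}(L))=\nu_\kappa+\bigl(\sfa_\beta(L)+\sfa_\kappa(L)-\sfa_\alpha(L)-\mathsf{w}_{m^-_\kappa}(L)\bigr)=\sfa_\beta(L)+\sfa_\kappa(L)-\sfa_\alpha(L),
\]
which equals $\sum_{\alpha'\in e(\tri)}(\ve^\tri_{\kappa\alpha'}-m_{\kappa\alpha'})\sfa_{\alpha'}(L)$, since $\ve^\tri_{\kappa\beta}=1$, $\ve^\tri_{\kappa\kappa}-m_{\kappa\kappa}=0-(-1)=1$, and $\ve^\tri_{\kappa\alpha}=-1$. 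This is \eqref{eq:ensemble_dual}.

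The only step requiring genuine care is the frozen flip formula in part (1): the corner at $m^-_\alpha$ sits on the side of the triangle opposite to the one that governs the quiver arrows incident to the frozen vertex, so the signs must be tracked with some attention to confirm that the compensating sign in the definition of $\check{\sfx}$ restores agreement with the tropical cluster Poisson transformation — this is, in effect, where the Langlands-dual nature of $\check{\sfx}$ already becomes visible. Everything else is a transcription of the proofs of \cref{prop:p-lamination} and \cref{prop:ensemble_tropical} with ``$m^+$'' replaced by ``$m^-$''.
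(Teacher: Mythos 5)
Your proposal is correct and follows exactly the route the paper intends: the paper offers no separate proof of this theorem, merely asserting it "similarly to \cref{prop:p-lamination,prop:ensemble_tropical}", and your transcription with $m^+_\alpha$ replaced by $m^-_\alpha$ and the sign flip in the frozen coordinate is the right elaboration (in particular, your part (2) computation $\check{\sfx}^\tri_\kappa = \sfa_\beta+\sfa_\kappa-\sfa_\alpha = \sum_{\alpha'}(\ve^\tri_{\kappa\alpha'}-m_{\kappa\alpha'})\sfa_{\alpha'}$ is exactly the dual of the computation in \cref{prop:ensemble_tropical}). The one step you flag but do not carry out, the frozen flip formula in part (1), does check out: the corner count at $m^-_{\alpha_1}$ gains precisely the weight of the long arcs crossing the quadrilateral in the positive direction, i.e.\ $\max\{0,\sfx_{\alpha_0}\}$, matching \cref{f:tropical x-flip}.
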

Observe that for the square matrix $p^\tri:=(\ve_{\kappa\alpha}^\tri+ m_{\kappa\alpha})_{\kappa,\alpha\in e(\tri)}$, its \emph{Langlands dual} \cite[Section 1.2.10]{FG09} is $(-p^\tri)^\top = (\ve_{\kappa\alpha}^\tri- m_{\kappa\alpha})_{\kappa,\alpha\in e(\tri)}$.

The following property shows that our assignment rule in \cref{def:skein_lift_X} satisfies one of the axioms of Fock--Goncharov duality with respect to this dual coordinates:

\begin{prop}\label{prop:cluster_monomial}
Suppose that an integral $\P$-lamination $(L,\nu)$ satisfies $\check{\sfx}_\alpha:=\check{\sfx}^\tri_\alpha(L,\nu) \geq 0$ for all $\alpha \in e(\tri)$ for some ideal triangulation $\tri$. Then we have
\begin{align*}
    \mathbb{I}_\X(L,\nu) = \prod_{\alpha \in e(\tri)} A_\alpha^{\check{\sfx}_\alpha}.
\end{align*}
\end{prop}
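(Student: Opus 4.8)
The plan is to prove \cref{prop:cluster_monomial} by a direct comparison of the two sides of the claimed identity, edge by edge. First I would unwind what the hypothesis $\check{\sfx}^\tri_\alpha(L,\nu) \geq 0$ for all $\alpha$ means geometrically. For the interior edges, $\check{\sfx}^\tri_\alpha = \sfx^\tri_\alpha = \sum_j w_j(\alpha:\widehat{\gamma}_j)$, and the sign-counting picture of \cref{f:intersection sign} shows that non-negativity of all these quantities forces the lamination $L$ to have no component $\gamma_i$ that crosses any edge of $\tri$: indeed, if some non-peripheral curve crossed an edge, then in the quadrilateral $Q_\alpha$ around some interior edge it would contribute a $-1$ somewhere (a curve that enters a triangle through one side and leaves through another adjacent side, when pushed to good position, always yields a $\ominus$ at the diagonal separating it from the opposite corner), contradicting $\sfx^\tri_\alpha\geq 0$ — more precisely, the standard fact (already used implicitly in \cref{prop:p-lamination}) that the shear coordinates of an $\X$-lamination are all $\geq 0$ exactly when the lamination consists of peripheral curves together with closed curves isotopic into subsurfaces with no such obstruction. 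The cleaner route: use the reconstruction procedure recalled after \cref{prop:p-lamination}. If all $\check\sfx_\alpha\geq 0$, then in the gluing blocks the pairings produce only corner arcs (no ``transversal'' strands crossing edges), so $L$ is a disjoint union of arcs each isotopic to the $\bB$-shift of an edge of $\tri$ — equivalently, each non-peripheral arc $\gamma_i$ has $\gamma_i^{\bM} \in e(\tri)$ — plus possibly peripheral curves; and there are no closed loops (a closed loop not contractible into a corner always crosses some edge).

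Second, granting that $L$ consists only of (weighted) arcs $\gamma_i$ with $\gamma_i^{\bM}=:\beta_i\in e(\tri)$ and no loops, I would write out $\mathbb{I}_\X(L,\nu)$ from \cref{def:skein_lift_X}: it is $\prod_i A_{\beta_i}^{w_i}\cdot\prod_{\alpha\in\bB}A_\alpha^{\nu_\alpha}$, where the first product ranges over the non-peripheral arcs and the boundary intervals pick up their pinning exponents $\nu_\alpha$. Regrouping by edge, this equals $\prod_{\alpha\in e(\tri)}A_\alpha^{e_\alpha}$ where for an interior edge $\alpha$, $e_\alpha = \sum_{i:\,\beta_i=\alpha} w_i$, and for a boundary interval $\alpha$, $e_\alpha = \nu_\alpha + \sum_{i:\,\beta_i=\alpha} w_i$. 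Now the task reduces to showing $e_\alpha = \check\sfx^\tri_\alpha(L,\nu)$ for every $\alpha$.

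Third, I would verify that identification coordinate by coordinate. For an interior edge $\alpha$: a non-peripheral arc $\gamma_i$ has $\gamma_i^{\bM}=\alpha$ iff (after putting the corresponding spiralling/ordinary diagram in good position) $\gamma_i$ traverses exactly the edge $\alpha$ and contributes $(\alpha:\widehat\gamma_i)=1$ while contributing $0$ to all other interior edges — this is precisely the statement that the $\bB$-shift of an edge $\alpha$ has $\sfa_\beta((\alpha)_\bB)=\tfrac12\delta_{\alpha\beta}$ for interior edges, which is the content of the interior case of the proof of \cref{thm:A to X} read in reverse. Hence $\check\sfx^\tri_\alpha=\sfx^\tri_\alpha=\sum_j w_j(\alpha:\widehat\gamma_j)=\sum_{i:\,\beta_i=\alpha}w_i=e_\alpha$. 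For a boundary interval $\alpha$: by the definition of the Langlands dual coordinate, $\check\sfx^\tri_\alpha(L,\nu)=\nu_\alpha+\sum_j w_j(\alpha:\widehat\gamma_j)^\vee$, where $(\alpha:\widehat\gamma_j)^\vee=1$ iff $\widehat\gamma_j$ has a corner arc around the terminal marked point $m^-_\alpha$. An arc $\gamma_i$ with $\gamma_i^{\bM}=\alpha$ is, by \cref{def:shift_curve}, obtained from a curve ending near $\alpha$ by shifting endpoints in the negative direction to the special points of $\alpha$; such a $\gamma_i$, viewed as a $\bB$-shift $(\alpha)_\bB$, is the corner arc surrounding the terminal marked point $m=m^-_\alpha$ of $\alpha$ — exactly matching the right-hand picture in \cref{fig:shift_interior} and the boundary case of the proof of \cref{thm:A to X}. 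So $(\alpha:\widehat\gamma_i)^\vee=1$ precisely for those $i$ with $\beta_i=\alpha$, giving $\check\sfx^\tri_\alpha = \nu_\alpha + \sum_{i:\,\beta_i=\alpha}w_i = e_\alpha$. Combining the two cases gives $\mathbb{I}_\X(L,\nu)=\prod_\alpha A_\alpha^{\check\sfx_\alpha}$.

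The main obstacle I anticipate is the first step: carefully justifying that $\check\sfx^\tri_\alpha\geq 0$ for all $\alpha$ forces $L$ to be exactly a weighted collection of edge-$\bB$-shifts with no loops and no ``long'' arcs, and that each such arc's $\bM$-shift is a single well-defined edge — i.e., matching the combinatorics of ``good position'' diagrams with the $(\alpha:\widehat\gamma_j)$ and $(\alpha:\widehat\gamma_j)^\vee$ counts. This is really a bookkeeping argument about corner arcs in triangles, and the cleanest formulation is to invoke the Fock--Goncharov reconstruction recalled in the paper: feeding a non-negative vector $(\check\sfx_\alpha)$ into the gluing blocks never creates a transversal strand, so the output has only corner arcs, which (after removing peripherals around special points, and noting there are no punctures) are precisely $\bB$-shifts of edges; then \cref{rem:pin_shift} handles the bookkeeping of the pinning $\nu_\alpha$ against the boundary-edge corner-arc count, exactly as in the proof of \cref{thm:amalgamation}. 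Once this structural description is in place, the rest is the edge-by-edge matching above, which is essentially the tropical/linear shadow of \cref{thm:A to X} already established in \cref{thm:index_2}.
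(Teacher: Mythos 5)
Your proposal follows essentially the same route as the paper: identify the laminations in the non-negative cone of $\check{\sfx}_\tri$ as weighted $\bB$-shifts of the edges of $\tri$ together with pinnings, then multiply out \cref{def:skein_lift_X}. The paper simply asserts this structural description in one line ($L=\{(\gamma_\alpha,\check{\sfx}_\alpha)\}_{\alpha\in e_{\interior}(\tri)}$ with $(\gamma_\alpha)^\bM=\alpha$ and $\nu_\beta=\check{\sfx}_\beta$) and concludes immediately; you supply the justification via the Fock--Goncharov reconstruction, which is a reasonable way to make the step explicit. Two slips are worth correcting. First, your opening claim that non-negativity ``forces $L$ to have no component that crosses any edge of $\tri$'' is false and would yield $L=\emptyset$: the $\bB$-shift $\alpha_\bB$ of an interior edge crosses $\alpha$ (contributing $+1$) and crosses several other edges with zero net contribution, yet has all coordinates $\geq 0$. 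Your subsequent ``cleaner route'' reaches the correct description, so this does not sink the argument, but the first formulation should be deleted. Second, an arc with $\gamma_i^{\bM}\in\bB$ is a peripheral arc around a special point and is excluded from $\P$-laminations by \cref{d:P-lamination}; so in your edge-by-edge matching the index set $\{i:\beta_i=\alpha\}$ is empty for every boundary interval $\alpha$, and the boundary case reduces to checking that the $\bB$-shifts of \emph{interior} edges contribute $0$ to $\check{\sfx}_\alpha$ for $\alpha\in\bB$ (their terminal corner arcs sit at initial, not terminal, marked points of boundary intervals) — which is exactly what the Langlands-dual convention is designed to achieve, and which you do assert. With these repairs the argument closes and agrees with the paper's.
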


\begin{proof}
Such an integral $\P$-lamination is given by $L=\{(\gamma_\alpha,\check{\sfx}_\alpha)\}_{\alpha \in e_{\interior}(\tri)}$ such that $(\gamma_\alpha)^\bM=\alpha$, together with the pinnings $\nu_\beta:=\check{\sfx}_\beta$ for $\beta \in \bB$. Then we get
\begin{align*}
    \mathbb{I}_\X(L,\nu) = \prod_{\alpha \in e_{\interior}(\tri)} A_\alpha^{\check{\sfx}_\alpha} \cdot \prod_{\beta \in \bB} A_\beta^{\check{\sfx}_\beta} = \prod_{\alpha \in e(\tri)} A_\alpha^{\check{\sfx}_\alpha},
\end{align*}
as desired. 
\end{proof}

\begin{rem}
In the original coordinates $\sfx_\tri$, the $\P$-laminations in the negative cone $\sfx_\alpha^\tri \leq 0$ corresponds to the positive $\bM$-shifts (defined with the opposite direction) of ideal arcs $\alpha \in e_{\interior}(\tri)$ and negative pinnings on boundary intervals. They give rise to cluster monomials $\prod_{\alpha \in e(\tri)}A_\alpha^{-\sfx_\alpha^\tri(L,\nu)}$.  
\end{rem}

The two constructions of duality maps are compatible in the following sense:

\begin{thm}[Ensemble compatibility of duality maps]\label{prop:duality_compatible}
For any unpunctured marked surface $\Sigma$, the following diagram commutes:
\begin{equation}\label{eq:duality_compatible}
    \begin{tikzcd}
    \A_{\Sigma}(\bZ^{\mathsf{T}}) \ar[d,"\check{p}_\Sigma^{\mathsf{T}}"'] \ar[rr,"\mathbb{I}_\A"] && \cO(\X_{\Sigma}) \ar[d,"p_\Sigma^\ast"] \\
    \X_{\Sigma}(\bZ^{\mathsf{T}}) \ar[rr,"\mathbb{I}_\X"'] && \cO(\A_{\Sigma}),
    \end{tikzcd}
\end{equation}
where we use the Langlands dual ensemble map $ \check{p}_\Sigma^{\mathsf{T}}: \cL^a(\Sigma,\bZ) \to \cL^p(\Sigma,\bZ)$ on the tropical side.
\end{thm}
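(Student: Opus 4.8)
The strategy is to verify commutativity on each generating type of integral $\A$-lamination separately, matching the output of $\mathbb{I}_\A$ pulled back by $p_\Sigma^\ast$ with the output of $\mathbb{I}_\X$ applied to $\check{p}_\Sigma^\mathsf{T}(L)$, and then extending multiplicatively since both $\mathbb{I}_\A$ and $\mathbb{I}_\X$ are defined as products over the components. Recall that by the identifications $\cO(\X_\Sigma)=\cO(\P_{PGL_2,\Sigma})\subset C^\infty(\cT^p(\Sigma))$ and $\cO(\A_\Sigma)=\cO(\A_{SL_2,\Sigma}^\times)\subset C^\infty(\cT^a(\Sigma))$, together with \cref{prop:ensemble}, the pull-back $p_\Sigma^\ast$ is literally the restriction-of-functions map along the ensemble map $p_\Sigma:\cT^a(\Sigma)\xrightarrow{\sim}\cT^p(\Sigma)$ (here $\Sigma$ is unpunctured, so $p_\Sigma$ is a diffeomorphism by \cref{prop:ensemble}). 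So the claim reduces to an equality of functions on $\cT^a(\Sigma)$.

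First I would treat the loop components. For a weighted loop $(\gamma_i,w_i)$, the lamination $\check{p}_\Sigma^\mathsf{T}$ leaves it untouched (it only forgets peripherals and records pinnings), and $\mathbb{I}_\X$ assigns $\mathrm{Tr}_{[\gamma_i]^{w_i}}\in\cO(\A_{SL_2,\Sigma})$, while $\mathbb{I}_\A$ assigns the same trace-of-monodromy function $\mathrm{Tr}_{[\gamma_i]^{w_i}}$ on $\cT^p(\Sigma)$. The point is that under $p_\Sigma$ the underlying (untwisted) $PSL_2$-monodromy is unchanged — only the decoration/pinning data differ — so the two trace functions agree after pull-back. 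This is the easy case. Next, the arc components: a non-peripheral arc $(\gamma_i,w_i)$ in the $\A$-lamination gets sent by $\mathbb{I}_\A$ (after applying $\check{p}_\Sigma^\mathsf{T}$, which leaves non-peripheral arcs alone but records their contribution to the pinnings via the terminal marked point) to $\Delta_{22}(g_{[\gamma_i]})^{w_i}$ on $\cT^p(\Sigma)$, whereas $\mathbb{I}_\X$ assigns $(A_{\gamma_i^\bM})^{w_i}$. Here I would invoke \cref{prop:Wilson_lambda}(2): since $\gamma_i$ is (homotopic to) the $\bB$-shift of the ideal arc $\alpha:=\gamma_i^\bM$, i.e. $\gamma_i=\alpha_\bB$, we have $|\Delta_{22}(g_{[\alpha_\bB]})|=A_\alpha$, which is exactly the required identity of functions. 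Sign ambiguities in $PGL_2$ matrix coefficients are harmless because only the absolute value / the even product enters, as in \cref{lem:function_even}.

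The genuinely delicate part, and the one I expect to be the main obstacle, is the bookkeeping on the frozen (boundary-interval) coordinates, where the Langlands dual enters. On the $\X$-side, $\mathbb{I}_\X$ contributes the factor $A_\alpha^{\nu_\alpha}$ for each $\alpha\in\bB$, with $\nu_\alpha$ read off from $\check{p}_\Sigma^\mathsf{T}$ as the weight of the peripheral component around the \underline{terminal} marked point $m^-_\alpha$. On the $\A$-side, a peripheral arc $(\gamma_i,w_i)$ of the $\A$-lamination around a special point contributes $\Delta_{22}(g_{[\gamma_i]})^{w_i}$ to $\mathbb{I}_\A(L)$, which — $\gamma_i$ being peripheral — is $\Delta_{22}(g_{[\gamma_i]})^{w_i}$ with $\Delta_{22}(g_{[\gamma_i]})^{-1}=\Delta_{11}(g_{[\gamma_i]})$, and this is precisely an $\lambda$-length-type factor attached to the appropriate boundary edge. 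The computation I would carry out: express the peripheral Wilson line around a special point $m$ explicitly using \cref{thm:LR-formula} (its turning pattern makes it upper- or lower-triangular), identify $\Delta_{22}$ of this triangular matrix as a ratio of $\lambda$-lengths of the two boundary intervals incident to $m$, and then match this, under $p_\Sigma$ and the relation $A_\alpha=\prod_\beta(X_\beta^\tri)^{q_{\alpha\beta}}$ of \cref{thm:A to X}, with the factor $A_\alpha^{\nu_\alpha}$ prescribed by $\mathbb{I}_\X\circ\check{p}_\Sigma^\mathsf{T}$. The reason the \emph{terminal} marked point (rather than the initial one, as in the original ensemble map $p_\Sigma^\mathsf{T}$) must be used is exactly that the coordinate expressions force the matrix $(\ve_{\kappa\alpha}^\tri+m_{\kappa\alpha})$ appearing in $p_\Sigma^\ast X_\kappa^\tri$ and the matrix $q_{\alpha\beta}$ appearing in $A_\alpha=\prod(X_\beta^\tri)^{q_{\alpha\beta}}$ to be related by transpose-and-negate, i.e. by Langlands duality, as observed just after \eqref{eq:ensemble_dual}; I would make this consistency explicit by comparing the lowest Laurent term $\prod_\alpha(X_\alpha^\tri)^{-\sfa_\alpha(L)}$ of $p_\Sigma^\ast\mathbb{I}_\A(L)$ from \cref{lem:function_even} with the monomial produced by $\mathbb{I}_\X(\check{p}_\Sigma^\mathsf{T}(L))$ on the cluster chart via \cref{prop:cluster_monomial}, and checking that a uniqueness/density argument (both sides are global functions on $\A_\Sigma$ agreeing on a Zariski-dense chart, or: both sides have matching leading terms and the cluster-monomial case pins down the rest) forces the full equality. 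This last "uniqueness from leading term plus cluster-monomial values" step, and getting the orientation conventions for $m^\pm_\alpha$, clockwise/counter-clockwise, and the $\bB$- versus $\bM$-shifts to line up with no stray sign or off-by-one, is where the real care is needed.
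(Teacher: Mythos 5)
Your overall strategy --- reduce to a single weighted curve, use multiplicativity of both $\mathbb{I}_\A$ and $\mathbb{I}_\X$, and match each factor as a function via \cref{prop:Wilson_lambda}(2) --- is exactly the paper's proof, and the loop and non-peripheral-arc cases are handled identically. Where you overcomplicate is the peripheral case: the peripheral arc $\gamma$ around a special point $m$ is precisely the $\bB$-shift $\alpha_\bB$ of the boundary interval $\alpha$ with $m=m^-_\alpha$ (this is exactly why the Langlands-dual map $\check{p}_\Sigma^{\mathsf{T}}$, which records the pinning at the \emph{terminal} point, is the right one), so \cref{prop:Wilson_lambda}(2) gives $\Delta_{22}(g_{[\gamma]})^k=A_\alpha^k=A_\alpha^{\nu_\alpha}$ directly --- $\Delta_{22}$ here is a single $\lambda$-length, not a ratio, and no explicit triangular-matrix computation is needed. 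Consequently the proposed ``leading term plus cluster-monomial values plus uniqueness/density'' step should be dropped: as sketched it is not a complete argument (agreement of lowest terms and agreement on cluster monomials do not by themselves force equality of two universally Laurent functions), and it is superfluous once each factor is matched exactly.
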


\begin{proof}
Let $L \in \A_\Sigma(\bZ^\mathsf{T})$ be an integral $\A$-lamination. It suffices to consider the case where $L$ consists of a single weighted curve $(\gamma,k)$. 
\begin{itemize}
    \item If $\gamma$ is a non-peripheral loop, the assertion is obvious.
    \item If $\gamma$ is a non-peripheral arc, then $\check{p}_\Sigma^\mathsf{T}(\gamma,k)$ is the same weighted arc. Then we need the equality 
     \begin{align*}
        \Delta_{22}(g_{[\gamma]})^k=(A_{\gamma^\bM})^k.
    \end{align*}
    Since $\gamma=(\gamma^\bM)_\bB$, 
    it is exactly the formula given in \cref{prop:Wilson_lambda} (2). 
    \item If $\gamma$ is a peripheral arc around a special point $m \in \bM$, then $\check{p}_\Sigma^\mathsf{T}(\gamma,k)$ consists of an empty collection of curves together with the pinning $\nu_\alpha=k$ assigned to the boundary interval $\alpha \in \bB$ such that $m=m^-_\alpha$ as its terminal endpoint:
    \begin{align*}
    \begin{tikzpicture}
    \bline{-0.5,0}{2.5,0}{0.2}
    \draw[red,thick] (2,0) arc(0:180:0.5cm) node[midway,above]{$(\gamma,k)$};
    \node at (1.5,-0.4) {$m$};
    \draw[fill=black] (0,0) circle(2pt);
    \draw[fill=black] (1.5,0) circle(2pt);
    \draw[|->,thick] (3,0.5) --node[midway,above]{$\check{p}_\Sigma^{\mathsf{T}}$} (4,0.5);
    \begin{scope}[xshift=5cm]
    \bline{-0.5,0}{2.5,0}{0.2}
    \node at (0.75,0.3) {$\alpha$};
    \node[red] at (0.75,-0.5) {$\nu_\alpha=k$};
    \draw[fill=black] (0,0) circle(2pt);
    \draw[fill=black] (1.5,0) circle(2pt);
    \end{scope}
    \end{tikzpicture}
    \end{align*}
    Then we need the equality 
    \begin{align*}
        \Delta_{22}(g_{[\gamma]})^k=A_\alpha^{k}. 
    \end{align*}
    Since $\gamma=\alpha_\bB$, 
    it follows from \cref{prop:Wilson_lambda} (2). 
\end{itemize}
Thus the assertion follows from the multiplicative nature of the both constructions. 
\end{proof}


\begin{rem}\label{rem:duality_constraint}
It is easily verified that the requirements for the (lowest term) exponents in \cref{lem:function_even,prop:cluster_monomial} are compatible only if the exponents/coefficients of the maps $p_\Sigma$ and $\check{p}^{\mathsf{T}}_\Sigma$ are Langlands dual to each other. In particular, it is an algebraic matter independent of the topological construction.
\end{rem}

\subsection{Amalgamation of bracelets bases}
Let us investigate the behavior of the duality maps $\mathbb{I}_\X$ under the tropical gluing maps studied in \cref{subsec:lamination_amalgamation}. Let us first modify the gluing map $q^\mathsf{T}_{\Sigma,\Sigma'}$ to its Langlands dual so that it is compatible with the coordinates $\check{\sfx}_\tri$. 

Let $\Sigma'$ be obtained by $\Sigma$ by gluing two boundary intervals $\alpha_L,\alpha_R$. 
We define the \emph{Langlands dual gluing map}
\begin{align*}
    \check{q}^\mathsf{T}_{\Sigma,\Sigma'}: \cL^p(\Sigma,\bZ) \to \cL^p(\Sigma',\bZ)    
\end{align*}
similarly to $q^\mathsf{T}_{\Sigma,\Sigma'}$, but replace the parametrization $\psi_Z$ with the one $\check{\psi}_{Z}:\bR \to \alpha_Z$ so that $\check{\psi}_{Z}(\frac{1}{2}+\bZ)=S_{Z}$, and $\check{\psi}_Z(\bR_{>0}) \cap S_Z$ consists of all the endpoints of the additional peripheral curves around the \underline{terminal} marked point $m^-_{\alpha_Z}$ for $Z \in \{L,R\}$. Then the same property as in \cref{thm:amalgamation} with the dual coordinates $\check{\sfx}_\tri$ holds. 

On the moduli side, we have the restriction morphism $\mathrm{Res}_{\Sigma',\Sigma}: \A_{SL_2,\Sigma'} \to \A_{SL_2,\Sigma}$.
It induces an algebra homomorphism
\begin{align*}
    \mathrm{Res}^\ast_{\Sigma',\Sigma}: \cO(\A^\times_{SL_2,\Sigma}) \to \cO(\A^\times_{SL_2,\Sigma'})[A_{\overline{\alpha}}^{-1}],
\end{align*}
which satisfies
\begin{align}\label{eq:res_A-variable}
    \mathrm{Res}^\ast_{\Sigma',\Sigma}(A_{\alpha_L}) = \mathrm{Res}^\ast_{\Sigma',\Sigma}(A_{\alpha_R}) = A_{\overline{\alpha}}. 
\end{align}
Let us consider the diagram
\begin{equation}\label{eq:duality_amalgamation}
    \begin{tikzcd}
    \X_{\Sigma}(\bZ^{\mathsf{T}}) \ar[d,"\check{q}^\mathsf{T}_{\Sigma,\Sigma'}"'] \ar[rr,"\mathbb{I}_\X"] && \cO(\A^\times_{\Sigma}) \ar[d,"\mathrm{Res}^\ast_{\Sigma',\Sigma}"] \\
    \X_{\Sigma'}(\bZ^{\mathsf{T}}) \ar[rr,"\mathbb{I}_\X"'] && \cO(\A^{\times}_{\Sigma'})[A_{\overline{\alpha}}^{-1}].
    \end{tikzcd}
\end{equation}

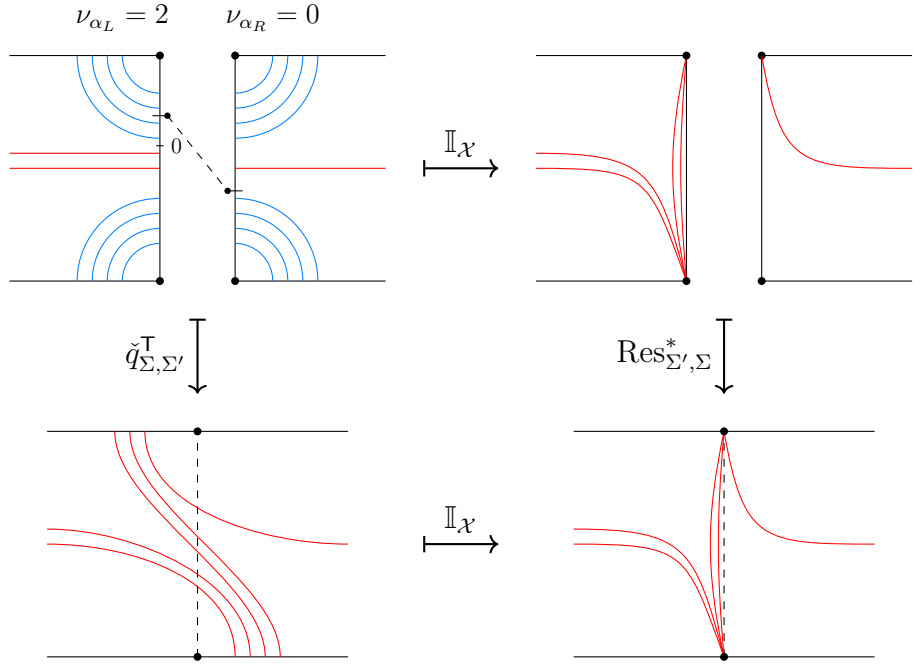
\begin{figure}[ht]
    \centering
\begin{tikzpicture}
\begin{scope}
\draw (-2,0) -- (0,0) -- (0,3) -- (-2,3);
\foreach \i in {0.5,0.7,0.9,1.1}
{
\draw[myblue] (0,0)++(0,\i) arc(90:180:\i);
\draw[myblue] (0,3)++(0,-\i) arc(-90:-180:\i);
}
\foreach \i in {0,1} \foreach \j in {0,3} \fill(\i,\j) circle(1.5pt);
\draw[red] (-2,1.5) -- (0,1.5);
\draw[red] (-2,1.7) -- (0,1.7);
\draw (-0.05,1.8) -- (0.05,1.8) node[right,scale=0.7]{$0$};
\pinn{0,2.2}{180}{0.1}{0.03cm}
\draw[dashed] (0.1,2.2) -- (0.9,1.2);
\node[scale=0.9] at (-0.5,3.5) {$\nu_{\alpha_L}=2$};
\end{scope}
\begin{scope}[xshift=1cm]
\draw (2,0) -- (0,0) -- (0,3) -- (2,3);
\foreach \i in {0.5,0.7,0.9,1.1}
{
\draw[myblue] (0,0)++(0,\i) arc(90:0:\i);
\draw[myblue] (0,3)++(0,-\i) arc(-90:0:\i);
}
\draw[red] (2,1.5) -- (0,1.5);
\pinn{0,1.2}{0}{0.1}{0.03cm}
\node[scale=0.9] at (0.5,3.5) {$\nu_{\alpha_R}=0$};
\end{scope}

\draw[thick,|->] (0.5,-0.5) --node[midway,left]{$\check{q}^{\mathsf{T}}_{\Sigma,\Sigma'}$}++(0,-1);
\draw[thick,|->] (3.5,1.5) --node[midway,above]{$\mathbb{I}_\X$}++(1,0);

\begin{scope}[xshift=7cm]
\draw (-2,0) -- (0,0) -- (0,3) -- (-2,3);
\draw[red] (-2,1.5) ..controls++(1.5,0) and (110:1.5).. (0,0);
\draw[red] (-2,1.7) ..controls++(1.5,0) and (105:1.5).. (0,0);
\draw[red] (0,0) to[bend left=5pt] (0,3);
\draw[red] (0,0) to[bend left=12pt] (0,3);
\foreach \i in {0,1} \foreach \j in {0,3} \fill(\i,\j) circle(1.5pt);
\draw[thick,|->] (0.5,-0.5) --node[midway,left]{$\mathrm{Res}^\ast_{\Sigma',\Sigma}$}++(0,-1);
\end{scope}
\begin{scope}[xshift=8cm]
\draw (2,0) -- (0,0) -- (0,3) -- (2,3);
\draw[red] (2,1.5) ..controls++(-1.5,0) and ($(0,3)+(-80:1.5)$).. (0,3);
\end{scope}

\begin{scope}[yshift=-5cm,xshift=0.5cm]
\draw (-2,0) -- (2,0);
\draw (-2,3) -- (2,3);
\draw[dashed] (0,0) -- (0,3);
\draw[red] (-2,1.5) ..controls++(1.2,0) and ($(0.5,0)+(0,1)$).. (0.5,0);
\draw[red] (-2,1.7) ..controls++(1.2,0) and ($(0.7,0)+(0,1)$).. (0.7,0);
\draw[red] (2,1.5) ..controls++(-1.2,0) and ($(-0.7,3)+(0,-1)$).. (-0.7,3);
\draw[red] (1.1,0) ..controls++(0,1) and ($(-0.9,3)+(0,-1)$).. (-0.9,3); 
\draw[red] (0.9,0) ..controls++(0,1) and ($(-1.1,3)+(0,-1)$).. (-1.1,3); 
\foreach \j in {0,3} \fill(0,\j) circle(1.5pt);
\draw[thick,|->] (3,1.5) --node[midway,above]{$\mathbb{I}_\X$}++(1,0);
\end{scope}

\begin{scope}[yshift=-5cm,xshift=7.5cm]
\draw (-2,0) -- (2,0);
\draw (-2,3) -- (2,3);
\draw[dashed] (0,0) -- (0,3);
\draw[red] (-2,1.5) ..controls++(1.5,0) and (110:1.5).. (0,0);
\draw[red] (-2,1.7) ..controls++(1.5,0) and (105:1.5).. (0,0);
\draw[red] (2,1.5) ..controls++(-1.5,0) and ($(0,3)+(-80:1.5)$).. (0,3);
\draw[red] (0,0) to[bend left=5pt] (0,3);
\draw[red] (0,0) to[bend left=12pt] (0,3);
\foreach \j in {0,3} \fill(0,\j) circle(1.5pt);
\end{scope}
\end{tikzpicture}
    \caption{Amalgamation of bracelets bases: an example with $\nu_{\alpha_L}+\nu_{\alpha_R}\geq 0$.}
    \label{fig:amal_example1}
\end{figure}

\begin{thm}\label{thm:amal_bracelet}
For any integral $\P$-lamination $(L,\nu) \in \X_\Sigma(\bZ^\mathsf{T})$ 
such that $\nu_{\alpha_L}+\nu_{\alpha_R} \geq 0$, we have $\mathrm{Res}^\ast_{\Sigma',\Sigma}(\mathbb{I}_\X(L,\nu))=\mathbb{I}_\X(\check{q}^\mathsf{T}_{\Sigma,\Sigma'}(L,\nu))$. 
\end{thm}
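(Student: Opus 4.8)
The plan is to reduce the statement, by the multiplicativity of $\mathbb{I}_\X$ and $\mathrm{Res}^\ast_{\Sigma',\Sigma}$, to the case where $(L,\nu)$ consists of a single elementary datum: either a single weighted loop, a single weighted non-peripheral arc, or a pinning concentrated on $\alpha_L$ and/or $\alpha_R$. For loops and arcs disjoint from both $\alpha_L$ and $\alpha_R$, the gluing map $\check{q}^\mathsf{T}_{\Sigma,\Sigma'}$ leaves the curve intact, the restriction morphism sends the corresponding trace/$\lambda$-length function to the one on $\Sigma'$, and there is nothing to check. So the real content is concentrated on curves that meet $\alpha_L\cup\alpha_R$ and on the pinnings $\nu_{\alpha_L},\nu_{\alpha_R}$, which is exactly where the hypothesis $\nu_{\alpha_L}+\nu_{\alpha_R}\geq 0$ enters.

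First I would treat the pinnings. Under $\check{q}^\mathsf{T}_{\Sigma,\Sigma'}$, the Langlands dual gluing identifies the pins $\check{\psi}_L(\nu_{\alpha_L})$ and $\check{\psi}_R(\nu_{\alpha_R})$; by the shift-invariance analogue of \cref{lem:shift-invariance} for the dual coordinates, the result depends only on the sum $\nu_{\alpha_L}+\nu_{\alpha_R}$, and one may normalize $\nu_{\alpha_R}=0$, $\nu_{\alpha_L}=\nu_{\alpha_L}+\nu_{\alpha_R}=:\nu\geq 0$. In the normalized picture, the $\nu$ leftmost peripheral curves around $m^+_{\alpha_L}$ on the $\alpha_L$-side get glued, through the biangle, to $\nu$ peripheral curves around the new special point $m^-_{\overline{\alpha}}$; no spiralling or new puncture is created (this uses $\Sigma$ unpunctured and the curves being disjoint from the pins). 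On the moduli side, $\mathrm{Res}^\ast_{\Sigma',\Sigma}(A_{\alpha_L}^{\nu_{\alpha_L}}A_{\alpha_R}^{\nu_{\alpha_R}}) = A_{\overline{\alpha}}^{\nu_{\alpha_L}+\nu_{\alpha_R}}$ by \eqref{eq:res_A-variable}, which matches $\mathbb{I}_\X$ applied to $\nu$ peripheral arcs around $m^-_{\overline{\alpha}}$, since each such peripheral arc $\gamma$ has $\gamma^\bM = \overline{\alpha}$ and hence contributes $A_{\overline{\alpha}}$. The hypothesis $\nu_{\alpha_L}+\nu_{\alpha_R}\geq 0$ is precisely what guarantees that the gluing produces genuine peripheral arcs (whose $\bM$-shift is the boundary edge $\overline{\alpha}$, an honest ideal arc) rather than a negative pinning that would have to be realized as a power of $A_{\overline{\alpha}}^{-1}$ not lying in $\cO(\A^\times_{\Sigma'})$ before localization; this is why the statement lands in $\cO(\A^\times_{\Sigma'})[A_{\overline{\alpha}}^{-1}]$ and why the inequality cannot simply be dropped.

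Next I would treat a weighted non-peripheral arc $(\gamma,k)$ that crosses $\alpha_L$ (and possibly $\alpha_R$). After the Langlands dual gluing, $\gamma$ becomes a curve $\overline{\gamma}$ in $\Sigma'$, obtained by concatenating the arc $\gamma$ with the through-biangle strand matching its endpoint; its $\bM$-shift $\overline{\gamma}^\bM$ is an ideal arc in $\Sigma'$, and $\mathbb{I}_\X(\check{q}^\mathsf{T}_{\Sigma,\Sigma'}(\gamma,k)) = A_{\overline{\gamma}^\bM}^{k}$. On the other side, $\mathbb{I}_\X(\gamma,k)=A_{\gamma^\bM}^k$, and $\mathrm{Res}^\ast_{\Sigma',\Sigma}(A_{\gamma^\bM}^k)$ must be identified with $A_{\overline{\gamma}^\bM}^k$. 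The cleanest way is to interpret $\lambda$-lengths as $(2,2)$-matrix coefficients of Wilson lines via \cref{prop:Wilson_lambda} (2): $A_{\gamma^\bM} = |\Delta_{22}(g_{[(\gamma^\bM)_\bB]})| = |\Delta_{22}(g_{[\gamma]})|$, and the restriction morphism is compatible with Wilson lines along arcs, so $\mathrm{Res}^\ast_{\Sigma',\Sigma}\Delta_{22}(g_{[\gamma]})$ is the corresponding matrix coefficient of the Wilson line along the image arc class in $\Sigma'$, which is $[\overline{\gamma}]$. Since $(\overline{\gamma}^\bM)_\bB = \overline{\gamma}$, this equals $\Delta_{22}(g_{[(\overline{\gamma}^\bM)_\bB]}) = A_{\overline{\gamma}^\bM}$, as needed. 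Loops are handled identically, using compatibility of $\mathrm{Res}^\ast_{\Sigma',\Sigma}$ with trace-of-monodromy functions (the loop and its homotopy class simply push forward).

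The main obstacle I anticipate is bookkeeping at the new special (or, in the consecutive-interval case, would-be puncture) point created by the gluing: one has to verify carefully that the Langlands dual parametrization $\check\psi_Z$, which accumulates the added peripheral strands toward the \emph{terminal} marked points $m^-_{\alpha_Z}$, makes the glued-up peripheral strands congregate around the vertex whose incident boundary edge is $\overline{\alpha}$, so that their $\bM$-shifts are all equal to $\overline{\alpha}$ and the exponent arithmetic $\nu_{\alpha_L}+\nu_{\alpha_R}$ comes out right. This is a local combinatorial check at the biangle, entirely parallel to the proof of \cref{thm:amalgamation} and to \cref{rem:pin_shift}, but it is the place where the sign conventions and the Langlands twist must be reconciled; once that is pinned down, the rest follows from multiplicativity, \eqref{eq:res_A-variable}, and \cref{prop:Wilson_lambda}.
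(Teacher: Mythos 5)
Your proposal follows essentially the same route as the paper's proof: reduce by multiplicativity, check that curves of $L$ meeting the glued edges keep the same negative $\bM$-shift after gluing (so their $\lambda$-length factors pass through $\mathrm{Res}^\ast_{\Sigma',\Sigma}$ unchanged), and match $\mathrm{Res}^\ast_{\Sigma',\Sigma}(A_{\alpha_L}^{\nu_{\alpha_L}}A_{\alpha_R}^{\nu_{\alpha_R}})=A_{\overline\alpha}^{\,n}$ with the $n$ new curves produced by the gluing. Two points should be corrected, though neither invalidates the argument under the stated hypothesis. First, the $n$ new curves are \emph{not} peripheral arcs around a new special point: a peripheral curve around a special point would be deleted by the gluing procedure and absorbed into a pinning, and its $\bM$-shift is a boundary interval, whereas $\overline\alpha$ is an \emph{interior} edge of $\Sigma'$ (so "$m^-_{\overline\alpha}$" is not meaningful). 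These new curves cross $\overline\alpha$ once, connecting the two distinct new marked points $m^+_{\alpha_L}\sim m^-_{\alpha_R}$ and $m^-_{\alpha_L}\sim m^+_{\alpha_R}$; they are non-peripheral arcs isotopic to the $\bB$-shift of $\overline\alpha$, and it is the non-peripheral rule of \cref{def:skein_lift_X} that assigns them $A_{\overline\alpha}$ — which is the contribution you in fact compute. Second, your account of why $n\geq 0$ is needed is off: for $n<0$ the new curves run around the \emph{other} pair of identified marked points, their $\bM$-shift is the other diagonal $\overline\alpha'$ of the glued quadrilateral, and $\mathbb{I}_\X$ produces $A_{\overline\alpha'}^{\,|n|}$, which by the Ptolemy relation is a genuine sum of terms; the identity fails outright (see \cref{ex:amal_dominance}), not merely because $A_{\overline\alpha}^{-|n|}$ requires localization. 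Finally, your Wilson-line detour for arcs crossing the gluing locus is more roundabout than the paper's direct comparison of $\bM$-shifts via \eqref{eq:res_A-variable}; the statement that $\mathrm{Res}^\ast_{\Sigma',\Sigma}\Delta_{22}(g_{[\gamma]})$ equals the corresponding coefficient for the image arc class needs care because $[\gamma]$ ends on $\alpha_L$, which is no longer a boundary interval of $\Sigma'$. The "local combinatorial check at the biangle" you defer is indeed the crux (it is what the paper records as its two observations), and it does go through with the Langlands dual parametrization and $n\geq 0$.
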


\begin{proof}
Let $(L',\nu'):=\check{q}^\mathsf{T}_{\Sigma,\Sigma'}(L,\nu) \in \X_{\Sigma'}(\bZ^\mathsf{T})$ and $n:=\nu_{\alpha_L}+\nu_{\alpha_R}$. 
Represent $L$ and $L'$ by curves with weight $1$. 
Observe that
\begin{enumerate}
\item[(1)] the curves in $L$ having endpoints on $\alpha_L$ and $\alpha_R$ give rise to curves in $L'$ ``turning right". In particular, their negative $\bM$-shifts are the same before/after the gluing;
\item[(2)] the new curves in $L'$ arising via the gluing give rise to $n$ parallel copies of the ideal ideal $\overline{\alpha}$.
\end{enumerate}
See \cref{fig:amal_example1} for an illustrating example. Hence we have
\begin{align*}
    \mathrm{Res}^\ast_{\Sigma',\Sigma}(\mathbb{I}_\X(L,\nu)) = \mathrm{Res}^\ast_{\Sigma',\Sigma}\left(A_{\alpha_L}^{\nu_{\alpha_L}}\cdot A_{\alpha_R}^{\nu_{\alpha_R}}\cdot \mathbb{I}_\X(L,\nu') \right) = A_{\overline{\alpha}}^n \cdot \mathbb{I}_\X(L,\nu') = \mathbb{I}_\X(L',\nu').
\end{align*}
Here $(L,\nu')$ denotes the data obtained from $(L,\nu)$ by deleting the pinnings $\nu_{\alpha_L}$ and $\nu_{\alpha_R}$, for which $\mathrm{Res}^\ast_{\Sigma',\Sigma}(\mathbb{I}_\X(L,\nu'))=\mathbb{I}_\X(L,\nu')$ holds from the observation (1). We also used \eqref{eq:res_A-variable} and the observation (2) in the second and third equality, respectively. The first assertion is proved.

\end{proof}

\begin{ex}\label{ex:amal_dominance}
Here is a square example with $\nu_{\alpha_L}+\nu_{\alpha_R}< 0$.
\begin{align*}
\begin{tikzpicture}
\begin{scope}
\draw (0,0) -- (0,3) -- (-2,1.5) -- cycle;
\foreach \i in {0,1} \foreach \j in {0,3} \fill(\i,\j) circle(1.5pt);
\foreach \i in {-2,3} \fill(\i,1.5) circle(1.5pt);
\node[red,scale=0.9] at (-0.3,1.5) {$-1$};
\node[scale=0.9] at (0.3,1) {$\alpha_L$};
\node[scale=0.9] at (-1,2.5) {$\beta$};
\node[scale=0.9] at (-1,0.5) {$\gamma$};
\end{scope}
\begin{scope}[xshift=1cm]
\draw (0,0) -- (0,3) -- (2,1.5)--cycle;
\node[red,scale=0.9] at (0.3,1.5) {$0$};
\node[scale=0.9] at (-0.3,2) {$\alpha_R$};
\node[scale=0.9] at (1,2.5) {$\epsilon$};
\node[scale=0.9] at (1,0.5) {$\delta$};
\end{scope}
\draw[thick,|->] (3.5,1.5) --node[midway,above]{$\check{q}^{\mathsf{T}}_{\Sigma,\Sigma'}$}++(1,0);
\begin{scope}[xshift=7cm]
\draw (-2,1.5) -- (0,0) -- (2,1.5) -- (0,3) --cycle;
\draw[dashed] (0,0) -- (0,3);
\draw[dashed] (-2,1.5) -- (2,1.5);
\draw[red] (-1,0.75) ..controls++(50:1) and ($(1,3-0.75)+(-110:1)$).. (1,3-0.75); 
\node[scale=0.9] at (-0.3,2.3) {$\overline{\alpha}$};
\node[scale=0.9] at (1,1.3) {$\overline{\alpha}'$};
\node[scale=0.9] at (-1,2.5) {$\beta$};
\node[scale=0.9] at (-1,0.5) {$\gamma$};
\node[scale=0.9] at (1,2.5) {$\epsilon$};
\node[scale=0.9] at (1,0.5) {$\delta$};
\foreach \j in {0,3} \fill(0,\j) circle(1.5pt);
\foreach \i in {-2,2} \fill(\i,1.5) circle(1.5pt);
\end{scope}
\end{tikzpicture} 
\end{align*}
Let us consider $(L,\nu)$ as shown in the left, the empty lamination with the pinning $\nu_{\alpha_L}=-1$ and $\nu_{\alpha_R}=0$, which produces a lamination $(L',\nu')$ shown in the right after the gluing. Then we have $\mathbb{I}_\X(L,\nu)=A_{\alpha_L}^{-1}$, while
\begin{align*}
    \mathbb{I}_\X(L',\nu')=A_{\overline{\alpha}'} = \frac{A_\beta A_\delta +A_\gamma A_\epsilon}{A_{\overline{\alpha}}} = \frac{A_\gamma A_\epsilon}{A_{\overline{\alpha}}}\cdot(1+ p_\Sigma^\ast X_{\overline{\alpha}}).
\end{align*}
Observe that ignoring the frozen term $A_\gamma A_\epsilon$, the function $\mathrm{Res}^\ast_{\Sigma',\Sigma}(\mathbb{I}_\X(L,\nu))=A_{\overline{\alpha}}^{-1}$ coincides with one of the terms in $\mathbb{I}_\X(L',\nu')$.
\end{ex}
\begin{rem}\label{rem:amal_weak}
In general, one can verify a weak statement that $\mathrm{Res}^\ast_{\Sigma',\Sigma}(\mathbb{I}_\X(L,\nu))$ corresponds to the highest term in $\mathbb{I}_\X(\check{q}^\mathsf{T}_{\Sigma,\Sigma'}(L,\nu))$ with respect to the \emph{dominance order} \cite{Qin21}, up to certain frozen variables of $\Sigma'$. 
In the quantum setting based on the skein algebra \cite{Muller}, the term $A_{\overline{\alpha}}^{|n|}\cdot \mathrm{Res}^\ast_{\Sigma',\Sigma}(\mathbb{I}_\X(L,\nu))$ appears in the expansion of the product $A_{\overline{\alpha}}^{|n|}\cdot \mathbb{I}_\X(\check{q}^\mathsf{T}_{\Sigma,\Sigma'}(L,\nu))$ 
in the graphical basis as the term of highest $q$-exponent, up to frozens.
\end{rem}
\appendix
\section{Cluster varieties associated to a marked surface}\label{app:cluster}
The reader is referred to \cite{FG09} for details. 
Let $\Sigma$ be a marked surface. For each ideal (or tagged) triangulation $\tri$ of $\Sigma$, let $\X_\tri=(\bG_m)^{e(\tri)}$, $\A_\tri=(\bG_m)^{e(\tri)}$ be two split algebraic tori, where $\bG_m:=\mathop{\mathrm{Spec}} \bC[t,t^{-1}]=\bC^\ast$ denotes the multiplicative group scheme over $\bC$. Let $(X_\alpha^\tri)_{\alpha \in e(\tri)}$, $(A_\alpha^\tri)_{\alpha \in e(\tri)}$ denote the standard coordinate systems on $\X_\tri$ and $\A_\tri$, respectively. These tori are accompanied with the \emph{exchange matrix} $\ve^\tri=(\ve_{\alpha\beta}^\tri)_{\alpha,\beta \in e(\tri)}$ \cite{FST}, defined as follows: for each triangle $T$ of $\tri$, let
\begin{align*}
    \ve_{\alpha\beta}(T):= \begin{cases}
        1 & \mbox{if $T$ has $\alpha$ and $\beta$ as its consecutive edges in the clockwise order}, \\
        -1 & \mbox{if the same holds with the counter-clockwise order}, \\
        0 & \mbox{otherwise}.
    \end{cases}
\end{align*}
Then we set $\ve_{\alpha\beta}^\tri:=\sum_T \ve_{\alpha\beta}(T)$,
where $T$ runs over all non-self-folded triangles of $\tri$. When $\tri$ has a self-folded triangle or it is tagged, $\ve_{\alpha\beta}^\tri$ is appropriately modified. See \cite{FST}.  
Then the \emph{cluster Poisson/$K_2$-varieties} \cite{FG09} are defined to be
\begin{align*}
    \X_\Sigma:= \bigcup_{\tri} \X_\tri, \quad \A_\Sigma:= \bigcup_{\tri} \A_\tri,
\end{align*}
where the gluing data is given by the birational isomorphisms
\begin{align}
    &\mu_{\kappa}^x: \X_\tri \to \X_{\tri'}, \quad
    (\mu_{\kappa}^x)^\ast X'_\alpha= 
    \begin{cases}
      X_\kappa^{-1} & (\alpha=\kappa'), \\
      X_\alpha (1+ X_\kappa^{-\mathrm{sgn}(\ve_{\alpha\kappa})})^{-\ve_{\alpha\kappa}} & (\alpha \neq \kappa'),
  \end{cases} \label{eq:X-transf} \\
  &\mu_{\kappa}^a: \A_\tri \to \A_{\tri'}, \quad
  (\mu_{\kappa}^a)^\ast A'_\alpha= 
  \begin{cases}A_\kappa^{-1} \big(\prod_{\beta\in e(\tri)} A_\beta^{[\ve_{\kappa\beta}]_+}  + \prod_{\beta\in e(\tri)} A_\beta^{[-\ve_{\kappa\beta}]_+} \big) & (\alpha = \kappa'),
  \\ 
  A_\alpha & (\alpha \neq \kappa'),
  \end{cases} \label{eq:A-transf}
\end{align}
for each flip $\tri'=\tri \setminus \{\kappa\} \cup \{\kappa'\}$ along $\kappa \in e_{\interior}(\tri)$. Here $\sgn (x) \in \{+,0,-\}$ denotes the sign, and $[x]_+:=\max\{0,x\}$ for $x \in \bR$. 
We abbreviated as $X_\alpha:=X_\alpha^\tri$, $X'_\alpha:=X_{\alpha}^{\tri'}$, $\ve_{\alpha\beta}:=\ve_{\alpha\beta}^\tri$, and so on. The maps \eqref{eq:X-transf}, \eqref{eq:A-transf} are called the \emph{cluster Poisson/$K_2$}-transformations, respectively. From the definition, their function algebras are given by
\begin{align*}
    \cO(\X_\Sigma) = \bigcap_{\tri} \bC[(X_\alpha^\tri)^{\pm 1} \mid \alpha \in e(\tri)], \quad \cO(\A_\Sigma) = \bigcap_{\tri} \bC[(A_\alpha^\tri)^{\pm 1} \mid \alpha \in e(\tri)].
\end{align*}
In other words, these algebras consists of \emph{universally Laurent polynomials}. The function algebras $\cO(\X_\Sigma)$ and $\cO(\A_\Sigma)$ are also called the \emph{cluster Poisson algebra} and the \emph{upper cluster algebra}, respectively.  

\paragraph{\textbf{Ensemble maps}}
The exchange matrix $\ve^\tri$ induces the monomial map
\begin{align*}
    p_{\uf}: \A_\tri \to \X_\tri^{\uf}, \quad p_{\uf}^\ast X_\kappa^\tri = \prod_{\alpha \in e(\tri)} (A^\tri_\alpha)^{\ve^\tri_{\kappa\alpha}} \quad (\kappa \in e_{\interior}(\tri)),
\end{align*}
which we call the \emph{ensemble map}. 
Here $\X_\tri^{\uf}:=(\bG_m)^{e_{\interior}(\tri)}$ denotes the cluster torus without frozen coordinates.\footnote{This restriction comes from the fact that the entries $\ve_{ij}$ for $i,j$ frozen are allowed to be rational in a general cluster variety.} It commutes with cluster transformations, and hence induces a morphism $p_{\uf}: \A_\Sigma \to \X_\Sigma^{\uf}$. 
We have a freedom to choose its extension
\begin{align*}
    p_{\Sigma;M}: \A_\tri \to \X_\tri, \quad p_{\Sigma;M}^\ast X_\kappa^\tri = \prod_{\alpha \in e(\tri)} (A^\tri_\alpha)^{\ve^\tri_{\kappa\alpha}+m_{\kappa\alpha}}
\end{align*}
by specifying a constant matrix $M=(m_{\kappa\alpha})_{\kappa,\alpha \in e(\tri)}$ such that $m_{\kappa\alpha}=0$ unless $(\kappa,\alpha) \in \bB \times \bB$ (cf.~\cite[Appendix A]{GHKK} and \cite[Section 18]{GS19}). It also commutes with cluster transformations, and induces a morphism $p_{\Sigma;M}: \A_\Sigma \to \X_\Sigma$. In this paper, following \cite{GS19}, we choose $m_{\kappa\alpha}=\mp \delta_{\kappa\alpha}$ for $(\kappa,\alpha) \in \bB \times \bB$. 

\paragraph{\textbf{Tropicalizations.}}
Let $\mathbb{P}$ be a semifield. Any positive rational map $f: T_1 \to T_2$ between split algebraic tori naturally induces a map $f(\mathbb{P}):T_1(\mathbb{P}) \to T_2(\mathbb{P})$, where $T_i(\mathbb{P}):=\Hom(\bG_m,T_i)\otimes_\bZ \mathbb{P}$, $i=1,2$ are sets of $\mathbb{P}$-valued points. Gluing the coordinate tori by the cluster transformations $\mu_\kappa^x(\mathbb{P})$ and $\mu_\kappa^a(\mathbb{P})$, we get the sets $\X_\Sigma(\mathbb{P})$ and $\A_\Sigma(\mathbb{P})$ of $\mathbb{P}$-points. For example:
\begin{itemize}
    \item if $\mathbb{P}=\pos$ is the semifield of positive real numbers with the usual operations, then $\X_\Sigma(\pos)$ is obtained by gluing $\X_\tri(\pos)=\mathbb{R}^{e(\tri)}_{>0}$ with the same formula as \eqref{eq:X-transf}. 
    \item if $\mathbb{P}=\bZ^{\mathsf{T}}$ is the (max-plus) tropical semifield with the addition $\max$ and multiplication $+$, then $\X_\Sigma(\bZ^{\mathsf{T}})$ is obtained by gluing $\X_\tri(\bZ^{\mathsf{T}})=\bZ^{e(\tri)}$ with the formula obtained from \eqref{eq:X-transf} by replacing the operations as $+ \mapsto \max$, $\times \mapsto +$, which is called the \emph{tropical analogue}. 
\end{itemize}
\paragraph{\textbf{The mapping class group action.}}
Let $MC(\Sigma)$ denote the mapping class group of $\Sigma$. 
Each mapping class $\phi \in MC(\Sigma)$ acts on $\X_\Sigma$ so that $X_\alpha^\tri(\phi(g)) = X_{\phi^{-1}(\alpha)}^{\phi^{-1}(\tri)}(g)$ for all $\tri$ and $\alpha \in e(\tri)$. It acts on $\A_\Sigma$ in a similar manner, and commutes with the (extended) ensemble maps. These actions are positive, and hence descends to the actions on the sets $\X_\Sigma(\mathbb{P})$ and $\A_\Sigma(\mathbb{P})$ of $\mathbb{P}$-points.

\end{document}